\newenvironment{enumeratea} {\begin{enumerate}[\upshape (a)]} {\end{enumerate}}
\newenvironment{enumeratei} {\begin{enumerate}[\upshape (i)]} {\end{enumerate}}
\newenvironment{enumerate1} {\begin{enumerate}[\upshape (1)]} {\end{enumerate}}
\newtheorem{lem}{Lemma}[subsection]
\newtheorem{thm}[lem]{Theorem}
\newtheorem{prop}[lem]{Proposition}
\newtheorem{cor}[lem]{Corollary}
\theoremstyle{definition}
\newtheorem{defn}[lem]{Definition}
\newtheorem{example}[lem]{Example}
\newtheorem*{defn*}{Definition}
\newtheorem*{thm*}{Theorem}
\newtheorem*{mainthm*}{Main Theorem}
\newtheorem*{cor*}{Corollary}
\theoremstyle{remark}
\newtheorem{remark}[lem]{Remark}
\numberwithin{equation}{subsection}
\newcommand{\epf}{\qed \vspace{+10pt}}
\newcommand\cA{\mathcal{A}} \newcommand\cB{\mathcal{B}} \newcommand\cC{\mathcal{C}}\renewcommand\cD{\mathcal{D}} 
\newcommand\cF{\mathcal{F}} \newcommand\cG{\mathcal{G}}
\newcommand\cI{\mathcal{I}}\newcommand\cJ{\mathcal{J}}\newcommand\cK{\mathcal{K}}\renewcommand\cL{\mathcal{L}}\newcommand\cM{\mathcal{M}}
\newcommand\cO{\mathcal{O}}\newcommand\cP{\mathcal{P}}
\newcommand\cV{\mathcal{V}}\newcommand\cW{\mathcal{W}}
\newcommand\cX{\mathcal{X}}\newcommand\cY{\mathcal{Y}}\newcommand\cZ{\mathcal{Z}}
\renewcommand\AA{\mathbb{A}}\newcommand\FF{\mathbb{F}}\newcommand\GG{\mathbb{G}}\newcommand\PP{\mathbb{P}}\newcommand\QQ{\mathbb{Q}}
\newcommand\ZZ{\mathbb{Z}}
\newcommand\fm{\mathfrak{m}}
\newcommand\fp{\mathfrak{p}}
\newcommand\fq{\mathfrak{q}}
\newcommand\id{\mathrm{id}}
\newcommand\spec{\operatorname{Spec}}
\newcommand\hookarr{\hookrightarrow}
\newcommand\im{\operatorname{im}}
\renewcommand{\setminus}{\smallsetminus}
\newcommand{\Le}{\textup{Lis-{\'e}t}}
\def\gitq{/\hspace{-0.1cm}/}
\newcommand{\simm}{\kern-4pt\sim \, \,}
\newcommand{\simmc}{\kern-4pt\sim_c \, \,}
\newcommand{\red}{_{\mathrm{red}}}
\renewcommand{\ss}{\operatorname{ss}}
\newcommand{\s}{\operatorname{s}}
\newcommand{\rrarrows}{\rightrightarrows}
\newcommand{\coker}{\operatorname{coker}}
\newcommand{\Eq}{\operatorname{Eq}}
\newcommand{\Hom}{\operatorname{Hom}}
\newcommand{\sSpec}{\operatorname{\mathcal{S}pec}}
\newcommand{\Proj}{\operatorname{Proj}}
\newcommand{\Sym}{\operatorname{Sym}}
\newcommand{\oh}{\cO}
\newcommand{\Spec}{\spec}
\newcommand{\tensor} {\otimes}
\newcommand{\Sch}{\operatorname{Sch}}
\newcommand{\Sets}{\operatorname{Sets}}
\newcommand{\iso}{\stackrel{\sim}{\to}}
\newcommand{\dlim}{{\displaystyle \lim_{\longrightarrow}}\,}
\newcommand{\GL}{\operatorname{GL}}
\newcommand{\PGL}{\operatorname{PGL}}
\newcommand{\SL}{\operatorname{SL}}
\newcommand{\mapsonto} {\twoheadrightarrow}
\renewcommand{\bar}{\overline}
\renewcommand{\tilde} {\widetilde}
\newcommand{\dual}{\vee}
\def\blfootnote{\xdef\@thefnmark{}\@footnotetext}
\begin{document}

\title[Adequate moduli spaces]{Adequate moduli spaces and \\geometrically reductive group schemes} 
\blfootnote{{\bf Mathematics Subject Classification:} 14A20, 14L15, 14L24, 14L30}\blfootnote{{\bf Keywords:} algebraic stacks, geometric invariant theory, algebraic groups.}
\author[Alper]{Jarod Alper}

\address[Alper]{Mathematical Sciences Institute\\
Australian National University\\
Canberra, ACT 0200, Australia}
\email{jarod.alper@anu.edu.au}

\date{May 13, 2010}

\maketitle

\begin{abstract}
We introduce the notion of an adequate moduli space.  The theory of adequate moduli spaces provides a framework for studying algebraic spaces which geometrically approximate algebraic stacks with reductive stabilizers in characteristic $p$.  The definition of an adequate moduli space generalizes the existing notion of a good moduli space to characteristic $p$ (and mixed characteristic).   The most important examples of an adequate moduli space are: (1) the morphism from the quotient stack $[X^{\ss}/G]$ of the semistable locus to the GIT quotient $X^{\ss} \gitq G$ and (2) the morphism from an algebraic stack with finite inertia to the 
Keel$-$Mori coarse moduli space.  It is shown that most of the fundamental properties of the GIT quotient $X^{\ss} \gitq G$ follow from only the defining properties of an adequate moduli space.
We provide applications of adequate moduli spaces to the structure of geometrically reductive and reductive group schemes.  In particular, results of Seshadri and Waterhouse are generalized.
The theory of adequate moduli spaces provides the possibility for intrinsic constructions of projective moduli spaces in characteristic $p$.
\end{abstract}

\setcounter{tocdepth}{1}
\tableofcontents

\section{Introduction}

\subsection*{Background and motivation}
In characteristic $0$, any representation of a finite group $G$ is completely reducible.  Therefore, the functor from $G$-representations to vector spaces $V \mapsto V^G$ given by taking invariants is exact.  In particular, if $G$ acts on an affine scheme $X$ and $Z \subseteq X$ is an invariant closed subscheme, every $G$-invariant function on $Z$ lifts to a $G$-invariant function on $X$.  In fact, for any algebraic group $G$, these properties are equivalent and give rise to the notion of a \emph{linearly reductive group}.

\medskip \noindent
In characteristic $p$, if $p$ divides the order $|G| = N$ of a finite group $G$, then the above properties can fail.  However, if $f$ is a $G$-invariant function on an invariant closed subscheme $Z$ of an affine scheme $X$ and $\tilde f$ is any (possibly non-invariant) lift to $X$, then $\prod_{g \in G} g \cdot \tilde f$ is a $G$-invariant function on $X$ which is a lift of $f^N$.  This motivates the definition of \emph{geometric reductivity} for an algebraic group $G$: for every action of $G$ on an affine scheme $X$, every invariant closed subscheme $Z \subseteq X$ and every $f \in \Gamma(Z, \oh_Z)^G$, there exist an integer $n >0$ and $g \in \Gamma(X, \oh_X)^G$ extending $f^n$.

\medskip \noindent
In positive characteristic, linearly reductive groups are rare (as the connected component is always a torus) while many algebraic groups (for example, $\GL_n$, $\SL_n$, $\PGL_n$) are geometrically reductive.  The notion of geometric reductivity of an algebraic group $G$ was introduced by Mumford in the preface of \cite{git}.  Nagata showed in \cite{nagata_invariants-affine}  that if a geometrically reductive group $G$ acts on a finite type affine scheme $\Spec(A)$ over a field $k$, then $A^G$ is finitely generated over $k$ and $\Spec(A^G)$ is a suitably nice quotient.  Mumford conjectured that the notions of geometric reductivity and reductivity were equivalent for an algebraic group; this result was proved by Haboush \cite{haboush}. Therefore, \emph{geometric invariant theory} (GIT) for reductive group actions could be developed in positive characteristic (see \cite[Appendix 1.C]{git3}) which in turn was employed with great success to construct various moduli spaces in characteristic $p$.  Since the Hilbert-Mumford criterion holds in positive characteristic (see  \cite[Appendix 2.A]{git3}), most arguments using GIT to construct moduli spaces in characteristic $0$ extend to positive characteristic.  For instance, one can use GIT to construct moduli spaces of bundles and sheaves over projective varieties in positive characteristic; \cite{maruyama_sheaves},  \cite{gieseker_surface_bundles},  and \cite{seshadri_bundles}.

\medskip \noindent
If $G$ is a geometrically reductive group acting on an affine scheme $X=\Spec(A)$ over a field $k$, then we can consider the quotient stack $\cX = [X / G]$.  There is a natural map
$$\phi: \cX \to Y:=\Spec(A^G)$$
which is easily seen to have the following properties:
\begin{enumerate1}
\item For every surjection of quasi-coherent $\oh_{\cX}$-algebras $\cA \to \cB$ and section $t \in \Gamma(\cX, \cB)$ there exist an integer $N > 0$ and a section $s \in \Gamma(\cX, \cA)$ such that $s \mapsto t^N$.
\item The homomorphism $\Gamma(Y, \oh_Y) \to \Gamma(\cX, \oh_{\cX})$ is an isomorphism.
\end{enumerate1}

\medskip \noindent
These properties motivate the following definition: for any algebraic stack $\cX$, we say that a morphism $\phi: \cX \to Y$ to an affine scheme is an \emph{adequate moduli space} if properties (1) and (2) are satisfied.  Because $Y$ is assumed to be affine, property (1) is  intrinsic to $\cX$ and independent of the morphism $\phi$.  When $Y$ is not affine, one has to consider the local versions of properties (1) and (2); see the following definition.

\medskip \noindent
The purpose of this paper is to develop the theory of adequate moduli spaces and then consider applications to the structure of geometrically reductive group schemes over an arbitrary base.

\subsection*{The definition and main properties}  The main definition of this paper is the following.

\begin{defn*} A quasi-compact and quasi-separated morphism $\phi: \cX \to Y$ from an algebraic stack to an algebraic space is an \emph{adequate moduli space} if the following two properties are satisfied:
\begin{enumerate1}
\item For every surjection of quasi-coherent $\oh_{\cX}$-algebras $\cA \to \cB$, \'etale morphism $p: U=\Spec(A) \to Y$ and section $t \in \Gamma(U, p^* \phi_* \cB)$ there exist $N > 0$ and a section $s \in \Gamma(U, p^* \phi_*\cA)$ such that $s \mapsto t^N$.
\item The morphism $\oh_Y \to \phi_* \oh_{\cX}$ is an isomorphism.
\end{enumerate1}
\end{defn*}

\medskip \noindent
Before going further, we state now the two main examples of an adequate moduli space that the reader should keep in mind.  First, if $G$ is a reductive group over a field $k$ acting on an affine $k$-scheme $\Spec(A)$, then $[\Spec(A) / G] \to \Spec(A^G)$ is an adequate moduli space (see Theorem \ref{theorem-git}).  More generally, if $G$ acts on a projective scheme $X$ and $L$ is an ample $G$-linearization, then the quotient of the semistable locus 
$$[X^{\ss}/G] \to X^{\ss} \gitq G :=\Proj(\bigoplus_{d \ge 0} \Gamma(X, L^{\tensor d})^G)$$
 is an adequate moduli space.  
It turns out that many of the standard properties of the GIT quotient $X^{\ss} \to X^{\ss} \gitq G$ can be seen to follow directly from properties (1) and (2); see the Main Theorem below.
Second, for an algebraic stack $\cX$ with finite inertia, the morphism $\phi: \cX \to Y$ to the Keel$-$Mori coarse moduli space is an adequate moduli space (see Theorem \ref{theorem-coarse}).

%  It turns out that properties (1) and (2) capture the stack-intrinsic properties of such GIT quotients stacks $[X/G]$ and that these properties alone suffice to show that the quotient $Y$ inherits nice geometric properties.

\medskip \noindent

\medskip \noindent One of the key insights in this paper is a generalization of Serre's criterion providing a characterization of affineness of an algebraic space in terms of the existence of liftings of powers of  sections along a surjective morphism of sheaves of quasi-coherent algebras.  More specifically, we define a morphism to be  \emph{adequately affine} if property (1) in the above definition is satisfied (see Definition \ref{definition-adequately-affine}) and we provide various equivalent formulations (see Lemmas \ref{lemma-adequately-affine-equiv} and \ref{lemma-affine-base-equiv}).  We prove the following generalization of Serre's criterion: if $f: X \to Y$ is a quasi-compact and quasi-separated morphism of algebraic spaces, then $f$ is adequately affine if and only if $f$ is affine (see Theorem \ref{serres-criterion}).

\medskip \noindent The notion of a \emph{good moduli space} (see \cite{alper-good}) is defined by replacing property (1) with the requirement that the push-forward functor $\phi_*$ be exact on quasi-coherent sheaves (that is, $\phi$ is cohomologically affine).  Any good moduli space is certainly an adequate moduli space; the converse is true in characteristic 0 (see Proposition \ref{proposition-good}).

\medskip \noindent
Section \ref{section-adequate} is devoted to characterizing ring maps $A \to B$ with the property that for all $b \in B$, there exist $N > 0$ and $a \in A$ such that $a \mapsto b^N$; such ring maps are called \emph{adequate} (see Definition \ref{definition-adequate}) and play an essential role in this paper. This notion is not stable under base change so we introduce \emph{universally adequate} ring maps (see Definition \ref{definition-universally-adequate}).  The key fact here is that if
$A \to B$ is finitely presented, then $A \to B$ is universally adequate with locally nilpotent kernel if and only if $\Spec(B) \to \Spec(A)$ is an integral universal homeomorphism which is an isomorphism in characteristic $0$ (see Proposition \ref{proposition-adequate-equiv}); we refer to this notion as an \emph{adequate homeomorphism}  (see Definition \ref{definition-adequate-homeomorphism}).
In the case of actions by finite groups (or more generally finite group schemes), the integer $N$ in the definition above can be chosen to be the size of the group.  However, for non-finite geometrically reductive groups (for example, $\SL_2$), Example \ref{example-johan} shows that the integer $N$ cannot be chosen universally over all quasi-coherent $\oh_{\cX}$-algebras. 

\medskip \noindent
The following theorem summarizes the main geometric properties of adequate moduli spaces:

\begin{mainthm*}  Let $\phi: \cX \to Y$ be an adequate moduli space.  Then
\begin{enumerate1}
\item The morphism $\phi$ is surjective, universally closed and universally submersive (Theorem \ref{theorem-adequate} (\ref{surjective}, \ref{closed} and \ref{submersive})).
\item Two geometric points $x_1$ and $x_2 \in \cX(k)$ are identified in $Y$ if and only if their closures $\overline{\{x_1\}}$ and $\overline{\{x_2\}}$ in $\cX \times_{\ZZ} k$ intersect  (Theorem \ref{theorem-adequate} (\ref{intersection})).
\item If $Y' \to Y$ is any morphism of algebraic spaces, then $\cX \times_Y Y' \to Y'$ factors as an adequate moduli space $\cX \times_Y Y' \to \tilde{Y}$ followed by an adequate homeomorphism $\tilde Y \to Y'$ (Proposition \ref{proposition-base-change}).
\item Suppose that $\cX$ is of finite type over a Noetherian scheme $S$.  Then $Y$ is of finite type over $S$ and for every coherent $\oh_{\cX}$-module $\cF$, $\phi_* \cF$ is coherent.  (Theorem \ref{theorem-finiteness}).
\item The morphism $\phi$ is universal for maps to algebraic spaces which are either locally separated or Zariski-locally have affine diagonal (Theorem \ref{theorem-uniqueness}).
\item Adequate moduli spaces are stable under flat base change and descend under morphisms $Y' \to Y$ which are fpqc (that is, faithfully flat and every quasi-compact open subset of $Y$ is the image of a quasi-compact open subset of $Y'$) (Proposition \ref{proposition-base-change}). 
\end{enumerate1}
\end{mainthm*}

\medskip \noindent
Part (4) above can be considered as a generalization of Hilbert's 14th problem and the statement that if $G$ is a reductive group over $k$ and $A$ is a finitely generated $k$-algebra, then $A^G$ is finitely generated over $k$ (see \cite{nagata_invariants-affine} or \cite[Appendix 1.C]{git3}). It also generalizes \cite[Theorem 4.16(xi)]{alper-good}) and Seshadri's result \cite[Theorem 2]{seshadri_reductivity}.  See the discussion in Section \ref{section-finiteness}.

\medskip \noindent
Part (5) implies that adequate moduli spaces are unique in a certain subcategory of algebraic spaces with a mild separation hypothesis.  This result implies that GIT quotients by reductive groups over a field are also unique in this subcategory of algebraic spaces.

\medskip \noindent
We also prove the following characterization of algebraic stacks admitting Keel$-$Mori coarse moduli spaces (Theorem \ref{theorem-coarse}):
\begin{thm*} 
 If $\cX$ is an algebraic stack with quasi-finite and separated diagonal, the following are equivalent:
\begin{enumerate1}
\item The inertia $I_{\cX} \to \cX$ is finite.
\item There exists a coarse moduli space $\phi: \cX \to Y$ with $\phi$ separated.
\item There exists an adequate moduli space $\phi: \cX \to Y$.
\end{enumerate1}
\end{thm*}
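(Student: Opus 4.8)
\emph{Plan of proof.} I would prove the cycle of implications $(1)\Rightarrow(2)\Rightarrow(3)\Rightarrow(1)$. The implication $(1)\Rightarrow(2)$ is the Keel--Mori theorem (as generalized by Conrad and Rydh): a quasi-compact quasi-separated algebraic stack with finite inertia admits a coarse moduli space $\phi\colon\cX\to Y$, and this $\phi$ is separated---indeed proper. So here there is nothing to do beyond invoking that result; I would only observe that finite inertia forces $\Delta_\cX$ to be separated, so the standing hypothesis of the theorem is compatible with $(1)$.

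For $(2)\Rightarrow(3)$ I would first extract $(2)\Rightarrow(1)$ cheaply. Since $Y$ is an algebraic space, the relative inertia $I_{\cX/Y}$ coincides with $I_\cX$, and $I_\cX\to\cX$ is the base change of the relative diagonal $\Delta_\phi\colon\cX\to\cX\times_Y\cX$ along itself; as $\phi$ is separated, $\Delta_\phi$ is proper, hence $I_\cX\to\cX$ is proper, and being also quasi-finite and separated it is finite. Granting $(1)$, I would then use the local structure of stacks with finite inertia: \'etale-locally on $Y$, the stack $\cX$ is a quotient $[\Spec B/G]$ by a finite flat group scheme $G$, with coarse space $\Spec B^{G}$. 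Finite flat group schemes are geometrically reductive (the norm $f\mapsto\prod_{g\in G}g\cdot f$ produces the required power), so by Theorem~\ref{theorem-git} (in its finite-group-scheme version) $[\Spec B/G]\to\Spec B^{G}$ is an adequate moduli space; since the notion of adequate moduli space descends in the fpqc---hence \'etale---topology on the target (Proposition~\ref{proposition-base-change}), it follows that $\phi\colon\cX\to Y$ is an adequate moduli space.

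For $(3)\Rightarrow(1)$, by the reduction just made it suffices to show that an adequate moduli space $\phi\colon\cX\to Y$ with $\Delta_\cX$ quasi-finite and separated is itself separated, i.e.\ (since $I_\cX\to\cX$ is automatically quasi-finite, separated and locally of finite type) that $I_\cX\to\cX$ is universally closed. This can be checked after an \'etale surjection $U\to Y$ with $U$ affine, along which adequate moduli spaces are stable (Proposition~\ref{proposition-base-change}), so I would verify the valuative criterion: given a discrete valuation ring $R$ with fraction field $K$, a morphism $\xi\colon\Spec R\to\cX$ and an automorphism $\alpha_K$ of $\xi_K$, one must extend $\alpha_K$ to an automorphism of $\xi_{R'}$ after a finite extension $R'/R$. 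Composing $\xi$ with $\phi$ and applying Proposition~\ref{proposition-base-change} to the base change along $\Spec R\to Y$, one reduces to the case that $\cX\to\Spec R$ is itself an adequate moduli space. Now quasi-finiteness of $\Delta_\cX$ forbids degenerations inside the fibres of $\phi$: by the Main Theorem any two points of a fibre have intersecting closures, so each fibre has a unique closed point into which every point degenerates, and a nontrivial such degeneration would make the automorphism group scheme of the limit point positive-dimensional, contrary to hypothesis. Thus each fibre of $\cX\to\Spec R$ is topologically a point, so $\cX\to\Spec R$ is gerbe-like and, using the given section $\xi$, modulo nilpotents $\cX\cong B_R\mathcal{A}$ with $\mathcal{A}=\Isom_R(\xi,\xi)=\Aut_R(\xi)$ a quasi-finite separated flat group scheme over $\Spec R$; the adequate-affineness property $(1)$ of $\phi$ then forces $\mathcal{A}$ to be finite (geometrically reductive plus quasi-finite forces finite), whence $\alpha_K$, being a $K$-point of the finite $\Spec R$-scheme $\mathcal{A}$, extends over a finite extension of $R$, and $I_\cX\to\cX$ is finite.

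The soft parts of the argument are $(1)\Rightarrow(2)$ (Keel--Mori) and the descent half of $(2)\Rightarrow(3)$ (\'etale descent of the notion of adequate moduli space, together with geometric reductivity of finite flat group schemes). The main obstacle is the concluding step of $(3)\Rightarrow(1)$: for a general stack with quasi-finite separated diagonal an automorphism over the generic point of a discrete valuation ring need not extend---equivalently the automorphism group scheme $\mathcal{A}$ above need not be finite, as the ``$\ZZ/2$ minus a point'' example shows---so one must genuinely exploit that $\cX$ carries an adequate moduli space, through the geometric consequences of the Main Theorem and the adequate-affineness property $(1)$, to rule this out.
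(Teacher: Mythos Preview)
Your treatment of $(1)\Rightarrow(2)$ and $(2)\Rightarrow(3)$ is fine and matches the paper: Keel--Mori gives $(1)\Leftrightarrow(2)$, and Proposition~\ref{proposition-coarse} (the characteristic-polynomial/norm argument for a finite locally free cover, which is the precise form of your norm heuristic) together with \'etale descent gives $(2)\Rightarrow(3)$.

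Your argument for $(3)\Rightarrow(1)$, however, has genuine gaps. First, base change of an adequate moduli space along $\Spec R\to Y$ does \emph{not} in general yield an adequate moduli space over $\Spec R$; by Proposition~\ref{proposition-base-change}(3) you only get $\cX_R\to\tilde Y\to\Spec R$ with the second arrow an adequate homeomorphism, so the reduction needs more care. Second, and more seriously, the assertion that a nontrivial specialization $x_1\leadsto x_0$ inside a fibre forces $\Aut(x_0)$ to be positive-dimensional is not justified; semicontinuity only gives $\dim\Aut(x_0)\ge\dim\Aut(x_1)$, and ruling out equality is essentially equivalent to what you are trying to prove. Third, even granting that each fibre is a single point, concluding that $\cX_{\mathrm{red}}\cong B_R\cA$ for a \emph{flat} group scheme $\cA$ requires further argument. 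Finally, your last step (``geometrically reductive plus quasi-finite forces finite'') is Theorem~\ref{theorem-finite-group}, which in the paper is \emph{deduced from} the present theorem; invoking it here is circular unless you supply an independent proof.

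The paper avoids all of this by attacking the diagonal directly. One observes that $\Delta\colon\cX\to\cX\times\cX$ sends closed points to closed points (because $\phi$ separates closed points), and that $\cX\times\cX$ again admits an adequate moduli space $Z$ with $Y\to Z$ integral. Then Lemma~\ref{lemma-finite}---a Zariski Main Theorem argument---applies: factor the quasi-finite separated map $\Delta$ as an open immersion followed by a finite morphism, and use the closed-point-preserving property together with the adequate moduli space structure to force the open immersion to be an isomorphism. This yields finiteness of $\Delta$, hence of $I_\cX\to\cX$, without any valuative or gerbe-theoretic detour.
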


\subsection*{Applications to geometrically reductive group schemes}  The theory of adequate moduli spaces allows for several interesting applications to the structure of geometrically reductive and reductive group schemes. 
Building off the work of Seshadri in \cite{seshadri_reductivity}, we first systematically develop the theory of geometrically reductive group schemes in Section \ref{section-adequate-groups} and we then deduce the foundational properties of quotients by geometrically reductive group schemes (see Theorem \ref{theorem-git}).  Our approach differs from Seshadri's  \cite{seshadri_reductivity} where the main interest is only studying quotients by reductive group schemes.  We can also consider group schemes which may not be smooth, affine or have connected fibers. 

\begin{defn*} Let $S$ be an algebraic space. A flat, finitely presented, separated group algebraic space $G \to S$ is \emph{geometrically reductive} if $BG \to S$ is an adequate moduli space.
\end{defn*}

\medskip \noindent
If $S= \Spec(R)$, then $G \to \Spec(R)$ is geometrically reductive if for every surjection $A \to B$ of $G$-$R$-algebras and $b \in B^G$, there exist $N > 0$ and $a \in A^G$ such that $a \mapsto b^N$.  The notion of geometric reductivity can be formulated in various ways (see Lemmas \ref{lemma-group-equiv} and \ref{lemma-group-equiv-affine}).  When $G \to \Spec(R)$ is smooth with $R$ Noetherian and satisfies the resolution property,  this definition is equivalent to Seshadri's notion (see \cite[Theorem 1]{seshadri_reductivity} and Remark \ref{remark-seshadri}).  Furthermore, Seshadri's generalization of Haboush's theorem can be extended as follows (see Theorem \ref{theorem-reductive}).

 \begin{thm*}  
 Let $G \to S$ be a smooth group scheme.  Then $G \to S$ is geometrically reductive if and only if the geometric fibers are reductive and $G/G^{\circ} \to S$ is finite.
 \end{thm*}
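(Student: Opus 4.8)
The plan is to prove the two implications separately. For the ``if'' direction, assume the geometric fibres are reductive and $\pi := G/G^{\circ} \to S$ is finite. First I would reduce to $S = \Spec R$ affine, since being an adequate moduli space, hence geometric reductivity, descends in the fpqc topology on the target (Proposition \ref{proposition-base-change}); here $G \to S$ is affine, because $G^{\circ}$ is a reductive group scheme (hence affine over $S$), $G \to G/G^{\circ}$ is a $G^{\circ}$-torsor, and $G/G^{\circ} \to S$ is finite. Next, using the affine criterion for geometric reductivity (Lemma \ref{lemma-group-equiv-affine}) I would split off the component group by a two-step invariants argument: given a surjection $A \twoheadrightarrow B$ of $G$-$R$-algebras and $b \in B^{G}$, the connected case provides $N_{0}$ and $a_{0} \in A^{G^{\circ}}$ mapping to $b^{N_{0}}$; as $b^{N_{0}}$ then lies in the $\pi$-subalgebra $\im(A^{G^{\circ}} \to B^{G^{\circ}})$ and is $\pi$-invariant there, geometric reductivity of the finite flat group scheme $\pi$ (established earlier in the paper) yields $N_{1}$ and $a_{1} \in (A^{G^{\circ}})^{\pi} = A^{G}$ mapping to $b^{N_{0}N_{1}}$. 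This reduces us to $G = G^{\circ}$, a reductive group scheme over $R$. A spreading-out argument then reduces $R$ to being of finite type over $\ZZ$ (harmless for the affine criterion, since a non-flat base change $\ZZ \to R_{0} \to R$ only shrinks the class of test algebras); by SGA3, $G$ becomes split after an \'etale cover of $\Spec R$; and fpqc-descent on the base reduces everything to the statement that a Chevalley group scheme $G_{0}$ over $\ZZ$ is geometrically reductive over $\ZZ$, i.e.\ the integral form of Haboush's theorem (\cite{haboush}, \cite{seshadri_reductivity}), for which the resolution property of $BG_{0}$ over $\ZZ$ makes Seshadri's argument available.

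For the ``only if'' direction, assume $G \to S$ is smooth and geometrically reductive. To obtain reductive geometric fibres, fix a geometric point $\bar{s} \colon \Spec k \to S$; by Proposition \ref{proposition-base-change} the base change $BG_{\bar{s}} \to \Spec k$ factors as an adequate moduli space $BG_{\bar{s}} \to \tilde{Y}$ followed by an adequate homeomorphism $\tilde{Y} \to \Spec k$. Since $\Gamma(\tilde{Y}, \oh_{\tilde{Y}}) = \Gamma(BG_{\bar{s}}, \oh_{BG_{\bar{s}}}) = k$ (the $G_{\bar{s}}$-action on $k$ being trivial) and $k$ is algebraically closed, the adequate homeomorphism $\tilde{Y} \to \Spec k$ is an isomorphism, so $BG_{\bar{s}} \to \Spec k$ is itself an adequate moduli space; hence $G_{\bar{s}}$ is geometrically reductive over the field $k$, and Haboush's theorem gives that $G_{\bar{s}}$ is reductive.

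It remains to prove that $\pi := G/G^{\circ} \to S$ is finite, and this is the step I expect to be the main obstacle. Since $G \to S$ is smooth, $\pi \to S$ is \'etale of finite type, and it is separated with finite geometric fibres (the component groups of the $G_{\bar{s}}$), so it suffices to show it is universally closed. The delicate point is that this does not follow from the universal closedness of $BG \to S$ (Theorem \ref{theorem-adequate}): that carries essentially no information here, as $BG \to S$ is already a homeomorphism on underlying spaces, and indeed there are non-finite \'etale group schemes $\pi$ for which $B\pi \to S$ is universally closed (for instance, over $\AA^{1}$, the group scheme that is $\ZZ/2$ away from the origin and trivial at the origin). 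Instead I would argue by contradiction after base changing to a discrete valuation ring $R$---geometric reductivity survives, again by Proposition \ref{proposition-base-change} together with the computation $\Gamma(BG_{R}, \oh) = R$. If $\pi_{R} \to \Spec R$ were not finite, Zariski's main theorem would exhibit a point of $\pi$ over the fraction field that does not extend over $R$, so the special fibre of $\pi_{R}$ is strictly smaller than the generic one; from this degeneration one constructs a surjection of $G_{R}$-equivariant $R$-algebras together with an invariant element of the quotient---supported over the closed point, where the group collapses---no power of which lifts to an invariant upstairs, contradicting geometric reductivity. Carrying out this last construction in general is the technical heart of the proof, and is in the spirit of Seshadri's original arguments.
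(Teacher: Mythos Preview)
Your ``if'' direction is correct and essentially the paper's argument: your two-step invariants computation is precisely Proposition~\ref{proposition-extensions}(2) (extensions of geometrically reductive groups are geometrically reductive), and the subsequent reduction of the connected case to a split group over a small base is exactly how Theorem~\ref{theorem-haboush} is proved. One simplification: you can cite Proposition~\ref{proposition-group-base-change}(i) directly for stability of geometric reductivity under base change on $S$, rather than recomputing $\Gamma(BG_{\bar s},\oh)=k$ via Proposition~\ref{proposition-base-change} each time; the same remark applies to your DVR base change.

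Where you diverge from the paper, and where your proposal has a genuine gap, is the finiteness of $G/G^{\circ}$. You correctly identify this as the crux, but then sketch a DVR degeneration-and-contradiction argument whose key step (``from this degeneration one constructs a surjection of $G_R$-equivariant $R$-algebras together with an invariant element \ldots\ no power of which lifts'') you leave entirely unspecified and call ``the technical heart of the proof''. The paper bypasses this completely. The insight you are missing is that geometric reductivity passes to \emph{quotients}: in the exact sequence $1 \to G^{\circ} \to G \to G/G^{\circ} \to 1$, Proposition~\ref{proposition-extensions}(1) shows that if $G$ is geometrically reductive then so is $G/G^{\circ}$. Now $G/G^{\circ} \to S$ is quasi-finite (indeed \'etale), separated, and flat, and Theorem~\ref{theorem-finite-group} says exactly that such a group algebraic space is geometrically reductive if and only if it is finite. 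So finiteness of $G/G^{\circ}$ follows in two lines from results already in hand, with no DVR contradiction needed. Note that Theorem~\ref{theorem-finite-group} is itself a corollary of Theorem~\ref{theorem-coarse} (the equivalence of finite inertia and existence of an adequate moduli space for stacks with quasi-finite separated diagonal), so the finiteness ultimately comes from the Keel--Mori circle of ideas rather than a bespoke representation-theoretic construction.
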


\medskip \noindent 
 Generalizing the main result of \cite{waterhouse}, we prove the following result (see Theorem \ref{theorem-finite-group}).

\begin{thm*} 
Let $G \to S$ be a quasi-finite, separated, flat group algebraic space.  Then $G \to S$ is geometrically reductive if and only if $G \to S$ is finite.
\end{thm*}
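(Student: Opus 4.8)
The plan is to prove the two implications separately. Suppose first that $G \to S$ is finite. To see that $G$ is geometrically reductive, i.e.\ that $BG \to S$ is an adequate moduli space, note that $\oh_S \to \phi_*\oh_{BG}$ is an isomorphism since $G$ is flat, so property~(2) holds; for property~(1) one may work locally and assume $S = \Spec R$, $G = \Spec H$ with $H$ a finite locally free $R$-Hopf algebra of rank $n$. Given a surjection $A \to B$ of $G$-$R$-algebras and $b \in B^G$, I would pick any lift $\widetilde b \in A$ and set $a := N_{A\otimes_R H/A}(\rho_A(\widetilde b)) \in A$, the norm of the image of $\widetilde b$ under the coaction $\rho_A\colon A \to A\otimes_R H$, computed relative to the finite locally free rank-$n$ $A$-algebra $A\otimes_R H$. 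A direct check with the comodule axioms shows $a \in A^G$ (in the classical case $H = \prod_{g}R$ this recovers $a = \prod_{g}g\widetilde b$), and modulo $\ker(A\to B)$ the element $a$ becomes $N_{B\otimes_R H/B}(b\otimes 1) = b^n$. Hence $G$ is geometrically reductive, with $N$ the rank of $G$ — as announced in the introduction. (When it applies, Theorem~\ref{theorem-coarse} also gives this direction at once, the inertia of $BG$ being finite over $BG$ exactly when $G \to S$ is finite.)

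For the converse, assume $G \to S$ is quasi-finite, separated, flat, of finite type and geometrically reductive; I want to show it is finite. Being quasi-finite and separated, $G\to S$ is quasi-affine, and for such morphisms ``finite'' is equivalent to ``universally closed''; by the valuative criterion it suffices, after the standard reductions (including to $S$ Noetherian), to prove $G$ finite when $S = \Spec V$ with $V$ a discrete valuation ring. The subtle point is that geometric reductivity is not visibly stable under the non-flat base change $\Spec V \to S$. Here I would invoke Proposition~\ref{proposition-base-change}: $BG_V \to \Spec V$ factors as an adequate moduli space $BG_V \to \widetilde Y$ followed by an adequate homeomorphism $\widetilde Y \to \Spec V$; since adequate homeomorphisms are affine and adequate affineness is stable under composition, $BG_V \to \Spec V$ is adequately affine, and testing property~(1) on the trivial \'etale cover of $\Spec V$ we keep exactly what is needed: for every surjection $A \to B$ of $G$-$V$-algebras and every $b \in B^G$ there are $N>0$ and $a \in A^G$ with $a \mapsto b^N$. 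So we may assume $S = \Spec V$ with $V$ a DVR, retaining only this weak form of geometric reductivity.

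Write $K = \Frac V$ and let $k$ be the residue field. By the (standard but non-trivial) structure theory of quasi-finite flat group schemes over a DVR, $G$ is an open subgroup scheme of a finite flat group scheme $\bar G$ over $V$ with $G_K = \bar G_K$ (one may take $\bar G$ to be the schematic closure of $G_K$ inside a Zariski-main-theorem compactification of $G$). Thus $G$ is finite over $V$ if and only if $G_k = \bar G_k$; suppose this fails. Then $G_k$ is an open, hence also closed, subgroup scheme of $\bar G_k$, so it contains $(\bar G_k)^{\circ}$, and the free quotient $W := \bar G_k/G_k$ by left translation is a finite \'etale $k$-scheme that is not $\Spec k$ (its length is the ``index'' $\dim_k\Gamma(\bar G_k)/\dim_k\Gamma(G_k) \ge 2$); as the identity coset is a $k$-point, $W = \Spec k \sqcup W'$ with $W' \ne \varnothing$, whence $\Gamma(\bar G_k)^G = \Gamma(W) \cong k \times C$ with $C := \Gamma(W') \ne 0$. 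On the other hand, for the left-translation action of $G$ on $A := \Gamma(\bar G)$ one has $A^G = V$: the restriction $\rho\colon A \to \Gamma(G)$ along $G \hookrightarrow \bar G$ is injective ($\bar G$ is $V$-flat and $G$ is schematically dense in it), and applying the counit $\varepsilon$ of $A$ to the appropriate tensor factor of the invariance identity $(\rho\otimes\mathrm{id})\Delta(a) = 1\otimes a$ gives $\rho(a) = \varepsilon(a)\cdot 1$, hence $a = \varepsilon(a) \in V$. Now apply the weak geometric reductivity to the $G$-equivariant surjection of $V$-algebras $A = \Gamma(\bar G) \twoheadrightarrow \Gamma(\bar G)/\pi A = \Gamma(\bar G_k)$ and to the idempotent $t = (1,0) \in k\times C = \Gamma(\bar G_k)^G$: some power $t^N = t$ must lift to $A^G = V$. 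But the image of $V$ in $\Gamma(\bar G_k)^G = k\times C$ is the diagonal copy of $k$, which does not contain $(1,0)$ since $C \ne 0$ — a contradiction. Therefore $C = 0$, i.e.\ $G_k = \bar G_k$ and $G = \bar G$ is finite.

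The conceptual core is the invariant idempotent $t = (1,0)$: it is manifestly $G$-invariant, every power of it equals itself, and it can never lift to the local base ring $V$, which has no nontrivial idempotents; this works uniformly in all characteristics and over all residue fields, unlike the more naïve attempt of pushing forward a non-invariant function on $\bar G_k \setminus G_k$, which stalls over finite fields and in the presence of purely inseparable residue extensions. The real work, and the main obstacle, is thus the bookkeeping in the reduction to a DVR: one must be content with the weakened form of geometric reductivity that survives the non-flat base change (extracted through Proposition~\ref{proposition-base-change}), and one must quote that a quasi-finite, separated, flat group scheme of finite type over a DVR is an open subgroup scheme of a finite flat group scheme with the same generic fibre.
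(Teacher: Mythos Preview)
Your approach differs substantially from the paper's one-line proof, which applies Theorem~\ref{theorem-coarse} to $\cX = BG$: the diagonal of $BG$ is quasi-finite and separated precisely because $G \to S$ is, the inertia $I_{BG} \to BG$ is finite iff $G \to S$ is finite, and $BG$ admits an adequate moduli space (namely $BG \to S$) iff $G$ is geometrically reductive. The underlying mechanism (Lemma~\ref{lemma-finite}, used inside Theorem~\ref{theorem-coarse}) is a Zariski-Main-Theorem-plus-closed-points argument at the level of stacks, requiring no group-specific structure theory.

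Your forward direction via the norm is exactly the content of Proposition~\ref{proposition-coarse}, so there is no real difference there. For the converse, your reduction to a DVR and the idempotent obstruction are clean and conceptually appealing, and the base-change step through Proposition~\ref{proposition-base-change} is handled correctly (indeed you recover full adequate affineness of $BG_V \to \Spec V$, not merely a weak form). But there is a genuine gap at the structure claim. You take $\bar G$ to be the schematic closure of $G_K$ in a ZMT compactification of $G$; this is finite flat over $V$ and contains $G$ as an open subscheme, but nothing forces it to be a group scheme, nor even to carry a $G$-action. Writing $A = \Gamma(\bar G) \hookrightarrow B = \Gamma(G)$ (over a Henselian DVR, where $G$ is affine), you would need the comultiplication $\Delta\colon B \to B\otimes_V B$ to send $A$ into $A\otimes_V A$ (or at least into $B\otimes_V A$, for the left $G$-action to extend to $\bar G$), and the ZMT compactification knows nothing about $\Delta$. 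One can show over a Henselian DVR that the finite part $G^f \subset G$ is an open-and-closed sub\emph{group} scheme, but producing from this a finite flat group envelope $\bar G \supset G$ with $\bar G_K = G_K$ is an additional, genuinely nontrivial step that your sketch does not supply and that is not in the paper. If you can locate or prove that structure result, the remainder of your argument (the computation $A^G = V$ via the counit, and the idempotent $(1,0)$ that survives all powers yet cannot lift) is correct and quite elegant; otherwise the paper's route via Theorem~\ref{theorem-coarse} is both shorter and sidesteps the issue entirely.
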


\medskip \noindent
We offer the following generalization of Matsushima's theorem (see Section \ref{section-matsushima} for a historical discussion, and Theorem \ref{theorem-matsushima} and Corollary \ref{corollary-matsushima} for a proof).

\begin{thm*} 
Let $G \to S$ be a geometrically reductive group algebraic space and $H \subseteq G$ a flat, finitely presented and separated subgroup algebraic space.  If $G/H \to S$ is affine, then $H \to S$ is geometrically reductive.  If $G \to S$ is affine, the converse is true.  In particular, if $G \to S$ is a reductive group scheme and $H \subseteq G$ a flat, finitely presented and separated subgroup scheme, then $H \to S$ is reductive if and only if $G/H \to S$ is affine.
\end{thm*}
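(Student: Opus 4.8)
The plan is to pass to the classifying stacks and reduce both implications to Serre's criterion (Theorem~\ref{serres-criterion}), using two soft inputs: an affine morphism has exact push-forward on quasi-coherent modules, and an adequate moduli space is cut out by the two conditions on $\phi_*$ of the definition, so those conditions may be verified on composites. Write $\psi_G\colon BG\to S$, $\psi_H\colon BH\to S$ for the structure morphisms and $\pi\colon G/H\to S$, $\sigma\colon G\to S$ for the structure maps. We use freely that, under the hypotheses on $H$, the fppf quotient $G/H$ is an algebraic space with $G\to G/H$ an fppf $H$-torsor, and the two standard descriptions: $BH\cong[(G/H)/G]$ for the left-translation action of $G$ on $G/H$ (so that the canonical morphism $q\colon BH\to BG$ has $S\times_{BG}BH\cong G/H$), and $G/H$ is itself the quotient stack of $G$ by the free right-translation action of $H$, so $\sigma$ descends to a morphism $\bar g\colon G/H\to BH$. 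Since $S\to BG$ and $S\to BH$ are fppf covers and affineness is fppf-local on the base, $q$ is affine precisely when $G/H\to S$ is affine, and $\bar g$ is affine precisely when $G\to S$ is affine.

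Suppose first $G/H\to S$ is affine, so $q$ is affine. For condition (1) of $\psi_H$: given a surjection of quasi-coherent $\oh_{BH}$-algebras $\cA\to\cB$, exactness of $q_*$ makes $q_*\cA\to q_*\cB$ a surjection of quasi-coherent $\oh_{BG}$-algebras; applying condition (1) for $\psi_G$ (which holds because $G$ is geometrically reductive) and using $(\psi_H)_*=(\psi_G)_*q_*$ gives condition (1) for $\psi_H$. For condition (2) we must see $\oh_S\to(\psi_H)_*\oh_{BH}$ is an isomorphism: computing the target via the atlas $G/H\to BH$ identifies it with the subsheaf of $G$-invariants of $\pi_*\oh_{G/H}$ for its natural linearization. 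The $G$-equivariant, faithfully flat map $G\to G/H$ yields a $G$-equivariant injection $\pi_*\oh_{G/H}\hookrightarrow\sigma_*\oh_G$; since $G$ with its left-translation action is the trivial $G$-torsor over $S$, $(\sigma_*\oh_G)^G=\oh_S$; and since $(-)^G$ is left exact and the maps are compatible with the identity section of $\sigma$, the composite $\oh_S\to(\pi_*\oh_{G/H})^G\hookrightarrow\oh_S$ is the identity, whence $(\psi_H)_*\oh_{BH}=\oh_S$. So $\psi_H$ is an adequate moduli space, i.e.\ $H\to S$ is geometrically reductive.

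For the converse, assume $G\to S$ is affine and $H\to S$ geometrically reductive; then $\bar g$ is affine, so $\bar g_*$ is exact. Given a surjection of quasi-coherent $\oh_{G/H}$-algebras, $\bar g_*$ of it is a surjection of quasi-coherent $\oh_{BH}$-algebras, and condition (1) for $\psi_H$ (which holds as $H$ is geometrically reductive) together with $\pi=\psi_H\circ\bar g$ shows $\pi\colon G/H\to S$ is adequately affine. Since $G/H\to S$ is a quasi-compact and quasi-separated morphism of algebraic spaces, Serre's criterion gives that it is affine.

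The statement on reductive group schemes follows by combining the two implications with Theorem~\ref{theorem-reductive}: a reductive $G$ is smooth, affine and geometrically reductive, and for a smooth group scheme ``geometrically reductive'' is ``reductive'' by that theorem; the only step outside the formalism is the smoothness of $H$, which is automatic in characteristic $0$ and otherwise is the point one must know when the quotient is affine. The main obstacle is condition (2) in the first implication --- the assertion that enlarging $BG$ to $BH$ along the affine morphism $q$ does not change the algebra of relative global sections; this is the one place a genuine computation is needed (the rest being formal bookkeeping with condition (1)), and one must take care that the relevant quotients exist, that $G\to G/H$ is a faithfully flat torsor, and that the linearizations used are the evident ones.
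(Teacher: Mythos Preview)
Your argument is essentially the paper's: both implications go through the cartesian square identifying $q\colon BH\to BG$ with $G/H\to S$ after base change along $S\to BG$, and the other square identifying $G/H\to BH$ with $G\to S$ after base change along $S\to BH$; one direction composes an affine morphism with the adequately affine $BG\to S$, the other composes the affine $G/H\to BH$ with the adequately affine $BH\to S$ and invokes Serre's criterion. Your unfolding of ``affine composed with adequately affine is adequately affine'' via exactness of $q_*$ is exactly what underlies Proposition~\ref{proposition-adequately-affine}(1)--(2), which the paper cites instead.

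One remark: geometrically reductive means only that $BH\to S$ is \emph{adequately affine} (Definition~\ref{definition-geometrically-reductive}), not that it is an adequate moduli space. Your paragraph verifying condition~(2), $(\psi_H)_*\oh_{BH}=\oh_S$, is therefore unnecessary for the conclusion (and in any case automatic: the invariants of the trivial $H$-module $\oh_S$ are $\oh_S$). The paper never checks it. Your observation about the final ``in particular'' is well taken: the passage from geometrically reductive back to reductive via Theorem~\ref{theorem-haboush} requires $H\to S$ smooth with connected fibers, which is not part of the stated hypotheses on $H$; the paper's Corollary~\ref{corollary-matsushima} simply cites Theorems~\ref{theorem-matsushima} and~\ref{theorem-haboush} and leaves this point implicit.
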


\subsection*{Potential applications} 
The theory of good moduli spaces (which is the analogous notion in characteristic $0$) has already had several interesting applications to the log minimal model program for $\bar{M}_g$ (see \cite{afs} and \cite{afsw}).  The theory of adequate moduli spaces is likely indispensable in extending these results to characteristic $p$.  In fact, adequate moduli spaces have already made a prominent appearance in \cite{melo-viviani} and \cite{bfmv}.

\medskip \noindent
One might hope that the theory of adequate moduli spaces allows for intrinsic constructions of proper (or projective) moduli spaces in characteristic $p$.  The general strategy is:
\begin{enumerate}
\item Show that the moduli problem is represented by an algebraic stack $\cX$.
\item Use geometric properties of the moduli problem to show that there exists an adequate moduli space $\cX \to Y$ where $Y$ is an algebraic space.
\item Use a valuative criterion on $\cX$ to show that $Y$ is proper.  To show that $Y$ is projective, show that a certain tautological line bundle on $\cX$ descends to $Y$ and then use intersection theory techniques to show that the descended line bundle is ample.
\end{enumerate}

\medskip \noindent
Step (1) can often be accomplished by verifying deformation-theoretic properties 
of the moduli problem \cite{artin_versal}.  It can be the case that $\cX$ is not a global quotient stack such as for the moduli stack of semistable curves (see \cite{kresch-flattening}).  Moreover, it is often the case that $\cX$ is not known to be a global quotient stack such as for the moduli stacks parameterizing Bridgeland semistable objects (see \cite{abramovich-polishchuk} and \cite{toda}).  Therefore, to construct an adequate moduli space, one cannot rely on the machinery of GIT.  The construction of the adequate moduli space in step (2) is often the most challenging ingredient in this procedure and can be viewed as a generalization of the Keel$-$Mori theorem \cite{keel-mori} which guarantees the existence of coarse moduli spaces for algebraic stacks with finite inertia.  The verification of properness in step (3) involves showing that the moduli stack satisfies a weak valuative criterion analogous to Langton's theorem \cite{langton} for the moduli stack of torsion-free sheaves on a smooth projective variety or \cite[Theorem 4.1.1]{abramovich-polishchuk} for the moduli stack of Bridgeland semistable objects.  The strategy to establish projectivity in step (3) is analogous to Koll\'ar's proof of the projectivity of $\bar{M}_g$ \cite{kollar_projectivity}. 
Although this three-step procedure is ambitious, the analogous strategy has been successfully employed in characteristic $0$ in \cite{afsw} to construct the second flip of $\bar{M}_g$.  

\subsection*{Acknowledgments}   I am indebted to Johan de Jong for providing the motivation to pursue this project and for offering many useful suggestions.  I also thank David Rydh and Ravi Vakil for helpful discussions.
%%Thank NSF!!

%%%%%%%%%%%%%%%%%%%%%%%%%%%%%%%%%%%%%%%%%%%%%%%%
\section{Conventions}

\medskip \noindent
We use the terms algebraic stack and algebraic space in the sense of \cite{lmb}.  In particular, all algebraic stacks and algebraic spaces have a quasi-compact and separated diagonal (although we sometimes superfluously state this hypothesis).  If $\cX$ is an algebraic stack, the \emph{lisse-\'etale site} of $\cX$, denoted $\Le(\cX)$, is the site where objects are smooth morphisms $U \to \cX$ from schemes $U$, morphisms are arbitrary $\cX$-morphism, and covering families are \'etale.

\subsection{$G$-$R$-modules and algebras}
Let $G \to S=\Spec(R)$ be a flat, finitely presented and separated group scheme.  Let $\epsilon: \Gamma(G, \oh_G) \to R$, $\iota: \Gamma(G, \oh_G) \to \Gamma(G, \oh_G)$ and $\delta: \Gamma(G, \oh_G) \to \Gamma(G, \oh_G) \tensor_R \Gamma(G, \oh_G)$ be the counit, coinverse and comultiplication, respectively.
A \emph{(left) $G$-$R$-module} is an $R$-module $M$ with a coaction $\sigma_M: M \to \Gamma(G, \oh_G) \tensor_R M$ satisfying the commutative diagrams:
$$
\xymatrix{
M \ar[r]^{\sigma_M} \ar[d]^{\sigma_M}	& \Gamma(G, \oh_G) \tensor_R M \ar[d]^{\id \tensor \sigma_M} \\
\Gamma(G, \oh_G) \tensor_R M \ar[r]^{\delta \tensor \id \qquad}	& \Gamma(G, \oh_G) \tensor_R \Gamma(G, \oh_G) \tensor_R M
}
\qquad
\xymatrix{
M \ar[r]^{\sigma_M \qquad } \ar[rd]^{\id}	& \Gamma(G, \oh_G) \tensor_R M \ar[d]^{\epsilon \tensor \id} \\
		& M
}
$$
A \emph{morphism of $G$-$R$-modules} is a morphism of $R$-modules $\alpha: A \to B$ such that $(\id \tensor \alpha) \circ \sigma_M = \sigma_N \circ \alpha$.  The operators of direct sum and tensor products extend to $G$-$R$-modules.  A \emph{(left) $G$-$R$-algebra} is a $G$-$R$-module $A$ with the structure of an $R$-algebra such that $R \to A$ (where $R$ has the trivial $G$-$R$-module structure) and multiplication $A \tensor_R A \to A$ are morphisms of $G$-$R$-modules.  A \emph{morphism of $G$-$R$-algebras} is a morphism of $G$-$R$-modules $\alpha: A \to B$ which is also a morphism of $R$-algebras.

\medskip \noindent
Let $BG = [S/G]$ be the classifying stack of $G \to S$.  The category of $G$-$R$-modules (of finite type) is equivalent to the category of quasi-coherent sheaves (of finite type, respectively) on $BG$.  The category of $G$-$R$-algebras (of finite type) is equivalent to the category of quasi-coherent $\oh_{BG}$-algebras (of finite type, respectively). (One defines a $G$-$R$-module or $G$-$R$-algebra to be \emph{finite type} if the underlying $R$-module or $R$-algebra, respectively, is of finite type.

\subsection{Locally nilpotent ideals}
Recall that an ideal $I$ of a ring $R$ is \emph{locally nilpotent} if for every $x \in I$ there exists $N > 0$ such that $x^N = 0$.   Of course, if $I$ is finitely generated, this is equivalent to requiring the existence of $N > 0$ such that $I^N = 0$.  An ideal $\cI \subseteq \cA$ of a quasi-coherent $\oh_{\cX}$-algebra $\cA$ is \emph{locally nilpotent} if for every object $(U \to \cX) \in \Le(\cX)$ and section $x \in \cI(U \to \cX)$, there exists $N > 0$ such that $x^N = 0$.

\subsection{Symmetric products}

If $\cX$ is an algebraic stack and $\cF$ is a quasi-coherent or finite type $\oh_{\cX}$-module, then the symmetric algebra $\Sym^* \cF$ is a quasi-coherent $\oh_{\cX}$-algebra or a finite type $\oh_{\cX}$-algebra, respectively.  This construction is functorial:  a morphism of quasi-coherent $\oh_{\cX}$-modules $\cF \to \cG$ induces a morphism of quasi-coherent $\oh_{\cX}$-algebras $\Sym^* \cF \to \Sym^* \cG$.  Note that if $\cM \subseteq \cA$ is sub-$\oh_{\cX}$-module of a quasi-coherent $\oh_{\cX}$-algebra $\cF$, then there is an induced morphism $\Sym^* \cM \to \cA$ of quasi-coherent $\oh_{\cX}$-algebras.

\begin{lem}  \label{lemma-qcoh-limit}
If $\cX$ is a Noetherian algebraic stack, then every quasi-coherent $\oh_{\cX}$-algebra is a filtered inductive limit of finite type sub-$\oh_{\cX}$-algebras.  If $\cA$ is a finite type  $\oh_{\cX}$-algebra, then there exist a coherent sub-$\oh_{\cX}$-module $\cM \subseteq \cA$ such that $\Sym^* \cM \to \cA$ is surjective.

\end{lem}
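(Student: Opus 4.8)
The plan is to reduce both statements to the standard fact that on a noetherian algebraic stack every quasi-coherent $\oh_{\cX}$-module is the directed union of its coherent sub-$\oh_{\cX}$-modules (see \cite{lmb}; alternatively this can be checked by pulling back to a smooth presentation and invoking the analogous fact for noetherian schemes, since quasi-coherence, coherence, and the union may all be computed on the presentation). Granting this, the first assertion goes as follows. Let $\cA$ be a quasi-coherent $\oh_{\cX}$-algebra. For each coherent sub-$\oh_{\cX}$-module $\cM \subseteq \cA$, let $\cA_{\cM} \subseteq \cA$ be the image of the canonical map $\Sym^* \cM \to \cA$; this is a quasi-coherent sub-$\oh_{\cX}$-algebra, and it is of finite type because $\Sym^* \cM$ is (both assertions may be verified on a smooth presentation, where $\Sym^*$ of a finite module is a finitely generated algebra). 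The family $\{\cA_{\cM}\}$, indexed by the set of coherent sub-$\oh_{\cX}$-modules of $\cA$, is filtered under inclusion: given $\cM_1, \cM_2$, the module $\cM_1 + \cM_2$ is again coherent and dominates both. Since $\cM \subseteq \cA_{\cM}$ for every $\cM$ and $\cA = \bigcup_{\cM} \cM$, we conclude $\cA = \bigcup_{\cM} \cA_{\cM}$, exhibiting $\cA$ as a filtered inductive limit of finite type sub-$\oh_{\cX}$-algebras.

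For the second assertion, suppose $\cA$ is of finite type, and write $\cA = \bigcup_i \cA_i$ as above, with $\cA_i = \cA_{\cM_i}$ a finite type sub-$\oh_{\cX}$-algebra and the $\cA_i$ directed under inclusion. Since $\cX$ is noetherian, choose a smooth surjection $p \colon U \to \cX$ with $U = \Spec A_0$ an affine noetherian scheme. Because $\cA$ is of finite type, $B := \Gamma(U, p^*\cA)$ is a finitely generated $A_0$-algebra; fix generators $b_1, \dots, b_n \in B$. Since $p$ is flat, $p^*$ is exact, and as a left adjoint it commutes with filtered colimits, so it preserves the inclusions $\cA_i \hookarr \cA$ and $p^*\cA = \bigcup_i p^*\cA_i$; applying $\Gamma(U, -)$, which on the affine scheme $U$ commutes with filtered colimits and preserves inclusions of quasi-coherent modules, gives $B = \bigcup_i \Gamma(U, p^*\cA_i)$, a directed union of $A_0$-subalgebras of $B$. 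Each $b_j$ lies in some $\Gamma(U, p^*\cA_{i_j})$, so by directedness there is a single index $i$ with $b_1, \dots, b_n \in \Gamma(U, p^*\cA_i)$; then $\Gamma(U, p^*\cA_i)$ is an $A_0$-subalgebra of $B$ containing a generating set, hence equals $B$, i.e. $p^*\cA_i \to p^*\cA$ is surjective. Surjectivity of a map of quasi-coherent sheaves may be checked after the faithfully flat morphism $p$, so $\cA_i \to \cA$ is surjective; being also injective, it is an isomorphism, so $\cA_i = \cA$. Then $\Sym^*\cM_i \to \cA_i = \cA$ is surjective, and $\cM := \cM_i$ is the required coherent sub-$\oh_{\cX}$-module.

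The only real input is the exhaustion of a quasi-coherent sheaf by its coherent submodules on a noetherian algebraic stack; everything else is formal manipulation with flat pullback, filtered colimits of quasi-coherent sheaves, and the fact that surjectivity (and the formation of images) of morphisms of quasi-coherent sheaves can be checked on a smooth presentation. The steps deserving a sentence of justification are that $\Sym^*$ of a coherent sheaf is a finite type $\oh_{\cX}$-algebra and that the image of $\Sym^*\cM \to \cA$ is a quasi-coherent sub-algebra --- both reduce immediately to the affine case --- and that $p^*$ respects the relevant inclusions, which is where flatness of $p$ is used. I do not anticipate any genuine obstacle beyond invoking the exhaustion statement correctly in the stacky setting. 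One could instead try to prove the second part directly by choosing a coherent submodule of $p^*\cA$ generating $B$ and descending it along $p$, but routing through the first part avoids this descent and is cleaner.
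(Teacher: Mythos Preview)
Your proposal is correct and follows essentially the same approach as the paper: both reduce to the exhaustion of a quasi-coherent sheaf by coherent submodules (the paper cites \cite[15.4]{lmb}) and then pass to the finite type subalgebras generated by these, exactly your $\cA_{\cM} = \im(\Sym^*\cM \to \cA)$. The paper dispatches the second assertion by reference to \cite[I.9.6.6]{ega} without further comment, whereas you spell out the termination argument via a smooth affine presentation; this is just a matter of level of detail, not a difference in strategy.
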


\begin{proof}  
This follows formally from \cite[15.4]{lmb} as in \cite[I.9.6.6]{ega}.  Namely, \cite[15.4]{lmb} implies that any quasi-coherent $\oh_{\cX}$-algebra is a filtered inductive limit of coherent sub-$\oh_{\cX}$-modules and each sub-$\oh_{\cX}$-module generates a finite type sub-$\oh_{\cX}$-algebra. 
\end{proof}

%%%%%%%%%%
\section{Adequacy for rings} \label{section-adequate}

\subsection{Adequate ring homomorphisms}
\begin{defn} \label{definition-adequate}
A homomorphism of rings $A \to B$ is \emph{adequate} if for every element $b \in B$, there exists an integer $N > 0$ and $a \in A$ such that $a \mapsto b^N$.
\end{defn}

\medskip \noindent
It is clear that the composition of adequate ring maps is again adequate.  

\begin{lem} \label{lemma-adequate-basic}
Let $\phi: A \to B$ be an adequate homomorphism.  Then
\begin{enumerate1}
\item If $S \subseteq A$ is a multiplicative set, then $S^{-1} A \to S^{-1}A \tensor_A B$ is adequate.
\item If $I \subseteq A$ is an ideal, then $A/I \to B/IB$ is adequate.
\item For every prime $\fp \subseteq A$, the homomorphism $A_{\fp} \hookarr A_{\fp} \tensor_A B$ and $k(\fp) \hookarr  k(\fp) \tensor_A B$ are adequate.
\item For every $\fq \subseteq B$ with $\fp = \phi^{-1}(\fq)$, the homomorphisms $A_{\fp} \to B_{\fq}$ and $k(\fp) \to k(\fq)$ are adequate.
\item If $A$ is local with maximal ideal $\fm_A$, then $B$ is local with maximal ideal $\sqrt{ \fm_A B}$.
\end{enumerate1}
\end{lem}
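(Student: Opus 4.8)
The plan is to dispatch (1)--(4) by elementary manipulations of the adequacy condition and then use these, together with the fact that adequacy is preserved by passing to residue fields, to prove (5).

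For (1), given $b/s \in S^{-1}A \tensor_A B$ (with $b \in B$, $s \in S$), apply adequacy of $\phi$ to $b$ to get $N > 0$ and $a \in A$ with $a \mapsto b^N$; then $a/s^N \in S^{-1}A$ maps to $b^N/s^N = (b/s)^N$. For (2), an element of $B/IB$ is the class of some $b \in B$; choosing $N$ and $a$ as before, the class of $a$ in $A/I$ maps to the class of $b^N$, which is the $N$th power of the class of $b$. Part (3) is the special case of (1) with $S = A \setminus \fp$ (for $A_\fp \to A_\fp \tensor_A B$), followed by (2) applied to the ideal $\fp A_\fp$ to kill the maximal ideal and obtain $k(\fp) \to k(\fp) \tensor_A B$; injectivity of these maps is automatic since $A_\fp$ and $k(\fp)$ are obtained from $A$ by localization, and one checks the localizations/quotients are nonzero. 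For (4): $A_\fp \to B_\fq$ factors as $A_\fp \to A_\fp \tensor_A B \to B_\fq$, where the first map is adequate by (3) and the second is a localization of $A_\fp \tensor_A B$, hence adequate by (1); adequacy is preserved under composition (noted in the text right after Definition \ref{definition-adequate}). Then $k(\fp) \to k(\fq)$ follows by tensoring $A_\fp \to B_\fq$ with $k(\fp)$ over $A_\fp$ and invoking (1) and (2) once more, or directly: given $\bar b \in k(\fq)$, lift to $b \in B_\fq$, write $b^N = \phi(a) \cdot u$ for... — more simply, apply (2) to the ideal $\fm_A B_\fq$ inside $B_\fq$ after noting $B_\fq / \fm_A B_\fq$ surjects onto $k(\fq)$.

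The one part requiring a genuine argument is (5). Suppose $A$ is local with maximal ideal $\fm_A$. I first claim $\sqrt{\fm_A B}$ is a maximal ideal of $B$. Indeed $B / \fm_A B = k(\fm_A) \tensor_A B$ receives an adequate map from the field $k = k(\fm_A)$ by (3). The key subclaim is: \emph{if $k \to C$ is an adequate map from a field and $C \neq 0$, then $C$ has a unique prime ideal, i.e. $\spec C$ is a single point and $\sqrt{0}$ is maximal.} To see this, note any $c \in C$ satisfies $c^N \in k$ for some $N > 0$; if $c^N \neq 0$ then $c$ is a unit (its inverse is $(c^N)^{-1} c^{N-1}$), and if $c^N = 0$ then $c$ is nilpotent. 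Hence every element of $C$ is either a unit or nilpotent, so $C$ is local with nilpotent maximal ideal, proving the subclaim. Applying this to $C = B / \fm_A B$ (which is nonzero since $B \neq 0$... and if $\fm_A B = B$ then by adequacy... — one should verify $B/\fm_A B \neq 0$; this holds because $A \to B$ being adequate with $A$ local does not a priori force $\fm_A B \neq B$, but if $\fm_A B = B$ then $1 \in \fm_A B$, and one argues via the adequacy applied to show a contradiction, or simply notes that the statement as intended assumes $B \ne 0$ and $1 \notin \fm_A B$ follows since otherwise $B/\fm_A B = 0$ contradicts nothing — here I would lean on the convention that $B \neq 0$ and check $\fm_A B \ne B$ using that some element would have to be both a unit image and in $\fm_A$): we get that $B/\fm_A B$ has unique prime $\sqrt{0}$, whose preimage in $B$ is $\sqrt{\fm_A B}$, a maximal ideal. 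Finally, for uniqueness of the maximal ideal of $B$: let $\fq \subseteq B$ be any maximal ideal and $\fp = \phi^{-1}(\fq)$; since $A$ is local, $\fp \subseteq \fm_A$, and I must show $\fp = \fm_A$, equivalently $\fq \supseteq \fm_A B$. By (4), $k(\fp) \to k(\fq)$ is adequate; if $\fp \subsetneq \fm_A$ one derives a contradiction with the subclaim above by a suitable dimension/domain argument — concretely, $A/\fp \to B/\fq$ is adequate with $B/\fq$ a field, so by the subclaim applied appropriately, $A/\fp$ must have every element a unit or nilpotent, forcing $A/\fp$ to be a field (it's a domain), i.e. $\fp$ maximal, i.e. $\fp = \fm_A$. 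Hence every maximal ideal of $B$ contracts to $\fm_A$ and thus contains $\fm_A B$, so is contained in (hence equal to) $\sqrt{\fm_A B}$; therefore $B$ is local.

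The main obstacle I anticipate is the bookkeeping in (5): making precise that $B/\fm_A B$ (and each $B/\fq$) is genuinely a nonzero ring over which the adequacy-from-a-field subclaim applies, and cleanly extracting "domain $+$ every element unit-or-nilpotent $\Rightarrow$ field" to force contracted primes to be maximal. The subclaim itself — adequate image of a field has a unique prime — is the conceptual heart and is short; everything else is routine localization algebra that I would not spell out in full.
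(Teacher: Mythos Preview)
Your (1)--(3) are fine; the paper also treats these as immediate. Two points deserve comment.

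\textbf{Part (4): the factorization argument has a gap.} You claim $A_\fp \otimes_A B \to B_\fq$ is ``a localization of $A_\fp \otimes_A B$, hence adequate by (1)''. But (1) asserts that adequacy survives localizing an adequate map at a multiplicative set in the \emph{source}; it does not say that a localization homomorphism $R \to U^{-1}R$ is itself adequate, and in general it is not. The direct argument is just as short: for $b/t \in B_\fq$ with $t \notin \fq$, adequacy of $\phi$ gives $b^N = \phi(a)$ and $t^M = \phi(c)$; since $t^M \notin \fq$ we have $c \notin \fp$, so $a^M/c^N \in A_\fp$ and it maps to $(b/t)^{NM}$.

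\textbf{Part (5): correct in spirit but far longer than needed, with one mis-cited step.} When you contract a maximal $\fq$ to $\fp$ and conclude that $A/\fp$ is a field ``by the subclaim applied appropriately'', note that your subclaim concerns adequate maps \emph{from} a field, whereas here you have an adequate injection of the domain $A/\fp$ \emph{into} the field $B/\fq$; a separate (easy) argument is required. More importantly, the whole two-stage structure---first show $B/\fm_A B$ has a unique prime, then show every maximal ideal contracts to $\fm_A$---is avoidable. The paper's proof is one line: if $b \notin \sqrt{\fm_A B}$, choose $a \in A$ with $a \mapsto b^N$; then $a \notin \fm_A$ (else $b^N \in \fm_A B$), so $a$ is a unit in the local ring $A$, hence $b^N$ and therefore $b$ is a unit in $B$. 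Thus the non-units of $B$ lie inside the ideal $\sqrt{\fm_A B}$, so $B$ is local with that maximal ideal. Your worry that $\sqrt{\fm_A B}$ might equal $B$ is legitimate but easily dispatched: adequacy makes $B$ integral over $\phi(A)$ (each $b$ satisfies $b^N - \phi(a) = 0$), so lying-over furnishes a prime of $B$ containing $\phi(\fm_A)$.
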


\begin{proof}
Let $b/g \in S^{-1} B$. For some integer $N>0 $, we have $b^N \in A$ and therefore $(b/g)^N \in S^{-1}A$.  Statements $(2) - (4)$ are clear.  For statement $(5)$, for $b \notin \sqrt{\fm_A B}$, there exist $N > 0$ and $a \in A$ with $a \mapsto b^N$ but then $a \notin \fm_A$ so $b$ is a unit.
\end{proof}

\begin{lem} \label{lemma-adequate-descent}
Let $A \to B$ be a ring homomorphism and let $A \to A'$ be a faithfully flat ring homomorphism.  If $A' \to A' \tensor_A B$ is adequate, then so is $A \to B$.
\end{lem}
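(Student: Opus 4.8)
The plan is to reduce the statement to faithfully flat descent of the membership ``$b^N$ lies in the image of $A \to B$''. Fix $b \in B$. Since $A' \to A' \tensor_A B$ is adequate, applying it to the element $1 \tensor b$ produces an integer $N > 0$ and an element $a' \in A'$ with $a' \mapsto (1 \tensor b)^N = 1 \tensor b^N$. So it will suffice to prove: if $1 \tensor b^N$ lies in the image of $A' \to A' \tensor_A B$, then $b^N$ lies in the image of $A \to B$ (no further raising of the exponent is needed).

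Next I would set $A_0 = \im(A \to B) \subseteq B$ and $K = \ker(A \to B)$, so that $A \twoheadrightarrow A_0 \hookrightarrow B$ with $A_0 \cong A/K$. Since $A \to A'$ is flat, tensoring this factorization and the exact sequence $0 \to A_0 \to B \to B/A_0 \to 0$ with $A'$ over $A$ shows, first, that $A_0 \tensor_A A' \hookrightarrow A' \tensor_A B$ is exactly the image of $A' \to A' \tensor_A B$, and, second, that $(A' \tensor_A B)/(A_0 \tensor_A A') \cong (B/A_0) \tensor_A A'$. In particular $1 \tensor b^N \in A_0 \tensor_A A'$ inside $A' \tensor_A B$, hence the class of $b^N$ in $(B/A_0) \tensor_A A'$ vanishes.

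To finish I would invoke the standard fact that, for a faithfully flat ring map $A \to A'$ and any $A$-module $M$, the canonical map $M \to M \tensor_A A'$ is injective. Taking $M = B/A_0$, the previous paragraph shows the image of the class of $b^N$ in $(B/A_0)\tensor_A A'$ is zero, so the class of $b^N$ in $B/A_0$ is already zero; that is, $b^N \in A_0$, so some $a \in A$ maps to $b^N$ in $B$. As $b \in B$ was arbitrary, $A \to B$ is adequate.

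I do not expect a genuine obstacle here. The only bookkeeping step is identifying the image and cokernel of $A' \to A' \tensor_A B$ with $A_0 \tensor_A A'$ and $(B/A_0) \tensor_A A'$, which is just flat base change applied to short exact sequences; the injectivity of $M \to M \tensor_A A'$ for faithfully flat $A'$ is classical. The whole content is simply that ``being in the image of $A \to B$'' is a faithfully-flat-local property on the base.
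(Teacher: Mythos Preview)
Your proof is correct and follows essentially the same approach as the paper's: both arguments reduce the claim to faithfully flat descent for module membership. The paper first assumes $A \hookrightarrow B$ is injective and then uses the Amitsur exact sequence $A \to A' \rightrightarrows A' \otimes_A A'$ (together with the corresponding sequence for $B$) to conclude that the element $a' \in A'$ with $a' \mapsto b^N \otimes 1$ already lies in $A$; you instead work with the cokernel $B/A_0$ and the injectivity of $M \to M \otimes_A A'$ for any $A$-module $M$, which is the same descent fact in dual form. The only cosmetic difference is that you avoid the preliminary reduction to the injective case by carrying the image $A_0$ throughout.
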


\begin{proof}
We may assume that $A \to B$ is injective.  Since $A \to A'$ is faithfully flat, there is a commutative diagram
$$\xymatrix{
A \ar[r] \ar[d]	&  A' \ar@<0.5ex>[r] \ar@<-0.5ex>[r] \ar[d]		&   A' \tensor_A A' \ar[d] \\
B \ar[r] 		& B \tensor_A A' \ar@<0.5ex>[r] \ar@<-0.5ex>[r]  	&B \tensor _A A' \tensor_A A' 
}$$
where the rows are exact.  If $b \in B$, there exist $a' \in A'$ and $N > 0$ such that $a' \mapsto b^N \tensor 1$.  Since the elements $a' \tensor 1$ and $1 \tensor a'$ are equal in $A' \tensor_A A'$, we have $a' \in A$ and $a' \mapsto b^N$ in $B$.
\end{proof}

\begin{lem} \label{lemma-adequate-homeomorphism}
Let $A \hookarr B$ be an adequate inclusion of rings.  Then $\Spec(B) \to \Spec(A)$ is an integral homeomorphism. 
\end{lem}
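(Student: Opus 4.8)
The plan is to deduce integrality directly from the defining property, which immediately makes $\Spec B \to \Spec A$ a closed surjection, and then to upgrade this to a homeomorphism by showing every fiber is at most a point.

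First I would check that $B$ is integral over $A$: given $b \in B$, adequacy of $A \hookarr B$ yields $N > 0$ and $a \in A$ with $a \mapsto b^N$, and since the map is injective we may identify $b^N$ with $a \in A$, so $b$ is a root of the monic polynomial $x^N - b^N \in A[x]$. Hence $\Spec B \to \Spec A$ is integral, and being an \emph{injective} integral ring map it is surjective (lying over) and closed: a closed subset of $\Spec B$ has the form $V(J) = \Spec(B/J)$, the induced map $A/(J \cap A) \hookarr B/J$ is again integral and injective, and so its image is $V(J \cap A)$, which is closed in $\Spec A$.

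It remains to prove that $\Spec B \to \Spec A$ is injective, and for this it suffices to bound each fiber by a single point. Fix $\fp \in \Spec A$. By Lemma \ref{lemma-adequate-basic}(1) the localization $A_\fp \to B_\fp$ is adequate, and then by Lemma \ref{lemma-adequate-basic}(2), applied with the ideal $\fp A_\fp$, the map $k(\fp) \to B \tensor_A k(\fp) =: C$ is adequate. Now for any $c \in C$ there is $N > 0$ with $c^N$ in the image of $k(\fp)$, i.e. $c^N = \lambda$ for some $\lambda \in k(\fp)$; if $\lambda \ne 0$ then $c^N$ — hence $c$ — is a unit, while if $\lambda = 0$ then $c$ is nilpotent. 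Thus every element of $C$ is a unit or nilpotent, so if $C \ne 0$ its nilradical is its unique maximal ideal, hence its only prime, and $\Spec C$ is a single point; if $C = 0$ the fiber is empty.

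Putting this together, $\Spec B \to \Spec A$ is a continuous, closed bijection, therefore a homeomorphism, and it is integral, as claimed. The only point requiring a little care is the identification of the scheme-theoretic fiber over $\fp$ with $\Spec\big(B \tensor_A k(\fp)\big)$ together with the fact that the restriction of an adequate map to this fiber is still adequate; but this is precisely what parts (1) and (2) of Lemma \ref{lemma-adequate-basic} provide, so I do not expect any genuine obstacle.
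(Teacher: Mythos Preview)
Your proof is correct and follows essentially the same route as the paper: integrality is immediate from the definition, and injectivity on $\Spec$ comes from applying Lemma~\ref{lemma-adequate-basic} to see that each fiber ring $k(\fp)\tensor_A B$ has every element either a unit or nilpotent (equivalently, its reduction is a field). The paper's version is simply more terse, invoking Lemma~\ref{lemma-adequate-basic}(3) directly rather than composing parts (1) and (2), and then citing the standard fact that an integral, injective, dominant map of spectra is a homeomorphism.
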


\begin{proof} It is clear that $A \to B$ is integral.   By Lemma \ref{lemma-adequate-basic}, for every $\fp \subseteq A$, the fiber $k(\fp) \to k(\fp) \tensor_A B$ is adequate which implies that $(k(\fp) \tensor_A B)_{\text{red}}$ is a field.  Since $\Spec(B) \to \Spec(A)$ is integral, injective and dominant, it is a homeomorphism.
\end{proof}.

\begin{lem} \label{lemma-adequate-char0}
Let $A \hookarr B$ be an adequate inclusion of $\QQ$-algebras.  Then $A = B$.
\end{lem}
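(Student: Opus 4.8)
The plan is to show that an adequate inclusion $A \hookrightarrow B$ of $\QQ$-algebras is automatically surjective. The key observation is that for any $b \in B$ there is an integer $N>0$ with $b^N \in A$, and over a $\QQ$-algebra we may freely take $N$-th roots inside $A$ of elements that are known to be $N$-th powers in $B$; the trick is to leverage this together with the binomial theorem to bootstrap from "$b^N \in A$" to "$b \in A$".

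First I would reduce, using Lemma \ref{lemma-adequate-homeomorphism}, to the observation that $A \hookrightarrow B$ is integral, so every $b \in B$ satisfies a monic polynomial over $A$; combined with $b^N \in A$ this already controls $B$ as an $A$-module. The heart of the argument is the following elementary claim: if $b \in B$ and $b^N \in A$ for some $N$, then $b \in A$. To prove it, pick the minimal such $N$; if $N>1$, choose a prime $\ell \mid N$ and set $c = b^{N/\ell}$, so $c^\ell \in A$ while (by minimality) $c \notin A$. Now adequacy applied to the element $c$ of $B$ gives $M>0$ with $c^M \in A$; taking $\gcd$ with $\ell$ we may assume $\gcd(M,\ell)=1$, and then writing $1 = xM + y\ell$ we get $c = (c^M)^x (c^\ell)^y \in A$, a contradiction. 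Hence $N=1$ and $b \in A$, so $A = B$.

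Actually the cleanest route avoids even invoking integrality: the claim "$b^\ell \in A$ with $\ell$ prime implies $b \in A$" combined with "$b^N \in A$ for some $N$" (which is exactly adequacy) immediately yields $b \in A$ by induction on the number of prime factors of $N$. And for the prime case, one does not need a second application of adequacy at all if one instead argues as follows: apply adequacy to $b$ itself to get $b^N \in A$; factor $N = \ell^a m$ with $\gcd(\ell,m)=1$; then $b^m$ is an element of $B$ whose $\ell^a$-th power lies in $A$, and an easy descent on $a$ (each step extracting one $\ell$-th root, which is legitimate over a $\QQ$-algebra once we know the relevant binomial-type identities or use that $T^\ell - (b^{\ell})$ has $b$ as its only root in characteristic zero) reduces to $b^m \in A$; meanwhile $\gcd(m, \ell^a)$-type juggling recovers $b$. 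I would streamline this into a single short paragraph using the Bézout identity, since that is the mechanism doing all the work.

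The main obstacle is purely expository: making precise why "$c^\ell \in A$ and $\ell$ prime" forces $c \in A$ without circularity, since the naive approach wants to take an $\ell$-th root and the only $\ell$-th root available is $c$ itself, which is what we are trying to locate in $A$. The Bézout trick resolves this cleanly — it is really just the statement that in $\QQ$-algebras (or any ring where the relevant exponent is invertible) the subgroup of the multiplicative-type structure generated by $\{c^M, c^\ell\}$ already contains $c$ once $\gcd(M,\ell)=1$ — but one must be slightly careful that $c$ need not be a unit, so the argument should be phrased additively/polynomially (via $c = c^{xM+y\ell}$ with $xM+y\ell = 1$, legitimate because $x,y$ can be taken positive after adjusting) rather than by inverting $c$. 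Once that lemma is in hand the theorem is a one-line consequence of the definition of adequacy.
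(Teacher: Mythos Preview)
Your argument has a genuine gap at the B\'ezout step, and in fact the key implication you isolate is simply false, even over $\QQ$.

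You reduce everything to the claim: if $c^{M},\,c^{\ell}\in A$ with $\gcd(M,\ell)=1$, then $c\in A$. But take $A=\QQ[t^{2},t^{3}]\subset B=\QQ[t]$ and $c=t$. Then $c^{2},c^{3}\in A$ and $\gcd(2,3)=1$, yet $c=t\notin A$. So the implication fails. The reason your B\'ezout manoeuvre breaks is exactly the one you flag and then wave away: writing $1=xM+y\ell$ forces one of $x,y$ to be negative whenever $M,\ell\ge 2$, so ``$c=c^{xM+y\ell}$'' is only meaningful if $c$ is a unit. Your parenthetical that ``$x,y$ can be taken positive after adjusting'' is impossible for the value $1$; the non-negative integer combinations of $M$ and $\ell$ form a numerical semigroup that never contains $1$ once both generators exceed $1$.

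There is also a problem one step earlier. Adequacy applied to $c$ gives \emph{some} $M$ with $c^{M}\in A$; nothing lets you ``take $\gcd$ with $\ell$'' to force $\gcd(M,\ell)=1$. In fact, with $c=b^{N/\ell}$ and $N$ minimal for $b$, any $M$ with $c^{M}\in A$ satisfies $b^{NM/\ell}\in A$, hence $NM/\ell\ge N$, hence $M\ge\ell$; so the minimal such $M$ is $\ell$ itself and you gain nothing. More tellingly, your argument as written never invokes the hypothesis that $A$ and $B$ are $\QQ$-algebras in any essential way; that alone should signal that something is missing, since the statement is false in positive characteristic.

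The paper's proof uses characteristic zero in a completely different place: after reducing to $A\subset B=\QQ[x]$ and localizing, it applies adequacy to $x+1$ to get $(x+1)^{N}\in A$, and then differentiates this relation to obtain $N(x+1)^{N-1}\,dx=0$ in $\Omega_{B/A}$. Since $N$ is invertible in $\QQ$ and $x+1$ is a unit in the local ring, this forces $dx=0$, hence $\Omega_{B/A}=0$, hence $\Spec B\to\Spec A$ is finite \'etale; combined with the fact that it is a homeomorphism, it must be an isomorphism. The use of $\QQ$ is precisely the invertibility of $N$ in that differential computation.
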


\begin{proof}  An element $b \in B$ determines a ring homomorphism $\pi: \QQ[x] \to B$ and $\pi^{-1}(A) \hookarr \QQ[x]$ is adequate.  It thus suffices to handle the case when $A \subseteq B = \QQ[x]$.  There exists an $n > 0$ such that $\QQ[x^n] \subseteq A \subseteq \QQ[x]$ so that $A \to B=\QQ[x]$ is finite and $A$ is necessarily Noetherian.   For a maximal ideal $\fq \subseteq B$ with $\fp = \fq \cap A$, the map $A_{\fp} \hookarr B_{\fq}$ is adequate, where $B_{\fq}$ is a discrete valuation ring.  If $I = \ker(A_{\fp}[t] \to B_{\fq})$ where $t \mapsto x$, then for some $N > 0$ and $a \in A_{\fp}$, we have $(t+1)^N -a \in I$.  It follows that $\Omega_{B_{\fq} / A_{\fp}} = 0$ as $N (t+1)^{N-1} dt = 0$ and $t+1 \in B_{\fq}$ is a unit.  Therefore, $\Spec(B) \to \Spec(A)$ is finite and \'etale.  By Lemma \ref{lemma-adequate-homeomorphism}, it is also a homeomorphism and therefore an isomorphism. 
\end{proof}

\begin{lem} \label{lemma-transcendental}
Let $k \hookarr k'$ be an adequate inclusion of fields of characteristic $p$.  Suppose that $k$ is transcendental over $\FF_p$.  Then $k \hookarr k'$ is purely inseparable.
\end{lem}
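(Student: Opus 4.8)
The plan is to show that $k \hookrightarrow k'$ is purely inseparable by ruling out any separable extension. Suppose for contradiction that there is an element $\alpha \in k'$ that is separable of degree $> 1$ over $k$ (or more precisely, that the separable closure of $k$ in $k'$ strictly contains $k$); then $k(\alpha)/k$ is a nontrivial finite separable extension, and the inclusion $k \hookrightarrow k(\alpha)$ is still adequate by restricting along $k(\alpha) \hookrightarrow k'$ (using that $b^N \in k$ forces $b^N \in k(\alpha)$ once $b \in k(\alpha)$, since $k \subseteq k(\alpha)$). So it suffices to derive a contradiction from an adequate inclusion $k \hookrightarrow k(\alpha)$ with $k(\alpha)/k$ finite separable of degree $d > 1$.

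Here is where transcendence over $\FF_p$ enters. Pick $t \in k$ transcendental over $\FF_p$. The idea is to use a valuation-theoretic obstruction: choose a discrete valuation $v$ on $k$ (for instance, a $t$-adic-type valuation coming from a suitable subring, or pass to a place of $k$ over $\FF_p(t)$ that is nontrivial on $t$) such that $v$ has at least two distinct extensions $w_1, w_2$ to $k(\alpha)$, or a single extension with nontrivial residue extension or ramification — something that separable extensions generically provide but which adequacy forbids. Concretely, for any $b \in k(\alpha)$ with $b^N = a \in k$, we get $N \cdot w_i(b) = v(a)$ for each extension $w_i$, so $w_i(b) = v(a)/N$ is independent of $i$; thus every element of $k(\alpha)$ has "the same valuation" at all places above $v$, up to the $N$-th root ambiguity. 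Running this over all $b$, one shows the value groups and residue fields at $w_1$ and $w_2$ must coincide in a way incompatible with $w_1 \neq w_2$, contradicting the existence of a separable extension of degree $> 1$ admitting such a split or ramified or residually-extended place. The transcendence hypothesis is exactly what guarantees such a place $v$ exists on $k$ (one cannot produce it when $k$ is algebraic over $\FF_p$, e.g. $k = \FF_p$, where $\FF_p \hookrightarrow \FF_{p^2}$ is indeed adequate with $N = p^2 - 1$).

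The main obstacle will be making the valuation argument genuinely work for \emph{arbitrary} fields $k$ transcendental over $\FF_p$, not just finitely generated ones: an arbitrary such $k$ need not carry an obvious discrete valuation, and one must first reduce to a finitely generated subfield. The reduction should go as follows: given the separable element $\alpha$ with minimal polynomial $f \in k[x]$, let $k_0 \subseteq k$ be a finitely generated subfield containing the coefficients of $f$ and a transcendental $t$; then $k_0(\alpha)/k_0$ is finite separable of degree $d$. Adequacy of $k \hookrightarrow k(\alpha)$ does \emph{not} immediately restrict to $k_0 \hookrightarrow k_0(\alpha)$, so the argument must instead work with the adequate inclusion $k \hookrightarrow k(\alpha)$ directly but extract a contradiction by evaluating at elements of $k_0(\alpha)$ and choosing the valuation $v$ on the finitely generated field $k_0$ (where it exists by standard curve/function-field theory — pick a place where $f$ has distinct roots mod $v$, possible since $f$ is separable), then extending $v$ to $k$ by any extension of valuations. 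With $v$ on $k$ in hand and $b \in k_0(\alpha) \subseteq k(\alpha)$ satisfying $b^N \in k$, one argues as above. Verifying that the relevant $b$'s (e.g. $b = \alpha$ itself, or differences of conjugates of $\alpha$) yield the contradiction — that $\alpha^N \in k$ for some $N$ together with separability forces $d = 1$ — is the technical heart, and I expect it to reduce to the statement that a separable polynomial $f$ with $\alpha^N \in k$ must be linear, which one sees by comparing $f$ with $x^N - \alpha^N$ over $k$.
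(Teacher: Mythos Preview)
Your reduction to a finite separable subextension $k \hookrightarrow k(\alpha)$ is correct and matches the paper's opening move. After that, however, the proposal does not complete either of the two lines it sketches.

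The valuation idea is promising but left unfinished. You correctly observe that adequacy forces any two extensions $w_1,w_2$ of a valuation $v$ on $k$ to agree on every $b\in k(\alpha)$ (since $Nw_i(b)=v(b^N)$), hence $v$ has a \emph{unique} extension. But a separable extension of degree $d>1$ can perfectly well admit a valuation with a unique extension (totally ramified or with residue extension), so uniqueness alone does not yield $d=1$. You acknowledge this and gesture at ruling out ramification and residue growth, but give no argument; the natural attempt (pass to residue fields, which inherit adequacy) fails because the residue field of $v$ may be algebraic over $\FF_p$, where adequacy says nothing. Moreover, your plan to produce $v$ on a finitely generated subfield $k_0$ and then extend to $k$ does not obviously preserve the splitting behavior of the minimal polynomial modulo $v$.

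The fallback claim at the end---that ``$\alpha^N\in k$ for some $N$ together with separability forces $d=1$''---is simply false. Take $k=\FF_p(t)$ and $\alpha\in\FF_{p^2}^\times$ a generator, so $k(\alpha)=\FF_{p^2}(t)$ is separable of degree $2$ over $k$, yet $\alpha^{p^2-1}=1\in k$. The minimal polynomial of $\alpha$ divides $x^{p^2-1}-1$ but is not linear. So comparing $f$ with $x^N-\alpha^N$ gives nothing.

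The paper's proof is quite different and more elementary: after reducing to $k'=k(\alpha)$ with $\alpha^q=a\in k$ for a prime $q\neq p$ and $a$ transcendental over $\FF_p$, it applies adequacy not to $\alpha$ but to $\alpha+a$ (or to $t+\xi_q$ in the root-of-unity case). Writing $(\alpha+a)^N\in k$ with $p\nmid N$ and expanding in the basis $1,\alpha,\dots,\alpha^{q-1}$, the coefficient of $\alpha$ is a monic polynomial in $a$ with $\FF_p$-coefficients that must vanish, contradicting transcendence of $a$. The key trick you are missing is this shift by a transcendental before taking the power.
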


\begin{proof}
There is a factorization $k \subseteq k_0 \subseteq k'$ such that $k \subseteq k_0$ is separable and $k_0 \subseteq k'$ is purely inseparable.  Since $k \hookarr k_0$ is adequate, it suffices to show that for every adequate and separable field extension $k \hookarr k'$ with $k$ transcendental over $\FF_p$,  we have $k = k'$.

\medskip \noindent
We may assume that $k' = k(\alpha)$ with $\alpha^q = a \in k$ where $q \neq p$ is a prime and $a \in k$ is transcendental over $\FF_p$.  Suppose that $| k(\alpha) : k | = q$ so that $1, \alpha, \ldots, \alpha^{q-1}$ form a basis of $k'$ over $k$.  There exists $N > 0$ such that $(\alpha + a)^N = b \in k$.  Write $N = p^k N'$ with $p \nmid N'$.  Then $(\alpha + a)^N = (\alpha^{p^k} + a^{p^k})^{N'} \in k$, $k(\alpha^{p^k}) = k(\alpha)$ and $(\alpha^{p^k})^q = a^{p^k} \in k$ is transcendental.  So we may assume $p \nmid N$.  We can write
$$\begin{aligned}
 (\alpha + a)^N & = \sum_{i=0}^N{N \choose i} \alpha^i a^{N-i} 
 = \sum_{j=0}^{q-1} \bigg( \sum_{i=0}^{\lfloor (N-j)/q \rfloor} {N \choose j+qi } a^{N-j-qi+i} \bigg) \alpha^j 
\end{aligned}$$
By looking at the coefficient of $\alpha$, since $p$ does not divide $N$, we obtain a monic relation for $a$ over $\FF_p$, which is a contradiction.  Suppose that $| k(\alpha) : k | < q$ so that $x^q - a$ is reducible over $k$.  Then $a = b^q$ for some $b \in k$ (see \cite[Theorem 9.1]{lang_algebra}). Then $\alpha b^{-1} = \xi_q$ is a $q$th root of unity and $k'= k(\xi_q)$.  Let $t \in k$ be a transcendental element.  There exists an integer $N > 0$ such that $(t+\xi_q)^N = b \in k$.  We may assume that $p \nmid N$.  In the expansion of $(t+\xi_q)^N$ in terms of the basis $1, \xi_q, \ldots, \xi_q^{q-2}$ of $k'$ over $k$, the coefficient of $\xi_q$ is a polynomial $g(t) = N t^{N-1} + \cdots + (\text{lower degree terms})\in \FF_p[t]$ which must be 0.  This contradicts the fact that $t \in k$ is transcendental.
\end{proof}

\begin{remark} \label{remark-counter}
The hypothesis that $k$ be transcendental over $\FF_p$ is necessary.  An inclusion of finite fields $\FF_q \hookarr \FF_{q^n}$ is adequate as every element satisfies $x^{q^n-1} = 1$.  In fact, if $k$ is algebraic over $\FF_p$, then $k \hookarr \overline{\FF}_p$ is adequate.
\end{remark}
%%%%%%%%%%%%%%%%%%%%%%%%%

\subsection{Universally adequate ring homomorphisms}
If $A \to B$ is an adequate inclusion of rings, the base change $A' \to A' \tensor_A B$ by an $A$-algebra $A'$ is not necessarily adequate; similarly, $\Spec(B) \to \Spec(A)$ is a homeomorphism (see Lemma \ref{lemma-adequate-homeomorphism}) but is not necessarily a universal homeomorphism.  For instance, 
$$\FF_q \hookarr \FF_{q^n}$$
 is adequate but $\FF_{q^n} \to \FF_{q^n} \tensor_{\FF_q} \FF_{q^n} \cong \times_{i=1}^n \FF_{q^n}$ is not adequate.  Furthermore, if $B$ is any $\FF_p$-algebra and $\fm \subseteq B$ is a maximal ideal with residue field $\FF_{p^n}$ with $n > 1$, then let $A = \pi^{-1}(\FF_p)$ where $\pi: B \to \FF_{p^n}$.  Then $A \subseteq B$ is adequate but this is not stable under base change.  
This discussion motivates the following definition:

\begin{defn} \label{definition-universally-adequate}
A ring homomorphism $A \to B$ is \emph{universally adequate} if for every $A$-algebra $A'$ the ring homomorphism $A' \to A' \tensor_A B$ is adequate.
\end{defn}

\begin{remark} \label{remark-smooth}
A ring homomorphism $A \to B$ is universally adequate if and only if for every $n$ the ring homomorphism $A[x_1, \ldots, x_n] \to B[x_1, \ldots, x_n]$ is adequate.  Indeed, any $b' \in A' \otimes_A B$ is the image of an element in $B[x_1, \ldots, x_n]$ for some $n$.   Furthermore, by Lemma \ref{lemma-adequate-descent}, the property of being universally adequate descends under faithfully flat ring homomorphisms.
\end{remark}

\begin{lem} \label{lemma-adequate-charp}
 Let $A \hookarr B$ be an inclusion of $\FF_p$-algebras.  The following are equivalent:
\begin{enumerate1}
\item The homomorphism $A \hookarr B$ is universally adequate.
\item For every $b \in B$, there exists $r > 0$ such that $b^{\, p^r} \in A$.
\end{enumerate1}
Furthermore, if $A \hookarr B$ is finite type, then the above conditions are also equivalent to:
\begin{enumerate1} \setcounter{enumi}{2}
\item There exists $r > 0$ such that for all $b \in B$, $b^{\, p^r} \in A$.
\end{enumerate1}
In particular, an inclusion of fields is universally adequate if and only if it is purely inseparable.
\end{lem}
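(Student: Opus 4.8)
The plan is to prove $(2)\Rightarrow(1)$ and $(1)\Rightarrow(2)$ in general, then dispatch the extra equivalence with $(3)$ under the finite type hypothesis, and finally read off the statement about fields from $(1)\Leftrightarrow(2)$.

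For $(2)\Rightarrow(1)$ I would use Remark \ref{remark-smooth}: it suffices to show $A[x_1,\ldots,x_n]\to B[x_1,\ldots,x_n]$ is adequate for every $n$. Given $f\in B[x_1,\ldots,x_n]$, only finitely many coefficients $b_1,\ldots,b_m\in B$ of $f$ are nonzero, so by $(2)$ I can pick a single $r>0$ with $b_i^{p^r}\in A$ for all $i$. Since everything is an $\FF_p$-algebra, the $p^r$-power Frobenius is a ring endomorphism of $B[x_1,\ldots,x_n]$, and $f^{p^r}$ is obtained from $f$ by raising each coefficient to the $p^r$ power and each monomial $x^\alpha$ to $x^{p^r\alpha}$; hence $f^{p^r}\in A[x_1,\ldots,x_n]$.

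The implication $(1)\Rightarrow(2)$ is the heart of the matter, and it is where one must genuinely use universal adequacy rather than mere adequacy: adequacy of $A\to B$ alone only yields some $N>0$ with $b^N\in A$, which does not by itself give $b^{p^r}\in A$. The trick I would use is to pass to a polynomial variable. Fix $b\in B$; by $(1)$ the base change $A[t]\to A[t]\tensor_A B=B[t]$ is adequate, so there are $N>0$ and $g(t)\in A[t]$ with $g(t)=(b+t)^N$ in $B[t]$. Write $N=p^r m$ with $p\nmid m$. Then, by the Frobenius in characteristic $p$,
\[
(b+t)^N=\big(b^{p^r}+t^{p^r}\big)^m=\sum_{j=0}^m\binom{m}{j}\,b^{p^r(m-j)}\,t^{p^r j},
\]
so the coefficient of $t^{p^r(m-1)}$ in $g(t)$ equals $m\,b^{p^r}$ and therefore lies in $A$; since $p\nmid m$, the image of $m$ in $\FF_p\subseteq A$ is a unit, giving $b^{p^r}\in A$.

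Finally, assume $A\hookarr B$ is finite type. Then $(3)\Rightarrow(2)$ is trivial, and for $(2)\Rightarrow(3)$ I would write $B=A[b_1,\ldots,b_n]$, use $(2)$ to choose $r>0$ with $b_i^{p^r}\in A$ for all $i$ (take the maximum of the individual exponents), and then observe that any $b=\sum_\alpha a_\alpha b_1^{\alpha_1}\cdots b_n^{\alpha_n}$ with $a_\alpha\in A$ satisfies $b^{p^r}=\sum_\alpha a_\alpha^{p^r}(b_1^{p^r})^{\alpha_1}\cdots(b_n^{p^r})^{\alpha_n}\in A$ again by Frobenius. The last assertion is then immediate: for an inclusion of fields $k\hookarr k'$ of characteristic $p$, condition $(2)$ says exactly that every element of $k'$ is purely inseparable over $k$. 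I expect the only subtle point to be the polynomial-variable argument in $(1)\Rightarrow(2)$; the rest is routine manipulation of Frobenius.
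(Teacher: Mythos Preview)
Your proof is correct, and for $(1)\Rightarrow(2)$ it is genuinely different from, and markedly simpler than, the argument in the paper.

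The paper proves $(1)\Rightarrow(2)$ in three stages: first it shows that a universally adequate inclusion of fields of characteristic $p$ is purely inseparable, by passing to the algebraic closure and invoking Lemma~\ref{lemma-transcendental} (which is itself a somewhat delicate combinatorial argument about expansions of $(\alpha+a)^N$); then it handles the case where $A$ and $B$ are Artinian local rings by lifting from the residue field; finally it treats the general case by pushing an element $b\in B$ along $\FF_p[x]\to B$ and analyzing the subring $A_0=\FF_p[x]\cap A$ via its fraction field and a support argument. Your polynomial-variable trick bypasses all of this: from adequacy of $A[t]\to B[t]$ applied to $b+t$ you directly extract $b^{p^r}\in A$ by reading off one binomial coefficient. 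This is a real simplification; in particular, your route never needs Lemma~\ref{lemma-transcendental} at all, and the statement about field extensions falls out at the end rather than being an input. The paper's approach, by contrast, makes the field case logically prior and builds the general case on top of it, which is perhaps more in keeping with how one discovers the result but is considerably longer.

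For $(2)\Rightarrow(1)$ and the equivalence with $(3)$ under the finite type hypothesis, your arguments coincide with the paper's.
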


\begin{proof} Since condition (2) is easily seen to be stable under arbitrary base change,  we have $(2) \implies (1)$.  For $(1) \implies (2)$, we first show that a universally adequate inclusion of fields $k \hookarr k'$ is purely inseparable.  Indeed, let $k \hookarr k'$ be a separable field extension which is universally adequate and let $\bar k$ denote an algebraic closure of $k$.  Then $\bar k \hookarr \bar k \tensor_k k'$ is adequate which implies by Lemma \ref{lemma-transcendental} that $\bar k \tensor_k k' = \bar k$ and that $k = k'$.  

\medskip \noindent
Now suppose that $A$ and $B$ are Artin rings.  We can immediately reduce to the case where $A$ is local with maximal ideal $\fm_A$.  Then $B$ is a local ring with maximal ideal $\sqrt{\fm_A B}$ by Lemma \ref{lemma-adequate-basic}.  Since $A/ \fm_A \hookarr B / \sqrt{\fm_A B}$ is universally adequate, it is a purely inseparable field extension.  Therefore, for $b \in B$, there exist $a \in A$ and $n > 0$ such that $b - a^{\, p^n} \in \sqrt{\fm_A B}$ but then for some $m> 0$, $(b-a^{\, p^n})^{\,p^m} = b^{\, p^m} - a^{\, p^{n+m}} = 0$.  

\medskip \noindent
In the general case, an element $b \in B$ determines an $\FF_p$-algebra homomorphism $\FF_p[x] \to B$. If this map is not injective, the image $B_0 \subseteq B$ is an Artin ring and since $A_0 \hookarr B_0$ is an adequate inclusion, there exists a prime power of $b$ in $A$.  Otherwise, denote $A_0 = \FF_p[x] \cap A$.  Since $\text{Frac}(A_0) \hookarr \FF_p(x)$ is a purely inseparable field extension $\text{Frac}(A_0) = \FF_p(x^q)$ for a prime power $q$.  Denote by $A[x^q] \subseteq \FF_p[x]$ the subring generated by $A_0$ and $x^q$.  Then $f: \Spec(A_0[x^q]) \to \Spec(A_0)$ is an isomorphism over the generic point.  Let $I  = \text{Supp}(A_0[x^q] / A_0) \subseteq A_0$. Since $A_0/I \to A_0[x^q] / I A_0[x^q]$ is an adequate extension of Artin rings, there exists a prime power $q'$ such that $(A_0[x^q] / I A_0[x^q])^{q'} \subseteq A_0/I$.  It follows that the inclusion $A_0 \hookarr A_0[x^{qq'}]$ is an isomorphism so $x^{qq'} \in A$.

\medskip \noindent
It is clear that $(3) \implies (2)$.  Conversely, if $A \to B$ is of finite type, let $b_1, \cdots, b_n \in B$ be generators for $B$ as an $A$-algebra.  Choose $r > 0$ such that $b_i^{\, p^r} \in A$.  Then $b^{\, p^r} \in A$ for all $b \in B$.
\end{proof}

\begin{remark} If $A \hookarr B$ is not of finite type, a universal $r$ as in Lemma \ref{lemma-adequate-charp}(3) cannot be chosen.  For instance, consider
$\FF_p[x_1^p, x_2^{p^2}, x_2^{p^3}, \ldots] \hookarr \FF_p[x_1, x_2, x_3, \ldots]$
\end{remark}

%%%%%%%%%%%%%
\subsection{Adequate homeomorphisms}

\begin{defn} \label{definition-adequate-homeomorphism}
 A morphism $f:X \to Y$ of algebraic spaces is an \emph{adequate homeomorphism} if $f$ is an integral, universal homeomorphism which is a local isomorphism at all points with a residue field of characteristic $0$.  A ring homomorphism $A \to B$ is an \emph{adequate homeomorphism} if $\Spec(B) \to \Spec(A)$ is.  If $\cX$ is an algebraic stack, a morphism $\cA \to \cB$ of quasi-coherent $\oh_{\cX}$-algebras is an \emph{adequate homeomorphism} if $\sSpec_{\cX} (\cB) \to \sSpec_{\cX} (\cA)$ is.
\end{defn}

\begin{remark} \label{rmk-adequate-homeomorphism}
   A morphism $f: X \to Y$ of algebraic spaces is a local isomorphism at $x \in X$ if there exists an open neighborhood $U \subseteq X$ containing $x$ such that $f|_U$ is an isomorphism
onto its image.  
If $f: X \to Y$ is a locally of finite presentation morphism of schemes, then $f$ is a local isomorphism at $x$ if and only if $\oh_{Y, f(x)} \to \oh_{X,x}$ is an isomorphism (\cite[I.6.5.4, IV.1.7.2]{ega}).  The property of being an adequate homeomorphism is stable under base change and descends in the fpqc topology.  Therefore the property also extends to representable morphisms of algebraic stacks.  We note that by \cite[Corollary 4.20]{rydh_submersions}, any separated universal homeomorphism of algebraic spaces is necessarily integral.
\end{remark}

\medskip \noindent
We first consider the characteristic $p$ case and offer a slight generalization of \cite[Proposition 6.6]{kollar_quotients}.

\begin{prop} \label{proposition-adequacy-charp}
 Let $A \to B$ be an homomorphism of $\FF_p$-algebras.  Then the following are equivalent:
\begin{enumerate1}
\item The morphism $\Spec(B) \to \Spec(A)$ is an integral universal homeomorphism.
\item The morphism $\Spec(B) \to \Spec(A)$ is an adequate homeomorphism.
\item The ideal $\ker(A \to B)$ is locally nilpotent and $A \to B$ is universally adequate.
\item The ideal $\ker(A \to B)$ is locally nilpotent and for every $b \in B$, there exist $r > 0$ and $a \in A$ such that $a \mapsto b^{p^r}$.
\end{enumerate1}
If $A \to B$ is finite type, then the above are also equivalent to:
\begin{enumerate1} \setcounter{enumi}{4}
\item The ideal $\ker(A \to B)$ is locally nilpotent and there exists $r> 0$ such that for all $b \in B$, there exists $a \in A$ such that $a \mapsto b^{\, p^r}$.
\end{enumerate1}
\end{prop}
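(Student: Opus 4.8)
The plan is to establish the cycle of implications $(1) \Rightarrow (3) \Rightarrow (4) \Rightarrow (1)$, with $(2)$ inserted as a known reformulation, and then handle the finite-type addendum $(4) \Leftrightarrow (5)$ at the end. The equivalence $(2) \Leftrightarrow (3)$ is essentially immediate from Proposition~\ref{proposition-adequate-equiv} (or rather its statement as advertised in the introduction), so the real content lies in relating the topological condition $(1)$ to the algebraic conditions $(3)$ and $(4)$.

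\medskip\noindent
First I would reduce to the reduced case. Since $\ker(A \to B)$ is required to be locally nilpotent in $(3)$ and $(4)$, and since passing to $A\red$ and $B\red$ changes neither the topological spaces nor (by Lemma~\ref{lemma-adequate-basic}(2) and the fact that universal adequacy is clearly insensitive to nilpotents) the adequacy hypotheses, I may assume $A \hookrightarrow B$ is an injection of reduced $\FF_p$-algebras; the locally-nilpotent-kernel clause becomes vacuous and I just need to match ``$\Spec B \to \Spec A$ is an integral universal homeomorphism'' with ``$A \hookrightarrow B$ is universally adequate'' with the pointwise $p$-power condition. For $(1) \Rightarrow (4)$: an integral universal homeomorphism of $\FF_p$-schemes is, locally, captured by the classical fact that for such a map the relative Frobenius eventually factors through $A$ — concretely, by \cite[Proposition 6.6]{kollar_quotients} (which the author cites) or a direct argument, for each $b \in B$ the element $b^{p^r}$ lies in $A$ for $r \gg 0$, because $k(\fp) \hookrightarrow k(\fp)\tensor_A B$ has reduction a purely inseparable field extension at every prime $\fp$ and one bootstraps from residue fields using that $\Spec B \to \Spec A$ is a bijection and integral. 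This gives $(4)$, and then $(4) \Rightarrow (3)$ is exactly Lemma~\ref{lemma-adequate-charp}, which says the pointwise $p$-power condition (2) there is equivalent to universal adequacy. For $(3) \Rightarrow (1)$: universal adequacy with locally nilpotent kernel forces $\Spec B \to \Spec A$ to be integral (Lemma~\ref{lemma-adequate-homeomorphism} gives integrality of $A\red \hookrightarrow B\red$, hence of $A \to B$) and a homeomorphism on each fiber and base change, so it is a universal homeomorphism — here I would invoke that a morphism is a universal homeomorphism iff it is integral, universally injective, and surjective, and check universal injectivity via the field-fiber criterion using that adequacy is stable under the relevant base changes by hypothesis.

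\medskip\noindent
For the finite-type addendum, $(5) \Rightarrow (4)$ is trivial, and $(4) \Rightarrow (5)$ follows from the finite-type case of Lemma~\ref{lemma-adequate-charp} (condition (3) there): pick $A$-algebra generators $b_1, \dots, b_n$ of $B\red$, choose $r$ with $b_i^{p^r} \in A\red$ for all $i$, and since $x \mapsto x^{p^r}$ is additive and multiplicative in characteristic $p$, conclude $b^{p^r} \in A\red$ for every $b$, then lift back through the nilpotent kernel (adjusting $r$ upward to kill the resulting nilpotent, using finite generation of the kernel — note finite type over a ring where the kernel need not be finitely generated requires a small argument, but the kernel of a finite-type algebra map is finitely generated up to the relevant Noetherian reduction, or one argues directly that a finite-type locally nilpotent ideal is nilpotent).

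\medskip\noindent
The main obstacle I anticipate is the implication $(1) \Rightarrow (4)$, i.e. extracting a pointwise (let alone, in the finite-type case, uniform) $p$-power statement from the purely topological hypothesis of being an integral universal homeomorphism. The residue-field-level statement is classical, but propagating it from residue fields to the global rings $A \subseteq B$ requires controlling how the purely inseparable exponents vary over $\Spec A$; the clean way is to cite \cite[Proposition 6.6]{kollar_quotients} and note the author explicitly frames this proposition as ``a slight generalization'' of it, so the generalization is presumably in allowing non-reduced rings and the non-finite-type case, both of which are handled by the reductions above. A secondary subtlety is making sure ``universal homeomorphism'' is used with its standard characterization (integral + universally injective + surjective) rather than something weaker, so that the base-change stability built into ``universally adequate'' in $(3)$ is genuinely what feeds the universal injectivity.
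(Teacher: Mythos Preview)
Your overall architecture matches the paper's: the equivalences $(3) \Leftrightarrow (4)$ and (in the finite-type case) $(4) \Leftrightarrow (5)$ are delegated to Lemma~\ref{lemma-adequate-charp}, the implication $(3) \Rightarrow (1)$ goes through Lemma~\ref{lemma-adequate-homeomorphism} plus the universality built into ``universally adequate,'' and the real content is $(1) \Rightarrow (4)$. Two points, however. First, your handling of $(2)$ is both unnecessary and circular: over $\FF_p$ there are no characteristic-zero points, so $(1) \Leftrightarrow (2)$ is literally the definition of an adequate homeomorphism; invoking Proposition~\ref{proposition-adequate-equiv} is backwards, since that proposition is proved \emph{using} the present one. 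Second, your finite-type addendum overcomplicates the kernel: once you factor $A \to B$ through $A/\ker \hookrightarrow B$, the uniform $r$ from Lemma~\ref{lemma-adequate-charp}(3) for the injection already gives $(5)$ for the original map, with no need to control the size of $\ker$.

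The place where your proposal diverges from the paper is exactly the step you flag: $(1) \Rightarrow (4)$. You propose to cite \cite[Proposition~6.6]{kollar_quotients} and absorb the generalization into reductions; the paper instead gives a self-contained argument. After reducing to $A \hookrightarrow B$ injective and (by passing to a finitely generated $A$-subalgebra containing a given $b$) to $A \hookrightarrow B$ finite, the paper first treats the case $A$ local: choosing $A$-module generators $b_1,\dots,b_n$ of $B$ and using that $A/\fm_A \to B/\fm_B$ is purely inseparable, one finds $r$ with each $b_i^{p^r}$ congruent mod $\fm_A B$ to an element of $A$, and then Nakayama applied to the finite $A$-module $B_r/A$ (where $B_r$ is the $A$-subalgebra generated by the $b_i^{p^r}$) forces $B_r = A$. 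The general case is then a partition-of-unity gluing over $\Spec A$. Your phrase ``bootstraps from residue fields'' is gesturing at exactly this, but the Nakayama step is the mechanism you are missing; without it (or without simply citing Koll\'ar and checking his hypotheses are met after your reductions), the sketch is incomplete.
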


\begin{proof}
By definition, we have $(1) \iff (2)$.   Lemma \ref{lemma-adequate-homeomorphism} shows that $(3) \implies (1)$.  Lemma \ref{lemma-adequate-charp} shows that $(3) \iff (4)$ as well as $(4) \iff (5)$ if $A \to B$ is of finite type..  We need to show that $(1) \implies (4)$.  We may assume that $A \hookarr B$ is injective.  For $b \in B$, there exists a finite type $A$-subalgebra $A \subseteq B_0 \subseteq B$ containing $b$.  Then $\Spec(B_0) \to \Spec(A)$ is an integral universal homeomorphism.  We may assume that $A \hookarr B$ is finite.  Suppose first that $A$ is a local ring so $B$ is also a local ring (Lemma \ref{lemma-adequate-basic}). Let $\fm_A$ and $\fm_B$ denote the maximal ideals.   Let $b_1, \ldots, b_n$ be generators for $B$ as an $A$-module.  Since $A/\fm_A \to B / \fm_B$ is a purely inseparable field extension, there exists $r > 0$ such that for each $i$, the image of $b_i^{\, p^r}$ in $B/\fm_A B$ is contained in $A/\fm_A$.  Let $B_r$ be the $A$-subalgebra of $B$ generated by $b_i^{\, p^r}$ giving inclusions $A \subseteq B_r \subseteq B$.  For each $i$, the image of $b_i^{\, p^r}$ in $(B_r/A) \tensor_A A/\fm_A$ is 0.  Therefore $(B_r/A) \tensor_A A/\fm_A  = 0$ so by Nakayama's lemma $A = B_r$.  For the general case, let $b \in B$.  For each $\fp \in \Spec(A)$, there exists $r$ and $a/g \in A_{\fp}$ such that $\frac{a}{g} \mapsto b^{\, p^k}$ in $A_{\fp} \tensor_A B$.  Since $\Spec(A)$ is quasi-compact, there exists $r > 0$ and a finite collection of functions $g_1, \ldots, g_s \in A$ generating the unit ideal such that for each $i$, $g_i b^{\, p^r} \in A$.  We may write $1 = f_1 g_1 + \cdots + f_s g_s$ with $f_i \in A$.  Therefore $b^{\, p^r} = f_1 g_1 b^{\, p^r} + \cdots + f_s g_s b^{\, p^r} \in A$ which establishes (4).  
\end{proof}

\begin{remark}  Note that if condition (5) is satisfied with $r > 0$, then for any $A$-algebra $A'$ and $b' \in A' \tensor_A B$, there exists $a' \in A'$ such that $a' \mapsto b^{\, p^r}$.  If $A$ is Noetherian, then condition (5) above is equivalent to requiring the existence of a factorization
$$X = \Spec(B) \to \Spec(A) \to X^{(q)}$$
where $X \to X^{(q)}$ is the geometric Frobenius morphism for some $q = p^r$. 
\end{remark}

\medskip \noindent
We now adapt the proof of \cite[Lemma 8.7]{kollar_quotients}.   %We note that the statement there is not correct as inclusions of fields with property \cite[(8.7.2)]{kollar_quotients} are not necessarily purely inseparable (see Remark \ref{remark-counter}).

\begin{prop} \label{proposition-adequate-equiv}
Let $A \to B$ be a finitely presented\footnote{The published version of this paper had a version of this statement without the finitely presented hypothesis, which was incorrect.  Namely, when $A \to B$ is not finitely presented, the implication $(2) \implies (1)$ may fail, e.g., 
  $\ZZ \to \ZZ[ \{x_p\}_{p \text{ prime}}]/(px_p, x_p^p)$.}
 homomorphism of rings.  Then the following are equivalent:
\begin{enumerate1}
\item The morphism $\Spec(B) \to \Spec(A)$ is an adequate homeomorphism.
\item The ideal $\ker(A \to B)$ is locally nilpotent, $\ker (A \to B) \tensor \QQ = 0$ and $A \to B$ is universally adequate.
\end{enumerate1}
% If $A \to B$ is of finite type, then the above conditions are also equivalent to:
\begin{enumerate1} \setcounter{enumi}{2}
\item The ideal $\ker(A \to B)$ is locally nilpotent, $\ker (A \to B) \tensor \QQ = 0$ and there exists $N > 0$ such that for every $A$-algebra $A'$ and $b' \in A' \tensor_A B$, there exists $a' \in A'$ such that $a' \mapsto b'^N $.
\end{enumerate1}
\end{prop}

\begin{proof} Let $B' = \im(A \to B)$ and consider the factorization $A \mapsonto B' \hookarr B$.  The statement is clear for $A \mapsonto B'$.  We may therefore reduce to the case where $A \hookarr B$ is injective.

\medskip \noindent
For $(2) \implies (1)$, $\Spec(B) \to \Spec(A)$ is a universal homeomorphism by Lemma \ref{lemma-adequate-homeomorphism}.  Since $A \to B$ is universally adequate, $A \otimes \QQ \to B \otimes \QQ$ is an isomorphism by Lemma \ref{lemma-adequate-char0}.  Since $A \to B$ is
finitely presented, it follows that $A \to B$ is an isomorphism at all points of characteristic $0$ (see Remark \ref{rmk-adequate-homeomorphism}). 

%  For $(1) \implies (2)$, let $b \in B$.  By taking a finitely generated $A$-subalgebra $B_0 \subseteq B$ containing $b$, we can reduce to the case where $A \hookarr B$ is of finite type.  In this case, we will show that $(1) \implies (3)$.  
The implication $(3) \implies (2)$ is immediate. 
 To see $(1) \implies (3)$, define $Q = B/A$.  Since $\Spec(B) \to \Spec(A)$ is an isomorphism in characteristic $0$, we have $Q \tensor_{\ZZ} \QQ = 0$.  Since $Q$ is a finite $A$-module, there exists $m > 0$ such that $m Q = 0$.  

\medskip \noindent
We claim that there exists $N > 0$ such that for all $A/mA$-algebras $A'$ and $b' \in A' \tensor_{A/mA} B/mB$, there exists $a' \in A'$ with $a' \mapsto b'^N$.  
Write $m = p_1^{n_1} \cdots p_k^{n_k}$.  There are decompositions $A/mA = A_1 \oplus \cdots \oplus A_k$ and $B/mB = B_1 \oplus \cdots \oplus B_k$ with $\Spec(B_i) \to \Spec(A_i)$ a finite universal homeomorphism of $\ZZ/p_i^{n_i}$-schemes.   If for each $i$, there exists $N_i$ with the desired property for $A_i \to B_i$, then $N = \prod_i N_i$ satisfies the claim.  Assume $m = p^n$ and that $A \hookarr B$ is an inclusion of $\ZZ/m\ZZ$-algebras.  By Lemma \ref{lemma-adequate-charp} since $\Spec(A/pA) \to \Spec(B/pB)$ is a finite universal homeomorphism, there exists $r > 0$ such that for all $b \in B/pB$, there exists $a \in A/pA$ with $a \mapsto b^{\, p^r}$.  Therefore, for any $b \in B$, we may write $b^{\, p^r} = a + pb_1 \in A + pB$.  Then
$$b^{p^{r+n}} = (a + p b_1)^{p^n} = a^{p^n} + \sum_{i > 0} p^ i \binom{p^n}{i} a^{p^n-i} b_1^i = a^{p^n} \in A$$
since $p^n$ divides $p^i  \binom{p^n}{i}$ for $i >0$.  
 Furthermore, the same argument applied to $A' \to A' \tensor_{A} B$ for an $A$-algebra $A'$ shows that the property holds with the same choice of $r$.  This establishes the claim.

\medskip \noindent
For any $A$-algebra $A'$, if $Q' = \coker(A' \to A' \tensor_A B)$, then $Q' = Q \tensor_A A'$ and $mQ' = 0$.  Therefore, for any $b' \in A' \tensor_A B$, there exists $a' \in A'/mA'$ with $a' \mapsto b'^N$ in $A' \tensor_A B/m(A' \tensor_A B)$ which shows that the image of $b'^N \in Q'$ is contained in $mQ' = 0$ and so there exists $a' \in A'$ with $a' \mapsto b'^N$.
\end{proof}

\begin{remark} We note that since property $(1), (2)$ or $(3)$ implies that $A \to B$ is integral,  $A \to B$ is of finite type if and only if $A \to B$ is finite.  If in addition $A \to B$ is injective, then $A$ is Noetherian if and only if $B$ is Noetherian.
\end{remark}

\begin{example}
The condition that a morphism $f: \Spec(A) \to \Spec(B)$ be an adequate homeomorphism is not equivalent to the ring homomorphism $B \to A$ being universally adequate with locally nilpotent kernel.  For instance, consider $\Spec(\QQ) \to \Spec(\QQ[\epsilon] / (\epsilon^2))$.
\end{example}

%%%%%%%%%%%
%%%%%%%%%%%%%%%%%%%%%%%%%%%%%%%%%%%%%%%%%%%%%%%%%
\subsection{Universally adequate $\oh_{\cX}$-algebra homomorphisms}

\begin{defn}  \label{definition-adequate-stack-algebra} 
Let $\cX$ be an algebraic stack.  A morphism $\cA \to \cB$ of quasi-coherent $\oh_{\cX}$-algebras is \emph{universally adequate} if for every object $(U \to \cX) \in \Le(\cX)$ and section $s \in \cB(U \to \cX)$, there are an \'etale cover $\{U_i \stackrel{g_i}{\to} U \}$ and integers $N_i>0$ and $t_i \in \cA(U_i \to \cX)$  such that $t_i \mapsto (g_i^*s)^{N_i}$.
\end{defn}

\begin{remark}  It is clear that this is a Zariski-local condition on $\cX$ and that the composition of two universally adequate morphisms is again universally adequate.  For an object $(U \to \cX) \in \Le(\cX)$ with $U$ quasi-compact and section $s \in \cB(U \to \cX)$, a universal $N$ can be chosen. \end{remark}

\begin{lem} \label{lemma-equiv-defn}
A morphism $\cA \to \cB$ of quasi-coherent $\oh_{\cX}$-algebras is universally adequate if and only if for every smooth morphism $\Spec(A) \to \cX$ and $s \in \cB(\Spec(A) \to \cX)$, there exist an \'etale surjective morphism $g: \Spec(A') \to \Spec(A)$, an integer $N > 0$ and $t \in \cA(\Spec(A') \to \cX)$ such that $t \mapsto (g^*s)^{n}$.
\end{lem}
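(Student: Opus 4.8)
The plan is to show that Definition \ref{definition-adequate-stack-algebra}, which a priori quantifies over \emph{all} objects $(U \to \cX) \in \Le(\cX)$, is already forced by its restriction to objects of the form $\Spec A \to \cX$; so only the "if" direction has content, and even that is a routine descent along an affine cover.

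For the "only if" direction I would argue directly from the definition: given a universally adequate $\cA \to \cB$, a smooth morphism $\Spec A \to \cX$ and $s \in \cB(\Spec A \to \cX)$, Definition \ref{definition-adequate-stack-algebra} supplies an \'etale cover $\{U_i \stackrel{g_i}{\to} \Spec A\}$ together with integers $N_i > 0$ and sections $t_i \in \cA(U_i \to \cX)$ with $t_i \mapsto (g_i^* s)^{N_i}$. Since $\Spec A$ is quasi-compact I may take this cover finite with each $U_i$ affine, and — as recorded in the remark following Definition \ref{definition-adequate-stack-algebra} — after replacing each $N_i$ by $N := \prod_j N_j$ and $t_i$ by $t_i^{N/N_i}$ I may assume all exponents equal $N$. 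Then $\Spec A' := \coprod_i U_i$ is smooth over $\cX$, the induced map $g : \Spec A' \to \Spec A$ is \'etale and surjective, and since $\cA$ is additive over disjoint unions the tuple $(t_i)_i$ is a section $t \in \cA(\Spec A' \to \cX)$ with $t \mapsto (g^* s)^N$, as wanted.

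For the converse I would start from the displayed affine condition and an arbitrary $(U \to \cX) \in \Le(\cX)$ with $s \in \cB(U \to \cX)$. Choose a cover of $U$ by affine opens $\{V_j\}_j$; each composite $V_j \hookrightarrow U \to \cX$ is smooth, hence an object of $\Le(\cX)$, so the hypothesis applied to $V_j$ and $s|_{V_j}$ yields an \'etale surjective $g_j : \Spec A_j' \to V_j$, an integer $N_j > 0$ and $t_j \in \cA(\Spec A_j' \to \cX)$ with $t_j \mapsto (g_j^* s)^{N_j}$. Each composite $\Spec A_j' \to V_j \hookrightarrow U$ is \'etale (\'etale followed by an open immersion), and these maps are jointly surjective because the $V_j$ cover $U$; hence $\{\Spec A_j' \to U\}_j$ is a covering family in $\Le(\cX)$ exhibiting exactly the data required by Definition \ref{definition-adequate-stack-algebra}, so $\cA \to \cB$ is universally adequate.

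I do not expect a genuine obstacle here: the only step needing a little care is the passage, in the first implication, from an \'etale cover with a priori varying exponents $N_i$ to a single \'etale surjection carrying one uniform exponent $N$, which is precisely where quasi-compactness of an affine scheme enters and is already subsumed by the cited remark.
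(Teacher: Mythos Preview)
Your proposal is correct and is precisely the routine verification the paper has in mind: the paper's own proof of this lemma is simply ``This is clear.'' You have spelled out exactly the descent along an affine Zariski cover that justifies this, and the one nontrivial bookkeeping step you flag (using quasi-compactness of $\Spec A$ to pass to a finite affine \'etale cover and equalize the exponents) is indeed the only place requiring any thought.
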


\begin{proof}  This is clear.
%The direction $\implies$ is clear.  For the other direction, suppose $(U \to \cX) \in \Le(\cX)$ and $s \in \cB(U \to \cX)$.  Cover $U$ with affine schemes $U_i$.  Then by hypothesis, there exists an etale cover $q_i: U'_i \to U_i$ such that some power of $q_i^* (s|_{U_i})$ lifts.  The \'etale morphisms $\{U'_i \to U_i \to U\}$ provide the necessary etale cover.
\end{proof}

\begin{lem} \label{lemma-adequate-affine} 
 If $X = \Spec(R)$ is an affine scheme, a morphism of quasi-coherent $\oh_{X}$-algebras $\cA \to \cB$ is universally adequate if and only if $\Gamma(X, \cA) \to \Gamma(X, \cB)$ is universally adequate.
\end{lem}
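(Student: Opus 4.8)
The plan is to exploit the fact that over an affine scheme $X = \Spec R$ the global sections functor $\Gamma(X,-)$ identifies quasi-coherent $\oh_X$-algebras with $R$-algebras, so that the two conditions are really comparing the same data phrased geometrically versus ring-theoretically. Write $A = \Gamma(X,\cA)$ and $B = \Gamma(X,\cB)$, so $\cA = \tilde A$ and $\cB = \tilde B$.

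For the direction ``$\Gamma$ universally adequate $\implies \cA \to \cB$ universally adequate'': given a smooth morphism $\Spec A'' \to X$ and a section $s \in \cB(\Spec A'' \to X) = A'' \tensor_R B = A'' \tensor_A B$, we must produce an \'etale cover, an exponent $N$, and a preimage of a power of the pullback of $s$. Since $\Gamma(X,\cA) \to \Gamma(X,\cB)$, i.e. $A \to B$, is universally adequate, the base change $A'' \to A'' \tensor_A B$ is adequate, so there already exists $N > 0$ and $a'' \in A''$ with $a'' \mapsto s^N$; we may even take the trivial \'etale cover (the identity on $\Spec A''$). Here I would invoke Lemma~\ref{lemma-equiv-defn}, which reduces the definition of universal adequacy to test morphisms with affine source, so that this verification suffices.

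For the converse ``$\cA \to \cB$ universally adequate $\implies A \to B$ universally adequate'': I need to show that for every $R$-algebra $R'$, the map $R' \to R' \tensor_R B$ is adequate, i.e. every $b' \in R' \tensor_R B$ has a power in the image of $R'$. By the limit argument in Remark~\ref{remark-smooth} it is enough to treat $R' = R[x_1,\dots,x_n]$, i.e. to show $A[\underline x] \to B[\underline x]$ is adequate; equivalently (again by Remark~\ref{remark-smooth}) that $A \to B$ itself is adequate after any such polynomial base change, and for this it suffices to handle an arbitrary element $b \in B = \cB(X \to \cX)$. Applying the definition of universal adequacy of $\cA \to \cB$ to the identity $\id : X \to X$ and the section $b$, we obtain an \'etale cover $\{g_i : \Spec A_i \to X\}$, integers $N_i$, and $t_i \in \cA(\Spec A_i \to X) = A_i \tensor_R A$ with $t_i \mapsto (g_i^* b)^{N_i}$. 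The main obstacle is \emph{descending} these local preimages to a single global preimage of a single power of $b$: a priori the $N_i$ differ and the $t_i$ need not glue. I would first pass to a finite subcover (using quasi-compactness of $X$) and replace all $N_i$ by their product $N$, raising each $t_i$ to the appropriate power so that $t_i \mapsto (g_i^*b)^N$ on $\Spec A_i$. Then consider the image of $b^N$ in the cokernel $Q = \coker(A \to B)$: its restriction to each $\Spec A_i$ vanishes, and since $\{\Spec A_i \to \Spec R\}$ is an fpqc (indeed \'etale) cover and $Q$ is quasi-coherent, the element of $Q$ is zero, so $b^N$ lifts to $A$. This sheaf-theoretic descent step, rather than any attempt to directly glue the $t_i$, is the crux of the argument, and it is exactly what makes the affine case work where a naive patching would fail.
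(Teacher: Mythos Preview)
Your converse direction has a genuine gap. The reduction ``for this it suffices to handle an arbitrary element $b \in B = \cB(X \to X)$'' is wrong: to show $A[\underline{x}] \to B[\underline{x}]$ is adequate you must handle elements of $B[\underline{x}]$, not merely of $B$. By applying the definition of universal adequacy of $\cA \to \cB$ only to the identity $\id:X\to X$, your descent argument produces lifts of powers of elements of $B$ and therefore establishes only that $A \to B$ is adequate, not that it is universally adequate. The paper's proof repairs exactly this: it applies the definition to the \emph{smooth} morphism $\Spec R[x_1,\ldots,x_n] \to X$ and a section $b' \in B[x_1,\ldots,x_n]$, obtaining a lift of $b'^N$ to $A \tensor_R R'$ for some faithfully flat $R[x_1,\ldots,x_n]$-algebra $R'$ (a finite \'etale cover), and then invokes the descent argument of Lemma~\ref{lemma-adequate-descent} --- which is exactly your cokernel argument --- to descend the lift to $A[x_1,\ldots,x_n]$. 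So your descent step is correct and is the same idea as the paper's; the error is in which smooth chart you feed into the definition.

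There is also a slip in your ``if'' direction. You write $\cB(\Spec A'' \to X) = A'' \tensor_R B = A'' \tensor_A B$ and then look for a lift $a'' \in A''$. The second equality is meaningless since $A''$ is only an $R$-algebra, and the lift must live in $\cA(\Spec A'' \to X) = A \tensor_R A''$, not in $A''$. The correct argument: $A \tensor_R A''$ is an $A$-algebra, so universal adequacy of $A \to B$ gives that $A \tensor_R A'' \to (A \tensor_R A'') \tensor_A B = B \tensor_R A''$ is adequate, whence any $s \in B \tensor_R A''$ has some power lifting to $A \tensor_R A''$. This is what the paper means by ``the `if' direction is clear.''
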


\begin{proof} Let $A = \Gamma(X, \cA)$ and $B = \Gamma(X, \cB)$.  The ``if'' direction is clear since for any smooth $R$-algebra $R'$, the ring homomorphism $A \tensor_R R' \to B \tensor_R R'$ is adequate.    Conversely, by Remark \ref{remark-smooth}, it suffices to show that for each $n$, $A[x_1, \ldots, x_n] \to B[x_1, \ldots, x_n]$ is adequate.  For each $b' \in B[x_1, \ldots, x_n]$, the hypotheses imply that there exist a faithfully flat $R[x_1, \ldots,x_n]$-algebra $R'$, an integer $N > 0$ and $a' \in A \tensor_{R} R'$ such that $a' \mapsto b^N \tensor 1$ in $B \tensor_R R'$.  But this then implies as in Lemma \ref{lemma-adequate-descent} that there exists $a \in A[x_1, \ldots, x_n]$ such that $a \mapsto b^N$.
\end{proof}

\begin{lem} \label{lemma-affine-presentation}  Let $\cX$ be a quasi-compact algebraic stack and let $f: \Spec(R) \to \cX$ be a smooth presentation.  A morphism $\cA \to \cB$ of quasi-coherent $\oh_{\cX}$-algebras is universally adequate if and only if $\Gamma(\Spec(R), f^* \cA) \to \Gamma(\Spec(R), f^* \cB)$ is universally adequate.
\end{lem}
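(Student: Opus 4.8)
The plan is to prove the two implications separately. The forward (``only if'') direction is a short unwinding of Definition~\ref{definition-adequate-stack-algebra}: assuming $\cA \to \cB$ is universally adequate on $\cX$, I first note that $f^*\cA \to f^*\cB$ is universally adequate on $\Spec R$. Indeed, any smooth $\Spec A \to \Spec R$ composes with $f$ to a smooth morphism $\Spec A \to \cX$, and $f^*\cB(\Spec A \to \Spec R) = \cB(\Spec A \to \cX)$, so the required \'etale-local lifting of powers of sections is supplied directly by the hypothesis on $\cX$, the lift landing in $f^*\cA(\Spec A \to \Spec R) = \cA(\Spec A \to \cX)$. Then Lemma~\ref{lemma-adequate-affine}, applied to the affine scheme $\Spec R$, gives that $\Gamma(\Spec R, f^*\cA) \to \Gamma(\Spec R, f^*\cB)$ is universally adequate.

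For the converse, set $A = \Gamma(\Spec R, f^*\cA)$ and $B = \Gamma(\Spec R, f^*\cB)$ and assume $A \to B$ is universally adequate. By Lemma~\ref{lemma-equiv-defn} it is enough, given a smooth morphism $g \colon \Spec C \to \cX$ and $s \in \cB(\Spec C \to \cX)$, to produce an \'etale surjection $\Spec C'' \to \Spec C$, an integer $N > 0$, and $t \in \cA(\Spec C'' \to \cX)$ mapping to the $N$-th power of the pullback of $s$. The geometric heart of the argument is to replace $\Spec C$ by a cover on which $g$ factors through the atlas $f$. To this end I would form $W = \Spec C \times_{\cX} \Spec R$; because $\cX$ has quasi-compact and separated diagonal, the morphism $f$, and hence its base change $W \to \Spec C$, is quasi-compact and quasi-separated, so $W$ is a quasi-compact, quasi-separated algebraic space, and I may choose an \'etale surjection $h \colon \Spec C' \to W$ from an affine scheme. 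Writing $p_C \colon W \to \Spec C$ and $p_R \colon W \to \Spec R$ for the projections and $g' = p_C \circ h$, $q = p_R \circ h$, the morphism $q$ is smooth (a base change of $g$ followed by the \'etale $h$), so $C'$ is a smooth $R$-algebra; $g'$ is smooth and surjective (a base change of the smooth surjection $f$ followed by the \'etale surjection $h$); and the two morphisms $\Spec C' \to \cX$ induced, respectively, by $g'$ then $g$ and by $q$ then $f$ are canonically isomorphic.

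Using the factorization through $f$, the pullback of quasi-coherent algebras along the morphism of affine schemes $q$ is just extension of scalars: $\cA(\Spec C' \to \cX) = A \otimes_R C'$ and $\cB(\Spec C' \to \cX) = B \otimes_R C'$, and the induced homomorphism between them is the base change of $A \to B$ along $A \to A \otimes_R C'$, hence adequate because $A \to B$ is universally adequate. Therefore $s' := (g')^* s \in B \otimes_R C'$ admits $N > 0$ and $a' \in A \otimes_R C'$ with $a' \mapsto (s')^N$; the element $a'$ gives $t' \in \cA(\Spec C' \to \cX)$ with $t' \mapsto ((g')^* s)^N$. Finally, since $g' \colon \Spec C' \to \Spec C$ is smooth and surjective it has a section \'etale-locally on the target, so there is an \'etale surjection $\Spec C'' \to \Spec C$ admitting a $\Spec C$-morphism to $\Spec C'$; pulling $t'$ back along it yields the desired $t$, verifying the criterion of Lemma~\ref{lemma-equiv-defn}.

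I expect the main obstacle to be bookkeeping rather than a genuine difficulty: one must identify $\cA(\Spec C' \to \cX)$ with $A \otimes_R C'$ compatibly with both factorizations of $\Spec C' \to \cX$ through $\cX$, and one must check that $W$ is quasi-compact and quasi-separated so that an affine \'etale atlas exists (this is where the quasi-compactness hypotheses enter). The passage from a smooth-local lifting to an \'etale-local one --- needed because covers in the lisse-\'etale site are \'etale --- is routine via the fact that smooth surjections admit \'etale-local sections, but is worth recording explicitly.
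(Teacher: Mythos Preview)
Your proof is correct and follows essentially the same approach as the paper, which dispatches the lemma in one line: ``The `if' direction is clear. The `only if' direction follows from the same argument of Lemma~\ref{lemma-adequate-affine}.'' Your forward direction packages things slightly differently---you first observe that $f^*\cA \to f^*\cB$ is universally adequate as sheaves on $\Spec R$ and then invoke Lemma~\ref{lemma-adequate-affine}, whereas the paper would have you rerun that lemma's proof directly (checking $A[x_1,\dots,x_n]\to B[x_1,\dots,x_n]$ is adequate via the smooth object $\Spec R[x_1,\dots,x_n]\to\cX$ and descent)---but the content is the same. For the converse you have fully unpacked, with the fiber-product and \'etale-local-section maneuver, what the paper regards as routine; this is a reasonable thing to make explicit and involves no new ideas.
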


\begin{proof}
The ``if'' direction is clear.  The ``only if'' direction follows from the same proof as that of Lemma \ref{lemma-adequate-affine}.
\end{proof} 

\begin{lem}  \label{lemma-adequate-algebra-bc} 
 Let $f: \cX \to \cY$ be a morphism of algebraic stacks.   Suppose that $\cA \to \cB$ is a morphism of quasi-coherent $\oh_{\cY}$-algebras.  Then
\begin{enumerate1}
\item If $\cA \to \cB$ is universally adequate, then $f^*\cA \to f^* \cB$ is universally adequate. 
\item If $f$ is fpqc and $f^* \cA \to f^* \cB$ is universally adequate, then $\cA \to \cB$ is universally adequate.
\end{enumerate1}
\end{lem}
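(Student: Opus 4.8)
The plan is to reduce both statements to the already-established behaviour of \emph{universally adequate ring homomorphisms}, namely that this property is preserved under arbitrary base change (immediate from Definition~\ref{definition-universally-adequate}) and descends under faithfully flat base change (Remark~\ref{remark-smooth}, via Lemma~\ref{lemma-adequate-descent}), by passing to smooth presentations using Lemma~\ref{lemma-affine-presentation} and Lemma~\ref{lemma-adequate-affine}. Throughout I use that for a smooth morphism $p\colon \Spec R \to \cZ$ from an affine scheme and a quasi-coherent $\oh_\cZ$-algebra $\cC$, the restriction $p^*\cC$ is the $\oh_{\Spec R}$-algebra associated to the $R$-algebra $\Gamma(\Spec R, p^*\cC)$, so that pulling $p^*\cC$ further back along a morphism $\Spec R' \to \Spec R$ simply tensors this $R$-algebra with $R'$; I also use freely that universal adequacy of quasi-coherent algebra homomorphisms is Zariski-local on the stack (the remark following Definition~\ref{definition-adequate-stack-algebra}).

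For (1): since the conclusion is Zariski-local on $\cX$, I would first assume $\cX$ quasi-compact and fix a smooth presentation $h\colon \Spec C \to \cX$; by Lemma~\ref{lemma-affine-presentation} it suffices to show that $\Gamma(\Spec C, g^*\cA) \to \Gamma(\Spec C, g^*\cB)$ is a universally adequate ring map, where $g = f\circ h\colon \Spec C \to \cY$. Since $g(\Spec C)$ is quasi-compact I may replace $\cY$ by a quasi-compact open substack through which $g$ factors, and then choose a smooth presentation $p\colon \Spec R \to \cY$; by Lemma~\ref{lemma-affine-presentation} the ring map $\mathbf{A} := \Gamma(\Spec R, p^*\cA) \to \mathbf{B} := \Gamma(\Spec R, p^*\cB)$ is universally adequate. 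Next I form $W = \Spec C \times_\cY \Spec R$ and pick an \'etale surjection $\Spec C' \to W$; the two projections give a faithfully flat morphism $\alpha\colon \Spec C' \to \Spec C$ together with $\beta\colon \Spec C' \to \Spec R$ satisfying $g\alpha = p\beta$. Pulling back and taking global sections identifies $\Gamma(\Spec C, g^*\cA)\otimes_C C'$ with $\mathbf{A}\otimes_R C'$, compatibly with the maps to the $\cB$-side, and this is universally adequate because $\mathbf{A}\to\mathbf{B}$ is. Descent along the faithfully flat ring homomorphism $C \to C'$ (Remark~\ref{remark-smooth}) then yields that $\Gamma(\Spec C, g^*\cA) \to \Gamma(\Spec C, g^*\cB)$ is universally adequate, as wanted.

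For (2): by Lemma~\ref{lemma-equiv-defn} and Lemma~\ref{lemma-adequate-affine} it is enough to prove that for every smooth morphism $p\colon U = \Spec A \to \cY$ the ring homomorphism $\mathbf{A} := \Gamma(U, p^*\cA) \to \mathbf{B} := \Gamma(U, p^*\cB)$ is universally adequate. Base-changing $f$ along $p$ gives an fpqc morphism $f_U\colon \cX_U := \cX\times_\cY U \to U$, and since the composite $\cX_U \to \cX \to \cY$ (projection followed by $f$) coincides with $\cX_U \to U \to \cY$ (namely $f_U$ followed by $p$), part (1) shows that $f_U^*p^*\cA \to f_U^*p^*\cB$ is universally adequate on $\cX_U$. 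Now $\cX_U$ is quasi-compact (as $f_U$ is quasi-compact and $U$ is affine), so any smooth presentation $\delta\colon \Spec C \to \cX_U$ by an affine scheme has $\gamma := f_U\circ\delta\colon \Spec C \to U$ faithfully flat; identifying $\gamma^*p^*\cA$ with the $\oh_{\Spec C}$-algebra associated to $\mathbf{A}\otimes_A C$, Lemma~\ref{lemma-affine-presentation} gives that $\mathbf{A}\otimes_A C \to \mathbf{B}\otimes_A C$ is universally adequate. This map is the base change of $\mathbf{A}\to\mathbf{B}$ along the faithfully flat ring homomorphism $\mathbf{A}\to\mathbf{A}\otimes_A C$, so Remark~\ref{remark-smooth} once more permits us to descend and conclude. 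The main obstacle I anticipate is bookkeeping rather than conceptual: arranging the fiber products and atlases so that all the relevant pullbacks of quasi-coherent algebras are visibly plain tensor products, and — in (2) — using the fpqc hypothesis correctly to produce an \emph{affine} scheme that is faithfully flat over the affine base $U$ and factors through $\cX_U$ via a smooth presentation.
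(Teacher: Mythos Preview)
Your proposal is correct and follows essentially the same strategy as the paper: reduce to affine presentations via Lemma~\ref{lemma-affine-presentation}, then invoke stability of universally adequate ring maps under base change and faithfully flat descent. The only difference is organizational: the paper chooses a single compatible pair of presentations at the outset (first $q:\Spec R\to\cY$, then a presentation $\Spec S$ of $\Spec R\times_\cY\cX$, yielding one square with $f':\Spec S\to\Spec R$ that handles both parts at once), whereas you pick presentations of $\cX$ and $\cY$ independently and then relate them by a further fiber product, incurring an extra descent step in (1).
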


\begin{proof}  
We may assume that $\cX$ and $\cY$ are quasi-compact.  Let $q: \Spec(R) \to \cX$ and $\Spec(S) \to \Spec(R) \times_{\cY} \cX$ be smooth presentations.  This gives a 2-commutative diagram
$$\xymatrix{
\Spec(S) \ar[r]^{f'} \ar[d]^p	& \Spec(R) \ar[d]^q \\
\cX \ar[r]^f			& \cY
}$$
with $f': \Spec(S) \to \Spec(R)$ faithfully flat.  Then Lemma \ref{lemma-affine-presentation} implies that $\cA \to \cB$ and $f^* \cA \to f^* \cB$ are universally adequate if and only if $\Gamma(\Spec(R), q^* \cA) \to \Gamma(\Spec(R), q^* \cB)$ and $\Gamma(\Spec(S), p^*f^* \cA) \to \Gamma(\Spec(S), p^*f^* \cB)$ are universally adequate, respectively.  Part (1) is now clear and part (2) follows from Lemma \ref{lemma-adequate-descent}.
\end{proof}

\begin{lem}  Let $\cX$ be an algebraic stack and let $\cA \to \cB$ be a 
  finitely presented\footnote{As with Proposition \ref{proposition-adequate-equiv}, the published version did not assume
  that $A \to B$ is finitely presented and was incorrect.}
  homomorphism of quasi-coherent $\oh_{\cX}$-algebras.  Then the following are equivalent:
\begin{enumerate1}
\item The ideal sheaf $\ker(\cA \to \cB)$ is locally nilpotent, $\ker(\cA \to \cB) \tensor \QQ = 0$ and $\cA \to \cB$ is universally adequate.
\item The morphism $\cA \to \cB$ is an adequate homeomorphism.
\item The morphism $\sSpec_{\cX}(\cB) \to \sSpec_{\cX} (\cA)$ is an adequate homeomorphism.
\end{enumerate1}
\end{lem}

\begin{proof} This follows from the definitions and fpqc descent using Lemma \ref{lemma-adequate-algebra-bc} and Proposition \ref{proposition-adequate-equiv}.
\end{proof}

%%%%%%%%%%%%%%%%%%%%%%%
\section{Adequately affine morphisms}

\medskip \noindent
In this section, we introduce a notion characterizing affineness for \emph{non-representable} morphisms of algebraic stacks which is weaker than cohomological affineness and will be an essential property of adequate moduli spaces.  This notion was motivated by and captures the properties of a morphism $[\Spec(A) / G] \to \Spec(A^G)$ where $G$ is a reductive group.

\subsection{The definition and equivalences}
\begin{defn}  \label{definition-adequately-affine}
A quasi-compact, quasi-separated morphism $f: \cX \to \cY$ of algebraic stacks is \emph{adequately affine} if for every surjection $\cA \to \cB$ of quasi-coherent $\oh_{\cX}$-algebras, the push-forward $f_* \cA \to f_* \cB$ is universally adequate.   A quasi-compact, quasi-separated algebraic stack $\cX$ is \emph{adequately affine} if $\cX \to \Spec(\ZZ)$ is adequately affine.
\end{defn}

\begin{remark}  By Lemma \ref{lemma-adequate-affine}, a quasi-compact, quasi-separated algebraic stack $\cX$ is adequately affine if and only if for every surjection $\cA \to \cB$ of quasi-coherent $\oh_{\cX}$-algebras, the ring homomorphism $\Gamma(\cX, \cA) \to \Gamma(\cX, \cB)$ is universally adequate.  Even though the notion of adequacy is not stable under base change, the above notion is equivalent to the seemingly weaker requirement that for every surjection $\cA \to \cB$ of quasi-coherent $\oh_{\cX}$-algebras, the ring homomorphism $\Gamma(\cX, \cA) \to \Gamma(\cX, \cB)$ be adequate; see Lemma \ref{lemma-affine-base-equiv}(3).
\end{remark}

\begin{remark} A quasi-compact, quasi-separated morphism $\cX \to \Spec(A)$ is adequately affine if and only if $\cX$ is adequately affine if and only if $\cX \to \Spec(\Gamma(\cX, \oh_{\cX}))$ is adequately affine.
\end{remark}

\begin{remark} Recall from \cite[Section 3]{alper-good} that a quasi-compact, quasi-separated morphism $f: \cX \to \cY$ of algebraic stacks is said to be \emph{cohomologically affine} if the push-forward functor $f_*$ is exact on quasi-coherent $\oh_{\cX}$-modules.
\end{remark}

\begin{lem} \label{lemma-cohomologically-affine}  Let $f: \cX \to \cY$ be a quasi-compact, quasi-separated morphism of algebraic stacks.  
Then $f$ is cohomologically affine if and only if for every surjection 
$\cA \to \cB$ of quasi-coherent $\oh_{\cX}$-algebras, $f_* \cA \to f_* \cB$ is surjective.
\end{lem}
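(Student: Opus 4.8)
The plan is to deduce the statement from the definition of cohomological affineness (exactness of $f_*$ on quasi-coherent sheaves) via a ``trivial square-zero extension'' device that converts surjections of quasi-coherent $\oh_{\cX}$-modules into surjections of quasi-coherent $\oh_{\cX}$-algebras, and vice versa.

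The forward implication is immediate: if $f$ is cohomologically affine then $f_*$ is exact on quasi-coherent $\oh_{\cX}$-modules, and a surjection $\cA \to \cB$ of quasi-coherent $\oh_{\cX}$-algebras is in particular a surjection of the underlying quasi-coherent modules, so $f_* \cA \to f_* \cB$ is surjective.

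For the converse, I would first recall that, since $f$ is quasi-compact and quasi-separated, $f_*$ preserves quasi-coherence and, being a right adjoint to $f^*$ on quasi-coherent sheaves, is left exact and additive; consequently, exactness of $f_*$ is equivalent to the statement that $f_*$ carries every surjection of quasi-coherent $\oh_{\cX}$-modules to a surjection. So let $\psi: \cM \to \cN$ be a surjection of quasi-coherent $\oh_{\cX}$-modules. I would form the quasi-coherent $\oh_{\cX}$-algebras $\cA = \oh_{\cX} \oplus \cM$ and $\cB = \oh_{\cX} \oplus \cN$, each equipped with the trivial multiplication making the second summand a square-zero ideal, so that $\id_{\oh_{\cX}} \oplus \psi : \cA \to \cB$ is a surjective homomorphism of quasi-coherent $\oh_{\cX}$-algebras (surjective precisely because $\psi$ is). Applying the hypothesis, $f_*\cA \to f_*\cB$ is surjective; since $f_*$ commutes with finite direct sums, this map is $\id_{f_*\oh_{\cX}} \oplus f_*(\psi) : f_*\oh_{\cX} \oplus f_*\cM \to f_*\oh_{\cX} \oplus f_*\cN$, whose surjectivity forces $f_*(\psi): f_*\cM \to f_*\cN$ to be surjective. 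This finishes the converse.

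There is no serious obstacle: the points to check are routine, namely that $\oh_{\cX}\oplus\cM$ is genuinely a quasi-coherent $\oh_{\cX}$-algebra and that $\id\oplus\psi$ is a surjective algebra homomorphism, together with the standard facts that $f_*$ on quasi-coherent sheaves is left exact and additive (so that exactness of $f_*$ is equivalent to preservation of surjections). As an alternative to the square-zero trick one could instead use the surjection of graded algebras $\Sym^*\cM \to \Sym^*\cN$ and read off the degree-one component, but that route needs the (true, since $f$ is quasi-compact and quasi-separated) fact that $f_*$ commutes with arbitrary direct sums, whereas commuting with finite direct sums already suffices for the argument above.
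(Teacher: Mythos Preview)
Your proof is correct. The paper takes precisely the alternative route you mention at the end: given a surjection $\cF \to \cG$ of quasi-coherent $\oh_{\cX}$-modules, it forms the surjection $\Sym^*\cF \to \Sym^*\cG$ of graded $\oh_{\cX}$-algebras, applies the hypothesis, and reads off the degree-one piece to conclude that $f_*\cF \to f_*\cG$ is surjective. Your square-zero extension device achieves the same reduction with a smaller algebra and, as you observe, only requires that $f_*$ commute with \emph{finite} direct sums (automatic from additivity), whereas the $\Sym^*$ argument implicitly uses that $f_*$ respects the infinite grading. Both arguments are short and essentially interchangeable; yours is marginally more self-contained.
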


\begin{proof}
The ``only if" direction is clear.  Conversely, let $\cF \to \cG$ be a surjection of quasi-coherent $\oh_{\cX}$-modules.  Then $\Sym^* \cF \to \Sym^* \cG$ is a surjection of graded quasi-coherent $\oh_{\cX}$-algebras.   As $f_*$ is exact on the category of quasi-coherent $\oh_{\cX}$-modules, $f_* \Sym^* \cF \to f_*\Sym^* \cG$ is surjective and it follows that $f_* \cF \to f_* \cG$ is surjective.
\end{proof}

\medskip \noindent
We now establish a key lemma which will be used to prove that adequate moduli spaces and good moduli spaces are equivalent notions in characteristic $0$ (see Proposition \ref{proposition-good}).

\begin{lem} \label{lemma-adequately-affine-char0}
Let $f: \cX \to \cY$ be a quasi-compact, quasi-separated morphism of algebraic stacks defined over $\Spec(\QQ)$.  Then $f$ is adequately affine if and only if $f$ is cohomologically affine.
\end{lem}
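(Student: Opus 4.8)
The plan is to treat the two implications separately, noting that only the forward implication will use the hypothesis that $\cX$ and $\cY$ live over $\Spec\QQ$.

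\emph{The easy direction.} If $f$ is cohomologically affine, then for any surjection $\cA \to \cB$ of quasi-coherent $\oh_\cX$-algebras, Lemma \ref{lemma-cohomologically-affine} gives that $f_*\cA \to f_*\cB$ is surjective; a surjection of $\oh_\cY$-algebras is \emph{a fortiori} universally adequate (take all the integers $N_i$ equal to $1$ in Definition \ref{definition-adequate-stack-algebra} and lift sections \'etale-locally). Hence $f$ is adequately affine, and this holds in any characteristic.

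\emph{The converse.} Assume $f$ is adequately affine. By Lemma \ref{lemma-cohomologically-affine} it suffices to prove that $f_*\cA \to f_*\cB$ is \emph{surjective} for every surjection $\cA \to \cB$ of quasi-coherent $\oh_\cX$-algebras. I would factor this pushforward as $f_*\cA \twoheadrightarrow \cC \hookrightarrow f_*\cB$, where $\cC := \im(f_*\cA \to f_*\cB)$ is a quasi-coherent $\oh_\cY$-subalgebra of $f_*\cB$. Since $f_*\cA \to f_*\cB$ is universally adequate and $f_*\cA \to \cC$ is onto, the inclusion $\cC \hookrightarrow f_*\cB$ is itself universally adequate: \'etale-locally on $\cY$ a power of any section of $f_*\cB$ is hit from $f_*\cA$, hence from $\cC$. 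It then remains to show that a universally adequate \emph{inclusion} of quasi-coherent $\oh_\cY$-algebras over $\Spec\QQ$ is an isomorphism, which is where characteristic $0$ enters. Surjectivity being local on $\cY$, I would reduce to $\cY$ quasi-compact, choose a smooth presentation $p\colon U = \Spec R \to \cY$ with $U$ a disjoint union of affine $\QQ$-schemes, use Lemma \ref{lemma-adequate-algebra-bc}(1) to see that $p^*\cC \to p^* f_*\cB$ is universally adequate, invoke Lemma \ref{lemma-adequate-affine} to translate this into the statement that the ring map $\Gamma(U, p^*\cC) \hookrightarrow \Gamma(U, p^* f_*\cB)$ is (universally) adequate, and finally apply Lemma \ref{lemma-adequate-char0} — an adequate inclusion of $\QQ$-algebras is an equality — to conclude $\Gamma(U, p^*\cC) = \Gamma(U, p^* f_*\cB)$, whence $\cC = f_*\cB$ and the pushforward is surjective.

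The mathematical heart is the single observation that, over $\QQ$, Lemma \ref{lemma-adequate-char0} upgrades ``universally adequate'' to ``surjective'' for a pushforward of algebras; I do not expect a genuine obstacle beyond this. The only place demanding care is the bookkeeping around reducing to an affine chart of $\cY$ and checking that universal adequacy is inherited by the image subalgebra $\cC$ and by pullback along $p$ — all supplied by Lemmas \ref{lemma-adequate-algebra-bc} and \ref{lemma-adequate-affine} — together with the standard fact that $\cC$, being the image of a morphism of quasi-coherent sheaves, is quasi-coherent.
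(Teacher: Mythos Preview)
Your proof is correct and follows exactly the approach the paper intends: the paper's own proof is the one-line ``This follows from Lemmas \ref{lemma-cohomologically-affine} and \ref{lemma-adequate-char0},'' and you have simply unpacked this, using the former to reduce cohomological affineness to surjectivity of $f_*\cA \to f_*\cB$ and the latter to upgrade the universally adequate inclusion $\cC \hookrightarrow f_*\cB$ to an equality after passing to an affine smooth chart over $\QQ$. The only minor cosmetic point is that once you reduce to $\cY$ quasi-compact you may take $U$ to be a single affine rather than a disjoint union, but this does not affect the argument.
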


\begin{proof} This follows from Lemmas \ref{lemma-cohomologically-affine} and \ref{lemma-adequate-char0}.
\end{proof}

\begin{lem} \label{lemma-adequately-affine-equiv}
Let $f: \cX \to \cY$ be a quasi-compact, quasi-separated morphism of algebraic stacks.  The following are equivalent:
\begin{enumerate1}
\item For every universally adequate morphism $\cA \to \cB$ of quasi-coherent $\oh_{\cX}$-algebras with kernel $\cI$, the morphism $f_* \cA /f_* \cI \to f_* \cB$ is an adequate homeomorphism.
\item The morphism $f$ is adequately affine.
\item For every surjection $\cF \to \cG$ of quasi-coherent $\oh_{\cX}$-modules, the morphism $f_* \Sym^* \cF \to f_* \Sym^* \cG$ is universally adequate.
\end{enumerate1}
\medskip \noindent
If in addition $\cX$ is Noetherian, then the above are equivalent to:
\begin{enumerate1} 
\item[$(1')$] For every universally adequate morphism $\cA \to \cB$ of finite type quasi-coherent $\oh_{\cX}$-algebras with kernel $\cI$, the mophism $f_* \cA/f_* \cI \to f_* \cB$ is an adequate homeomorphism.
\item[$(2')$] For every surjection $\cA \to \cB$ of finite type quasi-coherent $\oh_{\cX}$-algebras, the morphism $f_* \cA \to f_* \cB$ is universally adequate.  
\item[$(3')$] For every surjection $\cF \to \cG$ of coherent $\oh_{\cX}$-modules,  the morphism $f_* \Sym^* \cF \to f_* \Sym^* \cG$ is universally adequate.
\end{enumerate1}
\end{lem}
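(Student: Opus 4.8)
The strategy is to prove $(2)\Leftrightarrow(3)$ and $(2)\Leftrightarrow(1)$, and then to derive the Noetherian statements by reduction to the finite type case. Two facts established earlier in Section \ref{section-adequate} will be used throughout: by its final lemma, an injective and universally adequate homomorphism of quasi-coherent algebras is automatically an adequate homeomorphism; and, by Lemma \ref{lemma-adequate-algebra-bc}(1), universal adequacy of algebra homomorphisms is preserved by pullback. Since $f_*$ is left exact, a surjection $\cA\to\cB$ of $\oh_\cX$-algebras with kernel $\cI$ satisfies $f_*\cI=\ker(f_*\cA\to f_*\cB)$, so $f_*\cA/f_*\cI\hookrightarrow f_*\cB$; and universal adequacy of $\oh_\cY$-algebra maps may be tested on quasi-compact $V\in\Le(\cY)$, for which $\cX_V:=\cX\times_\cY V$ is quasi-compact and quasi-separated. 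Two implications are then formal: for $(2)\Rightarrow(3)$, a surjection $\cF\to\cG$ of $\oh_\cX$-modules induces a degreewise surjection $\Sym^*\cF\to\Sym^*\cG$ of $\oh_\cX$-algebras, to which (2) applies; for $(1)\Rightarrow(2)$, a surjection $\cA\to\cB$ of $\oh_\cX$-algebras is in particular universally adequate with some kernel $\cI$, so (1) gives that $f_*\cA/f_*\cI\to f_*\cB$ is an adequate homeomorphism, hence universally adequate, and precomposing with the surjection $f_*\cA\twoheadrightarrow f_*\cA/f_*\cI$ shows $f_*\cA\to f_*\cB$ is universally adequate.

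The crux is $(3)\Rightarrow(2)$. The subtlety is that $f_*$ is only left exact, so universal adequacy cannot be transported directly from symmetric algebras down to the algebras themselves; the idea is to exploit the grading. Given a surjection $\cA\to\cB$ of $\oh_\cX$-algebras and a section $s$ of $f_*\cB$ over a quasi-compact $V\in\Le(\cY)$, view $\cB=\Sym^1\cB$ as the degree-one summand of $\Sym^*\cB$, so $s$ determines a section $\tilde s$ of $f_*\Sym^*\cB$ over $V$. Applying (3) to the surjection of \emph{modules} $\cA\to\cB$, some power $\tilde s^{\,N}$ lifts, after an \'etale cover of $V$, to a section of $f_*\Sym^*\cA$; since the comparison map $\Sym^*\cA\to\Sym^*\cB$ is graded and $\tilde s^{\,N}$ is homogeneous of degree $N$, this lift may be taken in $f_*\Sym^N\cA$. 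Pushing it forward along the compatible multiplication maps $\Sym^N\cA\to\cA$ and $\Sym^N\cB\to\cB$ (the degree-$N$ parts of the canonical surjections $\Sym^*(-)\to(-)$), which send $\tilde s^{\,N}$ to $s^N$, produces a section of $f_*\cA$ lifting $s^N$. Hence $f$ is adequately affine. I expect the degree bookkeeping here — in particular that $\Gamma$ over the quasi-compact quasi-separated $\cX_V$ respects the direct sum decomposition of $\Sym^*$ — to be the most delicate ingredient.

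For $(2)\Rightarrow(1)$, let $\cA\to\cB$ be universally adequate with kernel $\cI$ and factor it as $\cA\twoheadrightarrow\cA':=\cA/\cI\hookrightarrow\cB$, where $\cA'\hookrightarrow\cB$ is a universally adequate \emph{inclusion} of $\oh_\cX$-algebras. By (2) applied to the surjection $\cA\to\cA'$, the map $f_*\cA\to f_*\cA'$ is universally adequate, and combined with surjectivity of $f_*\cA\twoheadrightarrow f_*\cA/f_*\cI$ this shows $f_*\cA/f_*\cI\hookrightarrow f_*\cA'$ is universally adequate. Separately, $f_*\cA'\hookrightarrow f_*\cB$ is universally adequate for purely formal reasons (no adequate affineness needed): for a section $s$ of $f_*\cB$ over a quasi-compact $V$, cover $\cX_V$ by finitely many affine schemes smooth over it; on each, after an \'etale refinement, a power of $s$ lands in the subsheaf $\cA'\subseteq\cB$, so a common power $s^N$ lies in $\cA'$ on a smooth cover of $\cX_V$, and since $\cA'\hookrightarrow\cB$ is a monomorphism of sheaves these local sections glue to a section of $f_*\cA'$ lifting $s^N$. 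Composing the two, $f_*\cA/f_*\cI\to f_*\cB$ is universally adequate and injective, hence an adequate homeomorphism, which is (1).

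Finally, the Noetherian equivalences $(1')\Leftrightarrow(2')\Leftrightarrow(3')$ are obtained by the same three arguments, which respect finite presentation throughout (a quotient of a finite type algebra is finite type, and $\Sym^*$ of a coherent module is a finite type algebra). To connect with the general statements it suffices to check $(2')\Rightarrow(2)$: write a surjection $\cA\to\cB$ of quasi-coherent $\oh_\cX$-algebras as the colimit of the surjections $\cA_j\twoheadrightarrow\cB_j:=\im(\cA_j\to\cB)$ over the filtered family of finite type sub-$\oh_\cX$-algebras $\cA_j\subseteq\cA$ (Lemma \ref{lemma-qcoh-limit}); a section of $f_*\cB$ over a quasi-compact $V$ factors through some $f_*\cB_{j_0}$ because $\Gamma$ commutes with filtered colimits on the quasi-compact quasi-separated $\cX_V$, and applying $(2')$ to $\cA_{j_0}\to\cB_{j_0}$ and pushing the resulting lift forward along $\cA_{j_0}\to\cA$ produces the required lift of a power of $s$.
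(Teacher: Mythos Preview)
Your argument is correct and follows essentially the same route as the paper's: the implications $(1)\Rightarrow(2)\Rightarrow(3)$ are formal, $(3)\Rightarrow(2)$ goes through the symmetric algebra and the multiplication map $\Sym^*\cA\to\cA$, $(2)\Rightarrow(1)$ reduces to the injective case and then uses a descent/equalizer argument on a smooth presentation, and the noetherian statements come from writing algebras as filtered colimits of finite type subalgebras via Lemma~\ref{lemma-qcoh-limit}.

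Two minor remarks. First, your ``degree bookkeeping'' worry in $(3)\Rightarrow(2)$ is unnecessary: the paper simply observes that $\Sym^*\cB\to\cB$ has a (module) section, so $f_*\Sym^*\cB\to f_*\cB$ is surjective, and then the commutative square
\[
\xymatrix{
f_*\Sym^*\cA \ar[r]\ar[d] & f_*\Sym^*\cB \ar[d]\\
f_*\cA \ar[r] & f_*\cB
}
\]
shows $f_*\cA\to f_*\cB$ is universally adequate, with no need to extract homogeneous components. Second, your claim that ``$(1')\Leftrightarrow(2')\Leftrightarrow(3')$ are obtained by the same three arguments'' is not quite right for $(3')\Rightarrow(2')$: a finite type $\oh_\cX$-algebra need not be coherent as an $\oh_\cX$-module, so you cannot feed it into $(3')$. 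This does not damage your proof, since you separately establish $(2')\Rightarrow(2)$ and the implications $(i)\Rightarrow(i')$ are trivial; the paper handles this by proving $(i)\Leftrightarrow(i')$ for each $i$ via limit arguments rather than arguing among the primed statements directly.
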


\begin{proof}
It is obvious that $(1) \implies (2) \implies (3)$.  We now show that $(3) \implies (2) \implies (1)$.  Suppose that property (3) holds and let $\cA \to \cB$ be a surjection of quasi-coherent $\oh_{\cX}$-algebras.  The natural map $\Sym^* \cB \to \cB$ has a section so that $f_* \Sym^* \cB \to f_* \cB$ is surjective.  There is  a commutative diagram
$$\xymatrix{
f_* \Sym^* \cA \ar[r]\ar[d]		& f_* \Sym^* \cB \ar[d] \\
f_* \cA \ar[r]					& f_* \cB
}$$
Since the composition $f_* \Sym^* \cA \to f_* \Sym^* \cB \to f_* \cB$ is universally adequate, so is $f_* \cA \to f_* \cB$, which establishes property (2).  Suppose that property (2) holds.  We may assume that $\cX$ and $\cY$ are quasi-compact.  Let $\cA \to \cB$ be a universally adequate morphism of quasi-coherent $\oh_{\cX}$-algebras.  Let $\cB' = \im(\cA \to \cB)$.  Then $f_* \cA \to f_* \cB$ is universally adequate so we may assume that $\cA \to \cB$ is injective.  Let $V \to \cY$ and $U \to  \cX_V:=\cX \times_{\cY} V$ be smooth presentations with $U$ and $V$ affine.  Let $R = U \times_{\cX_V} U$.  This gives a diagram
$$\xymatrix{
U \times_{\cX_V} U \ar@<0.5ex>[r] \ar@<-.5ex>[r]	&U \ar[r] & \cX_V \ar[r] \ar[d]  \ar @{} [dr] |{\square}		& V \ar[d] \\
	&& \cX \ar[r]		& \cY
}$$
We have a diagram of exact sequences
$$\xymatrix{
f_* \cA (V \to \cY) \ar@{^(->}[r] \ar@{^(->}[d]	& \cA(U \to \cX) \ar@<0.5ex>[r] \ar@<-0.5ex>[r] \ar@{^(->}[d]  & \cA(U \times_{\cX_V} U \to \cX)\ar@{^(->}[d] \\
f_* \cB (V \to \cY) \ar@{^(->}[r] 				& \cB(U \to \cX) \ar@<0.5ex>[r] \ar@<-0.5ex>[r]  & \cB(U\times_{\cX_V} U \to \cX)
}$$
Since $\cA \to \cB$ is universally adequate, Lemma \ref{lemma-affine-presentation} implies that the middle vertical arrow is universally adequate.  Therefore, for $s \in f_*\cB(V \to \cY)$, there exist $N > 0$ and $t \in \cA(U \to \cX)$ with $t \mapsto s^N$.  By exactness, we must have $t \in f_* \cA(V \to \cY)$.  Therefore $f_* \cA(V \to \cY) \to f_* \cB(V \to \cY)$ is universally adequate which establishes that $f_* \cA \to f_* \cB$ is a universally adequate.  Statement (1) follows.

\medskip \noindent
In the locally Noetherian case, direct limit methods imply that for each $i \in \{1,2,3\}$,    $(i) \iff (i')$.  We spell out the details only for $(2) \iff (2')$.   Given an arbitrary surjective morphism of $\oh_{\cX}$-algebras $\alpha: \cF \to \cG$, we apply Lemma \ref{lemma-qcoh-limit} to write $\cG = \dlim \cG_{\alpha}$ with each $\cG_{\alpha} \subseteq \cG$ a finite type $\oh_{\cX}$-algebra.  The inverse $\cF_{\alpha} = \alpha^{-1}(\cG_{\alpha})$ is a quasi-coherent $\oh_{\cX}$-algebra.  If we knew the proposition for $\cG$ finite type, then each $f_* \cF_{\alpha} \to f_* \cG_{\alpha}$ is universally adequate.  Given $(\Spec(B) \to \cY) \in \Le(\cY)$ and $s \in f_* \cG(\Spec(B) \to \cY)$, then as $f_*\cG(\Spec(B) \to \cY) = \dlim f_* \cG_{\alpha} (\Spec(B) \to \cY)$, there exists $\alpha$ such that $s \in f_* \cG_{\alpha}(\Spec(B) \to \cY)$.  But then there exist $N>0$ and $t \in f_* \cF_{\alpha}(\Spec(B) \to \cY)$ with $t \mapsto s_{\alpha}^N$.  We may now assume $\cG$ is a finite type $\oh_{\cX}$-algebra.  By apply Lemma \ref{lemma-qcoh-limit} again, we may write $\cF = \dlim \cF_{\alpha}$.  Then there exists $\alpha$ such that $\cF_{\alpha} \to \cG$ is surjective and $f_* \cF_{\alpha} \to f_* \cG$ is universally adequate which implies that $f_* \cF \to f_* \cG$ is universally adequate.  
\end{proof}

\begin{lem} \label{lemma-affine-base-equiv}
Let $\cX$ be a quasi-compact and quasi-separated algebraic stack.  The following are equivalent:
\begin{enumerate1}
\item For every universally adequate morphism $\cA \to \cB$ of quasi-coherent $\oh_{\cX}$-algebras with kernel $\cK$, the induced algebra homomorphism $\Gamma(\cX, \cA) / \Gamma(\cX, \cK) \to \Gamma(\cX, \cB)$ is an adequate homeomorphism.
\item The algebraic stack $\cX$ is adequately affine.
\item For every surjection $\cA \to \cB$ of quasi-coherent $\oh_{\cX}$-algebras, the homomorphism $\Gamma(\cX, \cA) \to \Gamma(\cX, \cB)$ is adequate. 
\item For every surjection $\cF \to \cG$ of quasi-coherent $\oh_{\cX}$-modules, the homomorphism $\Gamma(\cX, \Sym^* \cF) \to \Gamma(\cX, \Sym^* \cG)$ is adequate.
\item For every surjection $\cF \to \oh_{\cX}$ of quasi-coherent $\oh_{\cX}$-modules, there exist $N> 0$ and $f \in \Gamma(\cX, \Sym^N \cF)$ such that $f \mapsto 1$ under $\Gamma(\cX, \Sym^N \cF) \to \Gamma(\cX, \oh_{\cX})$.
\end{enumerate1}
If in addition $\cX$ is Noetherian, then the above are equivalent to:
\begin{enumerate1}  
\item[$(1')$] For every universally adequate morphism $\cA \to \cB$ of finite type quasi-coherent $\oh_{\cX}$-algebras with kernel $\cK$, the induced algebra homomorphism $\Gamma(\cX, \cA)/ \Gamma(\cX, \cK) \to \Gamma(\cX, \cB)$ is universally adequate.
\item[$(2')$] For every surjection $\cA \to \cB$ of finite type quasi-coherent $\oh_{\cX}$-algebras, then $ \Gamma(\cX, \cA) \to \Gamma(\cX, \cB)$ is universally adequate.
\item[$(3')$] For every surjection $\cA \to \cB$ of finite type quasi-coherent $\oh_{\cX}$-algebras, then $ \Gamma(\cX, \cA) \to \Gamma(\cX, \cB)$ is adequate.
\item[$(4')$] For every surjection $\cF \to \cG$ of coherent $\oh_{\cX}$-modules, $\Gamma(\cX, \Sym^* \cF) \to \Gamma(\cX, \Sym^* \cG)$ is adequate.
\item[$(5')$] For every surjection $\cF \to \oh_{\cX}$ of coherent $\oh_{\cX}$-modules, there exist $N> 0$ and $f \in \Gamma(\cX, \Sym^N \cF)$ such that $f \mapsto 1$ under $\Gamma(\cX, \Sym^N \cF) \to \Gamma(\cX, \oh_{\cX})$.
\end{enumerate1}
If in addition $\cX$ has the resolution property (that is, for every coherent $\oh_{\cX}$-module $\cF$, there exists a surjection $\cV \to \cF$ from a locally free $\oh_{\cX}$-module of finite rank), then the above are equivalent to
\begin{enumerate1}
\item[$(5')$] For every surjection $\cV \to \oh_{\cX}$ from a locally free $\oh_{\cX}$-module of finite rank, there exist $N> 0$ and $f \in \Gamma(\cX, \Sym^N \cV)$ such that $f \mapsto 1$ under $\Gamma(\cX, \Sym^N \cV) \to \Gamma(\cX, \oh_{\cX})$.
\end{enumerate1}
\end{lem}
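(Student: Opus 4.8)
The plan is to obtain $(1) \Leftrightarrow (2)$ directly from Lemma~\ref{lemma-adequately-affine-equiv} and then to run the cycle $(2) \Rightarrow (3) \Rightarrow (4) \Rightarrow (5) \Rightarrow (3) \Rightarrow (2)$; the Noetherian refinements $(i) \Leftrightarrow (i')$ and the resolution-property variant are dealt with at the end by standard limit and resolution arguments.

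Applying Lemma~\ref{lemma-adequately-affine-equiv} to the structure morphism $f \colon \cX \to \Spec \ZZ$ and using Lemma~\ref{lemma-adequate-affine} to identify $f_*\cA$ with the $\oh_{\Spec\ZZ}$-algebra attached to the ring $\Gamma(\cX, \cA)$ --- so that $f_*\cA / f_*\cK \to f_*\cB$ becomes the homomorphism $\Gamma(\cX,\cA)/\Gamma(\cX,\cK) \to \Gamma(\cX,\cB)$ (using left exactness of $f_*$) and an adequate homeomorphism of affine schemes corresponds to one of rings --- conditions $(1)$ and $(2)$ of Lemma~\ref{lemma-adequately-affine-equiv} turn into conditions $(1)$ and $(2)$ above, so $(1) \Leftrightarrow (2)$. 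Next, $(2) \Rightarrow (3)$ since adequacy is weaker than universal adequacy, and $(3) \Rightarrow (4)$ by applying $(3)$ to the surjection of $\oh_\cX$-algebras $\Sym^* \cF \to \Sym^*\cG$ induced by $\cF \to \cG$. For $(4) \Rightarrow (5)$, take $\cG = \oh_\cX$: then $\Sym^*\oh_\cX = \oh_\cX[t]$ with $t$ the degree-one generator and $\Gamma(\cX, \Sym^*\oh_\cX) = \Gamma(\cX,\oh_\cX)[t]$ (since $\Gamma(\cX,-)$ commutes with the direct sum over degrees on the quasi-compact quasi-separated stack $\cX$), so adequacy applied to the element $t$ and a comparison of degrees produce $N>0$ and $f \in \Gamma(\cX, \Sym^N\cF)$ with $f \mapsto t^N$; under the canonical isomorphism $\Sym^N\oh_\cX \cong \oh_\cX$ this says $f \mapsto 1$, which is $(5)$.

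The crux is $(5) \Rightarrow (3)$. Given a surjection $\pi \colon \cA \to \cB$ of quasi-coherent $\oh_\cX$-algebras and $b \in \Gamma(\cX,\cB)$, form the quasi-coherent $\oh_\cX$-module $\cA' = \ker\bigl(\cA \oplus \oh_\cX \to \cB\bigr)$ for the $\oh_\cX$-linear map $(a,c) \mapsto \pi(a) - cb$. Since $\pi$ is surjective the projection $\mathrm{pr}_2 \colon \cA' \to \oh_\cX$ is surjective, and by construction $\pi \circ \mathrm{pr}_1 = m_b \circ \mathrm{pr}_2$ on $\cA'$, where $\mathrm{pr}_1 \colon \cA' \to \cA$ is the other projection and $m_b \colon \oh_\cX \to \cB$ is multiplication by $b$. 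Applying $(5)$ to $\mathrm{pr}_2$ yields $N>0$ and $f \in \Gamma(\cX, \Sym^N\cA')$ mapping to $1$ under $\Gamma(\cX, \Sym^N\cA') \to \Gamma(\cX, \Sym^N\oh_\cX) = \Gamma(\cX,\oh_\cX)$. Let $a \in \Gamma(\cX,\cA)$ be the image of $f$ under $\Sym^N\cA' \xrightarrow{\Sym^N(\mathrm{pr}_1)} \Sym^N\cA \to \cA$, the last map being multiplication. Working over a smooth atlas and using $\pi \circ \mathrm{pr}_1 = m_b \circ \mathrm{pr}_2$ together with the multiplicativity of $\pi$, one checks that the $\oh_\cX$-module map $\Sym^N\cA' \to \cA \xrightarrow{\pi} \cB$ just defined agrees with $\Sym^N\cA' \xrightarrow{\Sym^N(\mathrm{pr}_2)} \Sym^N\oh_\cX = \oh_\cX \xrightarrow{m_{b^N}} \cB$; evaluating global sections at $f$ gives $\pi(a) = m_{b^N}(1) = b^N$. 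Thus $\Gamma(\cX,\cA) \to \Gamma(\cX,\cB)$ is adequate, which is $(3)$.

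Finally, $(3) \Rightarrow (2)$: by Remark~\ref{remark-smooth} it suffices that $\Gamma(\cX,\cA)[x_1,\dots,x_n] \to \Gamma(\cX,\cB)[x_1,\dots,x_n]$ be adequate for each $n$, and since $\Gamma(\cX,-)$ commutes with the direct sum defining $\oh_\cX[x_1,\dots,x_n]$ on the quasi-compact quasi-separated stack $\cX$, this is the map $\Gamma(\cX, \cA \otimes_{\oh_\cX} \oh_\cX[x_1,\dots,x_n]) \to \Gamma(\cX, \cB \otimes_{\oh_\cX}\oh_\cX[x_1,\dots,x_n])$, to which $(3)$ applies because $\cA \otimes_{\oh_\cX}\oh_\cX[x_1,\dots,x_n] \to \cB\otimes_{\oh_\cX}\oh_\cX[x_1,\dots,x_n]$ is again a surjection of quasi-coherent $\oh_\cX$-algebras; this closes the cycle. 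For the Noetherian refinements, one proves $(i) \Leftrightarrow (i')$ for $i = 1,\dots,5$ exactly as in the proof of Lemma~\ref{lemma-adequately-affine-equiv}, presenting an arbitrary quasi-coherent algebra (resp.\ module) as a filtered colimit of finite type (resp.\ coherent) subobjects via Lemma~\ref{lemma-qcoh-limit} and using that $\Gamma(\cX,-)$ commutes with such colimits; note that $(3') \Rightarrow (2')$ still goes through the polynomial argument since adjoining finitely many variables preserves finite type. When $\cX$ has the resolution property, the extra condition follows from $(5')$ because a finite-rank locally free module is coherent, while conversely, given a surjection $\cF \to \oh_\cX$ of coherent modules one chooses a surjection $\cV \to \cF$ with $\cV$ locally free of finite rank, and any section of $\Gamma(\cX, \Sym^N\cV)$ mapping to $1$ maps, through $\Sym^N\cV \to \Sym^N\cF$, to a section of $\Gamma(\cX,\Sym^N\cF)$ mapping to $1$ by functoriality. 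I expect the verification in $(5) \Rightarrow (3)$, particularly checking the commutativity that forces $\pi(a) = b^N$, to be the main obstacle; the remaining steps are formal.
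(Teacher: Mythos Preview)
Your proof is correct and follows essentially the same route as the paper. In particular, your key step $(5)\Rightarrow(3)$ uses the same fiber-product construction $\cA' = \cA \times_{\cB} \oh_{\cX}$ (written as a kernel) and the same passage through $\Sym^N$ and multiplication; the paper organizes this into a commutative square but the content is identical, and your polynomial-algebra argument for $(3)\Rightarrow(2)$ and limit arguments for $(i)\Leftrightarrow(i')$ match the paper's as well.
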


\begin{proof}  
It is immediate that $(1) \implies (2) \implies (3) \implies (4) \implies (5)$.  Lemma \ref{lemma-adequately-affine-equiv} shows that $(2) \implies (1)$.  For $(3) \implies (2)$, let $\cA \to \cB$ be a surjection of quasi-coherent $\oh_{\cX}$-algebras.  By Remark \ref{remark-smooth}, it suffices to show that for each $n$,  the homomorphism $\Gamma(\cX, \cA)[x_1, \ldots, x_n] \to \Gamma(\cX, \cB)[x_1, \ldots, x_n]$ is adequate, but this corresponds to 
$$\Gamma(\cX, \cA \tensor_{\oh_{\cX}} \oh_{\cX}[x_1, \ldots, x_n]) \to \Gamma(\cX, \cB \tensor_{\oh_{\cX}} \oh_{\cX}[x_1, \ldots, x_n])$$
 which is adequate by statement (3).  The same argument of Lemma \ref{lemma-adequately-affine-equiv} shows that $(4) \implies (3)$.  For $(5) \implies (3)$, suppose that $\cA \to \cB$ is a surjection of quasi-coherent $\oh_{\cX}$-algebras.   A section $s \in \Gamma(\cX, \cB)$ gives a morphism of $\oh_{\cX}$-modules $\oh_{\cX} \to \cB$. Consider the fiber product and the induced diagram
$$
\begin{array}{ccc}
\xymatrix{
\cA \times_{\cB} \oh_{\cX} \ar[r] \ar[d]		& \oh_{\cX} \ar[d]^s \\
\cA \ar[r]							& \cB	 
} \qquad
\xymatrix{
\Sym^* (\cA \times_{\cB} \oh_{\cX}) \ar[r] \ar[d]		& \Sym^* \oh_{\cX} \cong \oh_{\cX}[x] \ar[d]	& x \ar@{|->}[d] \\
\cA \ar[r]							& \cB & s
}
\end{array}$$
There exist $N > 0$ and $\tilde t \in \Gamma(\cX,\Sym^* (\cF \times_{\cG} \oh_{\cX}))$ with $\tilde t \mapsto x^N$ under $\Gamma(\cX,\Sym^* (\cF \times_{\cG} \oh_{\cX})) \to \Gamma(\cX, \Sym^* \oh_{\cX}) \cong \Gamma(\cX, \oh_{\cX})[x]$.   If $t$ is the image of $\tilde t$ under the composition 
$$\Gamma(\cX,\Sym^* (\cA \times_{\cB} \oh_{\cX})) \to \Gamma(\cX, \Sym^* \cA),$$
 then $t \mapsto s^N$ which establishes statement (3).  Direct limit methods show the equivalences of $(i) \iff (i')$ for $i \in \{1, \ldots, 5\}$. The equivalence of $(5') \iff (5'')$ is immediate.
\end{proof}

%%%%%%%%%%
\subsection{Properties of adequately affine morphisms}

\begin{prop} \label{proposition-adequately-affine} \quad
\begin{enumerate1}
\item 
Adequately affine morphisms are stable under composition.
\item 
A cohomologically affine morphism $f: \cX \to \cY$ of algebraic stacks is adequately affine.  In particular, an affine morphism $f: \cX \to \cY$ of algebraic stacks is adequately affine.
\item
If $f: \cX \to \cY$ is an adequately affine morphism of algebraic stacks over an algebraic space $S$ and $S' \to S$ is a morphism of algebraic spaces, then $f_{S'} = \cX_{S'} \to \cY_{S'}$ is adequately affine. 
\end{enumerate1}
Consider a 2-cartesian diagram of algebraic stacks:
$$\xymatrix{ 
\cX' \ar[r]^{f'} \ar[d]^{g'} \ar @{} [dr] |{\square}	& \cY' \ar[d]^g\\
\cX \ar[r]^f				& \cY
}$$
\begin{enumerate1} \setcounter{enumi}{3}
\item 
If $g$ is faithfully flat and $f'$ is adequately affine, then $f$ is adequately affine.
\item
If $f$ is adequately affine and $g$ is a quasi-affine morphism, then $f'$ is adequately affine.
\item
 If $f$ is adequately affine and $\cY$ has quasi-affine diagonal over $S$, then $f'$ is adequately affine. In particular, if $\cY$ is a Deligne$-$Mumford stack with quasi-compact and separated diagonal, then $f$ adequately affine implies that  $f'$ is adequately affine.\\
\end{enumerate1}
\end{prop}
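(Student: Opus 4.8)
The plan is to reduce every assertion to the calculus of universally adequate ring- and algebra-homomorphisms of Section~\ref{section-adequate}, using throughout the equivalent formulations of Lemmas~\ref{lemma-adequately-affine-equiv} and~\ref{lemma-affine-base-equiv}; in particular I will use freely that surjections and adequate homeomorphisms are universally adequate (the latter by Proposition~\ref{proposition-adequate-equiv}), that composites of universally adequate maps are universally adequate, and that universal adequacy of an $\oh_\cX$-algebra map may be tested on a single smooth presentation (Lemma~\ref{lemma-affine-presentation}).

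For (1): given adequately affine $f\colon\cX\to\cY$, $g\colon\cY\to\cZ$ and a surjection $\cF\to\cG$ of quasi-coherent $\oh_\cX$-modules, Lemma~\ref{lemma-adequately-affine-equiv}(3) makes $f_*\Sym^*\cF\to f_*\Sym^*\cG$ a universally adequate map of $\oh_\cY$-algebras; applying $g_*$ and Lemma~\ref{lemma-adequately-affine-equiv}(1) to $g$ (its kernel pushes forward to a quotient on which the induced map is an adequate homeomorphism) one finds that $(g\circ f)_*\Sym^*\cF=g_*f_*\Sym^*\cF\to g_*f_*\Sym^*\cG=(g\circ f)_*\Sym^*\cG$ is universally adequate, so $g\circ f$ is adequately affine. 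Part (2) is immediate: by Lemma~\ref{lemma-cohomologically-affine} a cohomologically affine $f$ sends a surjection of $\oh_\cX$-algebras to a surjection, which is universally adequate; and affine morphisms are cohomologically affine. Part (4) is flat base change: with $g$ faithfully flat one has $g^*f_*=f'_*g'^*$, so a surjection $\cA\to\cB$ of $\oh_\cX$-algebras pulls back to a surjection on $\cX'$, whence $f'_*g'^*\cA\to f'_*g'^*\cB=g^*(f_*\cA\to f_*\cB)$ is universally adequate, and Lemma~\ref{lemma-adequate-algebra-bc}(2) descends this to $f_*\cA\to f_*\cB$.

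The remaining three parts are base-change statements and rest on the case of an affine $g$. If $g$ is affine then $g_*$ and the base change $g'_*$ are exact and faithful, so a surjection $\cA'\to\cB'$ of $\oh_{\cX'}$-algebras pushes to a surjection $g'_*\cA'\to g'_*\cB'$, hence $g_*(f'_*\cA'\to f'_*\cB')=f_*(g'_*\cA'\to g'_*\cB')$ is universally adequate; read off on a smooth presentation $\Spec R\to\cY$ with $\Spec R'=\cY'\times_\cY\Spec R$ (affine over $\Spec R$), this says $\Gamma(f'_*\cA')\to\Gamma(f'_*\cB')$ is universally adequate as $R$-algebras, and since universal adequacy of a ring map is unchanged when the base ring $R$ is replaced by an $R$-algebra $R'$ through which the map is linear, this is the same as being universally adequate as $R'$-algebras, i.e.\ $f'$ is adequately affine. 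A quasi-affine $g$ factors as a quasi-compact open immersion into an affine-over-$\cY$ algebraic space followed by an affine morphism, so (5) follows from the affine case together with the fact that adequate affineness is Zariski-local on the target. For (3) one uses in addition that adequate affineness descends along smooth surjective base change on the target: a morphism $S'\to S$ of algebraic spaces becomes, after a smooth cover of $\cY$ induced by an \'etale cover of $S$ and then a Zariski cover, affine over $\cY$ once pulled back, so $f_{S'}$ agrees there with an affine base change of $f$ and is hence adequately affine. For (6), factor $g\colon\cY'\to\cY$ as the graph $\Gamma_g\colon\cY'\to\cY'\times_S\cY$ followed by the projection to $\cY$: the graph is the base change of $\Delta_{\cY/S}$, hence quasi-affine by hypothesis, while the base change of $f$ along the projection is $f\times_S\id_{\cY'}$, which is adequately affine by the argument of (3) (applied over a scheme covering $\cY'$); now (5) applied to $\Gamma_g$ concludes. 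The Deligne--Mumford case is the special case where the diagonal, being separated, quasi-compact and unramified, is quasi-affine.

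The main obstacle is the non-flat base change underlying (5), (3) and (6): because $f_*$ neither commutes with non-flat base change nor preserves surjectivity, one cannot argue as in (4). The crux is therefore twofold: first the affine-base-change step, where one must pass to a smooth presentation of the base and exploit that adequacy of a ring homomorphism is intrinsic to the underlying map of rings; and second the statement that adequate affineness descends along a smooth surjective morphism of targets, which I would prove by reducing — via Lemma~\ref{lemma-qcoh-limit} and the noetherian criteria of Lemma~\ref{lemma-adequately-affine-equiv} — to finite-type algebras and spreading out.
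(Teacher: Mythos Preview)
Your treatment of (1), (2) and (4) is correct and matches the paper's. Your affine case of (5) is also essentially the paper's argument, and your graph-factorization approach to (6) is a pleasant alternative to the paper's explicit presentation diagram: factoring $g=p_2\circ\Gamma_g$ with $\Gamma_g$ quasi-affine (as a base change of $\Delta_{\cY/S}$) and $p_2$ a base change of $\cY'\to S$, so that the pullback of $f$ along $p_2$ is handled by (3) and (4) and then (5) applies to $\Gamma_g$. This works once (5) is in hand.

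The genuine gap is the open-immersion case of (5). You write that ``(5) follows from the affine case together with the fact that adequate affineness is Zariski-local on the target'', but the direction you need---that if $f:\cX\to\cY$ is adequately affine and $\cY'\hookrightarrow\cY$ is open then $f|_{\cY'}$ is adequately affine---is \emph{exactly} the open-immersion case of (5), so the argument is circular. The difficulty is real: a surjection $\cA'\to\cB'$ of $\oh_{\cX'}$-algebras need not extend to a surjection on $\cX$, since $g'_*\cA'\to g'_*\cB'$ can fail to be surjective, and there is no $j_!$ for quasi-coherent sheaves to fix this. The paper's remedy is to set $\tilde\cB=\im(g'_*\cA'\to g'_*\cB')$, so that $g'_*\cA'\twoheadrightarrow\tilde\cB$ is a surjection on $\cX$; then $f_*g'_*\cA'\to f_*\tilde\cB$ is universally adequate, and pulling back along the flat $g$ and using $g'^*g'_*\cong\id$ (for a quasi-compact open immersion) together with exactness of $g'^*$ identifies $g^*(f_*g'_*\cA'\to f_*\tilde\cB)$ with $f'_*\cA'\to f'_*\cB'$. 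You should insert this step; once you do, your arguments for (3) and (6), which both rely on (5), go through.
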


\begin{proof}  Part (1) follows from Proposition \ref{lemma-adequately-affine-equiv}.  Part (2) is clear.  For part (4), suppose that $\alpha: \cA \to \cB$ is a surjection of $\oh_{\cX}$-algebras.  Since $g'^*$ is exact and $f'$ is adequately affine, $f'_* g'^* \alpha$ is universally adequate.  By flat base change, $g^* f_* \alpha$ is canonically identified with $f'_* g'^* \alpha$.  By Lemma \ref{lemma-adequate-algebra-bc}(2), $f_* \alpha$ is universally adequate.  Therefore $f$ is adequately affine.  

\medskip \noindent
For part (5), let $\alpha: \cA' \to \cB'$ be a surjection of $\oh_{\cX'}$-algebras.  Suppose first that $g: \cY' \to \cY$ is a quasi-compact open immersion.   Let $\tilde \cB = \im(g'_* \cA \to g'_* \cB)$.  Since $g'^* g'_* \cA' \cong \cA'$ and $g'^* g'_* \cB' \cong \cB'$, there is a factorization $\cA' \mapsonto g'^* \tilde \cB \hookarr \cB'$ and we conclude that there is a canonical isomorphism $g'^* \tilde \cB \cong \cB'$.  Since $f$ is adequately affine, $f_* g'_* \cA' \to f_* \tilde \cB$ is universally adequate.  By Lemma \ref{lemma-adequate-algebra-bc}(1), $g^* f_* g'_* \cA' \to g^* f_* g'_* \tilde \cB$ is universally adequate but this is identified with $f'_* g'^* g'_* \cA' \to f'_* g'^* g'_* \tilde \cB$ which is identified with $f'_* \cA' \to f'_* \cB'$.   Now suppose that $g: \cY' \to \cY$ is an affine morphism so that the functors $g_*$ and $g'_*$ are faithfully exact on quasi-coherent sheaves.  It is also easy to see that a morphism $\cC \to \cD$ of quasi-coherent $\oh_{\cY'}$-algebras is universally adequate if and only if $g_*\cC \to g_* \cD$ is.  Since $f$ is adequately affine, $f_* g'_* \alpha \cong g_* f'_* \alpha$ is universally adequate and it follows that $f'_* \alpha$ is universally adequate.  This establishes part (5).

\medskip \noindent
For part (6), the question is Zariski-local on $\cY$ and $\cY'$ so we may assume that they are quasi-compact.  Let $Y  \to \cY$ be a smooth presentation with $Y$ affine.  Since $\Delta_{\cY/S}$ is quasi-affine, $Y\to \cY$ is a quasi-affine morphism.  We may choose a smooth presentation $Z \to \cY'_Y:= \cY' \times_{\cY} Y$ with $Z$ an affine scheme.  We have the 2-cartesian diagram:
$$\xymatrix{
				& \cZ \ar[d] \ar[rr]			&				& Z \ar[d] \\
				&\cX'_Y \ar[rr] \ar[dd] \ar[dl]			&				& \cY'_{Y} \ar[dd] \ar[dl] \\
\cX' \ar[rr]\ar[dd] &						& \cY' \ar[dd]	& \\
				& \cX_Y \ar[rr] \ar[dl]				& 				& Y \ar[dl] \\
\cX  \ar[rr]	&							& \cY 		&
}$$
Since $\cX \to \cY$ is adequately affine and $Y \to \cY$ is a quasi-affine morphism, by part (5) $\cX_Y \to Y$ is adequately affine.   The morphism $Z \to Y$ is affine which implies that $\cZ \to Z$ is adequately affine.  Since the composition $Z \to \cY'_Y \to \cY'$ is smooth and surjective, by descent $\cX' \to \cY'$ is adequately affine.  For the final statement of part (6), $\Delta_{\cY/S}: \cY \to \cY \times_S \cY$ is separated, quasi-finite and finite type so by Zariski's Main Theorem for algebraic spaces, 
$\Delta_{\cY/S}$ is quasi-affine.
Finally, part (3) follows formally from parts (4) and (5).
\end{proof}

\begin{remark}  \label{remark-quasi-affine}
Part (5) can fail if $\cY' \to \cY$ is not quasi-affine and part (6) can fail if $\cY$ does not have quasi-affine diagonal.  As in the example given in \cite[Remark 3.11]{alper-good}, if $A$ is an abelian variety over a field $k$, then $\Spec(k) \to BA$ is cohomologically affine (and therefore adequately affine) but $A = \Spec(k) \times_{BA} \Spec(k) \to \Spec(k)$ is not adequately affine.
\end{remark}

\begin{lem} \label{lemma-propertyP}
Let $f: \cX \to \cY$, $g: \cY \to \cZ$ be morphisms of algebraic stacks where either $g$ is quasi-affine or $\cZ$ has quasi-affine diagonal over $S$.  Suppose that $g \circ f$ is adequately affine and that $g$ has affine diagonal.  Then $f$ is adequately affine.
\end{lem}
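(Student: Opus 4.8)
The plan is to realize $f$ as a composition of two morphisms each of which is visibly adequately affine, using the graph construction and Proposition \ref{proposition-adequately-affine}. Write $f = p \circ \Gamma_f$, where $\Gamma_f \colon \cX \to \cX \times_{\cZ} \cY$ is the graph morphism of $f$ relative to $g \circ f$ (i.e.\ the section of the first projection determined by $f$), and $p \colon \cX \times_{\cZ} \cY \to \cY$ is the second projection. Since adequately affine morphisms are stable under composition (Proposition \ref{proposition-adequately-affine}(1)), it suffices to show that each of $\Gamma_f$ and $p$ is adequately affine; all the morphisms in sight are quasi-compact and quasi-separated, so there is no issue with the standing hypotheses in Definition \ref{definition-adequately-affine}.

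First I would treat $\Gamma_f$. It sits in a $2$-cartesian square
$$\xymatrix{
\cX \ar[r]^-{f} \ar[d]_{\Gamma_f}		& \cY \ar[d]^{\Delta_{\cY/\cZ}} \\
\cX \times_{\cZ} \cY \ar[r]^-{f \times \id}	& \cY \times_{\cZ} \cY
}$$
so $\Gamma_f$ is a base change of the relative diagonal $\Delta_{\cY/\cZ}$. By hypothesis $g$ has affine diagonal, hence $\Delta_{\cY/\cZ}$ is affine, hence $\Gamma_f$ is affine, and therefore adequately affine by Proposition \ref{proposition-adequately-affine}(2).

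Next, $p \colon \cX \times_{\cZ} \cY \to \cY$ is precisely the base change of $g \circ f \colon \cX \to \cZ$ along $g \colon \cY \to \cZ$. By hypothesis $g \circ f$ is adequately affine, and by assumption either $g$ is quasi-affine or $\cZ$ has quasi-affine diagonal over $S$; in the former case Proposition \ref{proposition-adequately-affine}(5) shows that $p$ is adequately affine, and in the latter case Proposition \ref{proposition-adequately-affine}(6) does. Combining this with the previous paragraph and Proposition \ref{proposition-adequately-affine}(1), we conclude that $f = p \circ \Gamma_f$ is adequately affine.

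I do not expect a genuine obstacle here: the argument is the standard ``magic square'' cancellation, and the only points that require care are checking that the graph square above is cartesian (a routine identification of fiber products) and matching the hypotheses exactly to cases (5) and (6) of Proposition \ref{proposition-adequately-affine} — in particular noting that ``$g$ has affine diagonal'' is exactly what makes the graph morphism $\Gamma_f$ affine, which is the only place that hypothesis is used.
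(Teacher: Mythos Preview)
Your proof is correct and is essentially identical to the paper's own argument: the paper also factors $f$ through the graph $(\id,f)\colon \cX \to \cX \times_{\cZ} \cY$ and the second projection $p_2$, exhibits the first as a base change of the affine diagonal $\Delta_{\cY/\cZ}$ and the second as a base change of the adequately affine $g\circ f$ along $g$, and then invokes Proposition~\ref{proposition-adequately-affine}. Your write-up is simply a more explicit version of the same ``cancellation via the magic square'' reasoning.
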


\begin{proof}
 This is clear from the 2-cartesian diagram
$$\xymatrix{
				& \cX \ar[r]^{(id,f)} \ar[dl]	& \cX \times_{\cZ} \cY \ar[r]^{p_2}	 \ar[dl] \ar[dr] & \cY \ar[dr]^g \\
\cY \ar[r]^{\Delta}	& \cY \times_{\cZ} \cY	&	&  \cX \ar[r]				& \cZ 
}$$
and Proposition \ref{proposition-adequately-affine}.
\end{proof}

\subsection{Generalization of Serre's criterion}

\begin{thm} \label{serres-criterion}
A quasi-compact, quasi-separated morphism $f: X \to Y$ of algebraic spaces is adequately affine if and only if it is affine.
\end{thm}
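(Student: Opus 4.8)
The forward implication is immediate: an affine morphism is adequately affine by Proposition~\ref{proposition-adequately-affine}(2). So the substance is the converse, and the plan is to adapt the classical proof of Serre's affineness criterion, with the adequacy lifting property playing the role of the vanishing of cohomology.

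First I would reduce to a tractable situation. Affineness of a morphism is fppf\dash local on the target and adequate affineness is stable under arbitrary base change (Proposition~\ref{proposition-adequately-affine}(3)), so after passing to an affine \'etale cover of $Y$ we may assume $Y=\Spec R$; then $f$ is affine if and only if $X$ is affine, and $f$ is adequately affine if and only if the algebraic space $X$ is adequately affine (the remarks after Definition~\ref{definition-adequately-affine}). Since closed immersions into $X$, and more generally affine morphisms to $X$, are adequately affine (Proposition~\ref{proposition-adequately-affine}(1),(2)), writing $X$ as a cofiltered limit of finite\dash type $\ZZ$\dash algebraic spaces with affine transition maps reduces — modulo the limit argument discussed below — to the case that $X$ is Noetherian; then, choosing a finite surjective morphism $\pi\colon W\to X$ from a scheme $W$ (which exists for $X$ Noetherian), noting that $W$ is again adequately affine (compose the affine morphism $\pi$ with $X\to\Spec\ZZ$), and applying Chevalley's theorem ($\pi$ finite surjective and $W$ affine $\Rightarrow$ $X$ affine), I would reduce everything to showing: \emph{a quasi-compact, quasi-separated adequately affine scheme $X$ is affine.}

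Now the Serre argument. Write $R=\Gamma(X,\oh_X)$. For a closed point $x\in X$, pick an affine open $U\ni x$ and let $Z=X\setminus U$ with its reduced structure; since the reduced point $\Spec\kappa(x)$ and $Z$ are disjoint closed subschemes, $\oh_X\to\oh_Z\times\kappa(x)$ is a surjection of quasi-coherent $\oh_X$\dash algebras. By the adequacy characterization of adequately affine schemes (Lemma~\ref{lemma-affine-base-equiv}(3)) there exist $N>0$ and $a\in R$ with $a\mapsto(0,1)^N=(0,1)$ in $\Gamma(Z,\oh_Z)\times\kappa(x)$; thus $a\in\Gamma(X,\mathcal I_Z)$ and $a(x)=1$, so $X_a$ is a distinguished affine open of $U$ containing $x$. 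As $X$ is quasi\dash compact every point specializes to a closed point, so these opens $X_a$ cover $X$; take a finite subcover $X=X_{a_1}\cup\cdots\cup X_{a_n}$ with $a_i\in R$ and each $X_{a_i}$ affine. To see that $(a_1,\dots,a_n)=R$, observe that the covering makes $\oh_X^{\oplus n}\xrightarrow{(a_i)}\oh_X$ surjective, hence so is $\Sym^*(\oh_X^{\oplus n})=\oh_X[t_1,\dots,t_n]\to\Sym^*(\oh_X)=\oh_X[s]$, $t_i\mapsto a_i s$ (surjective in each degree $d$ because $(a_1,\dots,a_n)^d=\oh_X$); applying Lemma~\ref{lemma-affine-base-equiv}(3) on global sections produces $N>0$ and $P\in R[t_1,\dots,t_n]$ with $P(a_1s,\dots,a_ns)=s^N$ in $R[s]$, and the coefficient of $s^N$ gives $1\in(a_1,\dots,a_n)^N\subseteq R$. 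Then $X=\bigcup X_{a_i}$ with each $X_{a_i}$ affine and $1\in(a_1,\dots,a_n)$, so $X$ is affine by the standard criterion, which (via the reductions above) finishes the theorem.

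The step I expect to be the real obstacle is the reduction to the Noetherian scheme case: ``adequately affine'' quantifies over all surjections of quasi-coherent algebras and is not manifestly finitary, so descending it along a Noetherian approximation $X=\varprojlim_\lambda X_\lambda$, and then transferring the scheme case to algebraic spaces (a finite cover by a scheme together with Chevalley's theorem), both require care. By contrast, once one is working over a qcqs scheme the argument is essentially the classical proof of Serre's criterion; the only new point is that its two uses of cohomology vanishing — lifting a section to cut out a basic affine open, and lifting $1$ to certify that the $a_i$ generate the unit ideal — are honest lifting problems for surjections of quasi-coherent \emph{algebras} (the first directly, the second after applying $\Sym^*$), hence are handled by the adequacy property, at the innocuous cost of passing from $a(x)$ and $1$ to $a(x)^N$ and $1\in(a_i)^N$.
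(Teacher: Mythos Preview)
Your scheme argument is essentially the paper's: the paper uses the surjection $\Sym^*\cI_Z\to\Sym^*\kappa(x)$ where you use $\oh_X\to\oh_Z\times\kappa(x)$, and both then use the surjection $\oh_X[t_1,\dots,t_n]\to\oh_X[s]$, $t_i\mapsto a_is$, to force $(a_1,\dots,a_n)=R$. That part is fine.

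The gap is exactly where you flagged it. Your Noetherian approximation step does not work as written: writing $X=\varprojlim X_\lambda$ with affine transition maps gives affine morphisms $X\to X_\lambda$, so knowing $X$ is adequately affine tells you nothing about $X_\lambda$; and conversely, there is no mechanism for transporting the adequacy condition (which quantifies over \emph{all} surjections of quasi-coherent algebras on $X$) down to one of the $X_\lambda$. You would need something like ``$X$ affine $\iff$ some $X_\lambda$ affine'' together with ``$X$ adequately affine $\Rightarrow$ some $X_\lambda$ adequately affine,'' and the second implication is precisely the non-finitary statement you are worried about.

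The paper sidesteps this entirely: it invokes Rydh's result that \emph{any} quasi-compact, quasi-separated algebraic space $X$ (no Noetherian hypothesis) admits a finite surjective morphism $X'\to X$ from a scheme $X'$. Then $X'$ is adequately affine (affine over $X$, composed with $X\to\Spec\ZZ$), hence affine by the scheme case you already proved, and Chevalley's criterion for algebraic spaces (also available without Noetherian hypotheses, again by Rydh) gives that $X$ is affine. So the fix is simply to drop the Noetherian reduction and cite the non-Noetherian versions of the finite cover and Chevalley; the rest of your argument goes through unchanged.
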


\begin{proof} By Proposition \ref{proposition-adequately-affine}, we may assume that $Y$ is an affine scheme.  We first show that the proof of \cite[II.5.2.1]{ega} generalizes when $X$ is a scheme.  Set $R = \Gamma(X, \oh_X)$.  For a closed point $q \in X$, let $U$ be an open affine neighborhood of $q$ with $Y = X \setminus U$.    Consider the surjective morphism of quasi-coherent $\oh_X$-algebras
$$\Sym^* \cI_Y \to \Sym^* k(q) \cong k(q)[x]$$
Since $X$ is adequately affine, there exist an integer $N$ and $f' \in \Gamma(X, \Sym^* \cI_Y)$ with $f' \mapsto x^N$.  Let $f\in R$ be the image of $f'$ under $\Gamma(X, \Sym^* \cI_Y) \to \Gamma(X,\oh_X)=R$.  We have $q \in X_f \subseteq U$.  Furthermore, $X_f$ is an affine scheme since $X_f = U_{f}$.

\medskip \noindent
Since $X$ is quasi-compact, we may find functions $f_1, \ldots, f_n \in R$ such that the affine schemes $X_{f_i}$ cover $X$.  Since affineness is Zariski-local, it suffices to show that $f_1, \ldots, f_n$ generate the unit ideal of $R$.  There is a surjection of $\oh_X$-algebras
$$\alpha:  \oh_X [t_1, \ldots, t_k] \to \oh_X[x]$$
defined by sending $t_i$ to $f_i x$.  Therefore
$$\Gamma(\alpha): R[t_1, \ldots, t_k] \to R[x]$$
is adequate and there exist an integer $N > 0$ and $g \in R[t_1, \ldots, t_k]$ of degree $N$ such that $g \mapsto x^{N}$.  But this implies that the monomials of $\prod_i f_i^{n_i}$ of degree $N$ generate the unit ideal and thus $(f_i) = R$.  

\medskip \noindent
In general, if $X$ is an algebraic space, by \cite[Theorem B]{rydh_noetherian}, there exists a finite surjective morphism $X' \to X$ from a scheme $X'$.  Since $X'$ is adequately affine, $X'$ is affine.
By Chevalley's criterion for algebraic spaces (see \cite[Corollary A.2]{conrad-deligne} or \cite[Theorem 8.1]{rydh_noetherian}), $X$ is affine.
\end{proof}

\begin{cor} A quasi-compact, quasi-separated representable morphism $f: \cX \to \cY$ of algebraic stacks where $\cY$ has quasi-affine diagonal is adequately affine if and only if it is affine.
\end{cor}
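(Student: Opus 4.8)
The plan is to combine the generalization of Serre's criterion (Theorem \ref{serres-criterion}) with the base-change behaviour of adequately affine morphisms recorded in Proposition \ref{proposition-adequately-affine}. The ``if'' direction needs nothing new: an affine morphism is adequately affine by Proposition \ref{proposition-adequately-affine}(2).

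For the ``only if'' direction, suppose $f$ is representable and adequately affine. Since being affine is an fppf-local (in particular smooth-local) property of a morphism on the target, it suffices to choose a smooth surjection $p \colon Y \to \cY$ with $Y$ an affine scheme and to show that the base change $f_Y \colon \cX_Y := \cX \times_{\cY} Y \to Y$ is affine; then $f$ is affine by descent along $p$. Because $f$ is representable and $Y$ is a scheme, $\cX_Y$ is an algebraic space, so $f_Y$ is a quasi-compact, quasi-separated morphism of algebraic spaces, and by Theorem \ref{serres-criterion} it is affine as soon as it is adequately affine.

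It thus remains to see that $f_Y$ is adequately affine, and this is where the hypothesis on the diagonal of $\cY$ enters. Regarding everything as living over $S = \Spec \ZZ$, the absolute diagonal $\Delta_{\cY}$ is quasi-affine, so Proposition \ref{proposition-adequately-affine}(6), applied to the base change of $f$ along $p$, gives exactly that $f_Y$ is adequately affine. (Equivalently, one can first observe that $p$ itself is quasi-affine --- factor it as the graph $Y \to Y \times_{\ZZ} \cY$, which is a base change of $\Delta_{\cY}$ and hence quasi-affine, followed by the affine projection $Y \times_{\ZZ} \cY \to \cY$ --- and then invoke Proposition \ref{proposition-adequately-affine}(5).) This completes the argument.

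I do not expect a genuinely new obstacle beyond Theorem \ref{serres-criterion} itself; the points requiring care are structural rather than technical. It is the representability of $f$ that is needed to make $\cX_Y$ an algebraic space so that Serre's criterion applies, and it is the quasi-affine-diagonal hypothesis on $\cY$ that is needed to keep the base change $f_Y$ adequately affine --- this cannot be dropped, cf.\ Remark \ref{remark-quasi-affine}. Beyond these, the proof is a formal assembly of Theorem \ref{serres-criterion}, Proposition \ref{proposition-adequately-affine}, and fppf descent of affineness.
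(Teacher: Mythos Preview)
Your proposal is correct and is precisely the argument the paper has in mind: the paper's proof is the one-line ``This follows from Proposition \ref{proposition-adequately-affine} and Theorem \ref{serres-criterion},'' and you have unpacked exactly that, using the quasi-affine-diagonal hypothesis via Proposition \ref{proposition-adequately-affine}(5) or (6) to make the base change to an affine atlas adequately affine, then applying Theorem \ref{serres-criterion} on the algebraic-space fiber and descending. The only cosmetic point is that $\cY$ need not be quasi-compact, so strictly speaking $Y$ should be a disjoint union of affines (or one works Zariski-locally on $\cY$ first); this changes nothing in the argument.
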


\begin{proof}
This follows from Proposition \ref{proposition-adequately-affine} and Theorem \ref{serres-criterion}.
 \end{proof}

\begin{remark}  As in Remark \ref{remark-quasi-affine}, the corollary can fail if $\cY$ does not have quasi-affine diagonal; if $A$ is an abelian variety over a field $k$, then $\Spec(k) \to BE$ is adequately affine but not affine.
\end{remark}

%%%%%%%%%%%%%
\section{Adequate moduli spaces} \label{section-adequate-moduli}

We introduce the notion of an adequate moduli space and then prove its basic properties. 

\subsection{The definition}

\begin{defn} \label{definition-adequate-moduli-space}  A quasi-compact, quasi-separated morphism $\phi: \cX \to Y$ from an algebraic stack $\cX$ to an algebraic space $Y$ is called an \emph{adequate moduli space} if the following properties are satisfied:
\begin{enumerate1}
\item The morphism $\phi$ is adequately affine,.
\item The natural map $\oh_Y \iso \phi_* \oh_{\cX}$ is an isomorphism. 
\end{enumerate1}
\end{defn}

\begin{remark}  A quasi-compact, quasi-separated morphism $p: \cX \to S$ from an algebraic stack to an algebraic space $S$ is adequately affine if and only if the natural map $\cX \to \sSpec( p_* \oh_{\cX})$ is an adequate moduli space. 
\end{remark}

\begin{remark}  As in \cite[Remark 4.4]{alper-good}, one could also consider the relative notion for an arbitrary quasi-compact, quasi-separated morphisms of algebraic stacks $\phi: \cX \to \cY$ satisfying the two conditions in Definition \ref{definition-adequate-moduli-space}.
\end{remark} 

\medskip \noindent 
In characteristic $0$, the notions of good moduli spaces and adequate moduli spaces agree.

\begin{prop} \label{proposition-good}
A quasi-compact, quasi-separated morphism $\phi: \cX \to Y$ over $\Spec(\QQ)$ from an algebraic stack $\cX$ to an algebraic space $Y$ is a good moduli space if and only if it is an adequate moduli space.
\end{prop}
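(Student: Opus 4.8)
The plan is to observe that the two notions being compared differ only in their first defining condition. Unwinding Definition \ref{definition-adequate-moduli-space}, a morphism $\phi\colon \cX \to Y$ is an adequate moduli space precisely when (i) $\phi$ is adequately affine and (ii) the natural map $\oh_Y \to \phi_* \oh_{\cX}$ is an isomorphism. By definition of a good moduli space (see \cite{alper_good_arxiv}, as recalled in the introduction), $\phi$ is a good moduli space precisely when (i$'$) $\phi$ is cohomologically affine and (ii) the same map $\oh_Y \to \phi_* \oh_{\cX}$ is an isomorphism. Condition (ii) is literally identical in the two definitions, so everything reduces to comparing (i) and (i$'$).

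The key step is then a direct appeal to Lemma \ref{lemma-adequately-affine-char0}: for a quasi-compact, quasi-separated morphism of algebraic stacks defined over $\Spec \QQ$, being adequately affine is equivalent to being cohomologically affine. Since $\phi\colon \cX \to Y$ is assumed to be a morphism over $\Spec \QQ$, this equivalence applies and shows that condition (i) holds if and only if condition (i$'$) holds. Combining this with the coincidence of condition (ii), we conclude that $\phi$ is a good moduli space if and only if it is an adequate moduli space.

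There is essentially no obstacle here: the entire content sits in Lemma \ref{lemma-adequately-affine-char0}, which itself follows by combining Lemma \ref{lemma-cohomologically-affine} (cohomological affineness is detected by surjectivity of $f_*$ on surjections of quasi-coherent $\oh_{\cX}$-algebras) with Lemma \ref{lemma-adequate-char0} (any adequate inclusion of $\QQ$-algebras is an equality, so over $\QQ$ universal adequacy of $f_*$ on such surjections forces surjectivity). The only point worth checking is that the definition of good moduli space from \cite{alper_good_arxiv} matches the phrasing used here, namely \emph{cohomologically affine} together with $\oh_Y \iso \phi_*\oh_{\cX}$; this is exactly as stated in the discussion following the Main Theorem, so no further argument is needed.
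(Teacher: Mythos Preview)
Your proof is correct and follows exactly the paper's approach: the paper's proof consists of the single sentence ``This follows from Lemma \ref{lemma-adequately-affine-char0},'' and your argument is just an unpacking of that reference together with the observation that condition (ii) is shared by both definitions.
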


\begin{proof}  
This follows from Lemma \ref{lemma-adequately-affine-char0}.
\end{proof}

\subsection{First properties}
We establish the basic properties of adequate moduli spaces as well as provide examples where the correspondingly stronger property of good moduli spaces does not hold.

\begin{lem} \label{lemma-ideal}
Suppose that $\phi: \cX \to Y$ is an adequately affine morphism of algebraic stacks where $Y$ is an algebraic space. Let $\cA$ be a quasi-coherent $\oh_{\cX}$-algebra and let $\cI$ be a quasi-coherent sheaf of $\oh_{Y}$-ideals.  Then
\begin{equation} \label{equation-ideal}
\phi_* \cA / \cI \to \phi_*(\cA/ \cI \cA)
\end{equation}
is an adequate homeomorphism.
\end{lem}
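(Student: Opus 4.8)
The plan is to show that $\psi\colon\phi_*\cA/\cI\to\phi_*(\cA/\cI\cA)$ is an adequate homeomorphism of quasi-coherent $\oh_{\cY}$-algebras by verifying the three properties that characterize this notion (kernel locally nilpotent, kernel killed by $\tensor\QQ$, and $\psi$ universally adequate); the substance lies entirely in the locally nilpotent condition, where the tool is a Rees-algebra construction fed through adequate affineness. Write $\cB=\cA/\cI\cA$, where $\cI\cA$ denotes the $\oh_{\cX}$-ideal $\phi^*\cI\cdot\cA$. Applying $\phi_*$ to $0\to\cI\cA\to\cA\to\cB\to 0$ and using left-exactness identifies $\ker(\phi_*\cA\to\phi_*\cB)=\phi_*(\cI\cA)$; this submodule contains $\cI\cdot\phi_*\cA$ (since $\phi^*\cI$ kills $\cB$, so $\cI$ kills $\phi_*\cB$), hence $\psi$ is well defined with $\ker\psi=\phi_*(\cI\cA)/\cI\cdot\phi_*\cA$.

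Two of the three conditions are quick. For universal adequacy: $\cA\to\cB$ is a surjection of $\oh_{\cX}$-algebras, so by adequate affineness $\phi_*\cA\to\phi_*\cB$ is universally adequate, and as this map factors through the surjection $\phi_*\cA\twoheadrightarrow\phi_*\cA/\cI$, lifting along the latter shows $\psi$ is universally adequate. For $\ker\psi\tensor\QQ=0$ I would base change to $\Spec\QQ$, where $\phi_{\QQ}$ is cohomologically affine by Lemma \ref{lemma-adequately-affine-char0}, hence $\phi_{\QQ*}$ is exact; writing $\cI$ as the image of some $\oh_{\cY}^{(S)}\to\oh_{\cY}$ and using that $\phi_{\QQ*}$ commutes with direct sums gives $\phi_{\QQ*}(\cI_{\QQ}\cA_{\QQ})=\cI_{\QQ}\cdot\phi_{\QQ*}\cA_{\QQ}$, and since $\phi_*$ commutes with the flat base change $\Spec\QQ\to\Spec\ZZ$ this forces $\ker\psi\tensor\QQ=0$ (in fact $\psi_{\QQ}$ is then an isomorphism, in keeping with Proposition \ref{proposition-good}).

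For the locally nilpotent condition I would first reduce to $\cY=\Spec R$ (the conditions are local on $\cY$ and adequate affineness is stable under the relevant base change, Proposition \ref{proposition-adequately-affine}), and then, since $\phi_*$ commutes with filtered colimits, to $\cI=(f_1,\dots,f_n)$ finitely generated. Set $A=\Gamma(\cX,\cA)$; it suffices to show every $x\in K:=\ker(A\to B)=\Gamma(\cX,\cI\cA)$ has a power in $IA$. Form the graded quasi-coherent $\oh_{\cX}$-algebra $\cR:=\bigoplus_{d\ge 0}(\cI\cA)^d\subseteq\cA[s]$ together with the surjection of graded $\oh_{\cX}$-algebras $\cA[t_1,\dots,t_n]\twoheadrightarrow\cR$ sending $t_i\mapsto f_is$ (with $\deg t_i=1$). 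Since $\Gamma(\cX,-)$ commutes with direct sums and $\cX$ is adequately affine (Lemma \ref{lemma-affine-base-equiv}), $\Gamma$ of this surjection is an adequate ring map $A[t_1,\dots,t_n]\to\bigoplus_{d\ge 0}\Gamma(\cX,(\cI\cA)^d)\,s^d$. The element $x$ lies in the degree-one piece $\Gamma(\cX,\cI\cA)$, so there are $N>0$ and $p\in A[t_1,\dots,t_n]$ with $p\mapsto(xs)^N=x^Ns^N$; comparing degree-$N$ homogeneous parts (the map is graded) yields $\sum_{|\alpha|=N}p_\alpha f^\alpha=x^N$ in $\Gamma(\cX,(\cI\cA)^N)\subseteq A$, and since each $p_\alpha\in A$ and each $f^\alpha\in I^N$ we conclude $x^N\in I^NA\subseteq IA$.

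The main obstacle is precisely this last step. Membership $x\in\cI\cA$ gives, for free, only that each $x^d$ is a \emph{section} of the sheaf $(\cI\cA)^d$, i.e.\ lies in $\Gamma(\cX,(\cI\cA)^d)$, and this module can be strictly larger than $I^d\cdot\Gamma(\cX,\cA)$; the Rees algebra is what lets adequate affineness produce a power $x^N$ that is manifestly a polynomial in the $f_i$ with coefficients honestly in $A=\Gamma(\cX,\cA)$, thereby upgrading the automatic sectionwise statement to the global membership $x^N\in IA$. I expect the only points needing care in the writeup are the identification $\Gamma(\cX,\cA[t_1,\dots,t_n])=\Gamma(\cX,\cA)[t_1,\dots,t_n]$ (using that $\Gamma$ commutes with direct sums for $\cX$ quasi-compact and quasi-separated) and the fact that the pushforward map respects the grading in which $\deg t_i=1$ and $A$ sits in degree zero.
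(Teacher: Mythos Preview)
Your proof is correct and follows essentially the same approach as the paper. The paper factors $\psi$ as $\phi_*\cA/\cI\twoheadrightarrow\phi_*\cA/\phi_*(\cI\cA)\hookrightarrow\phi_*(\cA/\cI\cA)$, invokes Lemma~\ref{lemma-adequately-affine-equiv}(1) for the second map, and then proves the kernel $\phi_*(\cI\cA)/\cI\cdot\phi_*\cA$ of the first is locally nilpotent via exactly your Rees-algebra surjection $\cA[x_j]\twoheadrightarrow\bigoplus_{d\ge 0}\cI^d\cA$; your direct verification of the three conditions amounts to the same thing, with the minor difference that the paper uses all generators of $I$ at once rather than first reducing to finitely many.
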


\begin{proof}
The quasi-coherent sheaf $\cI \cA$ is the image of $\phi^* \cI \to \cA$.  The surjection $\cA \to \cA / \cI \cA$ induces an adequate homeomorphism $\phi_* \cA / \phi_* \cI \cA \to \phi_* (\cA / \cI \cA)$ since $\phi$ is adequately affine. 
It suffices to show that the surjection $\phi_* \cA / \cI \to \phi_* \cA / \phi_* (\cI \cA)$ is an adequate homeomorphism.  Since it is an isomorphism in characteristic $0$,  it suffices to show that the kernel is locally nilpotent.  Since the question is local in the fpqc topology, we may assume that $Y $ is an affine scheme; let $A = \Gamma(Y, \cA)$ and $I = \Gamma(Y, \cI)$.  A choice of generators $f_j$ for $j \in J$ of $I \subseteq A$ induces a surjection $A[x_j; j \in J] \to \bigoplus_{n \ge 0} I^n $ where $x_j \mapsto f_j$.  This induces a surjection $\cA[x_j; j \in J] \to \bigoplus_{n \ge 0} \cI^n \cA$.  Since $\phi$ is adequately affine, 
$$
A[x_j; j \in J] \to \bigoplus_{n \ge 0} \Gamma(\cX, \cI^n \cA)
$$
is adequate which shows that for every $f \in \Gamma(\cX, \cI \cA)$, there exists $N > 0$ such that $f^N \in I$.  
\end{proof}

\begin{remark} \label{remark-git}
 Let $S$ be an affine scheme and let $G$ be a geometrically reductive group scheme over $S$ (see Section \ref{section-adequate-groups}) acting on an affine scheme $\Spec(R)$.  Let $I \subseteq R^G$ be an ideal.  Then Lemma \ref{lemma-ideal} implies that the map
 $$ R^G / I \to (R/IR)^G$$
 is an adequate homeomorphism. 
 \end{remark}

\begin{example} \label{example1}
This example shows that the map (\ref{equation-ideal}) need not be surjective.
Consider the action of $\ZZ / p \ZZ$ on $\AA^2 = \FF_p[x,y]$ over $\FF_p$ where a generator acts by $(x,y) \mapsto (x+y, y)$.   Let $z = x(x+y) \cdots (x+ (p-1)y)$.  Then 
$$\phi: \cX = [\AA^2 / \ZZ/p\ZZ] \to \Spec(\FF_p[y,z]) = Y$$
is an adequate moduli space (see Theorem \ref{theorem-git})   
and the map (\ref{equation-ideal}) applied with the ideal $(y)$ corresponds to
$$\FF_p[x^p] \cong \FF_p[y,z]/(y) \to (\FF_p[x,y]/ (y) \FF_p[x,y])^G \cong \FF_p[x]$$
which is not surjective.
\end{example}

\begin{example} \label{example2}
This example shows that the map (\ref{equation-ideal}) need not be injective.
Consider the action of $\ZZ / p \ZZ$ on $X = \Spec(R) = \FF_p[x_1, x_2,y]/(x_1 x_2)$ over $\FF_p$ where a generator acts by $(x_1, x_2, y) \mapsto (x_1, x_2, x_1 + y)$.  Let $I = (x_1, x_2)$.  Then 
 the invariant $x_2y \in IR \cap R^G$ is not in $I$.  That is, $x_2 y$ is a non-zero element in the kernel of $R^G/ I \to (R/IR)^G$.

\end{example}

\begin{example} \label{example-johan}
The following example due to Johan de Jong shows that in the definition of an adequately affine morphism $\cX \to \cY$,
the degree of the exponent required to lift sections cannot be universally bounded over all quasi-coherent $\oh_{\cX}$-algebras.  In particular, for a geometrically reductive group scheme (see Section \ref{section-adequate-groups}), the degree of the exponent required to lift invariant sections cannot be universally bounded over all $G$-modules.  However, for finite flat group schemes $G \to S$, such a universal bound can be chosen.

Consider the geometrically reductive group $\SL_2$ over $\FF_2$.  We show that there does not exist $N > 0$ such that for every surjection $V \to \FF_2$ of $\SL_2$-representations, the map $(\Sym^{N}V)^{\SL_2} \to \FF_2$ is non-zero.  
Let $W = \FF_2 x \oplus \FF_2 y$ be the standard representation of $\SL_2$.  For each $n > 0$, consider 
the representation
$$ V_n = \Sym^{2(2^n - 1)}(W)$$
which has basis elements $Z_{i,j} = x^iy^j$ where $i + j = 2(2^n - 1)$ where $i,j \ge 0$.  Consider the $\SL_2$-equivariant surjection 
$$
V_n \to k, \quad
Z_{i,j} \mapsto \left \{ 
\begin{array}{rl} 
1 & \text{if } i=j=2^n -1 \\
0& \text{otherwise.}
\end{array} \right.
$$
If $\gamma
= \begin{pmatrix} 1 & t \\ 0 & 1 \end{pmatrix} \in \SL_2
$, one can check that
$$\gamma \cdot Z_{2^n - 1, 2^n - 1} =
Z_{2^n - 1, 2^n - 1} + t Z_{2^n - 2, 2^n} + t^2 Z_{2^n - 3, 2^n + 1} +
\cdots + t^{2^n - 1}Z_{0, 2(2^n - 1)}.$$
\medskip \noindent
Suppose that for some $d > 0$, there exists a non-zero invariant element 
$$
v_n
=
(Z_{2^n - 1, 2^n - 1})^d + \sum_{a < d} (Z_{2^n - 1, 2^n - 1})^a
F_a(Z_{i,j}; i \neq j) \in (\Sym^d V_n)^{\SL_2}$$
for some elements $F_a \in \Sym^{d-a} V_n$.  We claim that $d \ge 2^n$.
The coefficient of $(Z_{2^n-2,2^n})^d$ in $\gamma \cdot (Z_{2^{n}-1,2^n-1})^d$ is $t^d$.  By expanding $\gamma \cdot v_n$ and by considering the coefficient of $t^d (Z_{2^n-2,2^n})^d$ in the equality $v_n = \gamma \cdot v_n$, there must exist some $a$ such that the coefficient of $t^d (Z_{2^n-2,2^n})^d$ in  $(Z_{2^n - 1, 2^n - 1})^a
F_a(Z_{i,j}; i \neq j)$ is non-zero.
 The coefficient of $Z_{2^n-2,2^n}$ in $\gamma \cdot Z_{i,j}$ is a non-zero only if $i \ge 2^n-2$ in which case the corresponding term is
 $$\binom{i}{2^n -2} t^{i-2^n+2} Z_{2^n-2,2^n}.$$
One can check that this binomial coefficient is divisible by $2$ for all $i$ in the range $2^n - 1 < i < 2(2^n-1)$.  When $i=2(2^n-1)$, this term is a multiple of $t^{2^n}Z_{2^n - 2, 2^n}$.  If $d < 2^n$, there is thus never a non-zero coefficient 
of  $t^d (Z_{2^n-2,2^n})^d$ in 
$(Z_{2^n - 1, 2^n - 1})^a F_a(Z_{i,j}; i \neq j)$.
\end{example}

\begin{lem} \label{lemma-adjunction}
 Suppose $\cX$ is an algebraic stack and $\phi: \cX \to Y$ is an adequate moduli space.  Then for any quasi-coherent $\oh_{Y}$-algebra $\cB$, the adjunction morphism $\cB \to \phi_* \phi^* \cB$ is an adequate homeomorphism.  
 \end{lem}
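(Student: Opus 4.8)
The plan is to reduce to the case where $Y$ is affine, present $\cB$ as a quotient of a polynomial algebra over $\oh_Y$, and then deduce the statement from Lemma \ref{lemma-ideal}. To reduce to $Y = \Spec R$: the property of being an adequate homeomorphism is stable under base change and descends in the fpqc topology; moreover $\phi^*$ commutes with arbitrary base change while $\phi_*$ commutes with flat base change (as $\phi$ is quasi-compact and quasi-separated), and adequate moduli spaces commute with flat base change (Proposition \ref{proposition-base-change}). Hence, choosing an \'etale covering of $Y$ by affine schemes, over each member of which the adjunction $\cB \to \phi_*\phi^*\cB$ restricts to the corresponding adjunction morphism (for an adequate moduli space, by the above), it suffices to treat $Y = \Spec R$ affine.

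Assume now $Y$ is affine. Then $\cB$ is a quotient of a polynomial $\oh_Y$-algebra, so fix a surjection $\pi\colon \cP := \oh_Y[x_j : j \in J] \to \cB$ of quasi-coherent $\oh_Y$-algebras, with kernel the ideal sheaf $\cK \subseteq \cP$. Put $Y' := \sSpec_Y \cP$, an affine space over $Y$ (in particular flat over $Y$), and let $\phi'\colon \cX' := \cX \times_Y Y' \to Y'$. By flat base change (Proposition \ref{proposition-base-change}), $\phi'$ is again an adequate moduli space; in particular it is adequately affine and $\oh_{Y'} \iso \phi'_*\oh_{\cX'}$. (Concretely, this reflects that the adjunction $\cP \to \phi_*\phi^*\cP$ is an isomorphism, because $\phi^*\cP = \oh_{\cX}[x_j]$ is a direct sum of copies of $\oh_{\cX}$, the functor $\phi_*$ commutes with arbitrary direct sums since $\phi$ is qcqs, and $\phi_*\oh_{\cX} = \oh_Y$ by hypothesis.) Regarding $\cK$ as an ideal sheaf of $\oh_{Y'}$, Lemma \ref{lemma-ideal} applied to the adequately affine morphism $\phi'$ with $\cA = \oh_{\cX'}$ shows that
\[
\phi'_*\oh_{\cX'}/\cK \;\longrightarrow\; \phi'_*\bigl(\oh_{\cX'}/\cK\oh_{\cX'}\bigr)
\]
is an adequate homeomorphism.

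It remains to recognize this morphism as $\cB \to \phi_*\phi^*\cB$. Using $\phi'_*\oh_{\cX'} = \oh_{Y'}$, its source is $\oh_{Y'}/\cK$, i.e.\ the structure sheaf of the closed subspace $Z := \sSpec_Y\cB \hookrightarrow Y'$. Since $\cX' \times_{Y'} Z = \cX \times_Y Z = \sSpec_{\cX}\phi^*\cB =: \cW$, its target is the pushforward along the closed immersion $Z \hookrightarrow Y'$ of $\psi_*\oh_{\cW}$, where $\psi\colon \cW \to Z$ is the projection. As the formation of relative $\Spec$ is transparent to the closed immersion $Z \hookrightarrow Y'$ and being an adequate homeomorphism is detected on relative $\Spec$, it follows that $\oh_Z \to \psi_*\oh_{\cW}$ is an adequate homeomorphism of $\oh_Z$-algebras. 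Pushing forward along the affine morphism $Z \to Y$, and using $\phi \circ (\cW \to \cX) = (Z \to Y)\circ \psi$ together with the compatibility of the adjunction unit with the cartesian square $\cW = \cX \times_Y Z$, this becomes exactly the adjunction $\cB \to \phi_*\phi^*\cB$; since being an adequate homeomorphism is a property of the underlying morphism of algebraic spaces, the proof is complete.

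I expect the main obstacle to be the bookkeeping in the last step: confirming that the morphism manufactured by Lemma \ref{lemma-ideal} over the auxiliary base $Y' = \sSpec_Y\cP$ is genuinely the adjunction morphism over $Y$. If one prefers to sidestep this, one can instead verify the three conditions of Proposition \ref{proposition-adequate-equiv} directly: universal adequacy of $\cB \to \phi_*\phi^*\cB$ follows from the isomorphism $\cP \iso \phi_*\phi^*\cP$, the universal adequacy of $\phi_*\phi^*\pi$ (adequate affineness applied to the surjection $\phi^*\pi$), and the surjectivity of $\pi$; the kernel dies after $\otimes\,\QQ$ because over $\Spec\QQ$ the morphism $\phi$ is a good moduli space (Proposition \ref{proposition-good}), for which a presentation-and-exactness argument shows the adjunction is an isomorphism; and the kernel is locally nilpotent by the Rees-algebra argument from the proof of Lemma \ref{lemma-ideal}, applied to $\phi'$.
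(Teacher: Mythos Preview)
Your proposal is correct and follows essentially the same approach as the paper: reduce to $Y$ affine, observe that the adjunction is an isomorphism for polynomial algebras (since $\phi_*$ and $\phi^*$ commute with direct sums and $\phi_*\oh_{\cX}=\oh_Y$), and then deduce the general case from Lemma~\ref{lemma-ideal}. The paper simply asserts that the last step ``follows directly'' from Lemma~\ref{lemma-ideal}, whereas you spell out the base change to $Y'=\sSpec_Y\cP$ and the identification of the resulting morphism with the adjunction; this is exactly the unpacking the paper omits.
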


\begin{proof}  The question is local in the \'etale topology on $Y$ so we may assume that $Y$ is affine.  As $\phi_*$ and $\phi^*$ commute with arbitrary direct sums, the adjunction map $\cB \to \phi_* \phi^* \cB$ is an isomorphism if $\cB$ is a polynomial algebra over $\Gamma(Y, \oh_Y)$.  In general, we can write $\cB$ as a quotient of a polynomial algebra $\cB'$ over $R$ and the statement follows directly from Lemma \ref{lemma-ideal}.
\end{proof}

\begin{remark} With the notation of Remark \ref{remark-git}, Lemma \ref{lemma-adjunction} implies that for an $R^G$-algebra $B$, the adjunction map
$$B \to (B \tensor_{R^G} R)^G$$
is an adequate homeomorphism.  If $S=\Spec(k)$ where $k$ is a field of characteristic $p$ and $G$ is a reductive group, this is \cite[Fact ($1$) on p. 195]{git3}.
\end{remark}

\begin{example}  With the notation of Example \ref{example1}, the quasi-coherent $\oh_{Y}$-algebra $\cB$ associated with $k[y,z]/y$ on $\Spec(k[y,z])$ provides an example where $\phi^* \phi_* \cB \to \cB$ is not surjective.  With the notation of Example \ref{example2}, the quasi-coherent $\oh_Y$-algebra $\cB$ associated with $R^G/(x_1,x_2)$ provides an example where $\phi^* \phi_* \cB \to \cB$ is not injective.
\end{example}

\begin{prop}  \label{proposition-base-change}
Suppose that $\cX$ and $\cX'$ are algebraic stacks and that
$$\xymatrix{
\cX' \ar[d]^{\phi'} \ar[r]^{g'}	 \ar @{} [dr] |{\square}		& \cX \ar[d]^{\phi} \\
Y' \ar[r]^{g}				& Y 
}$$
is a cartesian diagram with $Y$ and $Y'$ algebraic spaces.  Then
\begin{enumerate1}
\item If $g$ is flat and $\phi: \cX \to Y$ is an adequate moduli space, then $\phi': \cX' \to Y'$ is an adequate moduli space.
\item If $g$ is fpqc and $\phi': \cX' \to Y'$ is an adequate moduli space, then $\phi: \cX \to Y$ is an adequate moduli space.
\item If $\phi$ is an adequate moduli space, then $\oh_{Y'} \to \phi_* \oh_{\cX'}$ is an adequate homeomorphism.  The morphism $\phi'$ factors as an adequate moduli space $\cX' \to \sSpec_{Y'} (\phi'_* \oh_{\cX'})$ followed by an adequate homeomorphism  $\sSpec_{Y'} (\phi'_* \oh_{\cX'}) \to Y'$.  
\item If $\cA$ is quasi-coherent $\oh_{\cX}$-algebra, the adjunction morphism $g^* \phi_* \cA \to \phi'_* g'^* \cA$ is an adequate homeomorphism.
\end{enumerate1}
\end{prop}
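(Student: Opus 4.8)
The plan is to treat the four statements in the order (1), (2), (4), (3), since (4) is the substantive point and the first assertion of (3) is the special case $\cA=\oh_{\cX}$ of (4). \emph{Statement (1)} is quick: $\phi'$ is adequately affine by Proposition~\ref{proposition-adequately-affine}(3) (stability of adequate affineness under base change on the target --- flatness is not even needed here), and the flat base-change isomorphism applied to the quasi-compact quasi-separated $\phi$ gives $\oh_{Y'}=g^*\oh_Y=g^*\phi_*\oh_{\cX}\iso\phi'_*\oh_{\cX'}$. For \emph{(2)}, $\phi$ is adequately affine by Proposition~\ref{proposition-adequately-affine}(4) (descent of adequate affineness along the faithfully flat $g$), and pulling the adjunction $\oh_Y\to\phi_*\oh_{\cX}$ back along the fpqc morphism $g$ and using flat base change together with $\oh_{Y'}\iso\phi'_*\oh_{\cX'}$ exhibits the pullback as an isomorphism, whence $\oh_Y\to\phi_*\oh_{\cX}$ is an isomorphism by faithfully flat descent.

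The heart of the matter is \emph{(4)}. Since $\phi_*$ commutes with flat base change on $Y$, since the formation of the base-change morphism is compatible with further flat base change on $Y'$, and since being an adequate homeomorphism is fpqc-local on the target, I would first reduce to $Y=\Spec R$ and $Y'=\Spec R'$ affine. Then $g$ is affine; writing $\cC=g_*\oh_{Y'}$ for the quasi-coherent $\oh_Y$-algebra corresponding to $R'$, one has $\cX'=\sSpec_{\cX}\phi^*\cC$ with $g'\colon\cX'\to\cX$ affine, so the projection formula gives $g'_*g'^*\cA\cong\cA\tensor_{\oh_{\cX}}\phi^*\cC$ and hence $\phi'_*g'^*\cA\cong\phi_*(\cA\tensor_{\oh_{\cX}}\phi^*\cC)$; under these identifications the base-change morphism of (4) becomes the natural map $\phi_*\cA\tensor_{\oh_Y}\cC\to\phi_*(\cA\tensor_{\oh_{\cX}}\phi^*\cC)$. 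I would then present $\cC$ as a quotient $\oh_Y[x_j;\,j\in J]/\cK$ of a polynomial $\oh_Y$-algebra, let $u\colon Y_P=\Spec R[x_j]\to Y$ (affine and flat) and $\psi\colon\cX_P=\cX\times_Y Y_P\to Y_P$ be the base change of $\phi$ (adequately affine by Proposition~\ref{proposition-adequately-affine}(3)), and apply Lemma~\ref{lemma-ideal} to $\psi$, the pullback $\cA_P$ of $\cA$ to $\cX_P$, and the sheaf of $\oh_{Y_P}$-ideals $\cI$ associated to the ideal defining $\cC$ as a quotient of $\oh_Y[x_j]$. This produces an adequate homeomorphism $\psi_*\cA_P/\cI\to\psi_*(\cA_P/\cI\cA_P)$.

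The step I expect to be the main obstacle is checking that, after pushing this adequate homeomorphism forward along the affine morphism $u$ (which preserves adequate homeomorphisms of quasi-coherent algebras), one recovers precisely the natural map $\phi_*\cA\tensor_{\oh_Y}\cC\to\phi_*(\cA\tensor_{\oh_{\cX}}\phi^*\cC)$: concretely, that $\psi^*\cI$ generates the ideal of $\oh_{\cX}[x_j]$ with quotient $\phi^*\cC$ (using right exactness of $\phi^*$), and that pushforward along the affine morphisms $\cX_P\to\cX$ and $Y_P\to Y$ --- exact on quasi-coherent sheaves and compatible with the projection formula --- carries $\cA_P/\cI\cA_P$ to $\cA\tensor_{\oh_{\cX}}\phi^*\cC$ and $\psi_*\cA_P/\cI$ to $\phi_*\cA\tensor_{\oh_Y}\cC$; the rest is routine bookkeeping. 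Finally, for \emph{(3)}: taking $\cA=\oh_{\cX}$ in (4) shows $\oh_{Y'}\to\phi'_*\oh_{\cX'}$ is an adequate homeomorphism, so with $\tilde Y'=\sSpec_{Y'}\phi'_*\oh_{\cX'}$ and the canonical factorization $\cX'\to\tilde Y'\to Y'$, the morphism $\tilde Y'\to Y'$ is $\sSpec_{Y'}$ of an adequate homeomorphism of algebras, hence an adequate homeomorphism; and $\cX'\to\tilde Y'$ has $\oh_{\tilde Y'}$ as the pushforward of $\oh_{\cX'}$ by construction and is adequately affine by Lemma~\ref{lemma-propertyP} (the composite $\cX'\to\tilde Y'\to Y'$ is the base change $\phi'$ of the adequately affine $\phi$, while $\tilde Y'\to Y'$ is affine, hence has affine diagonal and is quasi-affine), so $\cX'\to\tilde Y'$ is an adequate moduli space.
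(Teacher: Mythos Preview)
Your proof is correct and rests on the same key input as the paper (Lemma~\ref{lemma-ideal} applied after presenting the base-change algebra as a quotient of a polynomial algebra), but the organization is inverted. The paper first isolates the special case $\cA=\oh_{\cX}$ as Lemma~\ref{lemma-adjunction} (the adjunction $\cB\to\phi_*\phi^*\cB$ is an adequate homeomorphism for any quasi-coherent $\oh_Y$-algebra $\cB$), uses it to prove (3), and then deduces (4) in one line by observing that $\sSpec_{\cX}\cA\to\sSpec_Y\phi_*\cA$ is again an adequate moduli space, so (3) applied to this new adequate moduli space and the base change $\sSpec_{Y'}g^*\phi_*\cA\to\sSpec_Y\phi_*\cA$ gives exactly the adequate homeomorphism $g^*\phi_*\cA\to\phi'_*g'^*\cA$. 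You instead prove (4) directly---essentially redoing the proof of Lemma~\ref{lemma-adjunction} with $\cA$ in place of $\oh_{\cX}$---and then read off (3) as the case $\cA=\oh_{\cX}$. The paper's $\sSpec$ trick buys you the bookkeeping paragraph for free; your route is more self-contained but more laborious. Either way the argument is sound. (Minor remark: for the second sentence of (3) the paper just invokes the remark that an adequately affine $\cX'\to Y'$ automatically makes $\cX'\to\sSpec_{Y'}\phi'_*\oh_{\cX'}$ an adequate moduli space, which is slightly quicker than your appeal to Lemma~\ref{lemma-propertyP}, though both are correct.)
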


\begin{proof}
For part (1), Proposition \ref{proposition-adequately-affine}(6) implies that $\phi'$ is adequately affine.  If $g$ is flat, then flat base change implies  $\oh_{Y'} \to \phi'_* \oh_{\cX'}$ is an isomorphism.  For part (2), Proposition \ref{proposition-adequately-affine}(4) implies that $\phi$ is adequately affine and fpqc descent implies that $\oh_Y \to \phi_* \oh_{\cX}$ is an isomorphism.  

\medskip \noindent
For part (3), since $\phi'$ is adequately affine $\cX' \to \sSpec_{Y'} (\phi'_* \oh_{\cX'})$ is an adequate moduli space.  Since the question is local in the fpqc topology, we may assume that $Y' \to Y$ is affine and defined by a quasi-coherent $\oh_Y$-algebra $\cB$.  By Lemma \ref{lemma-adjunction}, $\cB \to \phi_* \phi^* \cB$ is an adequate homeomorphism but this maps corresponds canonically to $g_* \oh_{Y'} \to g_* \phi'_* \oh_{\cX'}$.  

\medskip \noindent
For part (4), the diagram
$$\xymatrix{
\sSpec_{\cX'} (g'^* \cA) \ar[r] \ar[d]  \ar @{} [dr] |{\square}		& \sSpec_{\cX} (\cA) \ar[d] \\
\sSpec_{Y} (g^* \phi_* \cA) \ar[r]		& \sSpec_Y (\phi_* \cA)
}$$
is cartesian so the statement follows from part (3).
\end{proof}

\begin{example}  With the notation of Example \ref{example1}, we have a diagram
$$\xymatrix{
				& [\Spec(k[x]) / \ZZ_p] \ar@{^(->}[r] \ar[d]^{\phi'} \ar[ld]^{\varphi}  \ar @{} [dr] |{\square}		& [\AA^2 / \ZZ_p] \ar[d]^{\phi} \\
\Spec(k[x])	\ar[r]		& \Spec(k[x^p]) \ar@{^(->}[r]^{y=0}			& \Spec(k[y,z])
}$$
where the square is cartesian and $\phi$ and $\varphi$ are adequate moduli spaces.  The base change $\phi'$ is not an adequate moduli space but $\Spec(k[x]) \to \Spec(k[x^p])$ is an adequate homeomorphism.
\end{example}

\begin{lem} \label{lemma-adequate-affine-algebra}  Let $\cX$ be an algebraic stack and let $\phi: \cX \to Y$ be an adequate moduli space.  Let $\cA$ be a quasi-coherent sheaf of $\oh_{\cX}$-algebras.  Then $\sSpec_{\cX} (\cA)  \to \sSpec_Y (\phi_* \cA)$ is an adequate moduli space.  In particular, if $\cZ \subseteq \cX$ is a closed substack, then $\cZ \to Y':=\sSpec (\phi_* \oh_{\cZ})$ is an adequate moduli space.  The induced morphism $Y' \to \im \cZ$ to the scheme-theoretic image of $\cZ$ in $Y$ is an adequate homeomorphism.
\end{lem}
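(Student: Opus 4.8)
The plan is to deduce all three assertions from the first, which I would prove by factoring $\sSpec_{\cX}\cA\to\sSpec_Y\phi_*\cA$ through the canonical affine morphisms and invoking the properties of adequately affine morphisms established in Section 3.

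\textbf{The general statement.} Let $\psi\colon\sSpec_{\cX}\cA\to\cX$ and $q\colon\sSpec_Y\phi_*\cA\to Y$ denote the two canonical affine morphisms and $\Phi\colon\sSpec_{\cX}\cA\to\sSpec_Y\phi_*\cA$ the induced (quasi-compact, quasi-separated) morphism, so that $q\circ\Phi=\phi\circ\psi$. Since $\psi$ is affine it is adequately affine (Proposition \ref{proposition-adequately-affine}(2)), hence $\phi\circ\psi$ is adequately affine (Proposition \ref{proposition-adequately-affine}(1)); as $q$ is affine — in particular quasi-affine with affine diagonal — Lemma \ref{lemma-propertyP}, applied with $f=\Phi$ and $g=q$, shows that $\Phi$ is adequately affine. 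For the second axiom I would apply $q_*$ to the map $\oh_{\sSpec_Y\phi_*\cA}\to\Phi_*\oh_{\sSpec_{\cX}\cA}$: both sides become $\phi_*\cA$ — the source by the defining property of relative $\sSpec$, the target because $q_*\Phi_*\oh_{\sSpec_{\cX}\cA}=(\phi\psi)_*\oh_{\sSpec_{\cX}\cA}=\phi_*\psi_*\oh_{\sSpec_{\cX}\cA}=\phi_*\cA$ — and the induced map $\phi_*\cA\to\phi_*\cA$ is the identity by the universal property defining $\Phi$. Since $q_*$ reflects isomorphisms of quasi-coherent sheaves, $\oh_{\sSpec_Y\phi_*\cA}\to\Phi_*\oh_{\sSpec_{\cX}\cA}$ is an isomorphism, and $\Phi$ is an adequate moduli space.

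\textbf{Closed substacks and the scheme-theoretic image.} If $\cI\subseteq\oh_{\cX}$ is the ideal of $\cZ$, then $\cZ=\sSpec_{\cX}(\oh_{\cX}/\cI)$ and $\phi_*\oh_{\cZ}=\phi_*(\oh_{\cX}/\cI)$, so the first part with $\cA=\oh_{\cX}/\cI$ shows $\cZ\to Y':=\sSpec_Y\phi_*\oh_{\cZ}$ is an adequate moduli space. Writing $q'\colon Y'\to Y$ for its (affine) structure morphism and $\phi_{\cZ}=\phi|_{\cZ}$ (a quasi-compact morphism), from $q'_*\oh_{Y'}=\phi_*\oh_{\cZ}$ and $\oh_{Y'}\iso(\cZ\to Y')_*\oh_{\cZ}$ we get $(\phi_{\cZ})_*\oh_{\cZ}=q'_*\oh_{Y'}$; hence the scheme-theoretic image $\im\cZ$ of $\phi_{\cZ}$ coincides with that of $q'$, namely $\sSpec_Y(\oh_Y/\cK)$ with $\cK=\ker(\oh_Y\to\phi_*\oh_{\cZ})$, and $q'$ factors as an affine morphism $q''\colon Y'\to\im\cZ$ followed by the closed immersion $\im\cZ\hookrightarrow Y$. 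The key input is that $\oh_Y\to\phi_*\oh_{\cZ}$ is universally adequate: it is $\phi_*$ of the surjection $\oh_{\cX}\twoheadrightarrow\oh_{\cZ}$ of quasi-coherent $\oh_{\cX}$-algebras, using that $\phi$ is adequately affine and $\phi_*\oh_{\cX}=\oh_Y$. Since $\cK$ annihilates $\phi_*\oh_{\cZ}$, base change of $\phi_*\oh_{\cZ}$ over $\oh_Y/\cK$ coincides with base change over $\oh_Y$, so the injective map $\oh_{\im\cZ}=\oh_Y/\cK\hookrightarrow\phi_*\oh_{\cZ}=q''_*\oh_{Y'}$ is again universally adequate; reducing to the affine case via Lemma \ref{lemma-adequate-affine}, this map has zero kernel and is universally adequate, so Proposition \ref{proposition-adequate-equiv} — applied \'etale-locally on $\im\cZ$ (adequate homeomorphisms descending in the fpqc topology) — shows that $Y'\to\im\cZ$ is an adequate homeomorphism.

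\textbf{Main obstacle.} The first two paragraphs are essentially formal bookkeeping with relative $\sSpec$ and the already-established theory of adequately affine morphisms. The step requiring the most care is the last one: identifying $\im\cZ$ precisely as $\sSpec_Y(\oh_Y/\cK)$, checking that $q'$ factors affinely through it, and — above all — verifying that passing from $\oh_Y\to\phi_*\oh_{\cZ}$ to $\oh_Y/\cK\to\phi_*\oh_{\cZ}$ preserves universal adequacy, which is what upgrades $Y'\to\im\cZ$ from a universally adequate affine morphism to an adequate homeomorphism.
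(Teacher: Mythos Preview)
Your proof is correct and follows essentially the same strategy as the paper, just unpacked in much more detail. The paper's two-sentence proof observes that $\sSpec_{\cX}\cA \to Y$ is adequately affine (so the Remark after Definition~\ref{definition-adequate-moduli-space} gives the adequate moduli space statement immediately), and for the final assertion simply cites Lemma~\ref{lemma-ideal}; you instead invoke Lemma~\ref{lemma-propertyP} for the first part and argue directly via Proposition~\ref{proposition-adequate-equiv} for the last, but these are the same ingredients in slightly different packaging. One small simplification: your careful verification that universal adequacy passes from $\oh_Y\to\phi_*\oh_{\cZ}$ to $\oh_Y/\cK\to\phi_*\oh_{\cZ}$, together with the appeal to Proposition~\ref{proposition-adequate-equiv}, is exactly the content of condition~(1) in Lemma~\ref{lemma-adequately-affine-equiv} applied to the surjection $\oh_{\cX}\twoheadrightarrow\oh_{\cZ}$, so you could shorten that paragraph by citing it directly.
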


\begin{proof}  Since $\sSpec_{\cX} (\cA) \to Y$ is adequately affine, it follows that $\sSpec_{\cX} (\cA)  \to \sSpec_Y (\phi_* \cA)$ is an adequate moduli space.  The final statement follows directly from Lemma \ref{lemma-ideal}.
\end{proof}

\begin{lem} \label{lemma-nagata} (Analogue of Nagata's fundamental lemmas)
Let $\phi: \cX \to Y$ be an adequately affine morphism.  Then:
\begin{enumerate1}
\item For any quasi-coherent sheaf of ideals $\cI$ on $\cX$, the inclusion
$$\phi_* \oh_{\cX}  / \phi_* \cI \to \phi_* (\oh_{\cX} / \cI)$$
is an adequate homeomorphism.
\item For any pair of quasi-coherent sheaves of ideals $\cI_1, \cI_2$ on $\cX$, the inclusion
$\phi_* \cI_1 + \phi_* \cI_2 \to \phi_*(\cI_1 + \cI_2)$
induces an adequate homeomorphism
$$\oh_Y / (\phi_* \cI_1 + \phi_* \cI_2) \to \oh_Y / \phi_*(\cI_1 + \cI_2)$$
In other words, for every section $s \in \Gamma(\Spec(A) \to Y, \phi_* (\cI_1 + \cI_2))$, there exists $N > 0$ such that $s^N \in \Gamma(\Spec(A) \to Y, \phi_*\cI_1 + \phi_* \cI_2)$.
\end{enumerate1}
\end{lem}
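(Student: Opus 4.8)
For part (1) there is essentially nothing to do: the surjection $\oh_{\cX}\to\oh_{\cX}/\cI$ is in particular universally adequate (take the exponent to be $1$) and its kernel is $\cI$, so Lemma \ref{lemma-adequately-affine-equiv}(1) applies verbatim and gives that $\phi_*\oh_{\cX}/\phi_*\cI\to\phi_*(\oh_{\cX}/\cI)$ is an adequate homeomorphism. (Equivalently, left exactness of $\phi_*$ makes this map injective, the composite $\phi_*\oh_{\cX}\to\phi_*(\oh_{\cX}/\cI)$ is universally adequate since $\phi$ is adequately affine, hence so is the injection, and an injective universally adequate homomorphism of quasi-coherent algebras is an adequate homeomorphism.)

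For part (2), set $\cK=\cI_1+\cI_2\subseteq\oh_{\cX}$ and consider the surjective homomorphism
$$\psi\colon\ \oh_Y/(\phi_*\cI_1+\phi_*\cI_2)\ \longrightarrow\ \oh_Y/\phi_*\cK .$$
Since the conclusion is local on $Y$ in the fpqc topology and, for adequately affine $\phi$, the formation of $\phi_*\cI_1$, $\phi_*\cI_2$ and $\phi_*\cK$ commutes with flat base change on $Y$ (using that adequate affineness is stable under base change, Proposition \ref{proposition-adequately-affine}(3)), I would reduce to the case $Y=\Spec R$ affine, where $\cX$ is quasi-compact and quasi-separated and $\Gamma(\cX,-)=\Gamma(Y,\phi_*(-))$. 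Then $\psi$ is a surjection of rings, in particular universally adequate, so by Proposition \ref{proposition-adequate-equiv} it remains to check that $\ker\psi$ is locally nilpotent and that $\ker\psi\tensor\QQ=0$. The latter is immediate: over $\QQ$ an adequately affine morphism is cohomologically affine (Lemma \ref{lemma-adequately-affine-char0}), so $\phi_*$ is exact there, and applying it to the exact sequence $0\to\cI_1\cap\cI_2\to\cI_1\oplus\cI_2\xrightarrow{(a,b)\mapsto a+b}\cK\to 0$ shows $\phi_*\cI_1+\phi_*\cI_2\to\phi_*\cK$ is surjective over $\Spec\QQ$, i.e. $\ker\psi\tensor\QQ=0$.

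The substance of part (2) is the local nilpotence of $\ker\psi$, which is exactly the ``in other words'' assertion: given $s\in\Gamma(\cX,\cK)$, find $N>0$ with $s^N\in\Gamma(\cX,\cI_1)+\Gamma(\cX,\cI_2)$. The point is that one cannot simply push forward the sequence above, since adequate affineness controls no higher direct images; this is why the statement is trivial for good moduli spaces but not for adequate ones. Instead I would imitate the symmetric-algebra (``Rees-type'') device in the proof of Lemma \ref{lemma-ideal}. Applying Lemma \ref{lemma-adequately-affine-equiv}(3) to the surjection of quasi-coherent $\oh_{\cX}$-modules $\cI_1\oplus\cI_2\to\cK$, the induced graded ring map $\Gamma(\cX,\Sym^*(\cI_1\oplus\cI_2))\to\Gamma(\cX,\Sym^*\cK)$ is universally adequate; since $s$ lies in the degree-one piece $\Gamma(\cX,\cK)$ of the target, there are $N>0$ and $t\in\Gamma(\cX,\Sym^N(\cI_1\oplus\cI_2))$ with $t\mapsto s^N$ (we may take $t$ homogeneous of degree $N$ by passing to graded components, as $s^N$ is homogeneous of degree $N$). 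Now compare with the commutative square of $\oh_{\cX}$-algebras whose top arrow is $\Sym^*(\cI_1\oplus\cI_2)\to\Sym^*\cK$, whose right-hand arrow $\Sym^*\cK\to\oh_{\cX}$ is induced by the inclusion $\cK\hookrightarrow\oh_{\cX}$, and whose left-hand arrow $\Sym^*(\cI_1\oplus\cI_2)\to\oh_{\cX}$ is induced by $\cI_1\oplus\cI_2\xrightarrow{+}\oh_{\cX}$: chasing $t$ identifies its image in $\Gamma(\cX,\oh_{\cX})$ with $s^N$. On the other hand, in the canonical decomposition $\Sym^N(\cI_1\oplus\cI_2)=\bigoplus_{i+j=N}\Sym^i\cI_1\tensor_{\oh_{\cX}}\Sym^j\cI_2$, the left-hand arrow carries the $(i,j)$-summand into $\cI_1^i\cI_2^j\subseteq\oh_{\cX}$, and since $i+j=N\ge 1$ this summand lies in $\cI_1$ when $i\ge 1$ and in $\cI_2$ when $i=0$. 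Decomposing $t$ along this sum therefore exhibits $s^N$ as an element of $\Gamma(\cX,\cI_1)+\Gamma(\cX,\cI_2)$, as required.

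The main obstacle is precisely this last manoeuvre: recognizing that the symmetric-algebra trick replaces the exactness of $\phi_*$ that is unavailable in characteristic $p$, together with the bidegree bookkeeping needed to see that the lifted power, once pushed down to $\oh_{\cX}$, automatically lands in $\Gamma(\cX,\cI_1)+\Gamma(\cX,\cI_2)$ rather than merely in $\Gamma(\cX,\cK)$. The reduction to the affine case, the characteristic-$0$ verification, and the check that $\psi$ is surjective are all routine.
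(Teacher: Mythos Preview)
Your proof is correct. For part (1) you and the paper agree. For part (2) both arguments reduce to the affine case and then invoke the symmetric-algebra characterization of adequate affineness to lift a power of $s$, but you apply it to a different surjection than the paper does.

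The paper uses the short exact sequence $0\to\cI_1\to\cI_1+\cI_2\to\cI_2/(\cI_1\cap\cI_2)\to 0$: given $s\in\Gamma(\cX,\cI_1+\cI_2)$ with image $\bar s$, one applies adequate affineness to the surjection $\cI_2\twoheadrightarrow\cI_2/(\cI_1\cap\cI_2)$ to find $t_2\in\Gamma(\cX,\cI_2)$ mapping to $\bar s^{\,N}$, and then $s^N-t_2\in\Gamma(\cX,\cI_1)$ by left exactness of $\Gamma$. This avoids your bidegree bookkeeping entirely: once the lift exists, the decomposition $s^N=(s^N-t_2)+t_2$ is immediate. Your approach, applying the symmetric-algebra lift to $\cI_1\oplus\cI_2\twoheadrightarrow\cK$ and then reading off the bidecomposition of $\Sym^N(\cI_1\oplus\cI_2)$, is more symmetric in $\cI_1,\cI_2$ and makes the role of $\Sym^*$ fully explicit, at the cost of the extra tracking of where each summand lands under multiplication. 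You also verify the characteristic-zero condition $\ker\psi\otimes\QQ=0$ needed for the adequate-homeomorphism claim, which the paper's proof leaves implicit; this is a point in your favor.
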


\begin{proof}
Part (1) is obvious. For part (2), we may assume that $Y$ is affine.  The exact sequence 
$$0 \to \cI_1 \to \cI_1 + \cI_2 \to \cI_2 / \cI_1 \cap \cI_2 \to 0$$ 
induces a commutative diagram
$$\xymatrix{
		&				& \Gamma(\cX, \cI_2)\ar[d] \ar[rd] \\
0 \ar[r]	& \Gamma(\cX, \cI_1) \ar[r]	& \Gamma(\cX, \cI_1 + \cI_2) \ar[r]	& \Gamma(\cX, \cI_2 / \cI_1 \cap \cI_2)
}$$
where the bottom row is left exact.  Let $s \in \Gamma(\cX, \cI_1+ \cI_2)$ with image $\bar s$ in $\Gamma(\cX, \cI_2 / \cI_1 \cap \cI_2)$.  Since $\phi$ is adequately affine, there exist $N> 0$ and $t_2 \in \Gamma(\cX, \cI_2)$ such that $t_2 \mapsto \bar{s}^N$.  It follows that $t_2 - s^N \in \Gamma(\cX, \cI_2)$.
\end{proof}

\begin{remark} 
Part (2) above implies that for any set of quasi-coherent sheaves of ideals $\cI_{\alpha}$ 
$$\oh_Y / \big( \sum_{\alpha} \phi_* \cI_{\alpha} \big) \to \oh_Y / \big( \phi_* ( \sum_{\alpha} \cI_{\alpha} ) \big)$$
is an adequate homeomorphism.
\end{remark}

\begin{remark} With the notation of Remark \ref{remark-git}, property (1) translates into the statement that the natural inclusion $A^G/ (I \cap A^G) \hookrightarrow (A/I)^G$ is universally adequate for any invariant ideal $I \subseteq A$.  Property (2) translates into the statement that for any pair of invariant ideals $I_1, I_2 \subseteq A$, the induced inclusion  $(I_1 \cap A^G) + (I_2 \cap A^G) \hookrightarrow (I_1 + I_2) \cap A^G$ has the property that for any $s \in (I_1 + I_2) \cap A^G$, there exists $N > 0$ such that $s^N \in (I_1 \cap A^G) + (I_2 \cap A^G)$.  Note that if $S$ is defined over $\FF_p$, then by Lemma \ref{lemma-adequate-charp}, the integer $N$ can be chosen to be a prime power. If $S=\Spec(k)$ where $k$ is a field of characteristic $p$ and $G$ is a reductive group, this is \cite[Lemma 5.1.B and 5.2.B]{nagata_invariants-affine} and \cite[Lemma A.1.2 and Fact ($2$), p.195]{git3}.  
\end{remark}

\begin{example} Example \ref{example1} illustrates that the map in part (1) is not always surjective.  For an example where the map in part (2) is not an isomorphism, consider the dual action of $G=\ZZ_2$ on $A=\FF_2[x_1, x_2, y]/(x_1^2, x_2^2)$ given by $(x_1, x_2, y) \mapsto (x_1, x_2, y + x_1 + x_2)$.  Then the inclusion $(x_1) \cap A^G + (x_2) \cap A^G \hookarr (x_1,x_2) \cap A^G$ is not surjective as $y(x_1+x_2) \in (x_1, x_2) \cap A^G$ is not in the image.
\end{example}

\subsection{Geometric properties}

\begin{thm} \label{theorem-adequate}
Let $\cX$ be an algebraic stack and let $\phi: \cX \to Y$ be an adequate moduli space.  Then
\begin{enumerate1}
\item \label{surjective} 
	The morphism $\phi$ is surjective.
\item \label{closed}
	The morphism $\phi$ is universally closed.
\item \label{submersive}
	The morphism $\phi$ is universally submersive.
\item \label{intersection}
 	If $Z_1, Z_2$ are closed substacks of $\cX$, then
$$  \im Z_1 \cap \im Z_2 = \im (Z_1 \cap Z_2) $$
where the intersections and images are set-theoretic.
\item \label{points}
For an algebraically closed field $k$, there is an equivalence relation defined on $[\cX(k)]$ by $x_1 \sim x_2 \in [\cX(k)]$ if $\overline{ \{x_1\}} \cap \overline{ \{x_2\} } \ne \emptyset$ in $\cX \times_{\ZZ} k$ inducing a bijective map $[\cX(k)]/ \simm \to Y(k)$.  That is, $k$-valued points of $Y$ are $k$-valued points of $\cX$ up to orbit closure equivalence.
\end{enumerate1}
\end{thm}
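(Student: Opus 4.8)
\emph{Plan.} Parts (1)--(3) I would deduce formally from the base change statement, the structure of adequate homeomorphisms, and Lemma~\ref{lemma-adequate-affine-algebra}. For (1), reduce to $Y=\Spec A$ affine and, for $y\in Y$, base change along $\Spec\kappa(y)\to Y$: by Proposition~\ref{proposition-base-change}(3) the fibre $\cX\times_Y\Spec\kappa(y)$ maps to $\Spec\kappa(y)$ as an adequate moduli space followed by an adequate homeomorphism, both of which have non-empty source (the latter is a homeomorphism onto a point; the former because $\oh_{\tilde Y}\iso\phi_*\oh$), so $y$ is in the image. For (2): an adequate homeomorphism is a closed map, so by Proposition~\ref{proposition-base-change}(3) it suffices to show an adequate moduli space $\psi\colon\cW\to Z$ is closed; reducing to $Z$ affine and applying Lemma~\ref{lemma-adequate-affine-algebra}, a closed substack $\cZ\subseteq\cW$ surjects onto $\sSpec\psi_*\oh_{\cZ}$ (by (1)), which surjects onto the closed subscheme $\im\cZ$ (an adequate homeomorphism), so $\psi(\cZ)=|\im\cZ|$ is closed. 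For (3), every base change of $\phi$ is surjective (compose the surjection from (1) with the surjective adequate homeomorphism of Proposition~\ref{proposition-base-change}(3)) and closed by (2), hence a topological quotient map.

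For (4), the inclusion $\im(Z_1\cap Z_2)\subseteq\im Z_1\cap\im Z_2$ is immediate. For the converse, reduce to $Y=\Spec A$ with $A=\Gamma(\cX,\oh_{\cX})$ and let $\cI_i\subseteq\oh_{\cX}$ be the ideal sheaf of $Z_i$, so that $\cI_1+\cI_2$ is the ideal sheaf of $Z_1\cap Z_2$. By Lemma~\ref{lemma-nagata}(1) the map $A/\Gamma(\cX,\cI_i)\to\Gamma(Z_i,\oh_{Z_i})$ is an adequate homeomorphism, hence has locally nilpotent kernel, so $\ker(A\to\Gamma(Z_i,\oh_{Z_i}))=\sqrt{\Gamma(\cX,\cI_i)}$; with the closedness from (2) this identifies $\im Z_i$ set-theoretically with $V(\Gamma(\cX,\cI_i))$. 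Therefore $\im Z_1\cap\im Z_2=V(\Gamma(\cX,\cI_1)+\Gamma(\cX,\cI_2))$ while $\im(Z_1\cap Z_2)=V(\Gamma(\cX,\cI_1+\cI_2))$, and Lemma~\ref{lemma-nagata}(2) says precisely that $A/(\Gamma(\cX,\cI_1)+\Gamma(\cX,\cI_2))\to A/\Gamma(\cX,\cI_1+\cI_2)$ is an adequate homeomorphism, so these two closed subsets coincide.

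For (5) I would first record the auxiliary fact that a point of an algebraic space over $k$ with residue field $k$ is closed and is the image of a unique $k$-point (its reduced closure is integral with function field $k$, so its \'etale charts are finite disjoint unions of copies of $\Spec k$, forcing it to be $\Spec k$). The key claim is that for $x_1,x_2\in\cX(k)$ one has $\phi(x_1)=\phi(x_2)$ in $Y(k)$ iff $\overline{\{x_1\}}\cap\overline{\{x_2\}}\neq\emptyset$ in $\cX\times_{\ZZ}k$; granting this, $\sim$ is an equivalence relation (pulled back from equality on $Y(k)$) and $[\cX(k)]/{\sim}\,\to Y(k)$ is well defined and injective. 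If $\phi(x_1)=\phi(x_2)=y$, then Proposition~\ref{proposition-base-change}(3) gives an adequate moduli space $\cX\times_Y\Spec k\to\tilde Y$ with $\tilde Y$ one point; applying (4) to it, the two non-empty images of $\overline{\{x_1\}},\overline{\{x_2\}}$ in $\tilde Y$ agree, so $\overline{\{x_1\}}\cap\overline{\{x_2\}}\neq\emptyset$ in $\cX\times_Y\Spec k$, and its image under $\cX\times_Y\Spec k\to\cX\times_{\ZZ}k$ witnesses $x_1\sim x_2$. Conversely a point $z$ in $\overline{\{x_1\}}\cap\overline{\{x_2\}}\subseteq\cX\times_{\ZZ}k$ maps in $Y\times_{\ZZ}k$ to a common specialization of the two $k$-points representing $\phi(x_1),\phi(x_2)$, which are closed by the auxiliary fact, so $\phi(x_1)=\phi(x_2)$. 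Finally, $[\cX(k)]\to Y(k)$ is surjective: the fibre $\cX\times_Y\Spec k$ over $y\in Y(k)$ is non-empty by (1) and universally closed over $\Spec k$ by (2), so choosing a point $\Spec K\to\cX\times_Y\Spec k$ and a valuation ring $k\subseteq V\subseteq K$ with $\Frac V=K$ and residue field $k$ (available since $k$ is algebraically closed), the valuative criterion of universal closedness produces a dominating valuation ring $V'$ and a lift $\Spec V'\to\cX\times_Y\Spec k$ whose closed point gives a $k$-point over $y$.

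\emph{Where the difficulty lies.} Parts (1)--(4) and the ``only if'' half of the key claim in (5) are essentially formal given Propositions~\ref{proposition-base-change}, Lemma~\ref{lemma-adequate-affine-algebra} and Lemma~\ref{lemma-nagata}. The delicate point is the surjectivity of $[\cX(k)]\to Y(k)$, i.e. producing a $k$-rational point on a non-empty fibre: the adequate moduli space structure by itself does not suffice (after Proposition~\ref{proposition-base-change}(3) it only reduces the question to the same problem over a field), so one genuinely needs the universal closedness from (2) and the valuative criterion, and the subtle part is to ensure the valuative lift keeps residue field $k$. This is immediate when $\phi$ is of finite type --- the fraction field extension in the criterion may then be taken finite, so the residue extension is finite over the algebraically closed $k$ --- and in general one reduces to that case by a limit argument; this is the one step I expect to require real care.
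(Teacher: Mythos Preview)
Your argument is correct and tracks the paper's own proof closely. For (1)--(4) you invoke exactly the same ingredients the paper does: Proposition~\ref{proposition-base-change}(3) for non-emptiness of fibres, Lemma~\ref{lemma-adequate-affine-algebra} for closedness, and Lemma~\ref{lemma-nagata}(2) for the intersection formula; the paper is terser but the skeleton is identical. For (5) the paper simply cites \cite[Theorem~4.16(iv)]{alper_good_arxiv}, so your expanded treatment is a genuine addition rather than a divergence.

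One comment on the surjectivity of $[\cX(k)]\to Y(k)$: your valuative-criterion approach is correct in spirit, but in the finite-type case (which is what the cited reference handles) there is a more direct route that sidesteps the residue-field bookkeeping entirely. The fibre $\cX_y$ is non-empty and of finite type over the algebraically closed field $k$, so any smooth affine presentation $U\to\cX_y$ is a non-empty finite-type $k$-scheme and hence has a $k$-point by the Nullstellensatz; composing with $U\to\cX_y$ gives the desired $k$-point. Your instinct that the non-finite-type case needs genuine care is right. A cleaner line there, avoiding valuations, is: a closed point $\xi$ of the quasi-compact stack $\cX_y$ has reduced closure $\cZ$ with $\cZ\to\Spec\Gamma(\cZ,\oh_\cZ)$ an adequate moduli space and $\Spec\Gamma(\cZ,\oh_\cZ)\to\Spec k$ an adequate homeomorphism of reduced schemes, forcing $\Gamma(\cZ,\oh_\cZ)=k$; then $\cZ$ is (the residual gerbe) an fppf gerbe over $\Spec k$, and any fppf gerbe over an algebraically closed field admits a section since every fppf cover of $\Spec k$ does.
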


\begin{proof}  For part (\ref{surjective}), if $\Spec(k) \to Y$ is an arbitrary map from a field $k$, then Proposition \ref{proposition-base-change}(3) implies that $\cX \times_Y \Spec(k)$ is non-empty.  For part (\ref{closed}), if $\cZ \subseteq \cX$ is a closed substack, then Lemma \ref{lemma-adequate-affine-algebra} implies that $\cZ \to Y' = \sSpec_Y (\phi_* \cZ)$ is an adequate moduli space and $Y' \to \im \cZ$ is an adequate homeomorphism.  Using part (1), it follows that the composition $\cZ \to Y' \to \im \cZ$ is surjective so that $\phi(\cZ)$ is closed.  Proposition \ref{proposition-base-change}(3) then implies that $\phi$ is universally closed.   
Part (\ref{submersive}) follows from parts (\ref{surjective}) and (\ref{closed}).
Part (\ref{intersection}) follows from Lemma \ref{lemma-nagata}(2).  Part (\ref{points}) follows from (\ref{intersection}) as in the argument of \cite[Theorem 4.16(iv)]{alper-good}. 
\end{proof}

\subsection{Preservation of properties}

\begin{prop}  Let $\cP \in \{$reduced, connected, irreducible, normal$\}$ be a property of algebraic stacks.  Let $\cX$ be an algebraic stack and let $\phi: \cX \to Y$ be an adequate moduli space.  If $\cX$ has property $\cP$, then so does $Y$.
\end{prop}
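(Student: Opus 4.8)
\emph{Plan.} I would treat the four properties separately. Throughout, the key facts used are that $\phi$ is continuous and surjective (Theorem~\ref{theorem-adequate}(\ref{surjective})) and that $\oh_Y\iso\phi_*\oh_{\cX}$. Connectedness and irreducibility are then immediate, since a continuous surjection carries a connected (resp.\ irreducible) space to a connected (resp.\ irreducible) one, and $|Y|$ is the image of $|\cX|$. For reducedness, I would first reduce (Zariski-locally on $Y$) to the case $Y=\Spec R$, so that $R=\Gamma(\cX,\oh_{\cX})$; then for any smooth presentation $U\to\cX$ the scheme $U$ is reduced, and $R\hookrightarrow\Gamma(U,\oh_U)$ because $U\to\cX$ is faithfully flat, so $R$ is reduced.

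\emph{Normal, reduction step.} Normality is local on $Y$, so assume $Y=\Spec R$; then $\cX$ is quasi-compact, and since it is normal it decomposes as a finite disjoint union $\cX=\bigsqcup_{i=1}^{n}\cX_i$ of connected components, each irreducible (connected $+$ normal) and normal. Correspondingly $R=\prod_i R_i$ with $R_i=\Gamma(\cX_i,\oh_{\cX_i})$, and flat base change along the clopen immersions $\Spec R_i\hookrightarrow Y$ (Proposition~\ref{proposition-base-change}(1)) makes each $\cX_i\to\Spec R_i$ an adequate moduli space. A finite product of normal domains is normal, so it suffices to prove each $R_i$ is a normal domain; hence we may assume $\cX$ is irreducible and normal. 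By the reduced and irreducible cases $R$ is then a domain, with fraction field $K$, and we must show $R$ is integrally closed in $K$.

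\emph{Normal, integral closure.} Choose a quasi-compact smooth presentation $\pi\colon X\to\cX$ with $X$ a scheme. Being smooth over the normal stack $\cX$, $X$ is a normal quasi-compact scheme, hence $X=\bigsqcup_j X_j$ with each $X_j$ integral normal, and since $\pi$ is open and $\cX$ is irreducible every $X_j$ dominates $\cX$, hence dominates $Y$. Therefore $B:=\Gamma(X,\oh_X)=\prod_j B_j$ with $B_j:=\Gamma(X_j,\oh_{X_j})$ a normal domain, each $R\to B_j$ is injective (its kernel is a prime with vanishing locus all of $\Spec R$, and $R$ is reduced), and $K$ embeds compatibly into $\prod_j\Frac(B_j)$. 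For $t\in K$ integral over $R$, the image of $t$ in each $\Frac(B_j)$ is integral over $B_j$, hence lies in the normal domain $B_j$, so $t$ lifts to a global function $\tilde t\in B=\Gamma(X,\oh_X)$. It then remains to show $\tilde t$ lies in $\Gamma(\cX,\oh_{\cX})=\operatorname{eq}\bigl(\Gamma(X,\oh_X)\rightrightarrows\Gamma(X\times_{\cX}X,\oh)\bigr)$, i.e.\ that $p_1^*\tilde t=p_2^*\tilde t$. The algebraic space $X\times_{\cX}X$ is normal (it is smooth over $X$) and quasi-compact, hence a finite disjoint union of integral normal algebraic spaces $W_k$; on each $W_k$ the restrictions $p_1^*\tilde t$ and $p_2^*\tilde t$ both equal the image of $t$ under the canonical embedding $K\hookrightarrow\Frac\Gamma(W_k,\oh)$ induced by the composite $W_k\to X\xrightarrow{\pi}\cX\to Y$ (independent of the choice of projection $W_k\to X$, the two composites to $\cX$ being canonically $2$-isomorphic), so $p_1^*\tilde t=p_2^*\tilde t$. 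Thus $\tilde t=\pi^*s$ for some $s\in R$, and comparing images in $\prod_j\Frac(B_j)$ yields $s=t$, so $t\in R$.

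\emph{Main obstacle.} The crux is the descent in the last step: producing $\tilde t$ on the presentation is routine, but showing it is invariant, i.e.\ descends along $X\to\cX$, is where the stack structure genuinely interacts with integral closure. The mechanism is that $X\times_{\cX}X$ is itself normal, so its global sections are detected on the generic points of its components, on each of which the two pullbacks of $\tilde t$ collapse to the pullback of $t\in K$ along one and the same morphism to $Y$.
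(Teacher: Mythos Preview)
Your proof is correct.  The paper's own proof is a single sentence for each case: it declares the first three ``clear'' and, for normality, reduces to $Y$ affine and integral and then says the result ``follows since $\phi$ is universal for maps to affine schemes''---i.e.\ $Y=\Spec\Gamma(\cX,\oh_{\cX})$, together with the implicit standard fact that the ring of global functions on a normal integral stack is integrally closed.  Your argument for the normal case is precisely a self-contained proof of that standard fact via descent along a smooth presentation (lift the integral element to the normal cover, then use normality of $X\times_{\cX}X$ to check the two pullbacks agree on generic points), so the two approaches coincide in spirit; you have simply written out what the paper leaves to the reader.  One minor phrasing issue: the kernel of $R\to B_j$ is not ``a prime'', just an ideal whose vanishing locus contains the dense image of $X_j$ in $Y$, hence is zero since $R$ is a domain---but the conclusion you draw is right.
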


\begin{proof}
The first three are clear.  For $\cP = $ ``normality'', we may assume that $Y$ is affine and integral and the statement follows since $\phi$ is universal for maps to affine schemes.  
\end{proof}

\subsection{Flatness}
If $\phi: \cX \to Y$ is a good moduli space with both $\cX$ and $Y$ defined over a base $S$ and $\cF$ is quasi-coherent $\oh_{\cX}$-module flat over $S$, then $\phi_* \cF$ is also flat over $S$ (see \cite[Theorem 4.16(ix)]{alper-good}). The following example shows that the corresponding property does not hold for adequate moduli spaces.

\begin{example}
Let $R = \FF_2[x,y]/xy$ and consider the dual action of $G=\ZZ/ 2 \ZZ$ on $A=R[z,w]$ given by $(z,w) \mapsto(z+x, w+y)$.  Then $R \to A$ flat but we claim that $R \to A^G$ is not flat.  Indeed, since the annihilator of $y$ in $R$ is the ideal $(x)$, we have the injection $R/x \stackrel{y}{\hookarr} R$.  But $A^G / x \to A^G$ is not injective.  The element $f = xw \in A^G$ satisfies $fy = 0$ but is not divisible by $x$.  
\end{example}

\subsection{Vector bundles}  If $\phi: \cX \to Y$ is a good moduli space with $\cX$ locally Noetherian and $\cF$ is a vector bundle on $\cX$ such that at all points $x: \Spec(k) \to \cX$ with closed image, the $G_x$-representation $\cF \tensor k$ is trivial, then $\cF$ is the pullback of a vector bundle on $Y$ (see \cite[Theorem 10.3]{alper-good}).  This is not true for adequate moduli spaces.

\begin{example}
Suppose $\text{char}(k) = p$.  Let $S = k[\epsilon]/(\epsilon^2)$ be the dual numbers.  Consider the group scheme  $\alpha_{p,S} = \Spec(k[x, \epsilon]/(\epsilon^2,x^p))$.  Then $B \alpha_{p,S} \to S$ is an adequate moduli space (see Theorem \ref{theorem-finite-group}). Trivial representations of $\alpha_p$ over the closed point have non-trivial deformations.  Consider the line bundle $\cL$ on $B \alpha_{p,S}$ corresponding to the character
$$\begin{aligned}
 \alpha_p  &\to \GG_{m,S} =  \Spec(k[\epsilon, t]_t/(\epsilon^2))  \\
				1 + \epsilon x			& \mapsfrom  t
\end{aligned}$$
This restricts to the trivial line bundle under the closed immersion $B \alpha_{p,k} \hookarr B \alpha_{p,S}$ but is not the pullback of a line bundle on $S$.  One can construct similar examples for $\ZZ / p \ZZ$.
\end{example}

%%%%
\section{Finiteness results}

\subsection{Historical context} \label{section-finiteness}

In this section, we show that if $\cX \to Y$ is an adequate moduli space defined over a Noetherian algebraic space $S$, then $\cX \to S$ of finite type implies that $Y \to S$ is of finite type.  This can be considered as a generalization of Nagata's result that if $G$ is a geometrically reductive group over $k$ and $A$ is a finitely generated $k$-algebra, then $A^G$ is finitely generated over $k$ (see \cite{nagata_invariants-affine} or \cite[Appendix 1.C]{git3}).  See \cite[Section 3.6]{newstead} for a more complete discussion on the finite generation of the invariant rings.  Theorem \ref{theorem-finiteness} generalizes Seshadri's result \cite[Theorem 2]{seshadri_reductivity} for actions by reductive group schemes $G \to \Spec(R)$ where $R$ is universally Japanese as well as Borsari and Ferrer Santos's result \cite[Theorem 4.3]{ferrer-santos} on actions by geometrically reductive commutative Hopf algebras over fields.  The theorem here also generalizes \cite[Theorem 4.16(xi)]{alper-good} where the analogous statement is proved for good moduli spaces over an excellent base.  In \cite{alper-dejong}, a categorical framework for the adequacy condition is considered and the main result simultaneously generalizes Theorem \ref{theorem-finiteness} and the finiteness results in \cite{ferrer-santos} for actions of geometrically reductive non-commutative Hopf algebras.

\subsection{General result about finite generation of subrings}
We will apply the following result, which was discovered jointly with Johan de Jong.

\begin{thm} 
\label{theorem-finite-type}
Consider a commutative diagram of schemes
$$
\xymatrix{
X \ar[d] \ar[rd] \\
Y \ar[r]	& S
}
$$
Assume that:
\begin{enumeratea}
\item The schemes $Y$ and $S$ are Noetherian.
\item The morphism $X \to S$ is of finite type.
\item The morphism $X \to Y$ is quasi-compact and universally submersive.
\end{enumeratea}
Then $Y \to S$ is of finite type.
\end{thm}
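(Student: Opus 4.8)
The plan is to reduce to a statement about rings and then run a Noetherian induction. Since being of finite type is local on the target, and (as $Y$ is noetherian, hence quasi-compact) being locally of finite type is local on the source, I may first assume $S=\Spec A$ and $Y=\Spec B$ are affine with $A,B$ noetherian; replacing $X$ by $X\times_Y\Spec B_j$ along an affine cover of $Y$ is harmless: base change preserves quasi-compactness and universal submersiveness, and $X\times_Y\Spec B_j$ is an open subscheme of $X$, hence still of finite type over $S$. Then $X$ is noetherian. So the goal becomes: \emph{if $A\to B$ is a homomorphism of noetherian rings and there is a noetherian scheme $X$ with $X\to\Spec B$ quasi-compact and universally submersive and $X\to\Spec A$ of finite type, then $B$ is a finitely generated $A$-algebra.}

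Next I set up a Noetherian induction on closed subschemes of $\Spec B$. As $\mathrm{nil}(B)$ is finitely generated, $B$ is of finite type over $A$ iff $B_{\mathrm{red}}$ is, so (replacing $X$ by $X\times_{\Spec B}\Spec B_{\mathrm{red}}$) I may assume $B$ reduced. Since $X\to\Spec B$ is submersive, $\Spec B$ is a noetherian topological space, so reduced closed subschemes of $\Spec B$ satisfy the descending chain condition; for such a $Z$ the base change $X\times_{\Spec B}Z$ is a closed subscheme of $X$ and $X\times_{\Spec B}Z\to Z$ satisfies the same hypotheses, so by Noetherian induction I may assume in addition that every \emph{proper} reduced closed subscheme $Z\subsetneq\Spec B$ has $\Gamma(Z,\mathcal{O}_Z)$ of finite type over $A$, while $B$ is not, and derive a contradiction. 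If $\Spec B$ were reducible, write it as $V(I_1)\cup V(I_2)$ with $I_i$ radical and $I_1\cap I_2=0$; then $B\hookrightarrow B/I_1\times B/I_2$ is module-finite (the cokernel $B/(I_1+I_2)$ is a cyclic $B$-module), its target is of finite type over $A$ by the inductive hypothesis, and the Artin--Tate lemma (over $A':=\operatorname{im}(A\to B)$, a noetherian ring) gives that $B$ is of finite type over $A$ --- a contradiction. Hence $B$ is a domain.

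Now the domain case, which is the heart of the matter. Because $X\to\Spec B$ is universally submersive, its fibre over the generic point $\xi=\Spec K$, $K=\Frac(B)$, is nonempty; moreover this fibre is a closed subscheme of $X\times_{\Spec A}\Spec K$ (the morphism $\Spec K\to\Spec(B\otimes_A K)$ is a closed immersion), hence of finite type over $K$, so I may choose $x_0$ a closed point of it, giving $[\kappa(x_0):K]<\infty$. Then $W:=\overline{\{x_0\}}$ is integral and of finite type over $A$, and an affine chart $\Spec D\ni x_0$ of $W$ gives an injection $B\hookrightarrow D$ with $D=B[e_1,\dots,e_r]$ of finite type over $A$ and each $e_i$ algebraic over $K$. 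Clearing denominators, there is $f\in B\setminus\{0\}$ with every $e_i$ integral over $B[1/f]$; then $D[1/f]$ is module-finite over $B[1/f]$, so the Artin--Tate lemma (again over $A'$) shows $B[1/f]$ is of finite type over $A$. On the other hand $V(f)\subsetneq\Spec B$ is a proper closed subscheme and $X\times_{\Spec B}V(f)\to V(f)$ is again universally submersive and of finite type over $A$, so the Noetherian induction gives $B/fB$ of finite type over $A$.

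What remains --- and what I expect to be the technical crux --- is a \emph{gluing lemma}: if $B$ is a noetherian ring, $f\in B$ a non-zero-divisor, and $B[1/f]$ and $B/fB$ are both of finite type over the noetherian ring $A$, then so is $B$. Granting it, $B$ would be of finite type over $A$, contradicting the reduction, and the theorem follows. One checks readily that $B=B_2+f^nB$ for all $n$, where $B_2\subseteq B$ is the finite-type $A$-subalgebra generated by (denominator-cleared) lifts of algebra generators of $B[1/f]$ together with lifts of algebra generators of $B/fB$; equivalently $B=B[1/f]\times_{\widehat{B}[1/f]}\widehat{B}$ with $\widehat{B}$ the $f$-adic completion. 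The obstruction is that $B/B_2$ can a priori contain $f$-divisible elements, so one cannot conclude directly that $B$ is module-finite over $B_2$; showing that no such elements survive is exactly where the noetherian hypothesis on $B$ must be used (via Krull's intersection theorem / Artin--Rees and a completion argument), and this is the step I expect to be hardest.
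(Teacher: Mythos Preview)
Your overall strategy---reduce to $B$ a noetherian domain, use a closed point in the generic fibre plus Artin--Tate to get $B[1/f]$ of finite type, then induct to get $B/fB$ of finite type, then glue---is natural and the reductions up to the gluing step are carried out correctly.  But the ``gluing lemma'' is a genuine gap, and the methods you suggest do not close it.  The difficulty is exactly the one you name: one must rule out $f$-divisible elements in $B/B_2$.  Artin--Rees, however, applies to $B$-submodules of finitely generated $B$-modules, and $B_2\subset B$ is only a subring, not a $B$-submodule; there is no direct way to compare the $f$-adic topology on $B_2$ with the topology induced from $B$.  The completion argument gives only a surjection $\widehat{B_2}\twoheadrightarrow\widehat{B}$, and injectivity is equivalent to the very Artin--Rees-type statement ($f^nB\cap B_2\subset f^{m(n)}B_2$ with $m(n)\to\infty$) that one cannot prove without already knowing $B$ is module-finite over $B_2$.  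A further warning sign: your argument as written uses only \emph{surjectivity} of $X\to\Spec B$, never the full strength of universal submersiveness, so if your gluing lemma had an easy proof you would be establishing a strictly stronger theorem than the one stated.

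The paper's proof is entirely different and uses universal submersiveness in an essential way.  After reducing to $B$ an integral domain, it applies Raynaud--Gruson flattening: there is a blowup $\tilde Y\to Y=\Spec B$ in a finitely generated ideal such that the strict transform $X'\to\tilde Y$ is \emph{flat}.  Universal submersiveness (via the valuative criterion: lift a DVR-point of $\tilde Y$ to $X$) forces $X'\to\tilde Y$ to be surjective, so by faithfully flat descent $\tilde Y$ is of finite type over $S$.  Finally, writing $B=\operatorname{colim} B_\lambda$ over finite-type subalgebras, the blowup $\tilde Y$ is eventually the blowup of some $\Spec B_\lambda$; since $\tilde Y\to\Spec B_\lambda$ is then projective, $(\tilde Y\to\Spec B_\lambda)_*\mathcal O_{\tilde Y}$ is coherent and contains $\mathcal O_Y$, so $B$ is module-finite over $B_\lambda$.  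The flattening-by-blowup step is what replaces your gluing lemma, and it is where the submersiveness hypothesis genuinely enters.
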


\begin{remark}  This theorem generalizes several known special cases:
\begin{enumerate}
\item If $X \to Y$ is faithfully flat, this is \cite[IV.2.7.1]{ega}.  
(This is true even without the Noetherian hypothesis.)
\item If $X \to Y$ is pure, 
this is \cite[Theorem 1]{hashimoto_pure}.  (Here $Y$ does not 
need to be assumed Noetherian as it is immediately implied by the 
Noetherianness of $X$ and purity.)
\item If $X \to Y$ is surjective and universally open, 
$Y$ is reduced and $S$ is a universally catenary Nagata scheme, 
this is \cite[Theorem 2.3]{hashimoto_geometric}.  
(This is true without assuming $Y$ is Noetherian.)
\item If $X \to Y$ is surjective and proper and $S$ is excellent, 
this is \cite[Theorem 4.2]{hashimoto_geometric}.
\end{enumerate}

\end{remark}

\begin{proof}
We may assume that $S = \text{Spec}(R)$ and $Y = \text{Spec}(B)$ are affine.  
Furthermore, since a Noetherian scheme is of finite type over a ring $R$ if and 
only if the reduced subschemes of the irreducible components are finite type 
over $R$, we may assume that $Y$ is integral (\cite[p. 169]{fogarty}).

\medskip
\noindent
The morphism $X \to \text{Spec}(B)$ is flat over a non-empty
open subscheme $U \subset \text{Spec}(B)$. 
By \cite[Theorem 5.2.2]{raynaud-gruson}, there
exists a $U$-admissible blowup
$$
b : \tilde Y \longrightarrow Y = \text{Spec}(B)
$$
such that the strict transform $X'$ of $X$ is flat over $\tilde Y$.
For every point $y \in \tilde Y$ we can find a discrete valuation
ring $V$ and morphism $\text{Spec}(V) \to \tilde Y$ whose generic
point maps into $U$ and whose special point maps to $y$. By assumption
there exist a local map of discrete valuation rings $V \to V'$
and a commutative diagram
$$
\xymatrix{
X \ar[d] & & \text{Spec}(V') \ar[ll] \ar[d] \\
Y & \tilde Y \ar[l] & \text{Spec}(V) \ar[l]
}
$$
By definition of the strict transform we see that the product map
$\text{Spec}(V') \to \tilde Y \times_Y X$ maps into the strict transform.
Hence we conclude there exists a point on $X'$ which maps to $y$, that is,
we see that $X' \to \tilde Y$ is surjective. 
By \cite[IV.2.7.1]{ega}, 
 we conclude that $\tilde Y \to \text{Spec}(R)$
is of finite type.

\medskip\noindent
Let $I \subset B$ be an ideal such that $\tilde Y$ is the blowup of
$\text{Spec}(B)$ in $I$. Choose generators $f_i \in I$, $i = 1, \ldots, n$.
For each $I$ the affine ring
$$
B_i = B[f_j/f_i;\ j = 1, \ldots, \hat i, \ldots n] \subset f.f.(B)
$$
in the blowup is of finite type over $R$. Write
$B = \text{colim}_{\lambda \in \Lambda} B_\lambda$
as the union of its finitely generated $R$-subalgebras.
After shrinking $\Lambda$ we may assume that each $B_\lambda$ contains
$f_i$ for all $i$. Set $I_\lambda = \sum f_iB_\lambda \subset B_\lambda$
and let
$$
B_{\lambda, i} = B_\lambda[f_j/f_i;\ j = 1, \ldots, \hat i, \ldots n]
\subset f.f.(B_\lambda) \subset f.f.(B)
$$
After shrinking $\Lambda$ we may assume that the canonical maps
$B_{\lambda, i} \to B_i$ are surjective for each $i$ (as $B_i$ is finitely
generated over $R$). Hence for such a $\lambda$ we have
$B_{\lambda, i} = B_i$! So for such a $\lambda$ the blowup of
$\text{Spec}(B_\lambda)$ in $I_\lambda$ is {\bf equal} to the blowup
of $\text{Spec}(B)$ in $I$. Set $Y_{\lambda} = \text{Spec}(B_{\lambda})$.
 Thus the composition
$$
\tilde Y \longrightarrow Y \longrightarrow Y_{\lambda}
$$
is a projective morphism and we see that
$$
(Y \to Y_{\lambda})_*\mathcal{O}_Y \subset
(\tilde Y \to Y_{\lambda})_*\mathcal{O}_{\tilde Y}
$$
and the last sheaf is a coherent $\mathcal{O}_{Y_{\lambda}}$-module (\cite[III.3.2.1]{ega}).
Hence $(Y \to Y_{\lambda})_*\mathcal{O}_Y$ is also coherent so that $Y \to Y_{\lambda}$
is finite and we win.
\end{proof}

\medskip \noindent
Let $\phi: \cX \to Y$ be an adequate moduli space where $\cX$ is an algebraic stack of finite type over a Noetherian base $S$.  If we knew a priori that $Y$ is Noetherian, then the above theorem would immediately imply that $Y \to S$ is of finite type by using property Theorem \ref{theorem-adequate}(\ref{submersive}).  However, it is not true in general that if $\cX$ is Noetherian then $Y$ is Noetherian.

\begin{example} We quickly recall Nagata's example (see \cite{nagata_actions} and \cite[Example 6.5.1]{kollar_quotients}) of a Noetherian affine scheme $\Spec(R)$ which is defined but not of finite type over $\FF_p$ such that $R^{\ZZ/p \ZZ}$ is not Noetherian.
Let $K = \FF_p(x_1, x_2, \ldots)$.  Let $D:= \sum_i x_i^{p+1} \frac{\partial}{ \partial x_i}$ be a derivation of $K$.   Then $R = K[\epsilon]/ (\epsilon^2)$ is a local Artin ring (and thus Noetherian).  There is a dual action of $\ZZ / p \ZZ$ on $R$ given on a generator by $f + \epsilon g \mapsto f + \epsilon(g + D(f))$.  One can show that the ring of invariants $R^{\ZZ / p \ZZ} = F + \epsilon K$ is non-Noetherian, where $F = \{f \in K \mid D(f) = 0\}$.
\end{example}

\subsection{The main finiteness result} 
The proof of Theorem \ref{theorem-finiteness} will be by Noetherian induction.  Consider the following property of a Noetherian algebraic stack $\cX$ defined over a Noetherian ring $R$.
\begin{enumerate}
\item[($\star$)] The ring $\Gamma(\cX, \oh_{\cX})$ is a finite type $R$-algebra and for every coherent $\oh_{\cX}$-module $\cF$, the $\Gamma(\cX, \oh_{\cX})$-module $\Gamma(\cX, \cF)$ is of finite type.
\end{enumerate}

\begin{lem} \label{lemma-finiteness1}
Let $\cX$ be an adequate algebraic stack of finite type over a Noetherian ring $R$.  Let $\cI$ be a coherent sheaf of ideals in $\oh_{\cX}$ such that $\Gamma(\cX, \oh_{\cX} / \cI)$ is a finite type $R$-algebra.  Then $\Gamma(\cX, \oh_{\cX}/\cI)$ is a finite type $\Gamma(\cX, \oh_{\cX})$-module and $\im(\Gamma(\cX, \oh_{\cX}) \to \Gamma(\cX, \oh_{\cX}/\cI))$ is a finite type $R$-algebra.
\end{lem}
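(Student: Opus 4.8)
The plan is to reduce the statement to a commutative-algebra fact of Artin--Tate type, the only genuinely stack-theoretic ingredient being an integrality assertion supplied by the Nagata-type Lemma~\ref{lemma-nagata}. First I would fix notation: set $A=\Gamma(\cX,\oh_{\cX})$, $B=\Gamma(\cX,\oh_{\cX}/\cI)$ and $A'=\im(A\to B)$. Applying the left exact functor $\Gamma(\cX,-)$ to $0\to\cI\to\oh_{\cX}\to\oh_{\cX}/\cI\to 0$ identifies $\ker(A\to B)$ with $\Gamma(\cX,\cI)$, so $A'=A/\Gamma(\cX,\cI)$. Since $\cX$ is adequately affine, Lemma~\ref{lemma-nagata}(1) (applied to $\cX\to\Spec\ZZ$) says that $A'\hookarr B$ is an adequate homeomorphism; in particular $B$ is integral over $A'$. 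This is the one place adequacy is used, and in characteristic $0$ this map is even an isomorphism, so the lemma is trivial there.

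From here the argument is pure commutative algebra. By hypothesis $B$ is generated as an $R$-algebra by finitely many elements $b_1,\dots,b_m$. Each $b_i$ is integral over $A'$, hence satisfies a monic polynomial with coefficients in $A'$; let $A_0\subseteq A'$ be the $R$-subalgebra generated by all these (finitely many) coefficients. Then $A_0$ is a finite type $R$-algebra, hence Noetherian, and each $b_i$ is integral over $A_0$. Since the image of $R$ in $B$ lies in $A_0$ we have $B=A_0[b_1,\dots,b_m]$, and a finitely generated $A_0$-algebra generated by elements integral over $A_0$ is a finite $A_0$-module; hence $B$ is a finite module over the Noetherian ring $A_0$, and in particular a Noetherian $A_0$-module.

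Both conclusions then follow. The $A_0$-submodule $A'\subseteq B$ is finitely generated over $A_0$, and since $A_0$ is of finite type over $R$, so is $A'$ — this is the assertion that $\im(\Gamma(\cX,\oh_{\cX})\to\Gamma(\cX,\oh_{\cX}/\cI))$ is of finite type over $R$. Moreover $B$ is finite over $A_0$, a fortiori finite over $A'\supseteq A_0$, and finite over $A$ by restriction of scalars along $A\mapsonto A'$ — this is the assertion that $\Gamma(\cX,\oh_{\cX}/\cI)$ is a finite $\Gamma(\cX,\oh_{\cX})$-module. The only step with any real content is establishing the integrality of $A'\hookarr B$, which is precisely Lemma~\ref{lemma-nagata}(1); everything afterward is routine Artin--Tate/Eakin--Nagata bookkeeping. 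Note that one cannot instead invoke Theorem~\ref{theorem-finite-type} at this stage, since that result requires the target scheme to be Noetherian, and it is exactly the failure of $\Spec A'$ to be known Noetherian that forces the overall proof of Theorem~\ref{theorem-finiteness} to proceed by Noetherian induction.
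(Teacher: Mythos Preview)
Your proof is correct and follows essentially the same route as the paper: use adequacy of $\cX$ to see that $A\to B$ (equivalently $A'\hookarr B$) is integral, then conclude by the Artin--Tate lemma. The paper states this in two sentences, invoking the definition of adequately affine directly rather than Lemma~\ref{lemma-nagata}(1), and leaves the Artin--Tate step implicit; your write-up spells out both of these and correctly observes that finiteness of $B$ over $A$ then follows as well.
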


\begin{proof} Since $\cX$ is adequate, $\Gamma(\cX, \oh_{\cX}) \to \Gamma(\cX, \oh_{\cX}/\cI)$ is adequate and, in particular, integral.  Since $\Gamma(\cX, \oh_{\cX} / \cI)$ is a finite type $R$-algebra, the subalgebra $\im(\Gamma(\cX, \oh_{\cX}) \to \Gamma(\cX, \oh_{\cX}/\cI))$ also is.
\end{proof}

\begin{lem} \label{lemma-finiteness2}
Let $\cX$ be an adequate algebraic stack finite type over a Noetherian ring $R$.  Suppose that $\cI$ and $\cJ$ are quasi-coherent sheaves of ideals in $\oh_{\cX}$ such that $\cI \cJ = 0$.  If $(\star)$ holds for the closed substacks defined by $\cI$ and $\cJ$, then $(\star)$ holds for $\cX$.
\end{lem}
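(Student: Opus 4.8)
The plan is to feed the two short exact sequences
$$0\to\cI\to\oh_{\cX}\to\oh_{\cX}/\cI\to 0 \qquad\text{and}\qquad 0\to\cJ\cF\to\cF\to\cF/\cJ\cF\to 0$$
(the second for an arbitrary coherent $\cF$) into the global sections functor and to control the resulting three-term exact sequences. Write $\cX_1,\cX_2$ for the closed substacks defined by $\cI,\cJ$, so $\oh_{\cX_i}$ is $\oh_{\cX}/\cI$ resp.\ $\oh_{\cX}/\cJ$; note $\cX$ is noetherian, so $\cI,\cJ$ are coherent. The hypothesis $\cI\cJ=0$ is exactly what makes this work: $\cI$ is killed by $\cJ$, hence is a coherent sheaf on $\cX_2$; and $\cJ\cF$ is killed by $\cI$, hence is a coherent sheaf on $\cX_1$, while $\cF/\cJ\cF$ is killed by $\cJ$, hence a coherent sheaf on $\cX_2$. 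Thus $(\star)$ for $\cX_1$ and $\cX_2$ controls all the outer terms as modules over $\Gamma(\cX,\oh_{\cX}/\cI)$ or $\Gamma(\cX,\oh_{\cX}/\cJ)$; the role of Lemma \ref{lemma-finiteness1} — and this is the only place adequacy of $\cX$ is used — is to transfer such finiteness to finiteness over $B:=\Gamma(\cX,\oh_{\cX})$ itself, and to tell us that the images of $B$ in the two quotient rings are finite type $R$-algebras.

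First I would prove that $B$ is a finite type $R$-algebra. Applying $\Gamma(\cX,-)$ to the first sequence gives $0\to\Gamma(\cX,\cI)\to B\to\Gamma(\cX,\oh_{\cX}/\cI)$; by $(\star)$ for $\cX_1$ the target is finite type over $R$, so Lemma \ref{lemma-finiteness1} makes $B_1':=\im(B\to\Gamma(\cX,\oh_{\cX}/\cI))$ a finite type $R$-algebra. Likewise $C':=\im(B\to\Gamma(\cX,\oh_{\cX}/\cJ))$ is a finite type $R$-algebra and $\Gamma(\cX,\oh_{\cX}/\cJ)$ is a finite $C'$-module. Since $\cI$ is a coherent sheaf on $\cX_2$, $(\star)$ for $\cX_2$ makes $\Gamma(\cX,\cI)$ a finite $\Gamma(\cX,\oh_{\cX}/\cJ)$-module, hence a finite $C'$-module, and its $B$-module structure factors through $B\twoheadrightarrow C'$. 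Now I would choose finitely many elements of $B$ lifting algebra generators of $B_1'$ and of $C'$, and let $B_0\subseteq B$ be the finite type $R$-subalgebra they generate, so that $B_0\twoheadrightarrow B_1'$ and $B_0\twoheadrightarrow C'$. Then $B=B_0+\Gamma(\cX,\cI)$ (lift the image of any $b\in B$ in $B_1'$), and $\Gamma(\cX,\cI)$, being an ideal of $B$ whose module structure factors through $B_0\twoheadrightarrow C'$, is a finite $B_0$-module; hence $B$ is a finite $B_0$-module and therefore a finite type $R$-algebra. In particular $B$ is noetherian.

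Next I would prove the module statement. For coherent $\cF$, applying $\Gamma(\cX,-)$ to the second sequence gives $0\to\Gamma(\cX,\cJ\cF)\to\Gamma(\cX,\cF)\to\Gamma(\cX,\cF/\cJ\cF)$. By $(\star)$ for $\cX_1$, $\Gamma(\cX,\cJ\cF)$ is a finite $\Gamma(\cX,\oh_{\cX}/\cI)$-module, and $\Gamma(\cX,\oh_{\cX}/\cI)$ is a finite $B$-module by Lemma \ref{lemma-finiteness1}, so $\Gamma(\cX,\cJ\cF)$ is a finite $B$-module; symmetrically $\Gamma(\cX,\cF/\cJ\cF)$ is a finite $B$-module. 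Since $B$ is noetherian, the $B$-submodule $\im(\Gamma(\cX,\cF)\to\Gamma(\cX,\cF/\cJ\cF))$ is finite over $B$, and $\Gamma(\cX,\cF)$ sits in an extension of it by the finite $B$-module $\Gamma(\cX,\cJ\cF)$, hence is finite over $B$. Together with the previous paragraph this is exactly $(\star)$ for $\cX$.

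The one point that needs care — and the only real subtlety — is the bookkeeping of base rings: $(\star)$ for $\cX_1$ and $\cX_2$ only provides finiteness over the quotient rings $\Gamma(\cX,\oh_{\cX}/\cI)$ and $\Gamma(\cX,\oh_{\cX}/\cJ)$, whereas what is wanted is finiteness over $B$ and finite generation of $B$ over $R$. The bridge in each case is Lemma \ref{lemma-finiteness1}, i.e.\ adequacy of $\cX$, which forces $B\to\Gamma(\cX,\oh_{\cX}/\cI)$ to have finite type $R$-algebra image and to make the target a finite module over that image, and likewise for $\cJ$. One must also keep track that the $B$-module structures on $\Gamma(\cX,\cI)$, $\Gamma(\cX,\cJ\cF)$ and $\Gamma(\cX,\cF/\cJ\cF)$ all factor through the relevant quotient of $B$ — which is precisely what turns "finite over the quotient ring" into the needed "finite over $B$" (and, for $\Gamma(\cX,\cI)$, "finite over $B_0$").
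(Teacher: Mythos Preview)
Your proposal is correct and follows essentially the same approach as the paper's proof: build a finite type $R$-subalgebra $B_0\subseteq\Gamma(\cX,\oh_\cX)$ surjecting onto both images $\im(\Gamma(\cX,\oh_\cX)\to\Gamma(\cX,\oh_\cX/\cI))$ and $\im(\Gamma(\cX,\oh_\cX)\to\Gamma(\cX,\oh_\cX/\cJ))$ via Lemma~\ref{lemma-finiteness1}, use $\cI\cJ=0$ to see $\Gamma(\cX,\cI)$ is a finite $B_0$-module, and conclude $\Gamma(\cX,\oh_\cX)=B_0+\Gamma(\cX,\cI)$ is finite over $B_0$; then handle the module statement by the analogous three-term exact sequence. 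The paper uses $\cI\cF$ where you use $\cJ\cF$, and phrases the algebra step as ``$B_0[x_1,\dots,x_n]\to\Gamma(\cX,\oh_\cX)$ is surjective'' rather than ``$\Gamma(\cX,\oh_\cX)$ is a finite $B_0$-module'', but these are cosmetic differences.
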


\begin{proof}
By Lemma \ref{lemma-finiteness1}, there exists a finite type $R$-subalgebra $B \subseteq \Gamma(\cX, \oh_{\cX})$ such that $B \to \im (\Gamma(\cX, \oh_{\cX}) \to \Gamma(\cX, \oh_{\cX} / \cI) )$ and $B \to \im (\Gamma(\cX, \oh_{\cX}) \to \Gamma(\cX, \oh_{\cX} / \cJ) )$ are surjective.   Since $(\star)$ holds for the closed substack defined by $\cJ$ and $\cI$ is an $\oh_{\cX}/\cJ$-module, we may choose generators $x_1, \ldots, x_n$ of  $\Gamma(\cX, \cI)$ as an $\Gamma(\cX, \oh_{\cX}/\cJ)$-module.  We claim that $B[x_1, \ldots, x_n] \to \Gamma(\cX, \oh_{\cX})$ is surjective.  Let $f \in \Gamma(\cX, \oh_{\cX})$.  There exists $g \in B$ such that $f$ and $g$ have the same image in $\Gamma(\cX, \oh_{\cX}/\cI)$ so we may assume $f \in \Gamma(\cX, \cI)$.  We can write $f = a_1 x_1 + \cdots + a_n x_n$ with $a_i \in \Gamma(\cX, \oh_{\cX})$.  But there exists $a'_i \in B$ such that $a_i$ and $a'_i$ have the same image in $\Gamma(\cX, \oh_{\cX}/\cJ)$ so $f = a'_1 x_1 + \cdots + a'_n x_n$ is in the image of $B[x_1, \ldots, x_n] \to \Gamma(\cX, \oh_{\cX})$.  Therefore, $\Gamma(\cX, \oh_{\cX})$ is a finite type $R$-algebra.

\medskip \noindent
Let $\cF$ be a coherent $\oh_{\cX}$-module.  Consider the exact sequence
$$ 0 \to \Gamma(\cX, \cI \cF) \to \Gamma(\cX, \cF) \to \Gamma(\cX, \cF/ \cI\cF)$$
Now $\cI \cF$ is a $\oh_{\cX}/\cJ$-module and $\cF / \cI \cF$ is a $\oh_{\cX}/\cI$-module so by the hypotheses both $\Gamma(\cX, \cI \cF)$ and $\Gamma(\cX, \cF / \cI \cF)$ are finite type $\Gamma(\cX, \oh_{\cX})$-modules.  It follows that $\Gamma(\cX, \cF)$ is a finite type $\Gamma(\cX, \oh_{\cX})$-module.   
\end{proof}

\begin{thm} \label{theorem-finiteness}
Let $\cX$ be a finite type algebraic stack over a locally Noetherian algebraic space $S$.  Let $\phi: \cX \to Y$ be an adequate moduli space where $Y$ is an algebraic space over $S$.  Then $Y \to S$ is of finite type and for every coherent $\oh_{\cX}$-module $\cF$, the $\oh_{Y}$-module $\phi_* \cF$ is coherent.
\end{thm}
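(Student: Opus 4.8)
The plan is to reduce everything to property $(\star)$ by a few harmless base changes, to prove $(\star)$ by the double induction that Lemmas \ref{lemma-finiteness1} and \ref{lemma-finiteness2} are designed for, and to settle the base case of that induction by re-running the generic-flatness and blow-up argument in the proof of Theorem \ref{theorem-finite-type}. Both assertions are local on $S$, so we may assume $S=\Spec R$ with $R$ Noetherian; then $\cX$ is quasi-compact and, $\phi$ being surjective (Theorem \ref{theorem-adequate}(\ref{surjective})), so is $Y$. Both assertions and the notion of adequate moduli space are \'etale-local on $Y$, and for \'etale $U=\Spec C\to Y$ the base change $\cX_U\to U$ is again an adequate moduli space (Proposition \ref{proposition-base-change}(1)) with $\cX_U\to\cX$ \'etale, so $\cX_U$ is finite type over $R$; covering $Y$ by finitely many such $U$, I reduce to $Y=\Spec B$ with $B=\Gamma(\cX,\oh_{\cX})$. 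Now $\cX$ is a Noetherian adequately affine algebraic stack of finite type over $R$ (adequate affineness because $\cX\to\Spec B\to\Spec\ZZ$ is a composite of adequately affine morphisms, by Proposition \ref{proposition-adequately-affine}), and it suffices to prove property $(\star)$ for $\cX$: then $B$ is finite type over $R$, so $Y$ is Noetherian, and for coherent $\cF$ the $B$-module $\Gamma(Y,\phi_*\cF)=\Gamma(\cX,\cF)$ is finite, whence $\phi_*\cF$ is coherent.

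\emph{Reduction to $\cX$ integral.} Since $\cX$ is Noetherian its nilradical $\cN$ satisfies $\cN^n=0$ for some $n$, and I induct on $n$. For $n\le 1$ (i.e.\ $\cX$ reduced) I use Noetherian induction on the closed substacks of $\cX$: if $\cX$ is reduced and reducible, take $\cI_1$ the reduced ideal of one irreducible component and $\cI_2$ the reduced ideal of the union of the remaining components, so $\cI_1\cap\cI_2=0$ and hence $\cI_1\cI_2=0$; the closed substacks cut out by $\cI_1$ and $\cI_2$ are proper, adequately affine and finite type over $R$, so $(\star)$ holds for them by induction and therefore for $\cX$ by Lemma \ref{lemma-finiteness2}. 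This leaves the integral case. For $n\ge 2$, apply Lemma \ref{lemma-finiteness2} with $\cI=\cN^{n-1}$ and $\cJ=\cN$ (so $\cI\cJ=0$): the closed substack cut out by $\cN$ is $\cX_{\mathrm{red}}$, reduced, so $(\star)$ holds for it by the case just treated, while the one cut out by $\cN^{n-1}$ has nilpotency index $<n$, so $(\star)$ holds for it by the induction on $n$; hence $(\star)$ holds for $\cX$.

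\emph{The integral case.} Now $B=\Gamma(\cX,\oh_{\cX})$ is a domain, and its fraction field, being a subfield of the function field of any presentation of $\cX$, has finite transcendence degree over $R$. Fix a smooth affine presentation $X=\Spec A\to\cX$ with $A$ finite type over $R$; since $R\to B$, the ring $A$ is finite type over $B$, so $\cX$ is of finite type over $Y=\Spec B$. The morphism $\phi\colon\cX\to Y$ is surjective, quasi-compact and universally submersive (Theorem \ref{theorem-adequate}(\ref{surjective}),(\ref{submersive})). By generic flatness over the domain $B$ there is $0\ne f\in B$ with $\cX$ flat over $D(f)\subseteq Y$; by Raynaud--Gruson there is a blow-up $b\colon\tilde Y\to Y$, admissible with respect to a flat locus $U\supseteq D(f)$, such that the strict transform $\cX'$ of $\cX$ is flat over $\tilde Y$. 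The discrete-valuation-ring argument from the proof of Theorem \ref{theorem-finite-type}, applied to $\cX\to Y$ and $b$, shows $\cX'\to\tilde Y$ is surjective; being also flat and quasi-compact it is faithfully flat, so passing to a presentation of $\cX'$ (finite type over $R$) and applying \cite[IV.2.7.1]{ega} shows $\tilde Y$ is finite type over $R$, whence $B_f=\Gamma(D(f),\oh_{\tilde Y})$ is finite type over $R$. On the other hand, $f$ is a nonzero nonzerodivisor on the integral stack $\cX$, so $\Gamma(\cX,f\oh_{\cX})=fB$ and the vanishing locus of $f$ is a proper closed substack; by the Noetherian induction $(\star)$ holds for it, so $\Gamma(\cX,\oh_{\cX}/f\oh_{\cX})$ is finite type over $R$, and Lemma \ref{lemma-finiteness1} then gives that $B/fB$ is finite type over $R$. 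Finally, exactly as in the last paragraph of the proof of Theorem \ref{theorem-finite-type}, since $B/fB$ and $B_f$ are finite type over $R$ one descends the blow-up to a finitely generated $R$-subalgebra $B_\lambda\subseteq B$, making $\tilde Y\to\Spec B_\lambda$ projective; comparing pushforwards of structure sheaves shows $B$ is a finite $B_\lambda$-module, so $B$ is finite type over $R$. Together with the module-finiteness outputs of Lemmas \ref{lemma-finiteness1} and \ref{lemma-finiteness2} accumulated along the induction, this proves $(\star)$.

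\emph{Where the difficulty lies.} The main obstacle is exactly that $Y$ is not known to be Noetherian a priori --- Nagata's example shows noetherianness of $\cX$ alone does not force it --- so Theorem \ref{theorem-finite-type} cannot be quoted directly; its generic-flatness-plus-blow-up proof must instead be carried out over the possibly non-Noetherian base $Y$, using finiteness of $\cX$ over $R$ (equivalently, of the blow-up $\tilde Y$) to keep things under control, and it must be interleaved with Lemmas \ref{lemma-finiteness1}--\ref{lemma-finiteness2} to pass from the flat locus $D(f)$ to all of $Y$. A secondary subtlety, already visible above, is that killing nilpotents cannot be packaged as a Noetherian induction on $|\cX|$ and requires the separate induction on the nilpotency index.
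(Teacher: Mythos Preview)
Your integral-case argument has a genuine gap. You invoke generic flatness for $\cX$ (or its presentation $X=\Spec A$) over $Y=\Spec B$, and then Raynaud--Gruson flattening by blow-up. Both of these require $\cX\to Y$ to be of \emph{finite presentation}, not merely finite type. You correctly observe that $A$ is finite type over $B$ (since $A$ is finite type over $R$ and $R\to B\to A$), but since $B$ is not known to be Noetherian this does not give finite presentation, and neither result is available in that generality. So precisely the difficulty you flag in your last paragraph --- that $Y$ is not known Noetherian --- is not actually overcome by your argument; the blow-up machinery cannot be ``re-run'' over the non-Noetherian base without the finite-presentation input.

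The paper's proof sidesteps this completely by a much shorter route. After the same Noetherian-induction setup (on closed substacks, i.e.\ nonzero ideal sheaves, which already absorbs your separate nilpotency induction), it reduces not to $\cX$ integral but merely to the condition that every nonzero $f\in B=\Gamma(\cX,\oh_\cX)$ is a nonzerodivisor on $\oh_\cX$. The key step you are missing is then to prove \emph{directly} that $B$ is Noetherian: given an ideal $I\subseteq B$ and $0\neq f\in I$, the nonzerodivisor hypothesis gives $\Gamma(\cX,f\oh_\cX)=fB$, so $B/fB\hookrightarrow\Gamma(\cX,\oh_\cX/f\oh_\cX)$; by the induction hypothesis and Lemma~\ref{lemma-finiteness1} the target is finite type over $R$ and the image $B/fB$ is too, hence Noetherian, so $I/fB$ and therefore $I$ is finitely generated. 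With $B$ Noetherian in hand, Theorem~\ref{theorem-finite-type} applies as a black box (take $X=U$ a smooth presentation, $U\to Y$ universally submersive by Theorem~\ref{theorem-adequate}(\ref{submersive})), and there is no need to unpack the blow-up argument at all.
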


\begin{proof}  We may assume that $S = \Spec(R)$.  By Noetherian induction, we may assume that $(\star)$ holds for any closed substack $\cZ \subseteq \cX$ defined by a non-zero sheaf of ideals.  For $f \neq 0 \in \Gamma(\cX, \oh_{\cX})$, if $\ker(\oh_{\cX} \xrightarrow{f} \oh_{\cX})$ is non-zero, then by applying Lemma \ref{lemma-finiteness2} with the ideals sheaves $(f)$ and $\ker(\oh_{\cX} \xrightarrow{f} \oh_{\cX})$, we see that $(\star)$ holds for $\cX$.  Therefore, we may assume that every $f \neq 0 \in \Gamma(\cX, \oh_{\cX})$ is a non-zero divisor; that is, $\Gamma(\cX, \oh_{\cX})$ is an integral domain. 

\medskip \noindent Let $I \subseteq \Gamma(\cX, \oh_{\cX})$ be an ideal and let $f \neq 0 \in I$.  Since $f$ is a non-zero divisor, we have an exact sequence 
$$ 0 \to \Gamma(\cX, \oh_{\cX}) \xrightarrow{f} \Gamma(\cX, \oh_{\cX}) \to \im(\Gamma(\cX, \oh_{\cX}) \to \Gamma(\cX, \oh_{\cX}/ (f)) ) \to 0$$
By the induction hypothesis and Lemma \ref{lemma-finiteness1}, $\Gamma(\cX, \oh_{\cX}) / (f)$ is a finite type $R$-algebra.  The image of $I$ in $\Gamma(\cX, \oh_{\cX}) / (f)$ is a finitely generated ideal.  Therefore, $I$ is finitely generated and $\Gamma(\cX, \oh_{\cX})$ is Noetherian.  

\medskip \noindent If $U \to \cX$ is a smooth presentation, then the composition $U \to \cX \to Y$ is universally submersive by Theorem \ref{theorem-adequate}(\ref{submersive}).  It follows from Theorem \ref{theorem-finite-type} that $Y \to \Spec(R)$ is of finite type.  

\medskip \noindent
Let $\cF$ be a coherent $\oh_{\cX}$-module.  We wish to show that $\Gamma(\cX, \cF)$ is a finite type $\Gamma(\cX, \oh_{\cX})$-module.  By Noetherian induction again, we may assume that for every proper quotient $\cF \mapsonto \cF'$, the $\Gamma(\cX, \oh_{\cX})$-module $\Gamma(\cX, \cF')$ is of finite type. The statement is true if $\Gamma(\cX, \cF) = 0$; otherwise, let $s \neq 0 \in \Gamma(\cX, \cF)$.   Denote by $s \cdot \cF$ the image of $s: \oh_{\cX} \to \cF$ so that $\s \cdot \cF \cong \oh_{\cX} / \cI$ where $\cI = \ker(s: \oh_{\cX} \to \cF)$.   Consider the exact sequence
$$0 \to \Gamma(\cX, s \cdot \cF) \to \Gamma(\cX, \cF) \to \Gamma(\cX, \cF / s \cdot \cF)$$
By the induction hypothesis, $\Gamma(\cX, \cF / s \cdot \cF)$ is a finite type $\Gamma(\cX, \oh_{\cX})$-module.  If $\cI = 0$, then $s \cdot \cF = \oh_{\cX}$ and as $\Gamma(\cX, \oh_{\cX})$ is Noetherian, $\Gamma(\cX, \cF)$ is also a finite type $\Gamma(\cX, \oh_{\cX})$-module.  If $\cI \neq 0$, then by the inductive hypothesis and Lemma \ref{lemma-finiteness1}, $\Gamma(\cX, s \cdot \cF)$ is a finite type $\Gamma(\cX, \oh_{\cX})$-module so that $\Gamma(\cX, \cF)$ is also.
\end{proof}

%%%%
\section{Uniqueness of adequate moduli spaces}

\medskip \noindent
In this section, we show that if $\phi: \cX \to Y$ is an adequate moduli space, then $\phi$ is universal for maps to algebraic spaces which are locally separated or Zariski-locally have affine diagonal; that is, for any other morphism $\psi: \cX \to Z$ to an algebraic space $Z$ which is either locally separated or Zariski-locally has affine diagonal, there exists a unique morphism $\chi: Y \to Z$ such that $\psi = \chi \circ \phi$.  We believe that an adequate moduli space should be universal for maps to arbitrary algebraic spaces.

\medskip \noindent
\subsection{General result}
It follows from general methods that an adequate moduli space $\phi: \cX \to Y$ is universal for maps to locally separated algebraic spaces.  The following technique was used by David Rydh in \cite{rydh_quotients} to show that geometric quotients are universal for such maps.

\medskip \noindent
If an algebraic stack $\cX$ admits an adequate moduli space, then the relation that $x \sim_c y \in \cX(k)$ if $\overline{ \{x \}} \cap \overline{ \{ y \} } \neq \emptyset $ in $| \cX \times_{\ZZ} k|$ defines an equivalence relation (see \ref{theorem-adequate}(\ref{points})).  This is not true for an arbitrary stack; consider $[\PP^1 / \GG_m]$.  However, by using chains of orbit closures, we can define an equivalence relation as follows:  two geometric points $x,y \in \cX(k)$, are said to be \emph{closure equivalent}  (denoted $x \sim_c y$) if there is a sequence of points $x=x_1, x_2, \cdots, x_{n-1}, x_n = y \in \cX(k)$ such that for $i = 1, \cdots n-1$, $\overline{ \{ x_i \}} \cap \overline { \{x_{i+1} \} } \neq \emptyset$ in $|\cX \times _{\ZZ} k|$.

\begin{prop}  \label{proposition-locally-separated}
Let $\cX$ be an algebraic stack and let $Y$ be an algebraic space.  Suppose that $\phi: \cX \to Y$ is a morphism such that 
\begin{enumeratea}
\item The map $[\cX(k)] / \simmc \to Y(k)$ is bijective for all algebraically closed $\oh_S$-fields $k$. 
\item The morphism $\phi$ is universally submersive.
\item The morphism $\oh_Y \to \phi_* \oh_{\cX}$ is an isomorphism.
\end{enumeratea}
Then $\phi$ is universal for maps to locally separated algebraic spaces.
\end{prop}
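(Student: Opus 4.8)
The plan is to follow the technique of Rydh \cite{rydh_quotients}: first produce the underlying continuous map $|Y|\to|Z|$, and then upgrade it to a morphism of algebraic spaces by reducing the latter step to the case of an affine target.

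\emph{Uniqueness.} Suppose $\chi,\chi':Y\to Z$ both satisfy $\chi\circ\phi=\chi'\circ\phi=\psi$. Let $E$ be their equaliser, i.e.\ the fibre product of $(\chi,\chi'):Y\to Z\times Z$ with the diagonal $\Delta_Z$ of $Z$. Since $Z$ is locally separated, $\Delta_Z$ is an immersion, so $E\hookrightarrow Y$ is an immersion, and $\phi$ factors through $E$. As $\phi$ is surjective (by (a)), $|E|=|Y|$, so $E\hookrightarrow Y$ is a closed immersion with full support; the resulting surjection $\oh_Y\twoheadrightarrow\oh_E$ then factors the isomorphism $\oh_Y\xrightarrow{\sim}\phi_*\oh_{\cX}$ (via $\oh_Y\to\oh_E\to\phi_*\oh_{\cX}$), so $\oh_Y\to\oh_E$ is injective as well, hence an isomorphism, whence $E=Y$ and $\chi=\chi'$. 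The same reasoning applies after restricting to an open subspace of $Y$, which is how the local pieces below are glued.

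\emph{From the continuous map to the morphism.} Suppose a continuous $|\chi|:|Y|\to|Z|$ with $|\chi|\circ|\phi|=|\psi|$ has been produced. Choose an affine open cover $Z=\bigcup_i Z_i$, set $Y_i=|\chi|^{-1}(|Z_i|)$ (an open subspace of $Y$) and $\cX_i=\phi^{-1}(Y_i)$; then $|\cX_i|=|\psi|^{-1}(|Z_i|)$, so $\psi$ restricts to $\psi_i:\cX_i\to Z_i$, and $\phi_i:\cX_i\to Y_i$ still satisfies (c). Reducing to the quasi-compact case, let $\Gamma_i\subseteq Y_i\times Z_i$ be the scheme-theoretic image of $(\phi_i,\psi_i)$; then $\pi_i:\Gamma_i\to Y_i$ is affine and $\oh_{\Gamma_i}\hookrightarrow(\cX_i\to\Gamma_i)_*\oh_{\cX_i}$. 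Pushing forward to $Y_i$ and using (c) yields an injection $(\pi_i)_*\oh_{\Gamma_i}\hookrightarrow(\phi_i)_*\oh_{\cX_i}=\oh_{Y_i}$ whose composite with the adjunction $\oh_{Y_i}\to(\pi_i)_*\oh_{\Gamma_i}$ is the isomorphism of (c); hence $(\pi_i)_*\oh_{\Gamma_i}=\oh_{Y_i}$, and since $\pi_i$ is affine it is an isomorphism. Then $\chi_i:Y_i\xrightarrow{\sim}\Gamma_i\hookrightarrow Y_i\times Z_i\to Z$ satisfies $\chi_i\circ\phi_i=\psi_i$, and by the uniqueness paragraph the $\chi_i$ agree on overlaps and glue to the required $\chi:Y\to Z$, necessarily unique.

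\emph{The continuous map --- the main obstacle.} By (b), $|\phi|$ is a surjective submersion, hence a topological quotient map, so it suffices to show that $|\psi|$ is constant on the fibres of $|\phi|$. Since $|\cX\times_Y\cX|\to|\cX|\times_{|Y|}|\cX|$ is surjective, these fibres are exactly the equivalence classes of the relation generated by the two projections of $|\cX\times_Y\cX|$, and by (a) these coincide with the closure-equivalence classes $\simmc$. Thus everything comes down to the single-step assertion: \emph{if $x_1,x_2\in\cX(k)$ satisfy $\overline{\{x_1\}}\cap\overline{\{x_2\}}\neq\emptyset$ in $\cX\times_{\ZZ}k$, then $\psi(x_1)=\psi(x_2)$}, after which chaining along orbit closures gives the statement for $\simmc$ and hence for every fibre of $|\phi|$. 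This is the step I expect to be hardest, and it is precisely where local separatedness of $Z$ and hypothesis (c) are both indispensable (without (c), $\phi$ need not even be a topological epimorphism onto $Y$). I would argue as Rydh does: choose a valuation ring realising the specializations $x_1\leadsto w$ and $x_2\leadsto w$ towards a common point $w$; observe that $\phi(w)=\phi(x_1)=\phi(x_2)$ by (a), so that the entire configuration and its $\psi$-image lie over a single point of $Y$; then pass (\'etale-locally on $Y$) to an affine neighbourhood of that point and to an affine open of $Z$ containing $\psi(w)$ --- which then also contains $\psi(x_1)$ and $\psi(x_2)$, being generizations of $\psi(w)$ --- and finally combine the scheme-theoretic-image computation of the previous paragraph with the immersion property of $\Delta_Z$ to force $\psi(x_1)=\psi(x_2)$. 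Once the continuous map $|\chi|$ is available, the previous paragraph completes the proof.
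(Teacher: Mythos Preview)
Your approach has two real problems.

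\textbf{First, the affine-cover step fails for algebraic spaces.} In your ``upgrade'' paragraph you write ``choose an affine open cover $Z=\bigcup_i Z_i$''. But $Z$ is only a locally separated algebraic space, and such a space need not have any Zariski-open cover by schemes, let alone by affines. Your scheme-theoretic-image computation with $\Gamma_i\subseteq Y_i\times Z_i$ genuinely needs $Z_i$ affine (so that $\Gamma_i\to Y_i$ is affine and determined by its pushforward), and there is no evident way to run it with an \'etale cover in place of a Zariski one without a separate descent argument. This is exactly why the paper does not attempt to build the map by hand: it invokes Rydh's result \cite[Theorem~7.4]{rydh_submersions}, which says that for $\phi$ universally submersive and $Z$ locally separated the sequence
\[
\Hom(Y,Z)\;\to\;\Hom(\cX,Z)\;\rightrightarrows\;\Hom\big((\cX\times_Y\cX)_{\red},Z\big)
\]
is exact. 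This reduces everything to checking $\psi\circ p_1=\psi\circ p_2$, and completely sidesteps the problem of covering $Z$.

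\textbf{Second, your ``main obstacle'' is much easier than you make it, and the proposed attack is aimed at the wrong hypothesis.} The single-step assertion --- if $x_1,x_2\in\cX(k)$ have $\overline{\{x_1\}}\cap\overline{\{x_2\}}\neq\emptyset$ in $\cX\times_\ZZ k$ then $\psi(x_1)=\psi(x_2)$ --- needs neither local separatedness nor condition~(c). It holds for \emph{any} algebraic space $Z$: the images $\psi_k(x_1),\psi_k(x_2)$ are $k$-rational points of $Z\times_\ZZ k$, hence closed, and both specialize to $\psi_k(w)$ for any $w$ in the intersection; thus all three coincide. The valuation-ring man\oe uvre you sketch is unnecessary. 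In the paper this observation appears as the surjectivity half of the key step: one forms
\[
\cW \;=\; (\cX\times_Y\cX)_{\red}\times_{\cX\times\cX}(\cX\times_Z\cX)\;\longrightarrow\;(\cX\times_Y\cX)_{\red},
\]
which is a monomorphism, an immersion because $\Delta_Z$ is (this is where local separatedness enters), and surjective by the argument just given together with~(a). A surjective immersion into a reduced target is an isomorphism, so $\psi\circ p_1=\psi\circ p_2$ and Rydh's exact sequence finishes. Your uniqueness paragraph is fine and is essentially the same equalizer argument the paper cites from \cite[Proposition~7.2]{rydh_submersions}; but the rest of your outline should be replaced by the two-line reduction above.
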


\begin{remark} Condition (a) says that $Y$ has the right points, condition (b) says that $Y$ has the right topology and condition (c) says that $Y$ has the right functions.   Conditions (a) and (b) are stable under arbitrary base change while condition (c) is stable under flat base change.  Conditions (a)$-$(c) descend in the fpqc topology.
\end{remark}

\begin{proof} We need to show that for any locally separated algebraic space $Z$
$$\Hom(Y, Z) \to \Hom(\cX,Z)$$ is bijective.  The injectivity is straightforward (see
\cite[Proposition 7.2]{rydh_submersions}). Consider a morphism $\psi: \cX \to Z$ where $Z$ is a
locally separated algebraic space.  Since $\cX \to Y$ is universally submersive, it follows from
\cite[Theorem 7.4]{rydh_submersions} that
$$\Hom(Y, Z) \to \Hom(\cX, Z) \rrarrows \Hom((\cX \times_Y \cX)_{\red}, Z)$$ is exact.  Therefore,
it suffices to show that $\psi \circ p_1 = \psi \circ p_2$ where $p_1$ and $p_2$ are the projections
$(\cX \times_Y \cX)_{\red} \to \cX$.  We note that $\psi \circ p_1 = \psi \circ p_2$ if and only if
there exists a $\Lambda: (\cX \times_Y \cX)_{\red} \to \cX \times_Z \cX$ such that
$$\xymatrix{
(\cX \times_Y \cX)_{\red} \ar[r]^{\Lambda} \ar[d]	& \cX \times_Z \cX \ar[dl] \\
\cX \times \cX
}$$
commutes.  Consider the cartesian diagram
$$\xymatrix{
\cW \ar[r] \ar[d]	 \ar @{} [dr] |{\square}				& (\cX \times_Y \cX)_{\red} \ar[d] \\
\cX \times_Z \cX \ar[r]\ar[d]	 \ar @{} [dr] |{\square}		& \cX \times \cX \ar[d] \\
Z  \ar[r]					& Z \times Z
}$$
The monomorphism $\cW \to (\cX \times_Y \cX)_{\red}$ is surjective by property (1) and also an immersion since $Z$ is locally separated.  It follows that $\cW \to (\cX \times_Y \cX)_{\red}$ is an isomorphism and that $\psi \circ p_1 = \psi \circ p_2$.
\end{proof}

\begin{remark}
It is not true that the conditions (a) $-$ (c) imply that $\phi$ is universal for maps to arbitrary algebraic spaces.  Indeed, if $X$ is the non-locally separated affine line (that is, the bug-eyed cover), then $X \to \AA^1$ satisfies conditions (a) $-$ (c) but is not an isomorphism. 
\end{remark}

\subsection{Universality for adequate moduli spaces}

\begin{thm} \label{theorem-uniqueness}
Let $\cX$ be an algebraic stack and let $\phi: \cX \to Y$ be an adequate moduli space.  Then $\phi$ is universal for maps to algebraic spaces which are either locally separated or Zariski-locally have affine diagonal. 
\end{thm}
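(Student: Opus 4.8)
The plan is to treat the two permitted classes of targets separately. For \emph{locally separated} $Z$ the statement is essentially Proposition \ref{proposition-locally-separated}: an adequate moduli space $\phi\colon\cX\to Y$ satisfies its three hypotheses, since (c) is part of Definition \ref{definition-adequate-moduli-space}, (b) is Theorem \ref{theorem-adequate}(\ref{submersive}), and for (a) one uses Theorem \ref{theorem-adequate}(\ref{points}), which says that the one-step relation $x_1\sim x_2$ is \emph{already} an equivalence relation on $[\cX(k)]$, so it equals its transitive closure $\simmc$ and induces a bijection $[\cX(k)]/\simmc\to Y(k)$. As every scheme has an immersive diagonal, this already settles all scheme targets, so the remaining work is for algebraic-space targets with affine diagonal which need not be locally separated.

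For such $Z$ I would first reduce to the case that $Z$ has quasi-affine diagonal: writing $Z=\bigcup_i Z_i$ with each $Z_i$ having affine diagonal, the morphism $\Delta_Z$ restricts over $Z_i\times Z_i$ to $\Delta_{Z_i}$, hence is affine over the open $U=\bigcup_i(Z_i\times Z_i)\subseteq Z\times Z$, and so factors as the affine morphism $Z\to U$ followed by the open immersion $U\hookrightarrow Z\times Z$ and is quasi-affine. Given $\psi\colon\cX\to Z$, injectivity of $\Hom(Y,Z)\to\Hom(\cX,Z)$ is as in the proof of Proposition \ref{proposition-locally-separated} (using that $\phi$ is surjective), and since $\phi$ is universally submersive (Theorem \ref{theorem-adequate}(\ref{submersive})) the diagram
$$\Hom(Y,Z)\longrightarrow\Hom(\cX,Z)\rightrightarrows\Hom\big((\cX\times_Y\cX)_{\mathrm{red}},Z\big)$$
is exact by \cite[Theorem 7.4]{rydh_submersions}. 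Thus it suffices to prove $\psi\circ p_1=\psi\circ p_2$ on $(\cX\times_Y\cX)_{\mathrm{red}}$, where $p_1,p_2$ are the two projections; equivalently, the monomorphism $\cW:=(\cX\times_Y\cX)_{\mathrm{red}}\times_{Z\times Z,\,\Delta_Z}Z\to(\cX\times_Y\cX)_{\mathrm{red}}$ --- quasi-affine, being a base change of $\Delta_Z$ --- must be an isomorphism.

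The heart of the matter, and the step I expect to be the main obstacle, is precisely to show that $\cW\to(\cX\times_Y\cX)_{\mathrm{red}}$ is an isomorphism, equivalently that $\psi$ is constant on closure-equivalence classes of geometric points, equivalently that $\psi^{-1}(Z_0)$ is $\phi$-saturated for every open $Z_0\subseteq Z$. This cannot be purely formal: a surjective quasi-affine monomorphism need not be an isomorphism, and the bug-eyed cover in the Remark after Proposition \ref{proposition-locally-separated} shows that some hypothesis on $Z$ is indispensable. The argument must use the geometry of $\phi$: for geometric points $x_1\sim x_2$ of $\cX$ and a point $\xi$ lying in the common part of their closures in $\cX\times_\ZZ k$, one uses universal closedness of $\phi$ (Theorem \ref{theorem-adequate}(\ref{closed})) and valuation rings witnessing the specializations of $x_1$ and $x_2$ to $\xi$ to see that $\psi(\xi)$ is a common specialization of $\psi(x_1)$ and $\psi(x_2)$ inside $Z\times_\ZZ k$, and then exploits that $\Delta_Z$ is (quasi-)affine, so that $\cW$ is (quasi-)affine over the \emph{reduced} space $(\cX\times_Y\cX)_{\mathrm{red}}$, to rule out the pathology and deduce $\psi(x_1)=\psi(x_2)$. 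Once this is in hand, $\psi$ descends to a morphism $\chi\colon Y\to Z$ with $\chi\circ\phi=\psi$: topologically because $\phi$, being surjective and universally submersive, is a topological quotient identifying exactly the fibres on which $\psi$ is now constant; and on functions because every $\phi$-saturated open $\psi^{-1}(Z_0)$ satisfies $\Gamma(\psi^{-1}(Z_0),\oh_\cX)=\Gamma(\phi(\psi^{-1}(Z_0)),\oh_Y)$ by the moduli-space property and the fact that adequate moduli spaces localize (Proposition \ref{proposition-base-change}(1)). The remaining bookkeeping --- the two descent reductions, the functional descent, and assembling $\chi$ from the exact sequence above --- is then routine.
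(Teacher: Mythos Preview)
Your treatment of the locally separated case is correct and matches the paper's: both invoke Proposition \ref{proposition-locally-separated} after checking its hypotheses via Theorem \ref{theorem-adequate}.

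For targets $Z$ which Zariski-locally have affine diagonal, however, your argument has a genuine gap at exactly the step you flag as ``the heart of the matter.'' You reduce to showing that the quasi-affine monomorphism $\cW\to(\cX\times_Y\cX)_{\mathrm{red}}$ is an isomorphism, and propose to do this by a specialization argument showing $\psi(x_1)=\psi(x_2)$ on geometric points. There are two problems. First, the specialization argument only produces a common specialization $\psi(\xi)$ of $\psi(x_1)$ and $\psi(x_2)$ in $Z\times_{\ZZ}k$; having affine diagonal does not force $k$-points with a common specialization to coincide (the bug-eyed cover, which has affine diagonal, already shows that $k$-points of a non-locally-separated algebraic space need not be closed), and you give no mechanism for closing this. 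Second, even granting $\psi(x_1)=\psi(x_2)$ for all geometric points, you would only know $\cW\to(\cX\times_Y\cX)_{\mathrm{red}}$ is surjective, and a surjective quasi-affine monomorphism onto a reduced target need \emph{not} be an isomorphism: consider $(\AA^1\setminus\{0\})\sqcup\{0\}\to\AA^1$, which is an affine surjective monomorphism onto a reduced scheme. In Proposition \ref{proposition-locally-separated} this step works precisely because $\Delta_Z$ is an \emph{immersion}, so that a surjective immersion onto a reduced target is an isomorphism; replacing ``immersion'' by ``quasi-affine'' destroys the argument.

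The paper circumvents this entirely by a different route. After reducing (Zariski on $Z$, \'etale on $Y$) to $Y$ affine and $Z$ quasi-compact with affine diagonal, one replaces $Z$ by $\sSpec_Z\psi_*\oh_{\cX}$ so that $\oh_Z\to\psi_*\oh_{\cX}$ is an isomorphism; then Lemma \ref{lemma-propertyP} shows $\psi\colon\cX\to Z$ is itself an adequate moduli space. Choosing a finite surjection $W\to Z$ from a scheme (Rydh), the base change $\cX\times_Z W$ has an adequate moduli space $W'$ with $W'\to W$ an adequate homeomorphism (Proposition \ref{proposition-base-change}); since the scheme case is already settled, $W'$ is affine, hence $W'\to Z$ is integral surjective, and Chevalley's criterion for algebraic spaces forces $Z$ to be affine. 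Then $Z\to Y$ is an isomorphism. The key idea you are missing is to turn the problem around: rather than descending $\psi$ through $\phi$, recognize $\psi$ as a second adequate moduli space and use Chevalley to collapse $Z$ to an affine scheme.
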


\begin{proof} Let $Z$ be an algebraic space.  We need to show that the natural map
$$\Hom(Y, Z) \to \Hom(\cX, Z)$$
is bijective.  The injectivity of the map is straightforward.  Proposition \ref{proposition-locally-separated} shows that it is surjective if $Z$ is locally separated.
  Let $\psi: \cX \to Z$ be a morphism where $Z$ is an algebraic space which Zariski-locally has affine diagonal.  The argument of \cite[Remark 0.5]{git} (see also \cite[Theorem 4.16(vi)]{alper-good}) shows that the question is Zariski-local on $Z$; in particular, the statement holds when $Z$ is a scheme.
 Therefore we may assume that $Z$ is quasi-compact and has affine diagonal.  The question is also \'etale local on $Y$ so we may assume $Y = \Spec(A)$ is an affine scheme.  Furthermore, by replacing $Z$ with $\sSpec_Z (\psi_* \oh_{\cX})$, we may assume that $\oh_Z \to \psi_* \oh_{\cX}$ is an isomorphism.  Since $Y$ is affine, there exists a unique morphism $\eta: Z \to Y$ such that $\phi = \eta \circ \phi$.  
  
  \medskip \noindent
  Since $Z$ has affine diagonal, $\psi: \cX \to Z$ is an adequate moduli space (see Lemma \ref{lemma-propertyP}).  Let $W \to Z$ be a finite surjective map from a scheme $W$ (\cite[Theorem B]{rydh_noetherian}).  Therefore, by Proposition \ref{proposition-base-change} there exists a diagram
$$\xymatrix{
		& \cX \times_Z W \ar[d] \ar[r] \ar[ld]  \ar @{} [dr] |{\square}			& \cX \ar[d]^{\psi} \ar[rd]^{\phi} \\
W' \ar[r]	& W \ar[r]	& Z \ar[r]^{\eta}	&Y
}$$
where $ \cX \times_Z W \to W'$ is an adequate moduli space and $W' \to W$ is an adequate homeomorphism (and in particular integral and surjective).  Since $\cX \times_Z W$ is adequately affine, $\cX \times_Z W \to \Spec(\Gamma(\cX \times_Z W, \oh_{\cX \times_Z W}))$ is also an adequate moduli space. But since $W'$ is a scheme and since we know adequate moduli spaces are universal for maps to schemes, it follows that $W'$ is affine.  The composition $W' \to W \to Z$ is integral and surjective.  It follows from Chevalley's criterion (\cite[Theorem 8.1]{rydh_noetherian}) that $Z$ is affine and $Z \to Y$ is an isomorphism.
\end{proof}

%%%%
\section{Coarse moduli spaces}

\medskip \noindent Recall that if $\cX$ is an algebraic stack, a morphism $\phi: \cX \to Y$ to an algebraic space $Y$ is a \emph{coarse moduli space} if 
\begin{enumerate1}
\item for any algebraically closed field $k$, the map $[\cX(k)]/\simm \to Y(k)$ from isomorphism classes of $k$-valued points of $\cX$ to $k$-valued points of $Y$ is bijective, and
\item the morphism $\phi$ is universal for maps to algebraic spaces; that is, for any morphism $\xi: \cX \to Z$ to an algebraic space $Z$, there exists a unique map $\chi: Y \to Z$ such that $\xi = \chi \circ \phi$.
\end{enumerate1}

\subsection{Keel$-$Mori}

\begin{thm} \label{theorem-keel-mori}
 {\rm (\cite{keel-mori}, \cite{conrad}, \cite{rydh_quotients}) }
Suppose that $\cX$ is an algebraic stack with finite inertia $I_{\cX} \to \cX$.  Then there exists a coarse moduli space $\phi: \cX \to Y$ such that:
\begin{enumerate1}
\item The morphism $\phi$ is separated.
\item If $\cX$ is locally of finite type over a locally Noetherian algebraic space $S$, then $Y \to S$ is locally of finite type. 
\end{enumerate1}
\end{thm}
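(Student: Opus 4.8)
The plan is to reduce the existence of $\phi$ to an explicit invariant-theoretic computation in an \'etale-local model, and then to glue. First I would record two formal facts: a coarse moduli space, if it exists, is unique, and its formation commutes with flat base change on the target — both follow from the defining universal property together with fppf descent. It therefore suffices to produce $\phi$ \'etale-locally on the prospective coarse space and to glue the local pieces using uniqueness. Using standard absolute Noetherian approximation one may in addition assume from the start that $\cX$ is of finite type over an excellent Noetherian base.

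The crucial input is a local structure result: for each point $x$ of $\cX$, after an \'etale morphism to the prospective coarse space, $\cX$ admits a presentation by a finite flat groupoid, i.e.\ $\cX \cong [\,\Spec B \,/\, R\,]$ where $R \rrarrows \Spec B$ is a groupoid in schemes both of whose structure maps to $\Spec B$ are finite and flat and $\Spec B$ is affine. This is precisely where the hypothesis that $I_{\cX}\to\cX$ be finite enters: following \cite{keel-mori}, with the stack-theoretic streamlining of \cite{conrad, rydh_quotients}, finiteness of the inertia is what permits the construction, near $x$, of a finite flat chart $\Spec B \to \cX$. Granting this, I would set $A = \Eq\!\bigl(B \rrarrows \Gamma(R,\oh_R)\bigr)$ and take $Y = \Spec A$ locally. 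That $[\Spec B/R]\to \Spec A$ is a coarse moduli space is then checked directly: $A \hookarr B$ is integral with $\Spec B \to \Spec A$ finite and orbit-separating, which yields the bijection $[\cX(k)]/\simm \to Y(k)$ on geometric points; and $A$ is universal for maps to schemes by the classical invariant-theoretic argument for finite flat groupoids (a lift of a section over an affine scheme admits, after a norm construction along $R$, an invariant lift of a suitable power, so $\Spec B \to \Spec A$ behaves like a geometric quotient).

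Having glued the \'etale-local coarse spaces into a global morphism $\phi:\cX\to Y$, it remains to verify parts (1) and (2), both of which may be checked in the local model. For (1), the relative diagonal $\Delta_{[\Spec B/R]/\Spec A}$ is identified with a closed subscheme of $R$ over $\Spec(B\otimes_A B)$ and is in particular finite, so $\phi$ is separated. For (2), $B$ is a finitely generated algebra over the Noetherian ring $\Gamma(S,\oh_S)$ and a finite $A$-module, so by the Artin--Tate lemma $A$ is a finitely generated $\Gamma(S,\oh_S)$-algebra; as this is an \'etale-local statement on $Y$, the morphism $Y\to S$ is locally of finite type.

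The step I expect to be the main obstacle is the local structure result — the production, near a given point, of a finite flat chart $\Spec B\to\cX$ and hence of the finite flat groupoid presentation. This is the one place where finiteness of the inertia is genuinely exploited, and in the original treatment it rests on nontrivial deformation-theoretic and approximation inputs (Artin approximation and algebraization of formal objects). By contrast, once the local model is in hand, the passage to invariants, the gluing via uniqueness, and the verification of separatedness and of finite type over $S$ are essentially formal.
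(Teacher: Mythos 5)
The paper does not actually prove Theorem \ref{theorem-keel-mori}: it is quoted as a known result with attribution to \cite{keel-mori}, \cite{conrad} and \cite{rydh_quotients}, and the surrounding text only records which hypotheses were removed in which reference. So there is no internal proof to compare against; what you have written is a sketch of the proof that lives in those references. As such a sketch it is faithful in outline: the reduction to an \'etale-local model admitting a finite flat affine chart $\Spec B \to \cX$ (this is exactly the content the paper later extracts as Proposition \ref{proposition-finite-cover}), the definition of the local quotient as $\Spec \Eq(B \rrarrows \Gamma(R,\oh_R))$, the norm/characteristic-polynomial argument for lifting invariants (which this paper reuses verbatim in the proof of Proposition \ref{proposition-coarse}), the Artin--Tate argument for finite generation, and the identification of the pullback of $\Delta_{\cX/Y}$ with the finite morphism $R \to \Spec(B\otimes_A B)$ for separatedness are all the right ingredients. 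You are also right that the entire difficulty is concentrated in the local structure theorem, which you defer to the references.

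One step is misstated and would not survive as written: the assertion that compatibility of coarse moduli spaces with flat base change on the target ``follows from the defining universal property together with fppf descent.'' A coarse moduli space is a colimit in algebraic spaces, and colimits do not formally commute with flat base change; indeed, this compatibility is one of the nontrivial \emph{conclusions} of the Keel--Mori theorem, established only for the $Y$ one constructs, via the local model. Since your gluing step leans on this to match the local candidates, the argument as stated is circular. The standard repair (used in \cite{conrad} and \cite{rydh_quotients}) is to glue not by invoking the universal property directly but by verifying an \'etale-local characterization of the coarse space --- that $Y$ has the correct geometric points modulo closure equivalence, the correct (submersive) topology, and the correct functions $\oh_Y \iso \phi_*\oh_{\cX}$ --- and then deducing universality afterwards; this is precisely the style of argument the present paper isolates as Proposition \ref{proposition-locally-separated}. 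With that substitution your outline matches the cited proof.
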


\medskip \noindent %%
In \cite{keel-mori}, the theorem was proved when $\cX$ was locally of finite presentation over a locally Noetherian scheme $S$.  The Noetherian hypothesis of $S$ was removed in \cite{conrad}.  The finiteness assumptions of $\cX$ were removed in \cite{rydh_quotients}.

\medskip \noindent
We also recall the following proposition which follows from the proof of the Keel$-$Mori theorem in \cite{keel-mori}.  For the generality stated below, we need \cite[Theorem 7.13]{rydh_quotients}.

\begin{prop}  \label{proposition-finite-cover}
Let $\cX$ be a quasi-compact algebraic stack with finite inertia $I_{\cX} \to \cX$ and $\phi: \cX \to Y$ be its coarse moduli space.  Then there exists an \'etale surjective morphism $Y' \to Y$ such that $\cX \times_Y Y'$ admits a finite, flat, finitely presented morphism from an affine scheme.
\end{prop}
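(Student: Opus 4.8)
The statement is local for the étale topology on $Y$, so the plan is to fix a point $y \in Y$ and produce an étale morphism $Y' \to Y$ whose image contains $y$ together with a finite, flat, finitely presented morphism $\Spec A \to \cX \times_Y Y'$ from an affine scheme; since $Y$ is quasi-compact (it is the image of the quasi-compact $\cX$ under the surjection $\phi$), finitely many such $Y'$ suffice and their disjoint union is the desired étale cover. Here I use that the formation of the coarse moduli space commutes with étale (indeed flat) base change on $Y$, which is part of Theorem \ref{theorem-keel-mori}. It is worth stressing that one cannot in general take $Y' = Y$: globally $\cX$ need carry no finite flat affine atlas, and the point is precisely that such an atlas exists after étale localization on the coarse space. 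For the construction I would first pass to the strict henselization $Y^{\mathrm{sh}} = \Spec \oh_{Y,\bar y}^{\mathrm{sh}}$ at a geometric point $\bar y$ over $y$.

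Over $Y^{\mathrm{sh}}$ the closed fibre $\cX_{\bar y} = \cX \times_Y \bar y$ has, by bijectivity of $[\cX(k)]/\simm \to Y(k)$, a single geometric point up to isomorphism; since $I_{\cX} \to \cX$ is finite, $\cX_{\bar y}$ is a finite gerbe banded by a finite group scheme, and after a finite flat base change it becomes neutral, so it admits a finite, flat, finitely presented atlas $a_0 : \Spec A_0 \to \cX_{\bar y}$ from an affine scheme (take the resulting torsor and compose with the finite flat cover). The main step is then to lift $a_0$ to a finite, flat, finitely presented morphism $a : \Spec A \to \cX \times_Y Y^{\mathrm{sh}}$ from an affine scheme. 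I would do this by considering the functor on $Y$-schemes sending $T \to Y$ to the set of finite, flat, finitely presented morphisms $\Spec A' \to \cX \times_Y T$ from affine schemes, suitably rigidified near the closed fibre (for instance by also remembering an $I_{\cX}$-fixed section), showing that this functor is representable by an algebraic space which is étale over $Y$ in a neighbourhood of $\bar y$, and concluding from strict henselianness that it acquires a section over $Y^{\mathrm{sh}}$; equivalently, one produces through the closed point an $I_{\cX}$-stable finite flat finitely presented closed subscheme of $\cX \times_Y Y^{\mathrm{sh}}$ whose formation commutes with base change. This is exactly the content of the Keel--Mori construction, carried out at this level of generality in \cite[Theorem 7.13]{rydh_quotients} (and, in the locally noetherian, locally finitely presented case, in \cite{keel-mori}). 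Finally, since $\oh_{Y,\bar y}^{\mathrm{sh}}$ is a filtered colimit of étale $Y$-algebras, a standard limit argument for finitely presented morphisms of algebraic stacks (Rydh's absolute approximation applied fibrewise over $Y$, or ordinary direct limits after reducing to the finitely presented case) shows that $a$ is already defined over some étale neighbourhood $Y' \to Y$ of $y$, which completes the proof.

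The hard part will be the lifting step of the second paragraph: moving from the atlas $a_0$ of the residual gerbe in the closed fibre to an honest finite, flat, finitely presented atlas over a strictly henselian neighbourhood. This is not formal; it is the crux of the Keel--Mori theorem, and it is exactly where the hypothesis that $I_{\cX} \to \cX$ be \emph{finite} (rather than merely quasi-finite and separated) is indispensable, since it is what makes the relevant deformation/Hilbert-scheme problem unobstructed and representable by a scheme which is étale over $Y$, so that a section exists after strict henselization. The étale-localization and limit arguments surrounding it are routine, and for this reason I would ultimately present the result as a consequence of the cited form of the Keel--Mori construction rather than reproving it.
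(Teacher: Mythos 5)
Your proposal is correct and matches the paper's treatment: the paper gives no independent proof of this proposition, stating only that it ``follows from the proof of the Keel--Mori theorem'' and citing \cite[Theorem 7.13]{rydh_quotients} for the stated generality, which is exactly the reduction you make. Your surrounding sketch (étale localization on $Y$, quasi-compactness of $Y$, compatibility of coarse spaces with flat base change, neutrality of the residual gerbe over the closed fibre, and the limit argument descending from the strict henselization) is sound, and you correctly identify the lifting of a finite flat atlas from the closed fibre as the non-formal crux that is deferred to the cited sources.
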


\subsection{Keel$-$Mori coarse moduli spaces are adequate}

\medskip \noindent

\begin{prop} 
\label{proposition-coarse}
 Suppose that $\cX$ is an algebraic stack with finite inertia $I_{\cX} \to \cX$.  Let $\phi: \cX \to Y$ be its coarse moduli space.  Then $\phi: \cX \to Y$ is an adequate moduli space.
\end{prop}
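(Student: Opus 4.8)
The plan is to reduce, by fpqc descent on $Y$, to the case where $\cX$ carries a finite flat finitely presented surjection from an affine scheme, and then to prove adequate affineness by an averaging argument using the norm map along that cover. First, by Proposition \ref{proposition-base-change}(2) the property of being an adequate moduli space is fpqc\dash local on the target, and the formation of a coarse moduli space commutes with flat base change (\cite{keel-mori}, \cite{conrad}, \cite{rydh_quotients}). Hence I may first Zariski\dash localize on $Y$ to assume $\cX$ is quasi-compact, then, by Proposition \ref{proposition-finite-cover}, replace $\phi$ by its base change along a suitable quasi-compact \'etale surjection $Y' \to Y$ over which $\cX$ acquires a finite, flat, finitely presented, surjective morphism $\psi\colon P = \Spec B \to \cX$ from an affine scheme, and finally Zariski\dash localize once more to assume $Y = \Spec A$ is affine. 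Decomposing $\cX$ into the open-and-closed pieces on which $\psi$ has constant degree, I may further assume $\psi$ is finite locally free of constant rank $n$.

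The main step is to show that $\cX$ is adequately affine; by the remark following Definition \ref{definition-adequately-affine} this gives that $\phi\colon\cX\to\Spec A$ is adequately affine, and by Lemma \ref{lemma-affine-base-equiv}(3) it suffices to check that for every surjection $\cA\to\cB$ of quasi-coherent $\oh_{\cX}$-algebras and every $s\in\Gamma(\cX,\cB)$, some power of $s$ lifts to $\Gamma(\cX,\cA)$. Since $\psi$ is affine, $\psi^{*}\cA\to\psi^{*}\cB$ is a surjection of quasi-coherent $\oh_{P}$-algebras on the affine scheme $P$, so $\psi^{*}s$ lifts to a section $\tilde t\in\Gamma(P,\psi^{*}\cA)$. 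Let $R = P\times_{\cX}P = \Spec C$, an affine scheme (an iterated affine base change), with projections $p_{1},p_{2}\colon R\to P$; the morphism $p_{1}$, being the base change of $\psi$, is finite locally free of rank $n$, hence carries a multiplicative norm map $\mathrm{Nm}_{p_{1}}\colon\Gamma(R, p_{1}^{*}\psi^{*}\cA)\to\Gamma(P,\psi^{*}\cA)$, compatible with the algebra map to $\psi^{*}\cB$ and satisfying $\mathrm{Nm}_{p_{1}}(p_{1}^{*}a) = a^{n}$. Using the canonical identification $p_{1}^{*}\psi^{*}\cA\cong p_{2}^{*}\psi^{*}\cA$ coming from the tautological $2$-isomorphism $\psi p_{1}\cong\psi p_{2}$, I would set $t = \mathrm{Nm}_{p_{1}}(p_{2}^{*}\tilde t)\in\Gamma(P,\psi^{*}\cA)$. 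Its image in $\Gamma(P,\psi^{*}\cB)$ is $\mathrm{Nm}_{p_{1}}(p_{2}^{*}\psi^{*}s) = \mathrm{Nm}_{p_{1}}(p_{1}^{*}\psi^{*}s) = (\psi^{*}s)^{n}$, since $\psi^{*}s$ is pulled back from $\cX$ and therefore has equal $p_{1}^{*}$ and $p_{2}^{*}$. The crucial claim is that $t$ lies in the equalizer of the two pullback maps $\Gamma(P,\psi^{*}\cA)\rightrightarrows\Gamma(R, p_{1}^{*}\psi^{*}\cA)$ (the second using the identification above), hence descends to a section of $\Gamma(\cX,\cA)$ along the fppf cover $\psi$; when $\cX = [X/G]$ with $G$ a finite group this recovers the classical formula $t = \prod_{g\in G}g\cdot\tilde t$. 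Granting this, since $\psi$ is fppf surjective the descended section of $\cA$ maps to $s^{n}$ in $\Gamma(\cX,\cB)$. Thus $\Gamma(\cX,\cA)\to\Gamma(\cX,\cB)$ is adequate and $\cX$ is adequately affine.

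It remains to verify condition (2) of Definition \ref{definition-adequate-moduli-space}, that $\oh_{Y}\to\phi_{*}\oh_{\cX}$ is an isomorphism. With $\cX$ adequately affine, the canonical morphism $\cX\to\Spec\Gamma(\cX,\oh_{\cX})$ is an adequate moduli space, hence universal for morphisms to schemes by Theorem \ref{theorem-uniqueness}; comparing this with the universal property of the coarse moduli space $\phi\colon\cX\to Y$ (where, after the reductions, $Y = \Spec A$ is an affine scheme), and using the uniqueness clauses in both, identifies $Y$ with $\Spec\Gamma(\cX,\oh_{\cX})$, i.e. yields $\oh_{Y}\iso\phi_{*}\oh_{\cX}$. (Alternatively, this is part of the standard construction of coarse moduli spaces over an affine base.) Since this property descends in the fpqc topology, it holds before the reductions of the first paragraph, so $\phi$ is an adequate moduli space.

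I expect the main obstacle to be the crucial claim in the second paragraph: that $t = \mathrm{Nm}_{p_{1}}(p_{2}^{*}\tilde t)$ really descends to $\cX$. Proving it amounts to tracking the norm maps through the cartesian squares produced by the source, target and composition morphisms of the groupoid $R\rightrightarrows P$, using the base-change compatibility and multiplicativity of the norm. This is precisely the averaging that underlies the Keel--Mori theorem, and carrying it out carefully --- rather than merely citing it --- is the technical heart of the argument.
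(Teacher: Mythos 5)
Your proposal is correct and takes essentially the same approach as the paper: after the identical reduction via Propositions \ref{proposition-finite-cover} and \ref{proposition-base-change} to a finite locally free presentation of rank $N$, the paper lifts the invariant section $b$ to an element $a$ on the cover and takes the characteristic polynomial of multiplication by $\beta_A(a\otimes 1)$ on the rank-$N$ locally free module $A\otimes_{C,t}D$, whose constant term is exactly your norm $\mathrm{Nm}_{p_1}(p_2^{*}\tilde t)$ and maps to $b^{N}$. The descent claim you single out as the technical heart is asserted in the paper at the same level of detail --- the characteristic polynomial is simply declared to lie in $A^{R}[\lambda]$ --- so you have not omitted a step that the paper supplies.
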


\begin{proof}  By Proposition \ref{proposition-finite-cover}  and Proposition \ref{proposition-base-change}, it suffices to assume that there exists a finite, flat morphism $p: U = \Spec(C) \to \cX$.  We may assume that $p$ is locally free of rank $N$.  Let $s, t: R = \Spec(D) \rrarrows U$ be the groupoid presentation.  If $C^R = \Eq(C \rrarrows D)$, then $\phi: \cX \to Y = \Spec(C)^R$ is the coarse moduli space.

\medskip \noindent
Let $\alpha: \cA \to \cB$ be a surjective morphism of quasi-coherent $\oh_{\cX}$-algebras.  Then $\cA$ and $\cB$ correspond to a $C$-algebra $A$ and $B$, and isomorphisms $\beta_A: A \tensor_{C, s} D \iso A \tensor_{C,t} D$ and  $\beta_B: B \tensor_{C, s} D \iso B \tensor_{C,t} D$, respectively, satisfying the cocycle condition.  We have a commutative diagram 
$$\xymatrix{
A^R \ar@{^(->}[r] \ar[d]^{\alpha}	& A \ar@<.5ex>[r]^{\beta_A \circ (\id \tensor 1)} \ar@<-.5ex>[r]_{\id \tensor 1} \ar[d]	& A \tensor_{C,t} D \ar[d] \\
B^R \ar@{^(->}[r] 		& B \ar@<.5ex>[r]^{\beta_B \circ (\id \tensor 1)} \ar@<-.5ex>[r] _{\id \tensor 1} 		&  B \tensor_{C,t} D
}$$ 
of exact sequences with $A^R = \Gamma(\cX, \cA)$ and $B^R = \Gamma(\cX, \cB)$. 

\medskip \noindent
Let $b \in B^R$ and choose $a \in A$ with $a \mapsto b$.  Then multiplication by $\beta_A(a \tensor 1) \in A \tensor_{C,t} D$ is an $A$-module homomorphism (via $\id \tensor 1: A \to A \tensor_{C,t} D$).  The characteristic polynomial is
$$P(\lambda, \beta_A(a \tensor 1)) = \lambda^N - \sigma_{N-1} \lambda^{N-1} + \cdots + (-1)^N \sigma_0  \in A^R[\lambda]$$
which maps under $\alpha$ to the characteristic polynomial of $\beta_B(b \tensor 1) = b \tensor 1$
$$P(\lambda, \beta_B(b \tensor 1)) = (\lambda - b)^N$$

\medskip \noindent
By examining the constant term, we see that $\sigma_0 \in A^R$ and $\sigma_0 \mapsto b^N$.
\end{proof}

\subsection{Equivalences}

\begin{lem} \label{lemma-finite}
Suppose that $\cX$ and $\cX'$ are algebraic stacks and that
$$\xymatrix{
\cX' \ar[r]^f \ar[d]^{\phi'}		& \cX \ar[d]^{\phi} \\
Y' \ar[r]^{g}					& Y
}$$
is a commutative diagram with $\phi$ and $\phi'$ adequate moduli spaces.  Assume that the following hold:
\begin{enumeratea}
\item The morphism $f$ is representable, quasi-finite and separated.
\item The morphism $g$ is integral.
\item The morphism $f$ maps closed points to closed points.
\end{enumeratea}
Then $f$ is finite.
\end{lem}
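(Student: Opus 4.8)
Here is how I would go about proving Lemma \ref{lemma-finite}.

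\medskip\noindent
\textbf{Reduction and completion.} The plan is to reduce to an affine base, complete $f$ to a finite morphism via Zariski's Main Theorem, and then use the hypothesis on closed points to show nothing is lost. First I would reduce to $Y=\Spec A$ affine: finiteness of $f$ can be checked after an \'etale base change $V\to Y$ by an affine scheme, and the three hypotheses (that $\phi,\phi'$ are adequate moduli spaces, that $g$ is integral, and that $f$ preserves closed points) are all stable under flat base change on $Y$ by Proposition \ref{proposition-base-change}. With $Y=\Spec A$, we have $Y'=\Spec A'$ with $A\to A'$ integral, and $\cX,\cX'$ are quasi-compact with $\cX$ adequately affine. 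Next I would apply Zariski's Main Theorem for representable, quasi-finite, separated morphisms of algebraic stacks to factor $f$ as $\cX'\xrightarrow{\,j\,}\bar\cX'\xrightarrow{\,\bar f\,}\cX$ with $j$ an open immersion and $\bar f$ finite and representable. Replacing $\bar\cX'$ by the closed substack cut out by $\ker(\oh_{\bar\cX'}\to j_*\oh_{\cX'})$, I may further assume $j$ is dense and $\Gamma(\bar\cX',\oh)\hookrightarrow\Gamma(\cX',\oh)$ is injective.

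\medskip\noindent
\textbf{The auxiliary moduli space.} Since $\bar f$ is finite, $\bar\cX'\to\cX\to Y$ is adequately affine (Proposition \ref{proposition-adequately-affine}), so $\bar\phi'\colon\bar\cX'\to\bar Y':=\Spec\bar A'$ is an adequate moduli space, where $\bar A'=\Gamma(\bar\cX',\oh)$. Because $A'$ is integral over $A$ it is integral over $\bar A'$, so (using the injectivity just arranged) $Y'\to\bar Y'$ is integral and \emph{surjective}; and by the universal property of the adequate moduli space $\phi'$ for maps to affine schemes (Theorem \ref{theorem-uniqueness}) this morphism is a map $h\colon Y'\to\bar Y'$ with $\bar\phi'\circ j=h\circ\phi'$.

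\medskip\noindent
\textbf{The contradiction.} Suppose $\cZ:=\bar\cX'\setminus\cX'$ is nonempty; it is a quasi-compact closed substack, hence contains a point $\bar z$ closed in $|\bar\cX'|$. As $\bar\phi'$ is universally closed (Theorem \ref{theorem-adequate}), $\bar y_0:=\bar\phi'(\bar z)$ is a closed point of $\bar Y'$; pick $y_0'\in|Y'|$ over it, necessarily closed since $Y'\to\bar Y'$ is integral. The fiber $\phi'^{-1}(y_0')$ is nonempty and quasi-compact, so has a point $z'$ closed in it, which is then closed in $|\cX'|$ because $y_0'$ is closed. Now $j(z')$ and $\bar z$ lie in the same fiber $\bar\phi'^{-1}(\bar y_0)$; applying the intersection formula of Theorem \ref{theorem-adequate} to the closed substacks $\overline{\{j(z')\}}$ and $\{\bar z\}$, whose images in $\bar Y'$ both contain $\bar y_0$, gives $\bar z\in\overline{\{j(z')\}}$, hence $\bar f(\bar z)\in\overline{\{f(z')\}}$ in $|\cX|$. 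I claim $\bar f(\bar z)\neq f(z')$: otherwise $\bar z$ and $j(z')$ both lie over the generic point $f(z')$ of the integral closed substack $\overline{\{f(z')\}}\red$ under the finite representable morphism obtained by restricting $\bar f$ to $\overline{\{j(z')\}}\red$; but a finite morphism has discrete fibers, so the fiber over the generic point of the target consists only of the generic point $j(z')$ of the source, forcing $\bar z=j(z')$, absurd since $\bar z\notin\cX'$. Therefore $f(z')$ is not closed in $|\cX|$ although $z'$ is closed in $|\cX'|$, contradicting the hypothesis. Hence $\cZ=\emptyset$, $j$ is an isomorphism, and $f=\bar f$ is finite.

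\medskip\noindent
\textbf{Main obstacle.} The delicate point is the interaction of the open immersion $j$ with the two moduli spaces: one must arrange, via the density reduction, that every fiber of $\bar\phi'$ meeting $\cZ$ also meets $\cX'$, which is precisely what allows a ``lost'' closed point $\bar z$ of $\bar\cX'$ to be witnessed by an honest closed point $z'$ of $\cX'$ whose image under $f$ fails to be closed. The remaining ingredients are the topological consequences of the adequate-moduli-space axioms (surjectivity, universal closedness, the intersection formula) together with Zariski's Main Theorem and the elementary fact that generic fibers of finite morphisms of integral stacks are points.
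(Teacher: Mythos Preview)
Your argument is essentially correct and rests on the same two ingredients as the paper's proof---Zariski's Main Theorem and the topological properties of adequate moduli spaces---but you take a considerably longer route. The paper's first move is to replace $\cX$ by $\cX\times_Y Y'$, reducing at once to $Y=Y'$: the new $f$ is still representable, quasi-finite and separated, and it still sends closed points to closed points because $\cX\times_Y Y'\to\cX$ is integral (hence has discrete fibers over closed points). With $Y=Y'$ in hand, together with the density reduction you carry out, one gets $\Gamma(\tilde\cX,\oh)\hookrightarrow\Gamma(\cX',\oh)=\Gamma(Y,\oh_Y)$ and the reverse inclusion from $\oh_\cX\to\tilde\phi_*\oh_{\tilde\cX}$, so $\tilde\cX\to Y$ is \emph{itself} an adequate moduli space. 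The contradiction is then a one-liner: a closed point $x\in|\tilde\cX|\setminus|\cX'|$ and the image $i(x')$ of the unique closed point $x'\in\phi'^{-1}(y)$ (where $y$ is the image of $x$ in $Y$) are both closed in the same fiber of $\tilde\cX\to Y$, hence equal by Theorem~\ref{theorem-adequate}(\ref{points}). By keeping $Y'\neq Y$ you are forced to juggle three spaces $Y'\to\bar Y'\to Y$ and to invoke the intersection formula plus the extra claim $\bar f(\bar z)\neq f(z')$.

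Two small points. First, your reduction to $Y$ affine by \'etale base change does not obviously preserve hypothesis (c): a point closed in $|\cX'_V|$ need not be closed in $|\cX'|$, so you cannot directly feed it into (c). It is cleaner either to skip this step and work with $\sSpec_Y$ throughout, or to make the $Y=Y'$ reduction first (which \emph{does} preserve (c), as noted above). Second, ``discrete fibers'' is not quite the right justification for $\bar f(\bar z)\neq f(z')$; what you really need is incomparability for finite representable morphisms---a proper specialization $j(z')\rightsquigarrow\bar z$ in the source cannot map to a trivial specialization in the target---which one checks by passing to a smooth chart and using that the generic fiber of a finite dominant morphism of integral schemes is $\Spec$ of a field. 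On the positive side, you are right to make the density reduction explicit; the paper's assertion that $\tilde\cX\to Y$ is an adequate moduli space tacitly relies on it.
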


\begin{proof} It suffices to show that $\cX' \to \cX \times_{Y} Y'$ is integral so we may 
assume that $Y = Y'$.  By Zariski's Main Theorem (\cite[Theorem 16.5]{lmb}), there
 exists a factorization $f: \cX'  \hookarr \tilde \cX \to \cX$ with $i: \cX' \hookarr \cX$ an 
 open immersion and $\tilde \phi: \tilde \cX \to \cX$ integral.  Since $f$ maps closed 
 points to closed points, so does $i$.  It follows from 
 Lemma \ref{lemma-adequate-affine-algebra} that $\tilde \cX \to Y$ is an adequate 
 moduli space.  If $x \in |\tilde \cX| \setminus |\cX'|$ is a closed point, then 
 $\tilde \phi(x) \in Y$ is closed.  Let $x'$ be the unique closed point in the
  fiber $\phi'^{-1}(y)$.  Then $i(x') \in |\tilde \cX|$ is the unique closed point in the fiber 
  $\tilde \phi^{-1}(y)$ by Theorem \ref{theorem-adequate}(\ref{points}) so
  $i(x') = x$.  It follows that $\cX' = \tilde \cX$ and $\cX' \to \cX$ is integral.
\end{proof}

\begin{thm} \label{theorem-coarse}
 Suppose that $\cX$ is an algebraic stack
with quasi-finite and separated diagonal.  Then the following are equivalent:
\begin{enumerate1}
\item The inertia $I_{\cX} \to \cX$ is finite.
\item There exists a coarse moduli space $\phi: \cX \to Y$ with $\phi$ separated.
\item There exists an adequate moduli space $\phi: \cX \to Y$.
\end{enumerate1}
\end{thm}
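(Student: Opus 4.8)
The plan is to prove the cycle $(1)\Rightarrow(2)$, $(1)\Rightarrow(3)$, $(2)\Rightarrow(1)$, $(3)\Rightarrow(1)$, which yields all three equivalences. The implication $(1)\Rightarrow(2)$ is exactly the Keel--Mori theorem (Theorem~\ref{theorem-keel-mori}). For $(1)\Rightarrow(3)$ I would combine Keel--Mori with Proposition~\ref{proposition-coarse}: Keel--Mori produces a separated coarse moduli space $\phi\colon\cX\to Y$, and Proposition~\ref{proposition-coarse} says any coarse moduli space of a stack with finite inertia is an adequate moduli space.

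For $(2)\Rightarrow(1)$, let $\phi\colon\cX\to Y$ be a separated coarse moduli space. Since $Y$ is an algebraic space its inertia is trivial, so the natural map $I_{\cX/Y}\to I_\cX$ is an isomorphism, and $I_{\cX/Y}=\cX\times_{\Delta_\phi,\,\cX\times_Y\cX,\,\Delta_\phi}\cX$, so $I_\cX\to\cX$ is the base change of the relative diagonal $\Delta_\phi$ along itself. Now $\Delta_\phi$ is quasi-finite: the absolute diagonal of $\cX$ factors as $\cX\xrightarrow{\Delta_\phi}\cX\times_Y\cX\to\cX\times\cX$ with the second map a monomorphism, so quasi-finiteness of $\Delta_{\cX}$ forces quasi-finiteness of $\Delta_\phi$. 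And $\Delta_\phi$ is proper because $\phi$ is separated. Hence $\Delta_\phi$ is finite, so $I_\cX\to\cX$ is finite. (Note this uses only that $\phi$ is separated, not that it is a coarse space.)

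The substance is $(3)\Rightarrow(1)$. Given an adequate moduli space $\phi\colon\cX\to Y$, the morphism $\Delta_\phi\colon\cX\to\cX\times_Y\cX$ is representable, quasi-finite and separated (by the convention that $\cX$ has quasi-finite separated diagonal, and the factorization above), and as in the previous paragraph $I_\cX\to\cX$ is the base change of $\Delta_\phi$; so it suffices to show $\Delta_\phi$ is finite, which I would get from Lemma~\ref{lemma-finite}. First I would observe that the projection $q\colon\cX\times_Y\cX\to\cX$ is adequately affine: $\phi$ is adequately affine and $Y$, being an algebraic space (hence a Deligne--Mumford stack with quasi-compact separated diagonal), satisfies the hypothesis of Proposition~\ref{proposition-adequately-affine}(6). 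Composing with $\phi$, the morphism $\cX\times_Y\cX\to Y$ is adequately affine, hence $\cX\times_Y\cX\to Z:=\sSpec_Y\bigl((\phi\circ q)_*\oh_{\cX\times_Y\cX}\bigr)$ is an adequate moduli space. Because $\Delta_\phi$ is a section of $q$, the $\oh_Y$-algebra map $\cB:=(\phi\circ q)_*\oh_{\cX\times_Y\cX}\to\phi_*\oh_\cX=\oh_Y$ is split surjective, so the induced morphism $\iota\colon Y=\sSpec_Y\oh_Y\to\sSpec_Y\cB=Z$ is a closed immersion, in particular integral, and one checks directly that the square with rows $\Delta_\phi$, $\iota$ and columns $\phi$, $\cX\times_Y\cX\to Z$ commutes. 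The last hypothesis of Lemma~\ref{lemma-finite} to verify is that $\Delta_\phi$ sends closed points to closed points: if $x\in|\cX|$ is closed then both projections $\cX\times_Y\cX\to\cX$ are universally closed (base changes of $\phi$, Theorem~\ref{theorem-adequate}), so $\overline{\{\Delta_\phi(x)\}}$ lies in the closed substack supported on $\Spec\kappa(x)\times_Y\Spec\kappa(x)$, where $\Delta_\phi(x)$ is the image of the diagonal $\Spec\kappa(x)\to\Spec\kappa(x)\times_Y\Spec\kappa(x)$, a closed immersion (as $\Spec\kappa(x)\to Y$ is a monomorphism, hence separated); thus $\Delta_\phi(x)$ is closed. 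Lemma~\ref{lemma-finite} then gives that $\Delta_\phi$ is finite, hence $I_\cX\to\cX$ is finite.

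I expect the main obstacle to be the bookkeeping in $(3)\Rightarrow(1)$: producing the adequate moduli space $Z$ of $\cX\times_Y\cX$ with the correct compatibilities (via Proposition~\ref{proposition-adequately-affine}(6) and the constructions around Lemma~\ref{lemma-adequate-affine-algebra} and Proposition~\ref{proposition-base-change}), and in particular checking that $\Delta_\phi$ carries closed points to closed points so that the quasi-finite/separated morphism $\Delta_\phi$ can be fed into Lemma~\ref{lemma-finite}. This is exactly the point where the topological geometry of adequate moduli spaces (universal closedness, the unique closed point in each fibre) must be leveraged to beat the Zariski's-main-theorem argument packaged inside Lemma~\ref{lemma-finite}.
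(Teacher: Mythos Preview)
Your proof is correct and follows essentially the same strategy as the paper: invoke Keel--Mori and Proposition~\ref{proposition-coarse} for the easy directions, and for $(3)\Rightarrow(1)$ apply Lemma~\ref{lemma-finite} to the diagonal. The paper carries this out with the \emph{absolute} diagonal $\cX\to\cX\times\cX$: after reducing to $Y$ separated (the question is local on $Y$, so one may take $Y$ affine), the product $\phi\times\phi\colon\cX\times\cX\to Y\times Y$ is itself an adequate moduli space, and $Y\to Y\times Y$ is a closed immersion; then Lemma~\ref{lemma-finite} gives that $\Delta_{\cX}$ is finite. Your variant with the \emph{relative} diagonal $\Delta_\phi\colon\cX\to\cX\times_Y\cX$ is equally valid and avoids the preliminary reduction to $Y$ separated, at the cost of constructing the auxiliary space $Z$ and the splitting $Y\hookrightarrow Z$ by hand. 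A small bonus of your write-up is the self-contained argument for $(2)\Rightarrow(1)$ (proper plus quasi-finite implies finite for $\Delta_\phi$), which the paper leaves folded into the citation of Keel--Mori.

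One point of phrasing to tighten: in your ``closed points to closed points'' step you write $\Spec\kappa(x)$ where you really mean the residual gerbe $\cG_x$, and you invoke ``$\Spec\kappa(x)\to Y$ is a monomorphism'' when what is actually needed is that $\cG_x\to Y$ is \emph{separated}. This holds because the stabilizer $G_x$ is a quasi-finite group scheme over a field, hence finite, so $\cG_x\to\Spec k(x)$ is proper. With that correction your argument goes through: $\cG_x\hookrightarrow\cX$ is closed (as $x$ is closed), so $\cG_x\times_Y\cG_x\hookrightarrow\cX\times_Y\cX$ is closed, and the diagonal $\cG_x\to\cG_x\times_Y\cG_x$ is a closed immersion with one-point image. (Your appeal to universal closedness of the projections is harmless but unnecessary; ordinary continuity already confines $\overline{\{\Delta_\phi(x)\}}$ to $p_1^{-1}(x)\cap p_2^{-1}(x)$.) The paper, for its part, simply asserts this step without comment.
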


\begin{proof}  The Keel$-$Mori theorem (see Theorem \ref{theorem-keel-mori}) shows that $(1) \iff (2)$.  Proposition \ref{proposition-coarse} shows that $(2) \implies (3)$.  Suppose that statement (3) holds.  We may assume that $Y$ is separated.  First note that $\cX \to \cX \times \cX$ maps closed points to closed points.  Since $\phi \times \phi: \cX \times \cX \to Y \times Y$ is adequately affine, there exists a diagram
$$\xymatrix{
\cX \ar[r] \ar[d]^{\phi} 		& \cX \times \cX \ar[d]^{\varphi} \\
Y \ar[r]				& Z
}$$
where $\varphi: \cX \times \cX \to Z := \sSpec_{Y \times Y} (\phi \times \phi)_* \oh_{\cX \times \cX}$ is an adequate moduli space and $Y \to Z$ is integral (by Proposition \ref{proposition-base-change}(3)). It follows from Lemma \ref{lemma-finite}
that $\cX \to \cX \times \cX$ is finite.
\end{proof}

\begin{example} Let $X$ be the bug-eyed cover of the affine line over a field $k$ with $\text{char}(k) \neq 2$; that is, $X$ is defined by the quotient of the \'etale equivalence relation
$$\ZZ / 2 \ZZ \times \AA^1 \setminus \{(-1, 0)\} \rrarrows \AA^1 = \Spec(k[x])$$
where $\ZZ / 2 \ZZ$ acts via $x \mapsto -x$.  Then $X \to \AA^1 = \Spec(k[x^2])$ is a universal homeomorphism such that $\Gamma(X, \oh_X) = k[x^2]$.  However, $X \to \AA^1$ is not an adequate moduli space.  If $\text{char}(k) = 0$, then taking global sections of the surjection $\oh_X \to \oh_X / I^2$ where $I$ is ideal sheaf of the origin yields $k[x^2] \to k[x]/x^2$ which is not adequate.  If $\text{char}(k) = p \neq 2$, then consider the quasi-coherent $\oh_X$-algebra $\oh_X[t]$ where the action is given by $t \mapsto -t$ 
Then taking global sections of the surjection $\oh_X[t] \to \oh_X[t] / I^2 \oh_X[t]$ yields $k[x^2, t^2] \to k[x,t]/x^2$.  But there is no power of $x+t \in k[x,t]/x^2$ which is in the image.
\end{example}

%%%%%
\section{Geometrically reductive group schemes and GIT} \label{section-adequate-groups}

\medskip \noindent
In this section we introduce the notion of a geometrically reductive group algebraic space $G \to S$ over an arbitrary algebraic space $S$.  Our notion is equivalent to Seshadri's notion in \cite{seshadri_reductivity} (see Lemma \ref{lemma-group-equiv-affine} and Remark \ref{remark-seshadri}) when $G \to \Spec(R)$ is a flat, separated, finite type group scheme over a Noetherian affine scheme which satisfies the resolution property.  

\medskip \noindent The following are the main examples of geometrically reductive group algebraic spaces.
\begin{enumerate}
\item Any linearly reductive group algebraic space is geometrically reductive (see Remark \ref{remark-linearly-reductive}).
\item Any flat, finite, finitely presented group algebraic space is geometrically reductive (see Theorem \ref{theorem-finite-group}).  In particular, any finite group is geometrically reductive.
\item Any smooth affine group scheme $G \to S$ such that $G^{\circ} \to S$ is reductive and $G/G^{\circ} \to S$ is finite is geometrically reductive (see Theorem \ref{theorem-reductive}).  In particular, any reductive group scheme (for example, $\GL_{n,S} \to S$, $\PGL_{n,S} \to S$ or $\SL_{n,S} \to S$) is geometrically reductive.  
\end{enumerate}

\subsection{Definition and GIT}
\begin{defn} \label{definition-geometrically-reductive}
Let $S$ be an algebraic space.  A flat, separated, finitely presented group algebraic space $G \to S$ is \emph{geometrically reductive} if the morphism $BG \to S$ is adequately affine.
\end{defn}

\begin{remark} In other words, this definition is requiring that for every surjection $\cA \to \cB$ of quasi-coherent $G$-$\oh_S$-algebras, $\cA^G \to \cB^G$ is adequate.
\end{remark}

\begin{remark} \label{remark-linearly-reductive}
  This notion is weaker than the notion of linearly reductivity introduced in \cite[Section 12]{alper-good}.  Recall that a flat, separated, finitely presented group algebraic space $G \to S$ is \emph{linearly reductive} if $BG \to S$ is cohomologically affine; that is, if the functor from quasi-coherent $G$-$\oh_S$-modules to quasi-coherent $\oh_S$-modules given by taking invariants 
$$\text{QCoh}^G(S) \to \text{QCoh}^G, \quad \cF \mapsto \cF^G$$
is exact. If $S$ is defined over $\Spec(\QQ)$, then it follows from Lemma \ref{lemma-adequately-affine-char0} that $G \to S$ is linearly reductive if and only if $G \to S$ is geometrically reductive.   We emphasize that it is in characteristic $p$ where the notions are not equivalent.  The group schemes $\ZZ / p \ZZ$ and  $\GL_2$ are geometrically reductive but not linearly reductive.  In fact, an algebraic group $G$ over an algebraically closed field of characteristic $p$ is linearly reductive if and only if the connected component of the identity $G^{\circ}$ is a torus and $G/G^{\circ}$ has order prime to $p$ \cite{nagata_complete}.  
\end{remark}

\begin{thm} \label{theorem-git}
Let $S$ be an algebraic space.  Let $G \to S$ be a geometrically reductive group algebraic space acting on an algebraic space $X$ with $p: X \to S$ affine.  Then
$$\phi: [X/G] \to \sSpec_S (p_* \oh_X)^G$$
is an adequate moduli space.
\end{thm}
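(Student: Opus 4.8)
The plan is to factor $\phi$ through the classifying stack $BG$, so that the adequate affineness of $\phi$ becomes a consequence of the hypothesis that $G\to S$ is geometrically reductive (which is precisely adequate affineness of $BG\to S$) together with the permanence properties in Proposition \ref{proposition-adequately-affine}, while the second axiom becomes the tautological identification of $\Gamma([X/G],\oh_{[X/G]})$ with the ring of invariants. First I would reduce to the case $S=\Spec R$ affine. If $g\colon S'\to S$ is flat, then $G_{S'}\to S'$ is again geometrically reductive by Proposition \ref{proposition-adequately-affine}(3), the morphism $X_{S'}\to S'$ is again affine, and the formation of $(p_*\oh_X)^G$ commutes with the base change $S'\to S$ by flat base change for the quasi-compact, quasi-separated morphism $BG\to S$; hence $Y\times_S S'$ is the analogous construction $\sSpec_{S'}$ of the $S'$-invariants, and $\phi$ base changes to the corresponding morphism $[X_{S'}/G_{S'}]\to Y\times_S S'$. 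By Proposition \ref{proposition-base-change}(2) it therefore suffices to treat an affine \'etale cover of $S$, so we may assume $S=\Spec R$, $X=\Spec A$ with $A$ a $G$-$R$-algebra, and $Y=\Spec A^G$.

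For axiom (1), consider the factorization $[X/G]\xrightarrow{q}BG\xrightarrow{\pi}S$. Pulling back $q$ along the fppf cover $S\to BG$ identifies it with $p\colon X\to S$ (the relevant $2$-cartesian square has vertices $X$, $S$, $[X/G]$, $BG$); since $p$ is affine and affineness descends along fppf covers, $q$ is representable and affine, hence adequately affine by Proposition \ref{proposition-adequately-affine}(2). By the definition of geometric reductivity $\pi$ is adequately affine, so by Proposition \ref{proposition-adequately-affine}(1) the composite $[X/G]\to S$ is adequately affine; in particular $[X/G]$ is an adequately affine, quasi-compact, quasi-separated algebraic stack, so the canonical morphism $[X/G]\to\Spec\Gamma([X/G],\oh_{[X/G]})$ is adequately affine by the remark following Definition \ref{definition-adequately-affine}. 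Since $\Gamma([X/G],\oh_{[X/G]})=\Gamma(BG,q_*\oh_{[X/G]})=\Gamma(BG,A)=A^G$, this target is precisely $Y$ and this canonical morphism is $\phi$, so $\phi$ is adequately affine.

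For axiom (2): since $Y=\Spec A^G$ is affine and $\phi$ is quasi-compact and quasi-separated, $\phi_*\oh_{[X/G]}$ is the quasi-coherent $\oh_Y$-algebra attached to $\Gamma([X/G],\oh_{[X/G]})$, and under the identification $\Gamma([X/G],\oh_{[X/G]})=A^G$ of the previous paragraph the adjunction map $\oh_Y\to\phi_*\oh_{[X/G]}$ is the identity $A^G\to A^G$; hence it is an isomorphism and $\phi$ is an adequate moduli space.

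The step that demands the most care is not any single deep input — the one genuinely substantial fact, adequate affineness of $BG\to S$, is built into the hypothesis, and everything else is packaged in Proposition \ref{proposition-adequately-affine} — but rather the bookkeeping around the reduction to an affine base: one must verify that $\sSpec_S(p_*\oh_X)^G$ really commutes with the flat base change used, and that in the affine case the morphism $\phi$ of the statement coincides with the canonical morphism $[X/G]\to\Spec\Gamma([X/G],\oh_{[X/G]})$ rather than merely mapping to an isomorphic algebraic space.
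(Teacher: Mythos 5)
Your argument is correct and is essentially the paper's own proof: the paper likewise observes that $[X/G]\to BG$ is affine, hence the composite $[X/G]\to BG\to S$ is adequately affine by Proposition \ref{proposition-adequately-affine}, and then invokes the fact that an adequately affine morphism $p\colon\cX\to S$ yields an adequate moduli space $\cX\to\sSpec_S p_*\oh_{\cX}$. Your extra reduction to an affine base is harmless but not needed, since that last fact (the remark after Definition \ref{definition-adequate-moduli-space}) already works over an arbitrary algebraic space $S$.
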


\begin{proof}
 Since $[X/G] \to BG$ is affine, the composition $[X/G] \to BG \to S$ is adequately affine so the statement follows.
 \end{proof}

\begin{remark}  With the notation of Theorem \ref{theorem-git}, if $S$ is affine and $X = \Spec(A)$, then the theorem implies that
$$\phi: [\Spec(A) /G] \to \Spec(A^G)$$
is an adequate moduli space.
\end{remark}

\subsection{Equivalences} We will give equivalent definitions of adequacy first in the general case $G \to S$ of a group algebraic space, then in the case where $S$ is affine, and finally in the case where $S$ is the spectrum of a field.  
We call a morphism $\cA \to \cB$ of quasi-coherent $G$-$\oh_S$-algebras \emph{universally adequate} if the corresponding morphism of $\oh_{BG}$-algebras is.

\begin{lem} \label{lemma-group-equiv}
Let $S$ be an algebraic space.  Let $G \to S$ be a flat, separated, finitely presented group algebraic space.
\begin{enumerate1}
\item For every universally adequate morphism $\cA \to \cB$ of $G$-$\oh_S$-algebras with kernel $\cI$, the morphism $\cA^G / \cI^G \to \cB^G$ is an adequate homeomorphism.
\item The morphism $G \to S$ is geometrically reductive.
\item For every surjection $\cF \to \cG$ of $G$-$\oh_S$-modules, the morphism $(\Sym^* \cF)^G \to (\Sym^* \cG)^G$ is universally adequate.
\end{enumerate1}
\medskip \noindent
If in addition $S$ is Noetherian, then the above are equivalent to:
\begin{enumerate1} 
\item[$(1')$] For every universally adequate morphism $\cA \to \cB$ of finite type $G$-$\oh_S$-algebras with kernel $\cI$, the morphism $\cA^G / \cI^G \to \cB^G$ is an adequate homeomorphism.
\item[$(2')$] For every surjection $\cA \to \cB$ of finite type $G$-$\oh_S$-algebras, the morphism $\cA^G \to \cB^G$ is universally adequate.  
\item[$(3')$] For every surjection $\cF \to \cG$ of finite type $G$-$\oh_S$-modules,  the morphism $(\Sym^* \cF)^G \to (\Sym^* \cG)^G$ is universally adequate.
\end{enumerate1}
\end{lem}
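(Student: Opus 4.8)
The plan is to observe that Lemma \ref{lemma-group-equiv} is nothing more than Lemma \ref{lemma-adequately-affine-equiv} transported across the equivalence of categories between $G$-$\oh_S$-modules and quasi-coherent $\oh_{BG}$-modules, applied to the structure morphism $\phi\colon BG \to S$. So essentially no new mathematical content is needed; the work is in setting up and invoking the right dictionary.

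First I would record the dictionary. Recall from the Conventions section (stated there for $S = \Spec R$, but globalizing over an arbitrary algebraic space $S$ by working \'etale-locally) that $G$-$\oh_S$-modules correspond to quasi-coherent $\oh_{BG}$-modules, finite type ones to finite type ones, $G$-$\oh_S$-algebras to quasi-coherent $\oh_{BG}$-algebras, and that this equivalence commutes with forming symmetric algebras; moreover the invariants functor $\cF \mapsto \cF^G$ is identified with $\phi_*$, so that for a $G$-$\oh_S$-algebra $\cA$ with $G$-stable ideal $\cI$ one has $\cA^G = \phi_*\cA$ and $\cI^G = \phi_*\cI$. Two further identifications are needed: by Definition \ref{definition-geometrically-reductive}, $G \to S$ is geometrically reductive if and only if $\phi$ is adequately affine; and, by the definition of universal adequacy for $G$-$\oh_S$-algebras given just before the statement, a morphism $\cA \to \cB$ of $G$-$\oh_S$-algebras is universally adequate exactly when the associated morphism of $\oh_{BG}$-algebras is. I would also note that $\phi\colon BG \to S$ is quasi-compact and quasi-separated, since $G \to S$ is flat, separated and finitely presented, which is the hypothesis needed to invoke Lemma \ref{lemma-adequately-affine-equiv}; and when $S$ is noetherian, $BG$ is a noetherian algebraic stack, so the primed conditions of that lemma are available.

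With the dictionary in hand, conditions (1), (2), (3) of the present lemma match, term for term, conditions (1), (2), (3) of Lemma \ref{lemma-adequately-affine-equiv} for $\phi\colon BG \to S$ — using $\phi_*\cI = \cI^G$ for (1), the definition of adequate affineness for (2), and $\phi_*\Sym^*\cF = (\Sym^*\cF)^G$ together with compatibility of $\Sym^*$ with the equivalence for (3) — and likewise $(1')$, $(2')$, $(3')$ match the corresponding primed conditions in the noetherian case. The equivalence then follows at once from Lemma \ref{lemma-adequately-affine-equiv}. The only steps that call for any care — and the closest thing to an obstacle — are bookkeeping: verifying that the equivalence of categories and the identification $(-)^G = \phi_*$ from the affine case glue over a general algebraic space $S$, and keeping straight that the notion of universal adequacy for $G$-$\oh_S$-algebras is, by construction, the $\oh_{BG}$-algebra notion. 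When $S$ is affine one could instead deduce everything from Lemma \ref{lemma-affine-base-equiv} applied to $BG$, but routing through Lemma \ref{lemma-adequately-affine-equiv} disposes of the non-affine base uniformly.
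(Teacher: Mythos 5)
Your proposal is correct and matches the paper's proof, which simply cites Lemma \ref{lemma-adequately-affine-equiv} applied to $BG \to S$; you have merely made the translation dictionary explicit where the paper leaves it implicit.
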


\begin{proof} This follows from Lemma \ref{lemma-adequately-affine-equiv}.  
\end{proof}

\medskip \noindent
We recall the following notion.
 
\begin{defn} A flat, separated, finitely presented group algebraic space $G \to S$ satisfies the \emph{resolution property} if for every finite type $G$-$\oh_S$-module $\cF$, there exists a surjection $\cV \to \cF$ from a locally free $G$-$\oh_{S}$-module $\cV$ of finite rank. 
\end{defn}

\begin{remark}  If $S = \Spec(R)$ is affine, then this is equivalent to requiring that for every finite type $G$-$R$-module $M$, there exists a surjection $V \to M$ of $G$-$R$-modules from a \emph{free} finite type $G$-$R$-module $V$.  Indeed, suppose that $V \to M$ is a surjection of $G$-$R$-modules where $V$ is a locally free $G$-$R$-module.  We may choose a surjection $R^{\oplus n} \to V$ as $R$-modules which then splits as $R^{\oplus n} = V \oplus V'$.  If we give $V'$ the trivial $G$-$R$-module structure, we see that we have a surjection $R^{\oplus n} \to V \to M$ of $G$-$R$-modules.
\end{remark}

\begin{remark}  \label{remark-resolution}
In \cite{thomason}, Thomason shows that a group scheme $G \to S$ satisfies the resolution property in the following cases: % CHECK THE PAPER
\begin{enumerate}
\item The scheme $S$ is a separated, regular Noetherian scheme of dimension $\le 1$ and $G \to S$ is affine.
\item The scheme $S$ is a separated, regular Noetherian scheme of dimension $\le 2$ and $\pi: G \to S$ is affine, flat and finitely presented such that $\pi_* \oh_G$ is locally a projective module over $\oh_S$ (for example, $G \to S$ is smooth with connected fibers).
\item The scheme $S$ has an ample family of line bundles (e.g., $S$ is regular or affine), $G \to S$ is a reductive group scheme such that either (i) $G$ is split reductive, (ii) $G$ is semisimple, (iii) $G$ has isotrivial radical and coradical or (iv) $S$ is normal.
\end{enumerate}
\end{remark}

\begin{lem} \label{lemma-group-equiv-affine}
 Let $G \to \Spec(R)$ be a flat, separated, finitely presented group algebraic space.  The following are equivalent:
\begin{enumerate1}
\item For every universally adequate morphism $A \to B$ of $G$-$R$-algebras with kernel $K$, the induced $R$-algebra homomorphism $A^G / K^G \to B^G$ is an adequate homeomorphism.
\item The morphism $G \to S$ is geometrically reductive.
\item For every surjection $A \to B$ of $G$-$R$-algebras, the homomorphism $A^G \to B^G$ is adequate. 
\item For every surjection $M \to N$ of $G$-$R$-modules, the homomorphism $(\Sym^* M)^G \to (\Sym^* N)^G$ is adequate.
\item For every surjection $M \to R$ of $G$-$R$-modules where $R$ has the trivial $G$-$R$-module structure, there exist $N > 0$ and $f \in (\Sym^N M)^G$ such that $f \mapsto 1$ under $(\Sym^N M)^G \to R$ is adequate.
\end{enumerate1}
If in addition $R$ is Noetherian, then the above are equivalent to:
\begin{enumerate1}  
\item[$(1')$] For every universally adequate morphism $A \to B$ of finite type $G$-$R$-algebras with kernel $K$, the induced $R$-algebra homomorphism $A^G / K^G \to B^G$ is an adequate homeomorphism.
\item[$(2')$] For every surjection $A \to B$ of finite type $G$-$R$-algebras, the homomorphism $A^G \to B^G$ is universally adequate.
\item[$(3')$] For every surjection $A \to B$ of finite type $G$-$R$-algebras, the homomorphism $A^G \to B^G$ is adequate.
\item[$(4')$]  For every surjection $M \to N$ of finite type $G$-$R$-modules, the homomorphism $(\Sym^* M)^G \to (\Sym^* N)^G$ is adequate.
\item[$(5')$] For every surjection $M \to R$ of finite type $G$-$R$-modules where $R$ has the trivial $G$-$R$-module structure, there exist $N > 0$ and $f \in (\Sym^N M)^G$ such that $f \mapsto 1$ under $(\Sym^N M)^G \to R$ is adequate.
\item[$(6')$] For every finite type free $G$-$R$-module $V$, $R$-algebra $k$ with $k$ a field and non-zero homomorphism of $G$-$R$-modules $V \to k$, there exist $n > 0$ such that  $(\Sym^n V)^G \to k$ is non-zero.
\end{enumerate1}
If in addition $G \to \Spec(R)$ satisfies the resolution property, then the above are equivalent to
\begin{enumerate1}
\item[$(1'')$] For every universally adequate morphism $R[x_1, \cdots, x_n] \to B$ of finite type $G$-$R$-algebras with kernel $K$, the induced $R$-algebra homomorphism $R[x_1, \cdots, x_n]^G / K^G \to B^G$ is an adequate homeomorphism.
\item[$(2'')$] For every surjection $R[x_1, \cdots, x_n] \to B$ of finite type $G$-$R$-algebras,  the homomorphism $R[x_1, \cdots, x_n]^G \to B^G$ is universally adequate.
\item[$(3'')$] For every surjection $R[x_1, \cdots, x_n] \to B$ of finite type $G$-$R$-algebras,  the homomorphism $R[x_1, \cdots, x_n]^G \to B^G$ is adequate.
\item[$(4'')$]  For every surjection $V \to N$ of finite type $G$-$R$-modules where $V$ is free as an $R$-module, the homomorphism $(\Sym^* M)^G \to (\Sym^* N)^G$ is adequate.
\item[$(5'')$] For every surjection $V \to R$ of finite type $G$-$R$-modules where $V$ is free as an $R$-module and $R$ has the trivial $G$-$R$-module structure, there exist $N > 0$ and $f \in (\Sym^N V)^G$ such that $f \mapsto 1$ under $(\Sym^N V)^G \to R$ is adequate.
\item[$(6'')$] For every finite type $G$-$R$-module $V$ which is free as an $R$-module, $R$-algebra $k$ with $k$ a field and non-zero homomorphism of $G$-$R$-modules $V \to k$, there exist $n > 0$ such that $(\Sym^n V)^G \to k$ is non-zero.
\item[$(7'')$] For every finite type $G$-$R$-module $V$ which is free as an $R$-module and invariant vector $v \in V^G$ such that $R \xrightarrow{v} V$ is injective, there exists a non-zero homogenous invariant polynomial $f \in (\Sym^n V^{\dual})^G$ with $f(v) = 1$.
\end{enumerate1}
\end{lem}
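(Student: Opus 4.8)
The plan is to deduce the bulk of the equivalences from Lemma \ref{lemma-affine-base-equiv} applied to the classifying stack $\cX = BG$, using the dictionary that identifies $G$-$R$-modules (resp. $G$-$R$-algebras) with quasi-coherent $\oh_{BG}$-modules (resp. algebras), under which $M^G = \Gamma(BG,\widetilde M)$ and under which ``$BG \to \Spec R$ adequately affine'' is by definition geometric reductivity. First I would check that $BG$ is a quasi-compact, quasi-separated algebraic stack: $\Spec R \to BG$ is a smooth (indeed fppf), finitely presented, surjective atlas from a quasi-compact scheme, so $BG$ is quasi-compact, while the diagonal $BG \to BG \times_S BG$ is, fppf-locally on the target, the morphism $G \to S$, which is quasi-compact and separated. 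Hence Lemma \ref{lemma-affine-base-equiv} is available; when $R$ is Noetherian so is $BG$ (as $G \to S$ is finite type), and the resolution property for $G \to \Spec R$ is by definition the resolution property for $BG$ (and by the Remark following that definition we may take the resolving locally free modules to be free). Under this translation, conditions $(1)$--$(5)$, $(1')$--$(5')$ and $(1'')$--$(5'')$ here are literally the corresponding conditions of Lemma \ref{lemma-affine-base-equiv} for $\cX = BG$ (the last one being the resolution-property condition there), so those equivalences are immediate; in particular all of them are equivalent to geometric reductivity (Definition \ref{definition-geometrically-reductive}), exactly as in Lemma \ref{lemma-group-equiv}.

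It then remains to fold in the ``pointwise'' conditions $(6')$, $(6'')$ and $(7'')$, which are not among those of Lemma \ref{lemma-affine-base-equiv}. For $(5')$/$(5'')\Rightarrow(6')$/$(6'')$: given a free finite type $G$-$R$-module $V$, an $R$-algebra $k$ which is a field, and a nonzero $G$-equivariant $V \to k$ (trivial action on $k$), I would localize at $\fp = \ker(R \to k)$ --- taking invariants commutes with the flat base change $\Spec R_\fp \to \Spec R$ --- and then base-change to $k(\fp) \subseteq k$. By Proposition \ref{proposition-adequately-affine}(3) geometric reductivity, hence all the conditions $(1)$--$(5)$, passes to $G_{k(\fp)} \to \Spec k(\fp)$, so over the field $k(\fp)$ I may apply $(5)$ to the resulting surjection of $G_{k(\fp)}$-modules onto a trivial line, producing in some positive degree $n$ an invariant symmetric power mapping onto that line; tracing through $k(\fp) \hookrightarrow k$ shows $(\Sym^n V)^G \to k$ is nonzero. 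Conversely, $(6')$/$(6'')\Rightarrow(5')$ (equivalently $\Rightarrow(3')$): for a surjection $V \twoheadrightarrow R$ of trivial-target $G$-modules, let $I_n \subseteq R$ be the degree-$n$ part of the image of $(\Sym^\bullet V)^G$ in $R[t] = \Sym^\bullet R$; if $I_n \ne R$ for all $n > 0$ then $\sum_{n\ge 1} I_n$ lies in a maximal ideal $\fm$ and $V \twoheadrightarrow R \twoheadrightarrow R/\fm$ contradicts $(6')$, so some $I_N = R$, whence a homogeneous invariant of degree $N$ mapping to $1$ --- promoting ``for each $\fm$ some positive $I_{n(\fm)} \not\subseteq \fm$'' to a single $N$ uses the graded $R$-algebra structure of this image (and, if needed, finite generation of $(\Sym^\bullet V)^G$ over $R$, which follows from Theorem \ref{theorem-finiteness}). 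Finally, $(6'')\Leftrightarrow(7'')$ is the dual reformulation: an invariant vector $v \in V^G$ with $R \xrightarrow{v} V$ injective is a copy of the trivial module $R \hookrightarrow V$, dually the evaluation $V^\dual \to R$, and a positive-degree homogeneous invariant $f \in (\Sym^\bullet V^\dual)^G$ with $f(v) = 1$ is exactly a lift of $1$ along the $G$-equivariant algebra map $\Sym^\bullet V^\dual \to R[t]$ determined by $v$, so the equivalence with $(6'')$ runs through adequacy as in $(3)$.

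I expect the genuine obstacle to be precisely the passage between the global conditions over $R$ and the pointwise conditions $(6')$, $(6'')$, $(7'')$: because taking $G$-invariants does not commute with base change to a residue field, one cannot simply descend an invariant found over $k(\fp)$ to an element of $(\Sym^\bullet V)^G$, so the argument must be organized around the image ideals $I_n \subseteq R$, their multiplicative behavior, and a Nakayama/finite-generation input, and extracting a uniform degree $n$ from a point-dependent one is the delicate step. Everything else --- the quasi-compactness, quasi-separatedness and resolution-property bookkeeping for $BG$, together with the chain of trivial implications among the lettered conditions --- is routine once Lemma \ref{lemma-affine-base-equiv} is in hand.
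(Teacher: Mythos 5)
Your overall architecture coincides with the paper's: the equivalences among $(1)$--$(5)$, $(1')$--$(5')$ and $(1'')$--$(5'')$ are exactly Lemma \ref{lemma-affine-base-equiv} applied to $\cX = BG$ under the dictionary between $G$-$R$-modules and quasi-coherent sheaves on $BG$, and your converse direction $(6')\Rightarrow(5')$ --- studying the graded image ideals $I_n \subseteq R$ of $(\Sym^*V)^G \to R[t]$, using $I_mI_n \subseteq I_{m+n}$ and quasi-compactness of $\Spec R$ to extract a single degree $N$ with $I_N = R$ --- is in substance the paper's argument (the paper phrases it via the cokernel $Q$ of $(\Sym^*M)^G \to R[x]$ and kills a single power $x^N$ in $Q$). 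Your remark that the sole delicate point is the failure of invariants to commute with non-flat base change is exactly right.

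The problem is that your proposed proof of the \emph{forward} implication $(5')/(5'')\Rightarrow(6')/(6'')$ commits precisely that error. Applying $(5)$ to the geometrically reductive group $G_{k(\fp)} \to \Spec k(\fp)$ produces an invariant $f \in \bigl(\Sym^n(V\tensor_R k(\fp))\bigr)^{G_{k(\fp)}}$; but the natural map $(\Sym^n V)^G \tensor_R k(\fp) \to \bigl(\Sym^n(V\tensor_R k(\fp))\bigr)^{G_{k(\fp)}}$ need not be surjective (the base change $R_{\fp} \to k(\fp)$ is not flat), so the existence of $f$ downstairs gives no element of $(\Sym^n V)^G$, and ``tracing through $k(\fp)\hookrightarrow k$'' does not yield that $(\Sym^n V)^G \to k$ is nonzero, which is the assertion of $(6')$. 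This direction should instead be done directly over $R$, where it is genuinely easy: the image $L$ of the $G$-$R$-module map $V \to k$ is a nonzero trivial $G$-submodule of $k$, so $(4')$ applied to the surjection $V \twoheadrightarrow L$ makes $(\Sym^* V)^G \to \Sym^* L$ adequate; picking $0 \neq c \in L$, some power $c^N$ lifts to $(\Sym^N V)^G$, and $c^N \neq 0$ in the field $k$, so $(\Sym^N V)^G \to k$ is nonzero. (Two smaller points: the inference ``$I_n \neq R$ for all $n$ implies $\sum_n I_n$ lies in a maximal ideal'' is false as stated --- the $I_n$ can jointly generate the unit ideal without any one of them being all of $R$ --- though the multiplicativity you invoke afterwards does repair it; and your treatment of $(7'')$ is no more than a gesture, but the paper itself omits it, deferring to the dualization in Remark \ref{remark-dual}.)
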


\begin{proof} The equivalences (1)$-$(5), (1')$-$(5') and (1'')$-$(5'') follow from Lemma \ref{lemma-affine-base-equiv}.  It is clear that that $(5') \implies (6')$.  Conversely, suppose that $M \to R$ is a surjection of finite type $G$-$R$-modules.  Let $Q$ be the cokernel of the induced map $\alpha: (\Sym^* M)^G \to (\Sym^* R) \cong R[x]$.  We need to show that there exists $N > 0$ such that the image of $x^N$ in $Q$ is 0.  For every $\fp \in \Spec(R)$, we know there exist $n > 0$ and $f \in (\Sym^n V)^G$ such that $f(p) \neq 0$; that is, $\alpha(f) = c x^n$ with $c \in R \setminus \fp$.  Therefore, for every $\fp \in \Spec(R)$, there exists $n> 0$ such that $x^n$ is non-zero in $Q_{\fp}$.  Since $R$ is Noetherian, there exists $N > 0$ such that $x^N = 0 \in Q$.  The equivalences  $(5'') \iff (6'') \iff (7'')$ are similar.
\end{proof}

\begin{remark} \label{remark-seshadri}
Property $(6'')$ translates into the geometric condition that for every linear action of $G$ on $X = \AA^n_R = \Spec(\Sym V^{\dual})$ over $R$, where $V$ is a finite type $G$-$R$-module which is free as an $R$-module, and for every field-valued point $x_0 \in X(k) = V \tensor_R k$ which is $G \times_R k$-invariant, there exist $n > 0$ and a $G$-invariant element $f \in (\Sym^n V^{\dual})^G$ such that $f(x_0) \neq 0$. This is precisely Seshadri's condition of geometric reductivity in \cite[Theorem 1]{seshadri_reductivity} (see also \cite[Appendix G to Ch. 1]{git3}).
\end{remark}

\begin{remark}  \label{remark-dual}
Property $(5'')$ translates into the geometric condition that for every linear action of $G$ on $X = \AA^n_R = \Spec(\Sym V^{\dual})$ over $R$, where $V$ is a finite type $G$-$R$-module which is free as an $R$-module, and for every $G$-invariant $x \in X(R)$ which is given by an inclusion $R \hookarr V$ of $G$-$R$-modules, there exists $f \in (\Sym^n V^{\dual})^G$ such that $f(x) =1$.
\end{remark}

\begin{lem}  \label{lemma-group-equiv-field}
Let $G \to \Spec(k)$ be a finite type group scheme where $k$ is a field.  The following are equivalent:
\begin{enumerate1}
\item The group scheme $G$ is geometrically reductive.
\item For every surjection $V \to W$ of $G$-representations and $w \in W^G$, there exist $N > 0$ and $v \in (\Sym^N V)^G$ with $v \mapsto w^N$.
\item For every linear action of $G$ on $\AA^n$, closed invariant subscheme $Z$ and $G$-invariant function $f$ on $Z$ , there exists $N>0$ such that $f^N$ extends to a $G$-invariant function on $X$. 
\item For every linear action of $G$ on $\AA^n$ and closed invariant $k$-valued point $q \in \AA^n$, there exists a homogenous invariant non-constant polynomial $f$ such that $f(q) \ne 0$. 
\item For every $G$-representation $V$ and codimension one invariant subspace $W \subseteq V$, there exist $r > 0$ and a dimension one invariant subspace $Q \subseteq \Sym^{p^r}V$ such that $\Sym^{p^r} V \cong (W \cdot \Sym^{p^r-1} V) \oplus Q$.
\end{enumerate1}
If $G \to \Spec(k)$ is affine and smooth, then (1) $-$ (5) are equivalent to:
\begin{enumerate1}
\item[(6)] The group scheme $G$ is reductive.
\item[(7)] For every action of $G$ on a finite type $k$-scheme $\Spec(A)$, the ring of invariants $R^G$ is finitely generated.
\end{enumerate1}
If $\text{char}(k) = 0$, then (1) $-$ (5) are equivalent to:
\begin{enumerate1}
\item[(8)] The group scheme $G$ is linearly reductive.
\end{enumerate1}
If $\text{char}(k) = p$, then (1) - (5) are equivalent to:
\begin{enumerate1}
\item[(9)] For every linear action of $G$ on $\AA^n$ and closed invariant $k$-valued point $q \in \AA^n$, there exists a homogenous invariant polynomial $f$ of degree $p^r$ for some $r > 0$ such that $f(q) \ne 0$. 
\end{enumerate1}
\end{lem}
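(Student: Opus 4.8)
The plan is to derive every equivalence from Lemma~\ref{lemma-group-equiv-affine} taken with $R=k$, from the theory already in place (Theorems~\ref{theorem-git} and~\ref{theorem-finiteness}, Lemma~\ref{lemma-adequately-affine-char0}) and from the classical theorems of Haboush and of Nagata--Popov; the only genuinely new input is the argument for condition~$(5)$, which I expect to be the main obstacle. For the equivalences among $(1)$--$(5)$: by Definition~\ref{definition-geometrically-reductive}, $(1)$ is condition~$(2)$ of Lemma~\ref{lemma-group-equiv-affine} over $k$, so I would match the others to conditions of that lemma. Condition~$(2)$ corresponds to conditions~$(4)$ and~$(5)$ there: if $(\Sym^{*}V)^{G}\to(\Sym^{*}W)^{G}$ is adequate then a lift of $w^{N}$ with $w\in W^{G}$ can be taken homogeneous of degree $N$ by gradedness, while conversely $(2)$ applied with $W=k$ the trivial representation and $w=1$ is precisely condition~$(5)$. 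Condition~$(3)$ is condition~$(3')$ of Lemma~\ref{lemma-group-equiv-affine}, since every finite type $G$-$k$-algebra is a quotient of a polynomial $G$-$k$-algebra $\Sym^{*}E_{0}$ on a finite-dimensional subrepresentation (so linear actions on $\AA^{n}$ suffice), and a power of an invariant that lifts along $\Sym^{*}E_{0}\to B$ lifts along $A\to B$. Condition~$(4)$ is the geometric translation of condition~$(6')$ of Lemma~\ref{lemma-group-equiv-affine} via Remark~\ref{remark-seshadri}: a nonzero $G$-equivariant functional $V\to k$ is a nonzero $G$-fixed point $q$ of $\AA^{n}=V^{\vee}$, and an invariant homogeneous polynomial of degree $n$ not vanishing at $q$ is an element of $(\Sym^{n}V)^{G}$ mapping to a nonzero scalar; over $R=k$ one need only test $(6')$ against the field $k$ itself, which is all the proof of that lemma uses there, and one takes $q\neq 0$.

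For condition~$(5)$, set $L=V/W$. The subspace $W\cdot\Sym^{p^{r}-1}V$ is exactly the kernel of the natural surjection $\pi\colon\Sym^{p^{r}}V\twoheadrightarrow L^{\otimes p^{r}}$, so a one-dimensional $G$-stable complement $Q$ is the same as a $G$-equivariant section of $\pi$, which, using $\Hom(L^{\otimes p^{r}},\Sym^{p^{r}}V)=(L^{\vee})^{\otimes p^{r}}\otimes\Sym^{p^{r}}V$ together with the canonical isomorphism $(L^{\vee})^{\otimes p^{r}}\otimes L^{\otimes p^{r}}=k$, is the same as a $G$-invariant element of $(L^{\vee})^{\otimes p^{r}}\otimes\Sym^{p^{r}}V$ mapping to $1$. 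Since $L^{\vee}$ is one-dimensional, $\Sym^{N}(V\otimes L^{\vee})\cong\Sym^{N}V\otimes(L^{\vee})^{\otimes N}$, so such an element is precisely an invariant lift of $1$ under $\Sym^{p^{r}}(V\otimes L^{\vee})\to\Sym^{p^{r}}k$ arising from the surjection $V\otimes L^{\vee}\twoheadrightarrow L\otimes L^{\vee}=k$. Then $(1)\Rightarrow(5)$ goes as follows: by condition~$(2')$ of Lemma~\ref{lemma-group-equiv-affine} the homomorphism $(\Sym^{*}(V\otimes L^{\vee}))^{G}\to(\Sym^{*}k)^{G}=k[t]$ is \emph{universally} adequate, so as $\charr k=p$, Lemma~\ref{lemma-adequate-charp} applied to its image produces $r>0$ with $t^{p^{r}}$ in the image, i.e.\ an invariant element of $\Sym^{p^{r}}(V\otimes L^{\vee})$ lifting $1$; translating back gives $Q$. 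For $(5)\Rightarrow(1)$: given a surjection $M\twoheadrightarrow k$ of representations, restrict to a finite-dimensional subrepresentation $M_{0}$ still surjecting onto $k$, with kernel $W_{0}$ a hyperplane; applying $(5)$ to $(M_{0},W_{0})$ yields a one-dimensional $G$-stable complement $Q\subseteq\Sym^{p^{r}}M_{0}$ with $Q\cong(M_{0}/W_{0})^{\otimes p^{r}}=k$, so $Q$ is spanned by an invariant mapping to a nonzero multiple of $1$ under $\Sym^{p^{r}}M_{0}\to\Sym^{p^{r}}k$, which (after rescaling) is condition~$(5)$ of Lemma~\ref{lemma-group-equiv-affine}, hence $(1)$. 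The subtle points, namely the translation into an ``invariant lifting $1$'' problem and the use of $\charr k=p$ via universal adequacy and Lemma~\ref{lemma-adequate-charp} to force the degree to be a power of $p$, are the crux. Condition~$(9)$ then follows: $(9)\Rightarrow(4)$ is trivial, and for $(1)\Rightarrow(9)$ one runs the $(1)\Rightarrow(4)$ argument through $(5)$ --- for nonzero $G$-fixed $q\in\AA^{n}$ the hyperplane $W=\ker(E\xrightarrow{\ell\mapsto\ell(q)}k)$ in the space $E$ of linear forms is $G$-stable with $E/W$ trivial, and $(5)$ yields an invariant $f\in(\Sym^{p^{r}}E)^{G}$ outside $W\cdot\Sym^{p^{r}-1}E=\ker(\Sym^{p^{r}}E\to\Sym^{p^{r}}(E/W))$, so $f(q)\neq 0$ and $\deg f=p^{r}$.

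Finally, when $G\to\Spec k$ is affine and smooth: $(1)\Rightarrow(7)$ because by Theorems~\ref{theorem-git} and~\ref{theorem-finiteness} the adequate moduli space $[\Spec A/G]\to\Spec A^{G}$ has finite type source, so $A^{G}$ is a finitely generated $k$-algebra; $(7)\Rightarrow(6)$ is the converse to Nagata's finiteness theorem, due to Popov (a non-reductive affine algebraic group over a field admits an action on a finitely generated algebra whose ring of invariants is not finitely generated); and $(6)\Rightarrow(1)$ follows from Haboush's theorem \cite{haboush}: $G_{\bar k}$ is reductive, hence geometrically reductive, and geometric reductivity descends along the faithfully flat morphism $\Spec\bar k\to\Spec k$ by Proposition~\ref{proposition-adequately-affine}(4). (Alternatively $(1)\Rightarrow(6)$ is immediate from Theorem~\ref{theorem-matsushima} together with the non-geometric-reductivity of $\GG_{a}$.) And when $\charr k=0$, $(1)\iff(8)$ is immediate from Lemma~\ref{lemma-adequately-affine-char0}: over $\QQ$, adequate affineness of $BG\to\Spec k$ coincides with cohomological affineness, i.e.\ with linear reductivity.
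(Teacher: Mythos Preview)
Your proposal is correct and follows essentially the same route as the paper: both reduce $(1)$--$(4)$ to Lemma~\ref{lemma-group-equiv-affine}, invoke Haboush for the equivalence with $(6)$, and use Lemma~\ref{lemma-adequately-affine-char0} for $(8)$ and Lemma~\ref{lemma-adequate-charp} for the prime-power refinement in characteristic $p$.

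There are two minor but worthwhile differences. First, the paper dispatches $(5)$ in one line (``Mumford's original formulation\dots easily seen to be equivalent''), whereas you actually write out the translation: twisting by $L^{\vee}=(V/W)^{\vee}$ converts the existence of a $G$-stable complement $Q$ into the lifting problem for $1$ along $\Sym^{*}(V\otimes L^{\vee})\to k[t]$, and then universal adequacy plus Lemma~\ref{lemma-adequate-charp} forces the degree to be $p^{r}$. This is exactly the mechanism the paper has in mind for $(9)$ as well (it just says ``follows from Lemma~\ref{lemma-adequate-charp}''), so your more explicit treatment of $(5)$ subsumes it. Second, for $(1)\Rightarrow(7)$ you use the paper's own Theorems~\ref{theorem-git} and~\ref{theorem-finiteness} rather than the external citation to \cite[Lemma~A.1.2]{git3}, and you close the loop $(7)\Rightarrow(6)$ via Popov's converse; the paper instead cites \cite[Lemma~A.1.2]{git3} for $(6)\Leftrightarrow(7)$ directly. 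Your route is more self-contained given what the paper has already proved.
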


\begin{proof}
The equivalence $(1) \iff (2) \iff (3) \iff (4)$ is provided by Lemma \ref{lemma-group-equiv-affine}.  Statement (5) was Mumford's original formulation of geometric reductivity in \cite[Preface]{git} and is easily seen to be equivalent to the others.  The equivalence $(1) \iff (6)$ is Haboush's theorem \cite{haboush}.  The equivalence $(6) \iff (7)$ is provided by \cite[Lemma A.1.2]{git3}. The equivalence of $(1) \iff (8)$ follows from Lemma \ref{lemma-adequately-affine-char0} and $(1) \iff (9)$ follows from Lemma \ref{lemma-adequate-charp}.
\end{proof}

%%%%%%%%%%%%%%%%%%%%%%%
\subsection{Base change, descent and stabilizers}

\begin{prop} \label{proposition-group-base-change}
Let $S$ be an algebraic space and let $G \to S$ be a flat, finitely presented and separated group algebraic space.  Let $S' \to S$ be a morphism of algebraic spaces.
\begin{enumeratei}
\item If $G \to S$ is geometrically reductive, so is $G \times_S S' \to S'$.
\item If $S' \to S$ is faithfully flat and $G \times_S S' \to S'$ is geometrically reductive, then so is $G \to S$.
\end{enumeratei}
\end{prop}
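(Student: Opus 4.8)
The plan is to deduce both parts formally from Proposition \ref{proposition-adequately-affine}, once the behaviour of classifying stacks under base change has been pinned down.

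First I would record the relevant base-change statement. For any morphism $S' \to S$ of algebraic spaces, set $G' := G \times_S S'$; this is again a flat, finitely presented and separated group algebraic space over $S'$ (these properties are stable under base change), so Definition \ref{definition-geometrically-reductive} applies to $G' \to S'$. Since the formation of quotient stacks commutes with base change, applying $[X/G] \times_S S' \cong [\,X \times_S S' \,/\, G'\,]$ with $X = S$ yields a canonical equivalence $BG \times_S S' \cong BG'$ under which the structure morphism $BG' \to S'$ corresponds to the projection $BG \times_S S' \to S'$. Consequently the square
\[
\xymatrix{
BG' \ar[r] \ar[d] & BG \ar[d] \\
S' \ar[r] & S
}
\]
is $2$-cartesian. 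This identification is the only point requiring any care, and it is entirely formal; everything that follows is a direct appeal to results already established.

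Granting this, part (i) is immediate: if $G \to S$ is geometrically reductive then $BG \to S$ is adequately affine, so Proposition \ref{proposition-adequately-affine}(3) (base change of the morphism $BG \to S$ of algebraic stacks over $S$ along $S' \to S$) shows that $BG \times_S S' \cong BG' \to S'$ is adequately affine, i.e.\ $G' \to S'$ is geometrically reductive. For part (ii), suppose in addition that $S' \to S$ is faithfully flat and that $G' \to S'$ is geometrically reductive, so $BG' \to S'$ is adequately affine. Applying Proposition \ref{proposition-adequately-affine}(4) to the $2$-cartesian square above, with $g = (S' \to S)$ faithfully flat and $f' = (BG' \to S')$ adequately affine, we conclude that $BG \to S$ is adequately affine, i.e.\ $G \to S$ is geometrically reductive.

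The main (and essentially only) obstacle is the compatibility of the formation of $BG$ with base change, together with the tautological $2$-cartesianness of the displayed square; once these are in hand the proposition is a formal corollary of the base-change property (Proposition \ref{proposition-adequately-affine}(3)) and the faithfully-flat-descent property (Proposition \ref{proposition-adequately-affine}(4)) of adequately affine morphisms.
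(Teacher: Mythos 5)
Your proof is correct and is essentially the paper's own argument: the paper likewise observes that $BG' = BG \times_S S'$ and then cites Proposition \ref{proposition-adequately-affine} (parts (3) and (4) being the relevant ones). You have simply spelled out the details that the paper leaves implicit.
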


\begin{proof} Since $BG' = BG \times_S S'$, this follows directly from Proposition \ref{proposition-adequately-affine}. 
\end{proof}

\medskip \noindent
The following definition is justified by fpqc descent in Proposition \ref{proposition-adequately-affine}(4). 
\begin{defn}  If $\cX$ is an algebraic stack, a point $\xi \in |\cX|$ has a \emph{geometrically reductive stabilizer} if for some (equivalently any) representative $x: \Spec(k) \to \cX$, the stabilizer group scheme $G_x \to \Spec(k)$ is geometrically reductive.
\end{defn}

\begin{remark} If $\cX$ is locally Noetherian, then there exists a residual gerbe $\cG_{\xi} \subseteq \cX$ and $\xi \in |\cX|$ has geometrically reductive stabilizer if and only if $\cG_{\xi}$ is adequately affine.
\end{remark}

\medskip \noindent
The following is an easy but useful fact ensuring that closed points have geometrically reductive stabilizers.

\begin{prop} \label{closed_orbits_red}
Let $\cX$ be a locally Noetherian algebraic stack and let $\phi: \cX \to Y$ be an adequate moduli space.  Any closed point $\xi \in |\cX|$ has a geometrically reductive stabilizer.  For any $y \in Y$, the unique closed point $\xi \in |\cX_y|$ has a geometrically reductive stabilizer .
\end{prop}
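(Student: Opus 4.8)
The plan is to derive both parts from a single observation --- that a closed substack of an adequately affine stack over an affine base is again adequately affine --- combined with the remark immediately preceding the statement, which says that, since $\cX$ is locally noetherian, a point $\xi \in |\cX|$ has geometrically reductive stabilizer precisely when its residual gerbe $\cG_\xi \subseteq \cX$ is adequately affine.

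For the first statement I would begin by noting that the conclusion is Zariski-local on $Y$, while the hypothesis that $\phi$ is an adequate moduli space is preserved under restriction to an open of $Y$ (flat base change, Proposition \ref{proposition-base-change}(1)). Choosing an affine open $\Spec A \subseteq Y$ containing $\phi(\xi)$ and replacing $\cX$ by $\phi^{-1}(\Spec A)$, we may assume $Y = \Spec A$ is affine, so that $\cX$ is adequately affine. Since $\cX$ is locally noetherian the residual gerbe $\cG_\xi \hookrightarrow \cX$ exists, and because $\xi$ is a closed point of $|\cX|$ it is a \emph{closed} substack. A closed immersion is affine, hence adequately affine by Proposition \ref{proposition-adequately-affine}(2), so composing with $\cX \to \Spec \ZZ$ gives, via Proposition \ref{proposition-adequately-affine}(1), that $\cG_\xi$ is adequately affine; the remark above then yields that $\xi$ has geometrically reductive stabilizer. (Alternatively one can avoid localizing $Y$ by applying Lemma \ref{lemma-adequate-affine-algebra} to the closed substack $\cG_\xi$ and observing that its image in $Y$ is a single closed point, so that $\sSpec_Y \phi_* \oh_{\cG_\xi}$ is affine.)

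For the second statement, write $\cX_y = \cX \times_Y \Spec k(y)$. By Proposition \ref{proposition-base-change}(3) the morphism $\cX_y \to \Spec k(y)$ factors as an adequate moduli space $\phi_y : \cX_y \to \tilde Y$ followed by an adequate homeomorphism $\tilde Y \to \Spec k(y)$; in particular $\phi_y$ is an adequate moduli space. As localizations and quotients of noetherian rings are noetherian, $\cX_y$ is again locally noetherian, so the first statement applies to $\phi_y$ and to the (unique) closed point $\xi$ of $|\cX_y|$, giving that $\xi$ has geometrically reductive stabilizer. Finally, $\cX_y \to \cX$ is a monomorphism, being the base change of the monomorphism $\Spec k(y) \to Y$, so the stabilizer group scheme at $\xi$ is the same whether computed in $\cX_y$ or in $\cX$; this yields the claim for $\cX$ as well.

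The main point requiring care is topological bookkeeping rather than any hard input: one must know that the residual gerbe at a closed point is genuinely a closed (not merely locally closed) substack --- this is exactly where the hypothesis that $\xi$ is closed is used --- and that passage to the fiber $\cX_y$ preserves local noetherianity so that the residual gerbe there still exists and the first part applies. Everything else is an assembly of Proposition \ref{proposition-adequately-affine}, Proposition \ref{proposition-base-change}, and the residual-gerbe characterization in the preceding remark.
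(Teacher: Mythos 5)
Your proof is correct and follows essentially the same route as the paper: the paper's (very terse) argument is exactly your parenthetical alternative, namely that $\xi$ closed makes $\cG_\xi \hookrightarrow \cX$ a closed immersion, so Lemma \ref{lemma-adequate-affine-algebra} gives that $\cG_\xi \to \Spec k(\xi)$ is an adequate moduli space, whence $\cG_\xi$ is adequately affine and the preceding remark applies. Your explicit treatment of the fiber $\cX_y$ via Proposition \ref{proposition-base-change}(3) and the monomorphism $\cX_y \to \cX$ just spells out what the paper leaves implicit for the second assertion.
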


\begin{proof} The point $\xi$ induces a closed immersion $\cG_{\xi} \hookarr \cX$.  By Lemma \ref{lemma-adequate-affine-algebra}, the morphism from $\cG_{\xi} \to \Spec(k(\xi))$ is an adequate moduli space so that $\xi$ has geometrically reductive stabilizer.
\end{proof}

\subsection{Matsushima's theorem} \label{section-matsushima}
\medskip \noindent 
In \cite[Theorem 3]{matsushima}, Matsushima proved using analytic methods that if $G$ is a semisimple complex Lie group and $H \subseteq G$ is a closed, connected complex subgroup, then $H$ is reductive if and only if $G/H$ is affine.  
Bialynicki-Birula gave an algebro-geometric proof in \cite{bb_homogeneous} using a result from \cite{bbhm} that if $G$ is a reductive group over a field of characteristic $0$ and $H \subseteq G$ is a closed subgroup, then $H$ is reductive if and only if $G/H$ is affine.  It was known that the transcendental proof given in \cite[Theorem 3.5]{borel-harish-chandra} works in arbitrary characteristic but it relied on sophisticated \'etale cohomology methods.
Richardson gave a direct proof in \cite{richardson} that this holds for arbitrary algebraically closed fields $k$ using Haboush's theorem equating reductive groups and geometrically reductive groups.   
Haboush establishes in \cite[Proposition 3.2]{haboush_stab} that if $G$ is a geometrically reductive linear algebraic group over any field $k$ and $H \subseteq G$ is a closed subgroup, then $H$ is geometrically reductive if and only if $G/H$ is affine; from Haboush's theorem, he therefore deduces the analogous statement for reductive groups. There is also a proof by Ferrer Santos in \cite{ferrer-santos_reductive}, based on the techniques in \cite{cline-parshall-scott}, of the statement for geometrically reductive groups over an algebraically closed field.

\medskip \noindent We now give a generalization of Matsushima's theorem.  See also Corollary \ref{corollary-matsushima}.

\begin{thm} \label{theorem-matsushima} 
Suppose that $S$ is an algebraic space. Let $G \to S$ be a geometrically reductive group algebraic space and let $H \subseteq G$ be a flat, finitely presented and separated subgroup algebraic space.  If $G/H \to S$ is affine, then $H \to S$ is geometrically reductive.  If $G \to S$ is affine, the converse is true.
\end{thm}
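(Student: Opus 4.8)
The plan is to reduce both implications to the behavior of the quotient morphism and the adequately-affine formalism built up in Sections \ref{section-adequate} and on. For the first implication, suppose $G \to S$ is geometrically reductive and $G/H \to S$ is affine; we want $BH \to S$ adequately affine. The key observation is that there is a natural representable morphism $BH \to BG$ whose fiber over $S \to BG$ is $G/H$; more precisely, one has a $2$-cartesian square with $BH \to BG$ and $S \to BG$, whose pullback is $G/H \to S$. Since $G/H \to S$ is affine and affineness of a representable morphism of algebraic stacks can be checked after the faithfully flat base change $S \to BG$ (descent for affine morphisms), the morphism $q: BH \to BG$ is affine, hence adequately affine by Proposition \ref{proposition-adequately-affine}(2). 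Composing with the adequately affine morphism $BG \to S$ (which is exactly geometric reductivity of $G \to S$) and using that adequately affine morphisms are stable under composition (Proposition \ref{proposition-adequately-affine}(1)), we conclude $BH \to S$ is adequately affine, i.e. $H \to S$ is geometrically reductive.

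For the converse, assume in addition $G \to S$ is affine and $H \to S$ is geometrically reductive; we must show $G/H \to S$ is affine. Here the strategy is to realize $G/H$ as an adequate moduli space quotient and then invoke Serre's criterion, Theorem \ref{serres-criterion}. Consider the action of $H$ on $G$ (say by right translation); since $G \to S$ is affine, Theorem \ref{theorem-git} applies and $[G/H] \to \sSpec_S(\pi_* \oh_G)^H$ is an adequate moduli space, where $\pi: G \to S$. But the action is free, so the quotient stack $[G/H]$ is the algebraic space $G/H$ itself, and an adequate moduli space of an algebraic space is an isomorphism (it is a universal homeomorphism inducing an isomorphism on functions, and by uniqueness — Theorem \ref{theorem-uniqueness} — or directly by Theorem \ref{theorem-adequate} it must be an isomorphism since $G/H$ is already an algebraic space mapping to it). Hence $G/H \cong \sSpec_S(\pi_* \oh_G)^H$, which is by construction affine over $S$. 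Alternatively and more robustly, one checks directly that $G/H \to S$ is adequately affine: the projection $G \to G/H$ is an $H$-torsor, so $B H_{G/H} := [G/H \backslash\!\backslash$?$]$ — better to phrase it as: $G \to G/H$ is affine (it is a torsor under the affine $S$-group $H$), and one has $G \cong G/H \times_{BH} S$ over $BH$ in the appropriate sense, so $G/H \to S$ factors through $BH \to S$ with the first map affine; combined with $BH \to S$ adequately affine this gives $G/H \to S$ adequately affine, whence affine by Theorem \ref{serres-criterion}.

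I expect the main obstacle to be making the descent argument in the first implication fully rigorous: one needs that "affine" and "adequately affine" for the representable morphism $q: BH \to BG$ can be detected after base change along $S \to BG$, which is faithfully flat but not an fppf cover in the naive sense unless one is careful — it is in fact an fppf (even smooth) surjection of algebraic stacks precisely because $G \to S$ is flat and finitely presented, so Proposition \ref{proposition-adequately-affine}(4) (fpqc descent of adequate affineness) and standard fppf descent for affine morphisms both apply. The secondary subtlety is the identification of the fiber of $BH \to BG$ over $S \to BG$ with $G/H \to S$; this is the standard fact that for $H \subseteq G$ the sequence $G/H \to BH \to BG$ is a fiber sequence, which holds for flat finitely presented subgroup algebraic spaces and should be cited or verified from the definition of the classifying stacks. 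Once these descent and fiber-sequence facts are in place, everything else is a formal consequence of the stability properties of adequately affine morphisms (Proposition \ref{proposition-adequately-affine}) and Serre's criterion (Theorem \ref{serres-criterion}); the final "in particular" statement about reductive group schemes then follows immediately by invoking the characterization of geometric reductivity for smooth affine group schemes (Theorem \ref{theorem-reductive}) to translate "geometrically reductive" back to "reductive" on both sides.
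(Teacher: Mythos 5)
Your proof is correct and follows essentially the same route as the paper: both directions hinge on the cartesian square $G/H \cong BH \times_{BG} S$, descent of affineness along the fppf cover $S \to BG$, stability of adequately affine morphisms under composition (Proposition \ref{proposition-adequately-affine}), and the generalized Serre criterion (Theorem \ref{serres-criterion}). Your first route for the converse (via Theorem \ref{theorem-git} and freeness of the $H$-action on $G$) is a harmless variant, and the cleaner form of your ``more robust'' second argument is the paper's: $G/H \to BH$ is affine because it is the base change along $BH \to BG$ of the affine morphism $S \to BG$ (equivalently, its pullback along $S \to BH$ is $G \to S$), which avoids any appeal to $H$ itself being affine.
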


\begin{proof}
Consider the cartesian diagram
$$\xymatrix{
G/H \ar[r] \ar[d]	 \ar @{} [dr] |{\square}		& S \ar[d] \\
BH \ar[r]			& BG
}$$
If $G/H \to S$ is affine, then by descent $BH \to BG$ is affine.  Therefore, the composition $BH \to BG \to S$ is adequately affine, so $H \to S$ is geometrically reductive.  Conversely, if $G \to S$ is affine and $H \to S$ is geometrically reductive, then $G/H \to BH$ is affine and the composition $G/H \to BH \to S$ is adequately affine.  It follows from the generalization of Serre's criterion (Theorem \ref{serres-criterion}) that $G/H \to S$ is affine.
\end{proof}

\begin{cor} Let $S$ be an algebraic space.  Suppose that $G \to S$ is a geometrically reductive group algebraic space acting on an algebraic space $X$ affine over $S$.  Let $x: \Spec(k) \to X$.  If the orbit $o(x) \subseteq X \times_S k$ is affine, then $G_x \to \Spec(k)$ is geometrically reductive.  Conversely, if $G \to S$ is affine and $G_x \to \Spec(k)$ is geometrically reductive, then the orbit $o(x)$ is affine. \epf
\end{cor}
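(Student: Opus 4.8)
The plan is to deduce the corollary directly from Matsushima's theorem (Theorem \ref{theorem-matsushima}) applied over the base $\Spec k$. Set $G_k = G \times_S k$. First I would record two structural facts. (a) By Proposition \ref{proposition-group-base-change}(i), $G_k \to \Spec k$ is again geometrically reductive. (b) The stabilizer $G_x$ is a subgroup algebraic space of $G_k$, obtained as the preimage of the diagonal under the morphism $(g \mapsto (g\cdot x, x)) \colon G_k \to (X\times_S k) \times_k (X\times_S k)$; in particular $G_x \to G_k$ is a monomorphism, and over the Noetherian affine base $\Spec k$ one checks routinely, using the conventions on algebraic spaces, that $G_x \to \Spec k$ is flat (automatic over a field), finitely presented, and separated. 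Thus $G_x \subseteq G_k$ satisfies the hypotheses under which Theorem \ref{theorem-matsushima} applies.

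Next I would identify the orbit with the homogeneous space. The orbit morphism $G_k \to X \times_S k$, $g \mapsto g\cdot x$, is invariant for the free right-translation action of $G_x$ on $G_k$ and its (geometric) fibres are exactly the $G_x$-cosets; since $G_x \to \Spec k$ is flat, finitely presented and separated, the quotient $G_k/G_x$ exists as an algebraic space and the orbit morphism factors through a monomorphism $G_k/G_x \hookarr X \times_S k$ whose image is $o(x)$. Hence $o(x) \cong G_k/G_x$ up to nilpotents, and because a quasi-compact, quasi-separated algebraic space is affine if and only if its reduction is, affineness of $o(x)$ is equivalent to affineness of $G_k/G_x$.

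With these identifications, the two assertions become precisely the two halves of Theorem \ref{theorem-matsushima} for the geometrically reductive group $G_k$ and its subgroup $H = G_x$ over $S = \Spec k$: if $o(x) = G_k/G_x \to \Spec k$ is affine, then $G_x \to \Spec k$ is geometrically reductive; and if moreover $G \to S$ is affine, then $G_k \to \Spec k$ is affine, so the converse half of Theorem \ref{theorem-matsushima} gives that $G_x \to \Spec k$ geometrically reductive implies $G_k/G_x = o(x)$ is affine over $\Spec k$. The only real work — and the main (mild) obstacle — is the bookkeeping in (b) and in the second paragraph: checking that $G_x$ is genuinely flat, finitely presented and separated over $\Spec k$, and pinning down the identification of the set-theoretic orbit $o(x)$ with the fppf quotient $G_k/G_x$ (using that affineness ignores nilpotents). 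Neither is deep; both rest on the standard theory of quotients by flat, finitely presented, separated free actions.
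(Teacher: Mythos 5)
Your proposal is correct and is exactly the argument the paper intends: the corollary is stated without proof immediately after Theorem \ref{theorem-matsushima}, as a direct consequence of that theorem applied over $\Spec k$ to the closed subgroup $G_x \subseteq G_k$ together with the standard identification of $o(x)$ with $G_k/G_x$ (up to nilpotents, which do not affect affineness). The routine verifications you flag --- that $G_x$ is flat, finitely presented and separated over $k$, and that $G_k/G_x \to X\times_S k$ is a monomorphism onto the orbit --- are indeed the only bookkeeping required, and they go through as you describe.
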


\subsection{Quotients and extensions}
\begin{prop} \label{proposition-extensions}
Consider an exact sequence of flat, finitely presented and separated group algebraic spaces
$$1 \to G' \to G \to G'' \to 1 $$
\begin{enumerate}  
\item If $G \to S$ is geometrically reductive, then $G'' \to S$ is geometrically reductive.
\item If $G' \to S$ and $G'' \to S$ are geometrically reductive, so is $G \to S$.
\end{enumerate}
\end{prop}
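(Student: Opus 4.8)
The plan is to exploit the morphism $\pi\colon BG\to BG''$ induced by $G\to G''$ together with the factorization $BG\xrightarrow{\pi}BG''\xrightarrow{g}S$ of the structure morphism, and the basic fact that the $2$-cartesian square obtained by pulling $\pi$ back along the (faithfully flat, quasi-compact) atlas $a\colon S\to BG''$ has upper-left corner $BG\times_{BG''}S\cong BG'$, with the projection to $S$ identified with the structure morphism $p\colon BG'\to S$. Recall that, by definition, $G'\to S$ (resp.\ $G\to S$, resp.\ $G''\to S$) is geometrically reductive if and only if $p$ (resp.\ $g\circ\pi$, resp.\ $g$) is adequately affine.

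For (2), assume $G'\to S$ and $G''\to S$ are geometrically reductive, so $p$ and $g$ are adequately affine. Applying Proposition~\ref{proposition-adequately-affine}(4) to the cartesian square above — in which $g=a$ is faithfully flat and the base change of $\pi$ along $a$ is $p$, which is adequately affine — shows that $\pi$ is adequately affine. Since $g$ is adequately affine as well, the composite $g\circ\pi\colon BG\to S$ is adequately affine by Proposition~\ref{proposition-adequately-affine}(1), i.e.\ $G\to S$ is geometrically reductive.

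For (1), assume $G\to S$ is geometrically reductive, so $g\circ\pi$ is adequately affine; we must show $g$ is adequately affine. Given a surjection $\cA\to\cB$ of quasi-coherent $\oh_{BG''}$-algebras, the morphism $\pi^*\cA\to\pi^*\cB$ is again a surjection of quasi-coherent $\oh_{BG}$-algebras, so $(g\pi)_*\pi^*\cA\to(g\pi)_*\pi^*\cB$ is universally adequate. It therefore suffices to prove that $\pi_*\pi^*\cM\cong\cM$ canonically for every quasi-coherent $\oh_{BG''}$-module $\cM$: granting this, $(g\pi)_*\pi^*\cM=g_*\pi_*\pi^*\cM=g_*\cM$, so $g_*\cA\to g_*\cB$ is universally adequate and $g$ is adequately affine, i.e.\ $G''\to S$ is geometrically reductive. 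To see $\pi_*\pi^*\cM\cong\cM$, apply flat base change along $a$: writing $q\colon BG'\to BG$ for the other projection and using $\pi\circ q=a\circ p$, one gets $a^*\pi_*\pi^*\cM\cong p_*q^*\pi^*\cM=p_*p^*(a^*\cM)$; but $p^*(a^*\cM)$ is $a^*\cM$ equipped with the trivial $G'$-equivariant structure, whose $G'$-invariants $p_*p^*(a^*\cM)$ are $a^*\cM$ again. Hence $a^*\pi_*\pi^*\cM\cong a^*\cM$ compatibly with descent data, and the claim follows.

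The content of (2) is essentially just Proposition~\ref{proposition-adequately-affine}; the one point requiring care is the identification $\pi_*\pi^*=\mathrm{id}$ used in (1), which amounts to the statement that for a $G''$-equivariant quasi-coherent sheaf, taking $G$-invariants (via $G\twoheadrightarrow G''$) agrees with taking $G''$-invariants. This is where I expect the main (though modest) obstacle to lie: one must check that the base-change and invariance isomorphisms are natural in $\cM$, so that they respect the map $\cA\to\cB$, and that all the relevant morphisms ($\pi$, $p$, $a$) are quasi-compact and quasi-separated so that Proposition~\ref{proposition-adequately-affine} and flat base change apply.
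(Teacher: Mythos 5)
Your proof is correct and follows essentially the same route as the paper: part (2) is exactly the descent-plus-composition argument via Proposition \ref{proposition-adequately-affine}(1) and (4), and part (1) rests on the identification $\pi_*\pi^*\cong\mathrm{id}$ (the paper's isomorphism $\mathrm{id}\to j_*j^*$), which the paper also verifies by flat base change along the atlas $S\to BG''$ and the triviality of the induced $G'$-action. The naturality point you flag is the same implicit check the paper makes when writing these as isomorphisms of functors.
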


\begin{proof}
Consider the 2-commutative diagram
$$
\xymatrix{
BG' \ar[r]^i \ar[rd]_{\pi_{G'}	}	& BG \ar[r]^j \ar[d]^{\pi_G} 	& BG'' \ar[ld]^{\pi_{G''} }\\
						& S
}
\qquad 
\xymatrix{
BG' \ar[r]^i \ar[d]^{\pi_{G'}}	 \ar @{} [dr] |{\square}		& BG \ar[d]^j \\
S \ar[r]^p					&BG''
}
$$
where the right square is cartesian and the functors $i^*$ and $j^*$ are exact (on quasi-coherent sheaves).  The natural adjunction morphism $\id \to j_* j^*$ is an isomorphism; indeed it suffices to check that $p^* \to p^* j_* j^*$ is an isomorphism and there are canonical isomorphisms $p^* j_* j^* \cong {\pi_{G'}}_*i^* j^* \cong {\pi_{G'}}_* \pi_{G'}^* p^*$ such that the composition $p^* \to {\pi_{G'}}_* \pi_{G'}^* p^*$ corresponds the composition of $p^*$ and the adjunction isomorphism $\id \to {\pi_{G'}}_* \pi_{G'}^*$.

\medskip \noindent
To prove part (1), we have isomorphisms of functors
$${\pi_{G''}}_* \iso {\pi_{G''}}_* j_* j^* \cong {\pi_G}_* j^*$$
and since $j^*$ is exact and $\pi_G$ is adequately affine, $\pi_{G''}$ is adequately affine.

\medskip \noindent
To prove part (2), $j$ is adequately affine since $p$ is faithfully flat and $G' \to S$ is geometrically reductive.   As $\pi_G = \pi_{G''} \circ j$ is the composition of adequately affine morphisms, $G \to S$ is geometrically reductive.
\end{proof}

\subsection{Flat, finite, finitely presented group schemes are geometrically reductive}

\medskip \noindent
It was shown in \cite[Theorem 1]{waterhouse} than any finite group scheme $G$ (possibly non-reduced) over a field  $k$ is geometrically reductive.  We show that this holds over an arbitrary base: 

\begin{thm} \label{theorem-finite-group} 
Let $S$ be an algebraic space and let $G \to S$ be a quasi-finite, separated, flat group algebraic space.  Then $G \to S$ is geometrically reductive if and only if $G \to S$ is finite.
\end{thm}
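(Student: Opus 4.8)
The plan is to treat the two implications separately, the ``if'' direction being the easy one. If $G\to S$ is finite then, being also flat and finitely presented, it is finite locally free, so the canonical atlas $S\to BG$ is finite locally free (it pulls back along itself to $G\to S$). I would then run the characteristic‑polynomial argument of Proposition \ref{proposition-coarse} with this atlas: given a surjection $\cA\to\cB$ of quasi-coherent $\oh_{BG}$-algebras and $b\in\Gamma(BG,\cB)$, lift $b$ to a section over the atlas, form the characteristic polynomial of the corresponding (twisted) multiplication operator, which is a rank‑$N$ endomorphism with $N$ the rank of $G\to S$, and extract a coefficient in $\Gamma(BG,\cA)$ mapping to $b^{N}$. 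This exhibits $BG\to S$ as adequately affine, i.e.\ $G\to S$ as geometrically reductive.

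For the ``only if'' direction, assume $G\to S$ geometrically reductive. Since finiteness is Zariski-local on $S$ and geometric reductivity passes to Zariski opens of $S$ (Proposition \ref{proposition-group-base-change}), I would reduce to $S=\Spec R$ affine. The atlas $a\colon S\to BG$ is representable, quasi-finite, separated and finitely presented, so Zariski's Main Theorem for algebraic stacks (\cite[Theorem 16.5]{lmb}) factors it as $S\xrightarrow{i}\cW\xrightarrow{q}BG$ with $i$ a quasi-compact open immersion and $q$ finite; after replacing $\cW$ by the scheme-theoretic image of $i$ one may assume $i$ schematically dense. Two remarks organize the rest. First, $i$ is a section of the structure morphism $\cW\to S$, because $q\circ i=a$ is a morphism over $S$. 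Second, $q$ is affine and $BG\to S$ is adequately affine, so $\cW\to S$ is adequately affine (Proposition \ref{proposition-adequately-affine}); hence with $Y:=\sSpec_S(\cW\to S)_*\oh_{\cW}$ the map $\psi\colon\cW\to Y$ is an adequate moduli space and $Y\to S$ is affine, so $g:=\psi\circ i\colon S\to Y$ is a section of the separated morphism $Y\to S$, therefore a closed immersion, therefore integral. Also $\mathrm{id}_S\colon S\to S$ is an adequate moduli space.

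The crux is that $i$ maps closed points to closed points. For $x\in|S|$ closed, the fibre $G_x\to\Spec k(x)$ is quasi-finite over a field, hence finite, so $BG_x=B(G_x)$ has a single point; as $\pi_{BG}^{-1}(x)$ is closed in $|BG|$, the point $a(x)=q(i(x))$ is closed in $BG$. Since $q$ is finite, $q^{-1}(a(x))$ is a closed subscheme of $\cW$, finite over $\Spec k(a(x))$, hence finite and discrete, and it contains $\overline{\{i(x)\}}$ because $q$ is closed; an irreducible subspace of a discrete space is a point, so $\overline{\{i(x)\}}=\{i(x)\}$. With this, Lemma \ref{lemma-finite} (applied with $\cX'=S$, $\phi'=\mathrm{id}_S$, $\cX=\cW$, $\phi=\psi$, $f=i$, and $Y'\to Y$ equal to $g$) shows that $i$ is finite; a schematically dense open immersion that is finite is an isomorphism, so $\cW\cong S$, whence $a=q\circ i$ is finite and $G\to S$, the base change of $a$ along $a$, is finite. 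The main obstacle is exactly this last step: arranging the Zariski's-Main-Theorem reduction to interface with the adequate-moduli-space machinery — realizing that one should factor the \emph{atlas} $S\to BG$ rather than $G\to S$, that the resulting open immersion is a section, and that it preserves closed points (which is where quasi-finiteness forces the fibres of $G\to S$ over residue fields to be genuinely finite). Once these are arranged, everything else follows from the results already established.
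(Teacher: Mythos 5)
Your argument is correct, but it takes a visibly different route from the paper, whose entire proof is the single line ``this follows directly from Theorem \ref{theorem-coarse}'': there one translates $G \to S$ quasi-finite and separated into $BG$ having quasi-finite separated diagonal, identifies finiteness of $G \to S$ with finiteness of the inertia $I_{BG} \to BG$ (its pullback along the atlas), and invokes the equivalence of finite inertia with the existence of an adequate moduli space. What you do instead is re-prove the two relevant implications of that equivalence directly for $\cX = BG$, exploiting the special feature that the atlas $a\colon S \to BG$ is itself the quasi-finite separated object of interest. For the ``if'' direction you inline the characteristic-polynomial computation of Proposition \ref{proposition-coarse} using $a$ as the finite locally free cover, which lets you bypass the Keel--Mori theorem and Proposition \ref{proposition-finite-cover} entirely; for the ``only if'' direction you run the Zariski's-Main-Theorem-plus-Lemma \ref{lemma-finite} mechanism on $a$ rather than on the diagonal $\cX \to \cX \times \cX$ as in the proof of Theorem \ref{theorem-coarse} $(3)\Rightarrow(1)$, and your verification that $i$ sends closed points to closed points (via the one-point fibres $|BG_x|$ and discreteness of fibres of the finite $q$) is sound, as is the concluding observation that a finite, schematically dense open immersion is an isomorphism. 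The trade-off: the paper's proof is shorter given its general structure theorem and yields the coarse-moduli statement as a bonus, while yours is self-contained modulo Lemma \ref{lemma-finite} and the norm argument, avoids Keel--Mori, and is arguably the more transparent argument for the specific stack $BG$. Two cosmetic points: the containment $\overline{\{i(x)\}} \subseteq q^{-1}(a(x))$ follows from continuity of $q$ and closedness of $a(x)$ rather than from $q$ being a closed map; and $|B(G_x)|$ is a single point for any group scheme over a field, so the reduction of quasi-finite to finite on fibres is not actually needed at that step.
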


\begin{proof}
This follows directly from Theorem \ref{theorem-coarse}.
\end{proof}

\begin{example}
Let $k$ be a field and let $G \to \AA^1 = \Spec(k[x])$ be the group scheme with fibers isomorphic to $\ZZ_2$ everywhere except over the origin where it is trivial.  
It follows from Theorem \ref{theorem-finite-group} that since $G \to \AA^1$ is quasi-finite but not finite, $G \to \AA^1$ is not adequate.  One can also see this directly.  Suppose $\text{char}(k) \neq 2$.  Consider the action of $G$ on $X = \Spec(k[x,y])$ over $\AA^1$ defined by the involution $\sigma: k[x,y]_x \to k[x,y]_x$ given by $\sigma(y) = - y$.  Then if $\cX = [X/G]$ and $\cZ$ is the closed substack defined by $x = 0$, then 
$$k[x,y^2] \cong \Gamma(\cX, \oh_{\cX}) \to \Gamma(\cX, \oh_{\cZ}) \cong k[y]$$
is not adequate as there is no prime power of $y+1$ which lifts.  (One can show in a similar way that $G \to \AA^1$ is not adequate if $\text{char}(k) = 2$.).  
\end{example}

%%%%
\subsection{Reductive group schemes are geometrically reductive} \label{subsection-reductive}

Following Seshadri \cite{seshadri_reductivity}, we generalize Haboush's theorem \cite{haboush} to show that reductive group schemes are geometrically reductive.  Seshadri's result \cite[Theorem 1]{seshadri_reductivity} states that any reductive group scheme $G \to \Spec(R)$ with $R$ Noetherian satisfies property \ref{lemma-group-equiv-affine}$(6'')$.  We show that Seshadri's method generalizes to establish that a reductive group scheme is geometrically reductive according to Definition \ref{definition-geometrically-reductive}.  We stress that this is only a mild generalization of \cite[Theorem 1]{seshadri_reductivity} as our notion is equivalent to Seshadri's notion for flat, finite type, separated group schemes $G \to S$ that satisfy the resolution property with $S$ Noetherian and affine .

\medskip \noindent
The only improvement in our proof is that systematically developing the theory of geometrically reductive group schemes (for example, properties of base change, descent and extensions) simplifies the reductions to the case where G is a semisimple group scheme over a discrete valuation ring (DVR) with algebraically closed residue field.  However, the heart of the argument is in the representation theory in \cite[Property I and II on pg. 247]{seshadri_reductivity}  (see Lemmas \ref{lemma-propertyI} and \ref{lemma-propertyII}).

\begin{defn} A group scheme $G \to S$ is \emph{reductive} if $G \to S$ is affine and smooth such that the geometric fibers are connected and reductive.
 \end{defn}

\medskip \noindent
Let $G \to \Spec(R)$ be split reductive group scheme (\cite[Exp. XXII, Definition 1.13]{sga3}).  Fix a split maximal torus $T \subseteq G$ and a Borel subgroup scheme $B \supseteq T$.  Let $U \subseteq B$ be the unipotent subgroup scheme.  Denote by $X(T)$ the group of characters $T \to \GG_m$.  Let $\rho \in X(T)$ be the half sum of positive roots.  Then $\rho$ extends to a homomorphism $\tilde \rho: B \to \GG_m$ defined functorially by $\tilde \rho (tu) = \rho(t)$ for $t \in T$ and $u \in U$.  For a positive integer $m$, define
$$ W_{m \rho} = \{f \in \Gamma(G, \oh_G) \, | \, f(gb) = \tilde \rho(b)^m f(g) \text{ for all } b \in B \}$$
If $L$ is the line bundle on $G/B$ associated with $\rho$, then one can identify $W_{m \rho}$ with the $R$-module of sections $\Gamma(G/B, L^m)$.

 \begin{lem} (\cite[Property I on pg. 247]{seshadri_reductivity})
  \label{lemma-propertyI}
  Let $G$ be a split semisimple and simply connected group scheme over a DVR $R$ with algebraically closed residue field $\kappa$.  Fix a maximal torus $T$ and a Borel $B$ containing it.   Then
  \begin{enumerate1}
  \item For $m > 0$,  $(W_{m \rho} \tensor_R W_{m \rho})^G \cong R$.
  \item If $V$ is a finite type free $G$-$R$-module and $v \in V^G$, for $m \gg 0$, there is a homomorphism of $G$-$R$-modules
$$\varphi: V \to W_{m \rho} \tensor_R W_{m \rho}$$
such that the image of $\varphi(v)$ in $W_{m \rho} \tensor_R W_{m \rho} \tensor_R \kappa$ is non-zero. 
  \end{enumerate1}
 \end{lem}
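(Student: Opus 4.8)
The plan is to realize $W_{m\rho}$ as the module of global sections $\Gamma(G/B,\mathcal L^{\otimes m})$, where $\mathcal L=\mathcal L_{\rho}$ is the line bundle on the flag scheme $G/B$ attached to $\rho$ — here simple connectedness is used, so that $\rho\in X(T)$, and since $\rho$ is regular dominant, $\mathcal L$ is ample. Kempf vanishing (available over $\ZZ$ for dominant weights) gives $H^{>0}(G/B,\mathcal L^{\otimes m})=0$, so $W_{m\rho}$ is a free $R$-module whose formation commutes with base change, and its $R$-linear dual is the Weyl module $V(m\rho)$ (using $w_0\rho=-\rho$). Thus $(W_{m\rho}\tensor_R W_{m\rho})^G\cong\Hom_G(V(m\rho),W_{m\rho})$, the space of homomorphisms from the Weyl to the dual Weyl module of highest weight $m\rho$; this is torsion-free and finitely generated over the DVR $R$, is one-dimensional after $\tensor_R\Frac(R)$ by the standard computation, and is nonzero (the canonical map is defined already over $\ZZ$), hence free of rank one — generated by the canonical homomorphism $\omega_m$, whose reduction mod $\fm$ has image the simple module $L(m\rho)$ and is in particular nonzero. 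Geometrically this is the statement that $G$ acts on $G/B\times_R G/B$ with dense orbit $\cong G/T$ through $(eB,w_0B)$ and that $\mathcal L^{\otimes m}\boxtimes\mathcal L^{\otimes m}$ is $G$-equivariantly trivial on it, so that invariant sections restrict injectively (the scheme being integral over the domain $R$) to $\Gamma(G/T,\oh)^G=\Gamma(G,\oh)^{G\times T}=R$.

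\textbf{Part (2).} We may assume $\bar v\ne0$ in $V\tensor_R\kappa$. Since $\varphi(v)$ necessarily lies in $(W_{m\rho}\tensor_R W_{m\rho})^G=R\omega_m$, what is being asked is that for $m\gg0$ the $R$-linear evaluation $\Hom_G(V,W_{m\rho}\tensor_R W_{m\rho})\to R\omega_m$, $\varphi\mapsto\varphi(v)$, be surjective; by Nakayama over the local ring $R$ it suffices to do this after $\tensor_R\kappa$, provided one also keeps the relevant $\Ext^1$'s over $R$ under control. I would take $m=p^r-1$ with $r\gg0$, so that $W_{m\rho}$ is the $r$-th Steinberg module $\mathrm{St}_r=L((p^r-1)\rho)$: over $\kappa$ it is irreducible, self-dual, and — the crucial input — its restriction to the $r$-th Frobenius kernel $G_r$ is injective. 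Using a high Frobenius twist (the restriction to $G_r$ of $W^{(p^r)}$ is trivial), the Lyndon--Hochschild--Serre spectral sequence for $1\to G_r\to G\to G^{(r)}\to1$ collapses and identifies $(\mathrm{St}_r\tensor_\kappa\mathrm{St}_r\tensor_\kappa V_\kappa^{\dual})^{G}$ with an invariant space one Frobenius level down and the evaluation map with an honest pairing; the remaining point — that the invariant inclusion $\kappa\bar v\hookrightarrow V_\kappa$ can be split after mapping into $\mathrm{St}_r\tensor_\kappa\mathrm{St}_r$ — is precisely the representation-theoretic heart of Haboush's theorem. To get the conclusion over $R$ rather than only over $\kappa$, one runs the argument integrally where possible — the Steinberg module $W_{(p^r-1)\rho,R}$ is free over $R$ and self-dual, $\mathrm{St}_r\tensor_R\mathrm{St}_r$ is a tilting module and so has a good filtration over $R$, and $(W_{m\rho}\tensor_R W_{m\rho})^G=R\omega_m$ from part (1) — and finishes with the residue-field computation plus Nakayama over $R$; this integral bookkeeping is Seshadri's refinement of Haboush's argument, which I would follow.

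\textbf{Expected main obstacle.} The difficulty is entirely in the representation theory invoked above: the $G_r$-injectivity of the Steinberg module and the cohomological vanishing it forces, which is what allows an \emph{arbitrary} invariant vector to be detected inside $W_{m\rho}\tensor_R W_{m\rho}$ once $m$ is large — this is Haboush's theorem — together with the care needed to make everything work over the discrete valuation ring $R$ with algebraically closed residue field rather than over an algebraically closed field, which is Seshadri's contribution. The remaining ingredients — freeness of $W_{m\rho}$, the $\Hom$-identification in part (1), the reduction to $\bar v\ne0$, and the Nakayama argument — are formal.
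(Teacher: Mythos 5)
The paper does not actually prove this lemma: it is imported verbatim from Seshadri (the only ``proof'' is the citation to Property I on p.~247 of \emph{seshadri\_reductivity}, together with the remark that the over-$R$ form of part (1) ``follows in the same way'' from Seshadri's Lemma 3). So there is no internal argument to compare yours against line by line; what can be judged is whether your sketch is a faithful reconstruction of the cited proof, and it is. Your part (1) --- Kempf vanishing, $W_{m\rho}^{\dual}\cong V(m\rho)$ via $-w_0\rho=\rho$, the identification of $(W_{m\rho}\tensor_R W_{m\rho})^G$ with $\Hom_G(V(m\rho),W_{m\rho})$, and the ``torsion-free, generically of rank one, nonzero, hence free of rank one over the DVR'' conclusion --- is correct and is in fact more detailed than anything in the paper; the one point you should make explicit when writing it out is that $(W_{m\rho}\tensor_R W_{m\rho})^G$ is a \emph{saturated} submodule of the free module $W_{m\rho}\tensor_R W_{m\rho}$ (being the kernel of the coaction-minus-identity map into a torsion-free module), since part (2) needs the generator $\omega_m$ to be nonzero in $W_{m\rho}\tensor_R W_{m\rho}\tensor_R\kappa$ and not merely in $(W_{m\rho}\tensor_R W_{m\rho})^G\tensor_R\kappa$; your observation that the canonical map is an isomorphism on highest-weight lines also disposes of this. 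Your part (2) correctly identifies the two genuinely hard inputs --- the $G_r$-injectivity of the Steinberg module $W_{(p^r-1)\rho}$ driving Haboush's argument over $\kappa$, and the surjectivity of the base-change map on $\Hom_G$ needed to lift the conclusion from $\kappa$ to $R$ by Nakayama --- but, exactly like the paper, it defers both to Haboush and Seshadri rather than proving them. Two minor caveats: your argument produces the conclusion only for $m$ of the form $p^r-1$ with $r\gg 0$ rather than for all large $m$, which is all the paper ever uses (Theorem \ref{theorem-haboush} combines part (2) with Lemma \ref{lemma-propertyII}(2), which is itself only available for $m=p^{\nu}-1$); and the reduction ``we may assume $\bar v\neq 0$ in $V\tensor_R\kappa$'' is automatic from the hypothesis, as stated in Remark \ref{remark-dual}, that $R\xrightarrow{v}V$ is an inclusion of $G$-$R$-modules onto a direct summand as $R$-modules is not assumed, so one should say a word about why $v$ may be taken primitive (replace $v$ by $\pi^{-k}v$ for the maximal $k$ with $\pi^{-k}v\in V$, using again that $V^G$ is saturated in $V$).
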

 
 \begin{remark}  Statement (1) above differs from \cite[Property I(a)]{seshadri_reductivity} which states that 
 $$((W_{m \rho} \tensor_R k) \tensor_k (W_{m \rho} \tensor_R k))^{G \times_R k} \cong k$$
  but follows in the same way from \cite[Lemma 3]{seshadri_reductivity}.
 \end{remark}
 
 \begin{lem} (\cite[Property II on pg. 247]{seshadri_reductivity})
  \label{lemma-propertyII}
  Let $G$ be a split semisimple and simply connected group scheme over a DVR $R$ with algebraically closed residue field $\kappa$.  Fix a maximal torus $T$ and a Borel $B$ containing it.   Then
 \begin{enumerate1} 
\item If $\text{char}(\kappa) = 0$, then for all $m > 0$  there is an isomorphism $W_{m \rho}^{\dual} \iso W_{m \rho}$.
\item If $\text{char}(\kappa) = p$, then for $m = p^{\nu} - 1$ with $\nu$ a positive integer there is an isomorphism $W_{m \rho}^{\dual} \iso W_{m \rho}$.
\end{enumerate1}
\end{lem}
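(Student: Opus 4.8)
The plan is to construct, over $R$, a perfect $G$-invariant pairing on $W_{m\rho}$; equivalently, a $G$-$R$-module isomorphism $\psi\colon W_{m\rho}^{\dual}\iso W_{m\rho}$. I would obtain $\psi$ from the canonical generator of the rank-one module of invariants $(W_{m\rho}\tensor_R W_{m\rho})^G$ furnished by Lemma~\ref{lemma-propertyI}(1), and then verify that $\psi$ remains an isomorphism after reduction modulo the maximal ideal $\fm$ of $R$.

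Write $W=W_{m\rho}$. Using the identification $W\cong\Gamma(G/B,L^m)$ and Kempf's vanishing theorem for the flag scheme $\pi\colon G/B\to\Spec R$ (valid in all characteristics, $m\rho$ being dominant since $G$ is simply connected), one has $R^i\pi_*L^m=0$ for $i>0$, and a standard base-change argument shows that $W$ is a finite free $R$-module whose formation commutes with arbitrary base change. In particular $W\tensor_R\kappa\cong\Gamma((G/B)_\kappa,L^m)$ is the induced (dual Weyl) module for $G_\kappa$, and $W\tensor_R W$ is likewise finite free over $R$.

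By Lemma~\ref{lemma-propertyI}(1), $(W\tensor_R W)^G$ is free of rank one over $R$; fix a generator $t$. The key point is that $t\notin\fm(W\tensor_R W)$. Indeed, if $t=\varpi s$ with $\varpi$ a uniformizer and $s\in W\tensor_R W$, then applying the ($R$-linear) coaction $\sigma$ and using $\sigma(t)=1\tensor t$ gives $\varpi\,\sigma(s)=\varpi(1\tensor s)$ in $\Gamma(G)\tensor_R(W\tensor_R W)$; as $G\to\Spec R$ is flat and $W\tensor_R W$ is $R$-free, this module is $R$-flat, hence torsion free over the DVR $R$, so $\sigma(s)=1\tensor s$ and $s\in(W\tensor_R W)^G=Rt$. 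Writing $s=ct$ yields $(1-\varpi c)t=0$ in the torsion-free module $W\tensor_R W$, and since $1-\varpi c\in R^\times$ this forces $t=0$, contradicting that $t$ generates $R$. Hence the image $\bar t$ of $t$ in $(W\tensor_R W)\tensor_R\kappa\cong W_\kappa\tensor_\kappa W_\kappa$ is nonzero. Since $W$ is finite free over $R$, there is a $G$-equivariant identification $W\tensor_R W\cong\Hom_R(W^{\dual},W)$, so $t$ corresponds to a $G$-$R$-module map $\psi\colon W^{\dual}\to W$ with $\psi\tensor_R\kappa\neq 0$.

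It remains to see that $\psi\tensor_R\kappa$ is an isomorphism; then $\det\psi$ reduces to a unit in $\kappa$, hence is a unit in the local ring $R$, and $\psi$ itself is an isomorphism of $G$-$R$-modules. In part~(1), $\operatorname{char}\kappa=0$, so $W_\kappa=\Gamma((G/B)_\kappa,L^m)$ is the irreducible representation of highest weight $m\rho$; in part~(2) (where $\operatorname{char}\kappa=p>0$) with $m=p^\nu-1$, the module $W_\kappa$ is the $\nu$-th Steinberg module $\mathrm{St}_\nu$, which is irreducible by Steinberg's tensor product theorem. In either case $W_\kappa$ and $W_\kappa^{\dual}$ are simple $G_\kappa$-modules (recall $\kappa$ is algebraically closed), so the nonzero homomorphism $\psi\tensor_R\kappa$ is an isomorphism by Schur's lemma, which finishes the argument. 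The substantive inputs are Kempf vanishing over the DVR together with its base-change compatibility (which makes $W$ free over $R$ and controls its reduction), and — in positive characteristic — the irreducibility of $\Gamma((G/B)_\kappa,L^{p^\nu-1})$, i.e.\ its identification with the Steinberg module; this is the representation-theoretic heart of the proof (paralleling Lemma~\ref{lemma-propertyI}) and is exactly what forces the restriction $m=p^\nu-1$ in part~(2).
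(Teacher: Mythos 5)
Your argument is correct, and it fills in a proof that the paper itself does not give: Lemma \ref{lemma-propertyII} is stated only as a citation to Seshadri's Property II, so there is no in-paper proof to compare against. What you do --- extract the generator $t$ of $(W_{m\rho}\tensor_R W_{m\rho})^G\cong R$ from Lemma \ref{lemma-propertyI}(1), show $t$ is primitive (not in $\fm\cdot(W_{m\rho}\tensor_R W_{m\rho})$) via the torsion-freeness of $\Gamma(G)\tensor_R(W_{m\rho}\tensor_R W_{m\rho})$ over the DVR, reinterpret $t$ as a $G$-$R$-map $\psi\colon W_{m\rho}^{\dual}\to W_{m\rho}$ that is nonzero mod $\fm$, and conclude by Schur's lemma plus the unit-determinant trick --- is essentially Seshadri's route, with the representation-theoretic heart correctly isolated: Kempf vanishing with base change (so $W_{m\rho}$ is free and its reduction is the induced module) and, in characteristic $p$, the irreducibility of $\Gamma((G/B)_\kappa,L^{p^\nu-1})$ as the Steinberg module, which is exactly what forces $m=p^\nu-1$. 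The primitivity step is the one place where a blind attempt could easily have gone wrong (knowing $(W\tensor_R W)^G\cong R$ does not by itself say the generator survives reduction), and your torsion-freeness argument handles it cleanly. Two small remarks: you have tacitly (and correctly) repaired the typo in the statement --- part (2) should read $\charr(\kappa)=p$, not $0$; and the identification $\Hom_R(M,N)^G=\Hom_{G\text{-}R}(M,N)$ that you use to pass from the invariant tensor to an equivariant map deserves the one-line justification that $W_{m\rho}$ is finite free, so the internal Hom carries its natural $G$-structure.
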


\begin{thm} \label{theorem-haboush}
Let $G \to S$ be a smooth affine group scheme with connected fibers.  Then $G \to S$ is geometrically reductive if and only if $G \to S$ is reductive.
\end{thm}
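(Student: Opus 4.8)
The plan is to establish both implications, the weight of the argument resting on ``reductive $\Longrightarrow$ geometrically reductive''. Throughout one uses that geometric reductivity is fppf\dash local on $S$ (Proposition \ref{proposition-group-base-change}) and that adequately affine morphisms satisfy the stability properties of Proposition \ref{proposition-adequately-affine}, so that a standard limit argument reduces everything to the case $S = \Spec R$ with $R$ a finitely generated $\ZZ$\dash algebra, in particular Noetherian. For the ``only if'' direction, suppose $G \to S$ is smooth with connected fibers and geometrically reductive; base\dash changing along a geometric point $\Spec k \to S$ and using Proposition \ref{proposition-group-base-change}(i) reduces us to showing that a smooth connected geometrically reductive group $G_k$ over an algebraically closed field is affine and reductive. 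Chevalley's structure theorem gives $1 \to L \to G_k \to A \to 1$ with $L$ linear and $A$ an abelian variety; by Proposition \ref{proposition-extensions}(1) the quotient $A$ is geometrically reductive, and an abelian variety of positive dimension is not (as in Remark \ref{remark-quasi-affine}, $BA \to \Spec k$ fails to be adequately affine), so $G_k = L$ is affine. Lemma \ref{lemma-group-equiv-field} (Haboush's theorem) then gives that $G_k$ is reductive, and affineness of $G \to S$ itself follows from affineness of the fibers together with standard structure theory of flat separated group schemes (\cite{sga3}); thus $G \to S$ is a reductive group scheme.

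For the ``if'' direction, let $G \to S = \Spec R$ be reductive. I would first dévissage: $G/G^{\mathrm{der}}$ is a torus, which is geometrically reductive (split tori are linearly reductive, hence geometrically reductive, and one descends the general case by Proposition \ref{proposition-group-base-change}(ii)); the simply connected central cover $\widetilde{G^{\mathrm{der}}} \to G^{\mathrm{der}}$ has finite kernel, which is geometrically reductive by Theorem \ref{theorem-finite-group}; so by Proposition \ref{proposition-extensions} it is enough to treat $G$ semisimple and simply connected, and---after an fppf base change, permissible by Proposition \ref{proposition-group-base-change}(ii)---split, hence (since a split simply connected semisimple group scheme is a direct product of its simple factors) even simple. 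Since a split reductive group scheme over an affine base has the resolution property (Remark \ref{remark-resolution}(3)), the criterion of Lemma \ref{lemma-group-equiv-affine}$(7'')$ applies: one must show that for every finite type $G$\dash $R$\dash module $V$ free over $R$ and every $v \in V^G$ with $R \xrightarrow{v} V$ injective, there is a non-zero homogeneous invariant $f \in (\Sym^n V^{\dual})^G$ with $f(v) = 1$. By the spreading\dash out argument in the proof of Lemma \ref{lemma-group-equiv-affine} (the equivalence $(5') \iff (6')$) this is local on $\Spec R$, and, following Seshadri \cite{seshadri_reductivity}, one reduces further to $R$ a discrete valuation ring with algebraically closed residue field $\kappa$; here one also arranges, dividing $v$ by a uniformizer as often as possible, that $v$ is primitive in $V$.

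The remaining content is the representation theory of Lemmas \ref{lemma-propertyI} and \ref{lemma-propertyII}. With $W = W_{m\rho}$, Lemma \ref{lemma-propertyI}(2) furnishes, for suitable $m$, a morphism of $G$\dash $R$\dash modules $\varphi \colon V \to W \tensor_R W$ with $\varphi(v)$ nonzero modulo $\fm$; since $(W \tensor_R W)^G \cong R$ is free of rank one (Lemma \ref{lemma-propertyI}(1)), $\varphi(v)$ is a unit multiple of the canonical invariant $\theta$. Lemma \ref{lemma-propertyII} provides an isomorphism $W^{\dual} \iso W$ (using part (1) when $\ch \kappa = 0$ and part (2), for $m = p^{\nu}-1$, when $\ch \kappa = p$), equivalently a perfect $G$\dash invariant pairing on $W$ and hence on $W \tensor_R W$; by construction this pairing sends $(\theta,\theta)$ to a unit. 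Contracting two copies of $\varphi$ through this pairing (equivalently, composing $\varphi$ with the induced $G$\dash invariant functional on $W\tensor_R W$) produces the required invariant $f$ on $V$ with $f(v)$ a unit, and after rescaling this verifies Lemma \ref{lemma-group-equiv-affine}$(7'')$ and finishes the proof.

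The main obstacle is precisely this last step together with the reduction to a discrete valuation ring: everything before is bookkeeping with base change, extensions and the resolution property, whereas Lemmas \ref{lemma-propertyI} and \ref{lemma-propertyII} carry the genuine weight (Seshadri's sharpening of Haboush's argument), and care is needed to see that the canonical invariant of $(W \tensor_R W)^G$ pairs to a unit so that $f(v)$ is actually a unit. A secondary subtlety is the affineness assertion in the ``only if'' direction, which is not formal.
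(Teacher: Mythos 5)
Your reductions (to split semisimple simply connected over a DVR with algebraically closed residue field, via base change, descent, extensions, the resolution property, and Lemma \ref{lemma-group-equiv-affine}) match the paper's in substance, and your treatment of the ``only if'' direction via Chevalley's structure theorem is a legitimate variant of the paper's argument with the unipotent radical and Theorem \ref{theorem-matsushima}. But the final step of the ``if'' direction has a genuine gap, and it is exactly at the point you flag as delicate. You propose to produce $f$ by contracting $\varphi(v) \in W_{m\rho}\tensor_R W_{m\rho}$ against the $G$-invariant pairing coming from $W_{m\rho}^{\dual}\iso W_{m\rho}$, asserting that ``by construction this pairing sends $(\theta,\theta)$ to a unit.'' That assertion is false in characteristic $p$. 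Under the identification $W_{m\rho}\tensor_R W_{m\rho} \cong W_{m\rho}\tensor_R W_{m\rho}^{\dual} \cong \Hom_R(W_{m\rho},W_{m\rho})$, the canonical invariant $\theta$ corresponds to $\id_{W_{m\rho}}$ and the invariant functional you are using corresponds to the trace; so your $f(v)$ is (a unit times) $\operatorname{tr}(\id) = \operatorname{rank} W_{m\rho} = (m+1)^{|\Phi^+|}$ by the Weyl dimension formula. For $m = p^{\nu}-1$ (the only case Lemma \ref{lemma-propertyII} supplies in characteristic $p$) this equals $p^{\nu|\Phi^+|}$, which is $0$ in $\kappa$. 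The quadratic variant ``contract two copies of $\varphi$'' computes the same quantity. So your construction yields $f(v)\equiv 0 \pmod{\fm}$, not a unit.

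The fix --- and this is the actual content of Seshadri's and Haboush's argument, which the paper follows --- is to use the \emph{determinant} rather than the trace: having identified $W_{m\rho}\tensor_R W_{m\rho} \cong \Hom_R(W_{m\rho},W_{m\rho})$ and shown via Lemma \ref{lemma-propertyI}(1) that $\varphi(v)$ is a unit multiple of $\id_{W_{m\rho}}$ (which one rescales to $\id$), one takes $f = \det\circ\varphi$. The determinant is a nonzero homogeneous $G$-invariant polynomial of degree $\operatorname{rank} W_{m\rho}$ on $\Hom_R(W_{m\rho},W_{m\rho})$, and $\det(\id)=1$ in every characteristic. This is precisely why the argument needs the identification with an endomorphism algebra and a higher-degree invariant, rather than the degree-one or degree-two contraction you propose. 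Everything else in your write-up is essentially the paper's proof.
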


 \begin{proof}
First, suppose that $G \to S$ is geometrically reductive.  By Proposition \ref{proposition-group-base-change}, for every $s: \Spec(k) \to \Spec(R)$, the base change $G_s \to \Spec(k)$ is a geometrically reductive, smooth and connected group scheme.  Let $R_u \subseteq G_s$ be the unipotent radical.  Since $G_s / R_u$ is an affine group scheme, 
Theorem \ref{theorem-matsushima}(1) shows that $R_u$ is geometrically reductive.  It follows that $R_u$ is trivial and that $G_s$ is reductive. 

\medskip \noindent
Now suppose that $G \to S$ is reductive. By \cite[Exp. XXII, Corollary 2.3]{sga3}, there exists an \'etale cover $S' \to S$ such that $G' =G \times_S S' \to S'$ is a split reductive group scheme.  By Proposition \ref{proposition-extensions}(1), it suffices to prove that $G' \to S'$ is geometrically reductive.  There exists a split reductive group scheme $H \to \Spec(\ZZ)$ such that $H \times_{\Spec(\ZZ)} S' \cong G'$.  By Theorem \ref{theorem-matsushima}(1), it suffices to prove that $H \to \Spec(\ZZ)$ is geometrically reductive.  Furthermore, by Proposition \ref{proposition-extensions}(1), we may assume that $G$ is a reductive group scheme over a DVR $R$ with algebraically closed residue field $\kappa$.
 
 \medskip \noindent
 The radical $R(G)$ of $G$ is a torus and thus geometrically reductive.  By Proposition \ref{proposition-extensions}(2), it suffices to show that $G/R(G)$ is geometrically reductive.  If $\tilde G \to G/R(G)$ is the simply connected covering of $G/R(G)$, then $\tilde G \to \ZZ$ is a split semisimple and simply connected group scheme.
 Furthermore, by Proposition \ref{proposition-extensions}(1), it suffices to show that $\tilde G$ is geometrically reductive.  Thus, we may assume that $G$ is a split semisimple and simply connected group scheme over a DVR $R$ with algebraically closed residue field $\kappa$.
 
 \medskip \noindent
Since $\dim R = 1$, $G$ satisfies the resolution property (see Remark \ref{remark-resolution}).  Using the equivalence of Lemma \ref{lemma-group-equiv-affine} and Remark \ref{remark-dual}, we need to show that given a finite type $G$-$R$-module $V$ which is free as an $R$-module and $x: R \to V$ an inclusion of $G$-$R$-modules, there exists $f \in (\Sym^n V^{\dual})^G$ such that $f(x) = 1$.
 By Lemma \ref{lemma-propertyI}(2) and Lemma \ref{lemma-propertyII} there exist $m>0$ and a homomorphism of $G$-$R$-modules
 $$\varphi: V \to W_{m \rho} \tensor_R W_{m \rho}^{\dual} \cong \Hom_R(W_{m\rho}, W_{m\rho})$$
 such that the image of $\varphi(v)$ in $W_{m \rho} \tensor_R W_{m \rho} \tensor_R \kappa$ is non-zero. 
Furthermore, by Lemma \ref{lemma-propertyI}(1), $\Hom_R(W_{m\rho}, W_{m\rho})^G$ is isomorphic to $R$ and is generated by the identity map $\id_{W_{m \rho}}: W_{m \rho} \to W_{m \rho}$.  It follows that $\varphi(v) = \lambda \cdot \id_{W_{m \rho}}$ where $\lambda \in R$ is a unit.  By multiplying $\varphi$ by $\lambda^{-1}$, we may assume $\varphi(v) = \id_{W_{m \rho}}$. The determinant function $\det: \Hom_R(W_{m\rho}, W_{m \rho}) \to R$ is a non-zero homogenous invariant polynomial.  Therefore the composition
 $$f: V \xrightarrow{\varphi} \Hom_R(W_{m\rho}, W_{m \rho}) \xrightarrow{\det} R$$
 is a non-zero homogenous invariant polynomial; that is, $f \in (\Sym^n V^{\dual})^G$ for some $n>0$.  Furthermore $f(v) = \det \varphi(v) = 1$ so we have constructed the desired invariant.
 \end{proof}
 
 \medskip \noindent
 If $G \to S$ is a smooth group scheme, then \cite[Exp. $VI_B$, Theorem 3.10]{sga3} implies that the functor 
  $$
  \begin{aligned}
  (\Sch/S) &\to \Sets \\
(T \to S)	& \mapsto  \{ g \in G(T) \, | \, \forall s \in S, \, g_s(T_s) \subseteq (G_s)^{\circ} \}
  \end{aligned}
  $$
is representable by an open subgroup scheme $G^{\circ} \subseteq G$ which is smooth over $S$.
 
 \begin{thm}  \label{theorem-reductive}
 Let $G \to S$ be a smooth affine group scheme such that $G/G^{\circ} \to S$ is separated.  Then $G \to S$ is geometrically reductive if and only if the geometric fibers of $G^{\circ} \to S$ are reductive and $G/G^{\circ} \to S$ is finite.
 \end{thm}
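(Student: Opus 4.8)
The backbone of the argument is the exact sequence of group algebraic spaces
$1 \to G^{\circ} \to G \to G/G^{\circ} \to 1$,
where $G^{\circ}$ is the open subgroup scheme recalled just above the theorem and $G/G^{\circ}$ is the fppf quotient, which is quasi-finite over $S$ with finite geometric fibres $G_{\bar s}/G_{\bar s}^{\circ}$. Into this sequence I would feed three inputs: the relative form of Haboush's theorem (Theorem \ref{theorem-haboush}) for the connected part $G^{\circ}\to S$; the characterization of finite group schemes (Theorem \ref{theorem-finite-group}) for the component group $G/G^{\circ}\to S$; and the stability of geometric reductivity under base change (Proposition \ref{proposition-group-base-change}), under extensions (Proposition \ref{proposition-extensions}), and the Matsushima theorem (Theorem \ref{theorem-matsushima}) to pass to the subgroup $G^{\circ}$.

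For the ``only if'' direction, assume $G \to S$ is geometrically reductive. First I would descend to a geometric fibre: by Proposition \ref{proposition-group-base-change}(i), for every geometric point $\bar s:\Spec k \to S$ the group $G_{\bar s}\to\Spec k$ is geometrically reductive. Since $G_{\bar s}/G_{\bar s}^{\circ}$ is a finite, hence affine, $k$-group scheme, Theorem \ref{theorem-matsushima} (first assertion, applied with $H = G_{\bar s}^{\circ}$, which is flat, finitely presented and separated in $G_{\bar s}$) shows $G_{\bar s}^{\circ}\to\Spec k$ is geometrically reductive; being smooth with connected fibre, it is reductive by Theorem \ref{theorem-haboush}. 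Hence the geometric fibres of $G$ are reductive, and in particular affine; this forces $G^{\circ}$ to be closed in $G$, so $G/G^{\circ}\to S$ is a separated (and, by descent along the faithfully flat $G \twoheadrightarrow G/G^{\circ}$, flat and finitely presented) group algebraic space. Proposition \ref{proposition-extensions}(1) applied to the displayed sequence then gives that $G/G^{\circ}\to S$ is geometrically reductive, and being quasi-finite, separated and flat it is finite by Theorem \ref{theorem-finite-group}.

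For the ``if'' direction, assume the geometric fibres of $G$ are reductive and $G/G^{\circ}\to S$ is finite. Then $G^{\circ}\to S$ is a smooth group scheme with connected fibres whose geometric fibres $G_{\bar s}^{\circ}$ are reductive; such a group scheme is a reductive group scheme, hence geometrically reductive by Theorem \ref{theorem-haboush}. On the other hand $G/G^{\circ}\to S$ is finite, hence geometrically reductive by Theorem \ref{theorem-finite-group}; it is moreover flat and finitely presented over $S$ by descent along $G \twoheadrightarrow G/G^{\circ}$. Thus $1 \to G^{\circ}\to G \to G/G^{\circ}\to 1$ is an exact sequence of flat, finitely presented, separated group algebraic spaces with the two outer terms geometrically reductive, so Proposition \ref{proposition-extensions}(2) gives that $G \to S$ is geometrically reductive, completing the proof.

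The geometric‑reductivity part of the argument is thus a direct assembly of results already in the paper; the genuinely delicate point — the main obstacle — is the foundational bookkeeping around $G/G^{\circ}$, namely verifying that it is a flat, finitely presented, \emph{separated} group algebraic space over $S$ so that Theorem \ref{theorem-finite-group} and Proposition \ref{proposition-extensions} may be invoked. Flatness and finite presentation come for free from fppf descent along $G \to G/G^{\circ}$ (which is even smooth, since $G^{\circ}$ is open in $G$), but separatedness of $G/G^{\circ}\to S$ — equivalently, closedness of $G^{\circ}$ in $G$ — can fail for a general smooth $G$ (e.g.\ Néron models of elliptic curves), so in the ``only if'' direction it must be extracted, as above, from the already established affineness of the geometric fibres; I would use the structure theory of smooth group schemes (\cite{sga3}, Exp.\ $\mathrm{VI}_B$) to make this precise. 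The same circle of standard facts underlies the assertion, used in the ``if'' direction, that a smooth group scheme with connected reductive geometric fibres is automatically affine, hence a reductive group scheme.
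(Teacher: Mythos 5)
Your proof is correct and follows essentially the same route as the paper's: everything is hung on the exact sequence $1 \to G^{\circ} \to G \to G/G^{\circ} \to 1$, combining Theorem \ref{theorem-haboush} for $G^{\circ}$, Theorem \ref{theorem-finite-group} for $G/G^{\circ}$, and Propositions \ref{proposition-group-base-change} and \ref{proposition-extensions} (with Theorem \ref{theorem-matsushima}) to move between the pieces. The paper's two-line proof silently assumes the foundational points you flag --- that $G/G^{\circ} \to S$ is a flat, finitely presented, \emph{separated} group algebraic space so that Theorem \ref{theorem-finite-group} and Proposition \ref{proposition-extensions} apply, and that a smooth group scheme with connected reductive geometric fibers is affine, hence reductive in the sense of the paper's definition --- so your additional bookkeeping only makes explicit what the paper leaves implicit.
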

 
 \begin{proof}
 If $G \to S$ is geometrically reductive, then Proposition \ref{proposition-extensions} implies that $G/G^{\circ} \to S$ is geometrically reductive and Theorem \ref{theorem-finite-group} implies that $G/G^{\circ} \to S$ is finite.  Furthermore, the geometric fibers are geometrically reductive by Proposition \ref{proposition-group-base-change} and therefore reductive by Theorem \ref{theorem-haboush}.  Conversely, Theorem \ref{theorem-haboush} implies that $G^{\circ} \to S$ is geometrically reductive and Theorem \ref{theorem-finite-group} implies that $G/G^{\circ} \to S$ is geometrically reductive.  It follows from Proposition \ref{proposition-extensions}(2) that $G \to S$ is geometrically reductive.
 \end{proof}

\begin{cor} \label{corollary-matsushima}
If $G \to S$ is a reductive group scheme and $H \subseteq G$ is a smooth closed subgroup scheme with geometrically connected fibers, then $H \to S$ is reductive if and only if $G/H \to S$ is affine.
\end{cor}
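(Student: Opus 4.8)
The plan is to deduce the corollary formally from Theorem~\ref{theorem-matsushima}, using Theorems~\ref{theorem-haboush} and~\ref{theorem-reductive} only to translate between ``reductive'' and ``geometrically reductive''. First I would record the preliminary remarks that a reductive group scheme $G\to S$ is affine (by its very definition) and is geometrically reductive, the latter because it is smooth with connected geometric fibres and so Theorem~\ref{theorem-haboush} applies; and that, conversely, a smooth group scheme with connected geometric fibres is reductive as soon as it is geometrically reductive, again by Theorem~\ref{theorem-haboush}.

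Next I would treat the two implications. For ``$H\to S$ reductive $\Rightarrow$ $G/H\to S$ affine'', I would note that $H\to S$ is then geometrically reductive while $G\to S$ is affine, so the second half of Theorem~\ref{theorem-matsushima} yields that $G/H\to S$ is affine. For the converse, ``$G/H\to S$ affine $\Rightarrow$ $H\to S$ reductive'', I would use that $G\to S$, being reductive, is geometrically reductive, so the first half of Theorem~\ref{theorem-matsushima} shows that $H\to S$ is geometrically reductive; and then I would invoke Theorem~\ref{theorem-haboush} for the smooth group scheme $H\to S$ with connected fibres (or Theorem~\ref{theorem-reductive} to control its components and geometric fibres) to conclude that $H\to S$ is reductive.

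The step I expect to be the main obstacle is precisely this last upgrade from geometric reductivity to reductivity: to apply Theorem~\ref{theorem-haboush} one needs $H\to S$ smooth with connected geometric fibres, whereas $H$ is only assumed flat, finitely presented and separated. So the real work, beyond the bookkeeping above, is to verify that the smoothness and connectedness hypotheses are genuinely in force in the case at hand; everything else is a formal assembly of results already proved, with no further input from the theory of adequately affine morphisms or invariant theory.
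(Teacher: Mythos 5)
Your route is exactly the paper's: the printed proof of this corollary is a one-line citation of Theorems \ref{theorem-matsushima} and \ref{theorem-haboush}, assembled just as you describe, and your treatment of the direction ``$H$ reductive $\Rightarrow$ $G/H$ affine'' is complete (reductive implies geometrically reductive by Theorem \ref{theorem-haboush}, $G$ is affine by definition, and the second half of Theorem \ref{theorem-matsushima} applies). The obstacle you flag in the converse direction, however, is not a verification you can defer and then carry out: it is a genuine gap, and it is present in the paper's own proof as well. Theorem \ref{theorem-matsushima} only yields that $H \to S$ is \emph{geometrically} reductive when $G/H \to S$ is affine, and under the stated hypotheses on $H$ (flat, finitely presented, separated) the smoothness and fibrewise connectedness needed to invoke Theorem \ref{theorem-haboush} simply fail in general. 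Concretely, take $G = \GG_m$ over any base $S$ and $H = \mu_2$ (or the constant subgroup $\{\pm 1\}$ when $2$ is invertible). Then $H$ is finite flat and separated, $G/H \cong \GG_m$ is affine, and $H$ is geometrically reductive by Theorem \ref{theorem-finite-group}; but $H$ is not reductive in the sense of the paper's definition, having disconnected (and, in residue characteristic $2$, non-smooth) fibres.

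So the statement as literally written requires an additional hypothesis --- for instance that $H \to S$ is smooth with connected geometric fibres --- in which case your argument closes immediately via Theorem \ref{theorem-haboush}; without it, the honest conclusion of the ``affine'' direction is only that $H$ is geometrically reductive, which is the content of Theorem \ref{theorem-matsushima} itself rather than of the corollary. Your instinct that this last upgrade is where the real work lies is correct; the point to internalize is that the work cannot be done under the given hypotheses, so one must either strengthen the assumptions on $H$ or weaken the conclusion to geometric reductivity.
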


\begin{proof}  This follows from Theorems \ref{theorem-matsushima} and \ref{theorem-haboush}.
\end{proof}

\medskip \noindent
We end with another immediate application of the theory of adequacy.  This result is well known to the experts but we are unaware of a reference.

\begin{prop}
Let $G \to S$ be a geometrically reductive group scheme (for example, a reductive group scheme).  Let $X \to Y$ be a morphism of algebraic spaces over $S$ which is a principal homogenous space for $G$.  If $X$ is affine over $S$, then so is $Y$.
\end{prop}

\begin{proof}
Let $p$ denote the structure morphism $X \to S$.  By Theorem \ref{theorem-git}, $[X/G]=Y \to \sSpec_S (p_* \oh_X)^G$ is an adequate moduli space.  In particular, $Y \to \sSpec_S (p_* \oh_X)^G$ is an adequately affine morphism of algebraic spaces so by the generalization of Serre's criterion (Theorem \ref{serres-criterion}), it is also affine; it follows that $Y$ is affine over $S$.
\end{proof}

\bibliography{refs}
\bibliographystyle{amsalpha}

\end{document}